\newtheorem{thm}{Theorem}[section]
\newtheorem{cor}[thm]{Corollary}
\newtheorem{lem}[thm]{Lemma}
\newtheorem{prop}[thm]{Proposition}
\newtheorem{rem}[thm]{Remark}
\theoremstyle{definition}
\numberwithin{equation}{section}
\renewcommand{\Re}{\hbox{Re}\,}
\renewcommand{\Im}{\hbox{Im}\,}
\newcommand{\C}{\mathbb{C}}
\renewcommand{\div}{\operatorname{div}}
\newcommand{\N}{\mathbb{N}}
\newcommand{\R}{\mathbb{R}}
\newcommand{\supp}{\operatorname{supp}}
\def\tilde{\widetilde}
\def \bfo {\begin {eqnarray*} }
\def \efo {\end {eqnarray*} }
\def \ba {\begin {eqnarray*} }
\def \ea {\end {eqnarray*} }
\def \beq {\begin {eqnarray}}
\def \eeq {\end {eqnarray}}
\def \supp {\hbox{supp }}
\def \dist {\hbox{dist}}
\def \p {\partial}
\def\tilde{\widetilde}
\def \bfo {\begin {eqnarray*} }
\def \efo {\end {eqnarray*} }
\def \ba {\begin {eqnarray*} }
\def \ea {\end {eqnarray*} }
\def \beq {\begin {eqnarray}}
\def \eeq {\end {eqnarray}}
\def \supp {\hbox{supp }}
\def \dist {\hbox{dist}}
\def \p {\partial}
\begin{document}

 \title[Inverse transmission problems for magnetic Schr\"odinger operators ]{Inverse transmission problems for magnetic Schr\"odinger operators}

\author[Krupchyk]{Katsiaryna Krupchyk}

\address
        {K. Krupchyk, Department of Mathematics and Statistics \\
         University of Helsinki\\
         P.O. Box 68 \\
         FI-00014   Helsinki\\
         Finland}

\email{katya.krupchyk@helsinki.fi}

\maketitle

\begin{abstract} This paper is concerned with the study of inverse transmission problems for magnetic Schr\"odinger operators on bounded domains and in all of the Euclidean space, in the self-adjoint case. Assuming that the magnetic and electric potentials are known outside of a transparent obstacle, in the bounded domain case, we show that the obstacle, the transmission coefficients, as well as the magnetic field and electric potential inside the obstacle are uniquely determined from the knowledge of the set of the Cauchy data for the transmission problem, given on an open subset of the boundary of the domain. In the case of the transmission scattering problem, we obtain the same conclusion, when the scattering amplitude at a fixed frequency is known. The problems studied in this work were proposed in \cite{Isakov_2008}.

\end{abstract}

\section{Introduction and statement of results}

Let $\Omega\subset\R^n$, $n\ge 3$, be a bounded domain with connected Lipschitz boundary, and let $D\subset\subset\Omega$ be a bounded open set with Lipschitz boundary such that $D^+:=\Omega\setminus \overline{D}$ is connected.  
Setting also $D^-:=D$, and letting $A^\pm\in W^{1,\infty}(D^\pm,\R^n)$, $q^\pm\in L^\infty(D^\pm,\R)$, we consider the magnetic Schr\"odinger operators
\begin{align*}
\mathcal{L}_{A^\pm,q^\pm}(x,D_x):=&\sum_{j=1}^n(D_{x_j}+A_j^\pm(x))^2+q^\pm(x)\\
=&-\Delta-2iA^\pm(x)\cdot\nabla-i(\nabla\cdot A^\pm(x))+(A^\pm(x))^2+q^\pm(x),
\end{align*}
where $D_{x_j}=-i\p_{x_j}$. 
Let $(u^+,u^-)\in H^1(D^+)\times H^1(D^-)$ satisfy the magnetic Schr\"odinger equations
\begin{equation}
\label{eq_main}
\begin{aligned}
\mathcal{L}_{A^+,q^+}(x,D_x)u^+&=0\quad \textrm{in}\quad D^+,\\
\mathcal{L}_{A^-,q^-}(x,D_x)u^-&=0\quad \textrm{in}\quad D^-.
\end{aligned}
\end{equation}
Denote by  $\nu$ the almost everywhere defined outer unit normal to $\p D$ and to $\p \Omega$. Since $\Delta u^+\in L^2(D^+)$ and $\Delta u^-\in L^2(D^-)$, 
the traces of the normal derivatives $\p_\nu u^+$ and $\p_\nu u^-$ on $\p D$ are well-defined as  elements of $H^{-1/2}(\p D)$, see \cite[Chapter 3]{McLean_book} as well as Subsection \ref{sub_sec_sobol} below.
 In addition to \eqref{eq_main}, we require that $(u^+,u^-)$ satisfies the following transmission conditions on $\p D$,
\begin{equation}
\label{eq_trans_cond}
\begin{aligned}
u^+&=a u^-\quad \textrm{on}\quad \p D,\\
(\p_\nu+iA^+\cdot \nu)u^+&=b(\p_\nu+iA^-\cdot\nu)u^-+cu^-\quad \textrm{on}\quad \p D,
\end{aligned}
\end{equation}
as well as the Dirichlet boundary conditions on the boundary of  $\Omega$,
\begin{equation}
\label{eq_int_bounda_d}
u^+=g\quad \textrm{on}\quad \p \Omega.
\end{equation}
Here $a,b\in C^{1,1}(\overline{D},\R)$ (the space of $C^1$--functions with Lipschitz gradient in a neighborhood of $\overline{D}$),  $c\in C(\overline{D},\R)$, and $g\in H^{1/2}(\p \Omega)$. 
The transmission conditions \eqref{eq_trans_cond} encompass physical models of imperfect transmission arising in acoustics, elastodynamics, quantum scattering, and semiconductor physics.

In what follows we shall assume that $a,b>0$ in $\overline{D}$.  
Then the Fredholm alternative holds for the transmission problem \eqref{eq_main}, \eqref{eq_trans_cond}, and \eqref{eq_int_bounda_d}, see Proposition \ref{prop_2_4} below. 
Let $\gamma\subset \p\Omega$ be an open nonempty subset of the boundary of $\Omega$ and let  
\begin{align*}
\mathcal{C}_\gamma(A^+,q^+,&A^-,q^-,a,b,c; D):=\{(u^+|_{\p \Omega},(\p_\nu+iA^+\cdot \nu) u^+|_{\gamma}):\\ 
&(u^+,u^-)\in H^1(D^+)\times H^1(D^-)
\textrm{ solves }\eqref{eq_main}, \eqref{eq_trans_cond},\supp(u^+|_{\p \Omega})\subset\gamma\}
\end{align*}
be the set of the Cauchy data for the transmission problem \eqref{eq_main}, \eqref{eq_trans_cond}, associated with $\gamma$. 

The first inverse problem studied in this paper is as follows. Assume that  we are given the bounded domain $\Omega\subset \R^n$, the subset $\gamma\subset\p \Omega$, the magnetic and electric potentials $A^+$, $q^+$, and 
the set $\mathcal{C}_\gamma(A^+,q^+,A^-,q^-,a,b,c; D)$ of the Cauchy data for the transmission problem \eqref{eq_main}, \eqref{eq_trans_cond}, associated with $\gamma$.  The problem is whether we can recover the obstacle $D$, 
the transmission coefficients $a$, $b$, and $c$ on $\p D$, as well as the magnetic and electric potentials  $A^-$, $q^-$ in $D$.  

This problem was proposed in  \cite{Isakov_2008}, where the corresponding inverse transmission problem in the absence of magnetic potentials was investigated.   As it was pointed out in \cite{Isakov_2008}, in general one cannot hope to recover the transmission  coefficients $a$, $b$, and $c$ on $\p D$ uniquely, since the set of  the Cauchy data enjoys the following invariance property,   
\begin{equation}
\label{eq_int_1_0}
\mathcal{C}_\gamma(A^+,q^+, A^-,q^-,a,b,c; D)
=\mathcal{C}_\gamma(A^+,q^+, A^- ,q^-,\alpha a,\alpha b,\alpha c; D),
\end{equation}
for any  $\alpha>0$ on $\overline{D}$,  constant on each connected component of $D$.

In the presence of the magnetic potentials there is another gauge transformation that preserves the set of the Cauchy data. Namely,  for any function $\psi\in C^{1,1}(\overline{D}, \R)$, we have
\[
e^{-i\psi}\mathcal{L}_{A^-,q^-}e^{i\psi}=\mathcal{L}_{A^-+\nabla \psi,q^-}.
\]
Hence,  $(u^+,u^-)\in H^1(D^+)\times H^1(D^-)$ satisfies the transmission problem \eqref{eq_main}, \eqref{eq_trans_cond}  if and only if $(u^+,U^-)$, where $U^-=e^{-i\psi}u^-$,  satisfies 
\begin{align*}
&\mathcal{L}_{A^+,q^+}u^+=0\quad\textrm{in}\quad \Omega\setminus\overline{D}, \\
&\mathcal{L}_{A^-+\nabla \psi,q^-}U^-=0\quad\textrm{in}\quad D,\\
&u^+= a e^{i\psi} U^-\quad \textrm{on}\quad \p D,\\
&(\p_\nu+iA^+\cdot \nu)u^+= e^{i\psi}( b(\p_\nu+i(A^-+\nabla \psi)\cdot\nu)U^-+ cU^-)\quad \textrm{on}\quad \p D.
\end{align*}
Thus, for any function $\psi\in C^{1,1}(\overline{D}, \R)$ such that $\psi|_{\p D}=0$, we have
\begin{equation}
\label{eq_int_1}
\mathcal{C}_\gamma(A^+,q^+, A^-,q^-,a,b,c; D)
=\mathcal{C}_\gamma(A^+,q^+,  A^-+\nabla \psi,q^-, a, b, c; D). 
\end{equation}
Notice that the invariance of the set of the Cauchy data under the gauge transformation $A^-\mapsto A^-+\nabla \psi$ with $\psi|_{\p D}=0$  is the standard obstruction to the unique determination of the magnetic potential in inverse boundary value problems, see \cite{NakSunUlm_1995, Sun_1993}.

In general, the transmission problem \eqref{eq_main}, \eqref{eq_trans_cond}, and \eqref{eq_int_bounda_d} is non-self-adjoint.  As we shall see in Section  \ref{subsec_direct} below, the  self-adjointness of  the transmission problem   is guaranteed  by the assumptions that $A^\pm$, $q^\pm$ are real-valued,  and $ab=1$ on $\overline{D}$.  

In this paper we shall be concerned with inverse transmission problems in the self-adjoint case. The importance of 
 the self-adjoint transmission conditions comes in particular from the fact that they assure the continuity of the energy flux of the solution $(u^+,u^-)$ of the transmission problem along the boundary of the obstacle $D$, i.e. 
\[
\Im (\overline{u^+}(\p_\nu + iA^+\cdot\nu) u^+)=\Im (\overline{u^-}(\p_\nu + iA^-\cdot\nu) u^-)\quad \textrm{on}\quad \p D. 
\]   

Working with self-adjoint transmission problems,  the obstruction \eqref{eq_int_1_0}
can be eliminated, as shown in the following, first main result of this paper. 
\begin{thm} 
\label{thm_main_sa} 
Let $\Omega\subset\R^n$, $n\ge 3$, be a bounded domain with connected Lipschitz boundary, and $D_1,D_2\subset\subset\Omega$ be  bounded open subsets with Lipschitz boundaries such that $\Omega\setminus \overline{D_j}$ is connected, $j=1,2$.  Let $A^+\in W^{1,\infty}(\Omega,\R^n)$, $q^+\in L^\infty(\Omega,\R)$,  $A_j^-\in W^{1,\infty}(D_j,\R^n)$, $q_j^-\in L^\infty(D_j,\R)$,  $a_j, b_j\in C^{1,1}(\overline{D_j},\R)$, and $c_j\in C(\overline{D_j},\R)$, $j=1,2$. Assume that $a_j,b_j>0$ on $\overline{D_j}$, $a_jb_j=1$ on   $\overline{D_j}$, and 
\[
a_j(x)\ne  1 \quad \textrm{for all } x\in \p D_j,\quad j=1,2.
\]
If 
\[
\mathcal{C}_\gamma(A^+,q^+, A_1^-,q_1^-,a_1,b_1,c_1; D_1)
=\mathcal{C}_\gamma(A^+,q^+,  A_2^-,q_2^-,a_2,b_2,c_2; D_2),
\]
for an open non-empty subset $\gamma\subset \p \Omega$, then 
\[
D_1=D_2=:D, 
\]
and 
\[
 a_1=a_2,\quad b_1=b_2,\quad c_1=c_2, \quad \textrm{on}\quad \p D.
\] 
Furthermore, if $\p D$ is of class $C^{1,1}$, there is a  function $\psi\in C^{1,1}(\overline{D},\R)$,  $\psi|_{\p D}=0$, such that 
\begin{equation}
\label{eq_int_2_2}
A_2^-=A_1^-+\nabla\psi,\quad q_1^-=q_2^-,\quad \textrm{in}\quad D.
\end{equation}

\end{thm}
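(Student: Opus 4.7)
The plan is to argue in three stages: first identify the obstacle by showing $D_1=D_2=:D$, then match the transmission coefficients $a,b,c$ on $\p D$, and finally recover the interior potentials $A^-,q^-$ up to the gauge transformation \eqref{eq_int_2_2}. The equality of Cauchy data is exploited through the following recurring mechanism: given any $(u_1^+,u_1^-)$ solving the first transmission system with $u_1^+|_{\p\Omega}$ supported in $\gamma$, the hypothesis supplies a pair $(u_2^+,u_2^-)$ for the second data sharing the same Dirichlet trace on $\p\Omega$ and the same Neumann trace $(\p_\nu+iA^+\cdot\nu)u_2^+$ on $\gamma$. The difference $w:=u_1^+-u_2^+$ solves $\mathcal{L}_{A^+,q^+}w=0$ in the connected component $G$ of $\Omega\setminus(\overline{D_1}\cup\overline{D_2})$ that meets $\gamma$, with zero Cauchy data on $\gamma$, and weak unique continuation for magnetic Schr\"odinger operators then forces $u_1^+=u_2^+$ in $G$.

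\emph{Step 1 (obstacle).} Suppose for contradiction that $D_1\ne D_2$. Since each $\Omega\setminus\overline{D_j}$ is connected, a standard topological argument in the spirit of \cite{Isakov_2008} produces, after relabelling, a point $x_0\in\p D_1\cap\p G$ with $x_0\notin\overline{D_2}$. One then constructs a family of solutions $u_1^+$ to the first transmission problem arising from source sequences $y_k\to x_0$ placed inside $D_2$; the Green-function type behaviour forces $u_1^+$ to blow up at $x_0$, while on the other hand $u_2^+=u_1^+$ in $G$ is regular at the interior point $x_0$ of $\Omega\setminus\overline{D_2}$. Matching leading-order singularities produces a contradiction, and this is precisely where the non-degeneracy condition $a_1(x_0)\ne 1$ is essential: the leading coefficient of the singular part is proportional to $a_1-1$, so that when $a_1\equiv 1$ the obstacle would be invisible at $x_0$. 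The hard part will be producing singular solutions of the \emph{magnetic} transmission problem with the right asymptotic form at $x_0$, which requires a careful parametrix construction compatible with \eqref{eq_trans_cond}; I expect this to be the main technical obstacle of the proof.

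\emph{Step 2 (boundary coefficients).} With $D_1=D_2=D$ in hand, fix $x_0\in\p D$ and localize: constructing families of solutions of $\mathcal{L}_{A^+,q^+}u^+=0$ in $D^+$ whose Dirichlet traces concentrate near $x_0$ with prescribed leading oscillation, the transmission identities read at the principal order give $a_1(x_0)=a_2(x_0)$, whence $b_1(x_0)=b_2(x_0)$ automatically from $a_jb_j=1$; a subleading expansion in the Neumann jump then extracts $c_1(x_0)=c_2(x_0)$.

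\emph{Step 3 (interior potentials).} With the obstacle and the boundary coefficients matched, a Runge-type approximation in $D^+$ combined with the equality of Cauchy data on $\gamma$ upgrades the hypothesis to equality, on all of $\p D$, of the partial Dirichlet-to-Neumann maps associated with the interior problems $\mathcal{L}_{A_j^-,q_j^-}U=0$ in $D$. The $C^{1,1}$ regularity of $\p D$ is then used to construct admissible complex geometric optics solutions and to set up the gauge; invoking the standard inverse boundary value theorem for the magnetic Schr\"odinger operator (as in \cite{NakSunUlm_1995, Sun_1993}) yields a function $\psi\in C^{1,1}(\overline{D},\R)$ with $\psi|_{\p D}=0$ and $A_2^-=A_1^-+\nabla\psi$, $q_1^-=q_2^-$ in $D$, which is exactly \eqref{eq_int_2_2}.
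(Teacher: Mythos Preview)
Your three-stage outline matches the paper, but Step~1 contains a genuine gap in the contradiction mechanism. First, the source placement is inconsistent: if $x_0\in\p D_1$ with $x_0\notin\overline{D_2}$, then points $y_k\to x_0$ cannot lie ``inside $D_2$''. More fundamentally, the ``$u_2^+$ regular versus $u_1^+$ singular'' dichotomy you invoke does not hold for transmission problems: solutions pass through the interface in $H^1$, so $u_1^+$ has no reason to blow up at a point of $\p D_1$. In the paper the pole $x_\delta=x_0+\delta\nu(x_0)$ is placed in the common exterior region $D(e)$ (with the roles of $D_1,D_2$ swapped relative to your choice), and the singular solutions $u_j^\pm$ for \emph{both} problems are built from the same fundamental solution $G(\cdot,x_\delta)$ of $\mathcal{L}_{A^+,q^+}$; both are singular as $\delta\to0$. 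The contradiction comes instead from a quantitative integral identity over a half-ball $B\cap D_2$: after Green's formulas and the transmission conditions one isolates the term $\int_{B\cap D_2} a_2^{-1}(b_2-a_2)\,|\nabla G_0(\cdot,x_\delta)|^2\,dx\gtrsim\delta^{2-n}$ (this is where $a_2\ne1$, hence $b_2\ne a_2$ via $a_2b_2=1$, enters), while every remaining term is shown, through a long chain of $H^1$ and trace estimates, to be $O(\delta^{1-n/2}\mu_n(\delta))=o(\delta^{2-n})$. To make the singular solutions exist and to pass from solutions on $\Omega$ to solutions on an auxiliary neighborhood $V(\delta)\supset D_1\cup D_2$, the paper also needs the domain-perturbation results of Section~\ref{sec_domain_pertur} (ensuring well-posedness of the transmission problem in $V(\delta)$, which uses the self-adjointness $a_jb_j=1$) and the Runge-type density Lemma~\ref{lem_density}; neither appears in your sketch.

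For Step~3, your reduction to equality of the interior Dirichlet-to-Neumann maps tacitly assumes that zero is not a Dirichlet eigenvalue of $\mathcal{L}_{A_j^-,q_j^-}$ in $D$ (cf.\ the Remark closing Subsection~\ref{sec_rec_potentials}). The paper avoids this hypothesis by working directly with the integral identity obtained from the density lemma, first recovering the tangential part of $A_j^-$ on $\p D$ via a Brown--Salo-type argument (Section~\ref{sec_boundary_rec}) that replaces well-posedness by a Carleman-estimate solvability result, and only then invoking the CGO machinery on an enlarged ball. Step~2 is broadly correct in spirit, though in the paper it is again carried out with singular solutions and integral identities rather than oscillating boundary data.
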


In order to recover the obstacle and the transmission coefficients in the proof of Theorem  \ref{thm_main_sa},  we shall follow closely \cite{Isakov_2008}, and use the method of singular solutions for the transmission problem, with singularities approaching the boundary of the obstacle.  The presence of the magnetic potentials complicates the arguments, and we have therefore attempted to give a careful discussion througout. 
When constructing the singular solutions, it becomes essential to assure that the unique solvability of the transmission problem can always be achieved by a small perturbation of the boundary of a domain.  Furthermore, this property 
is required when establishing some auxiliary Runge type results on approximation of solutions of transmission problems in subdomains by solutions in larger domains, in Subsection \ref{sec_runge}.  We show that this key property is enjoyed by the self-adjoint transmission problem in Section \ref{sec_domain_pertur}, through an application of the mini-max principle.

In the second part of Theorem \ref{thm_main_sa}, we assume that the boundary of $D$ is of class $C^{1,1}$.  Indeed, to the best of our knowledge, the most general boundary reconstruction result for the tangential component of a continuous magnetic potential from the knowledge of the Dirichlet--to--Neumann map has been obtained in \cite{Brown_Salo_2006}, when the boundary is of class $C^1$. Next, as far as we know, the most general result, in the sense of regularity,  for inverse boundary value problems for the magnetic Schr\"odinger operator has been proven in \cite{Salo_2004} making use of \cite{Brown_Salo_2006},  under the assumption that the boundary of the domain $D$ is of class $C^{1,\textrm{Dini}}$, the magnetic potential is of class $C^{\textrm{Dini}}(\overline{D})$, and the electric potential is of class $L^\infty(D)$. Here $C^{\textrm{Dini}}(\overline{D})$ stands for the space of Dini continuous functions, see \cite{Salo_2004}. We have therefore decided to avoid considering the issue of getting the minimal regularity assumptions on magnetic potentials and on the boundary of $D$, and will content ourselves with  
Lipschitz continuous magnetic potentials. The minimal regularity of the boundary of $D$ required when working with Lipschitz continuous magnetic potentials seems to be  $C^{1,1}$. In particular, this is due to the fact that, in general, a 
solution $\psi$  to the equation $(A_j^-+ \nabla \psi)\cdot\nu =0$ on  $\p D$ will be of class $C^{1,1}(\overline{D})$ only when $\p D$ is of class $C^{1,1}$, see \cite[Lemma  5.8]{Salo_2004}.

We would like to emphasize that the set of the Cauchy data in Theorem \ref{thm_main_sa} can be given on  an arbitrarily small open non-empty subset of the boundary of $\Omega$.  This is important from the point of view of applications, since in practice, performing measurements on the entire boundary could be either impossible or too cost consuming.  To the best of our knowledge, the only available result in the presence of an obstacle, where the measurements are performed on an arbitrarily small portion of the boundary, is the work \cite{Isakov_2008} for the Schr\"odinger operator without a magnetic potential.  When no obstacle is present and the electric and magnetic potentials are known near the boundary, it is proven in \cite{Ammari_Uhlmann_2004},  see also \cite{KrupLassasUhlmann_slab}, that the knowledge of the Cauchy data on an arbitrarily small part of the boundary determines uniquely the magnetic field and the electric potential in  the entire domain.  Dropping the assumption that the potentials are known near the boundary, some fundamental recent progress on inverse boundary value problems with partial  
measurements has been achieved in \cite{BukhUhl_2002, DKSU_2007, Ken_Sjo_Uhl_2007}.

The second part of the paper is devoted to the inverse scattering problem for the magnetic Schr\"odinger operator in the presence of a transparent obstacle.  Here we assume that $D\subset\R^n$, $n\ge 3$, is a bounded open set with Lipschitz boundary such that  $\R^n\setminus\overline{D}$ is connected. 
Let $A^+\in W^{1,\infty}(\R^n, \R^n)$,  $A^-\in W^{1,\infty}(D,\R^n)$, $q^+\in L^\infty(\R^n,\R)$, $q^-\in L^\infty(D,\R)$.  Assume that $A^+$ and $q^+$ are compactly supported. As before, let  $a,b\in C^{1,1}(\overline{D},\R)$, $c\in C(\overline{D},\R)$, be such that $a,b>0$ in $\overline{D}$ and $ab=1$ on  $\overline{D}$.   

Let $k>0$,  $\xi\in \mathbb{S}^{n-1}:=\{\xi\in\R^n:|\xi|=1\}$, and consider the scattering transmission problem, 
\begin{equation}
\label{eq_int_8_10}
\begin{aligned}
&(\mathcal{L}_{A^+,q^+}-k^2)u^+=0\quad \textrm{in}\quad \R^n\setminus\overline{D},\\
&(\mathcal{L}_{A^-,q^-}-k^2)u^-=0\quad\textrm{in}\quad D,\\
&u^+=au^-\quad\textrm{on}\quad \p D,\\
&(\p_\nu+iA^+\cdot\nu)u^+=b(\p_\nu+iA^-\cdot\nu)u^-+cu^-\quad\textrm{on}\quad \p D,\\
& u^+(x;\xi,k)=e^{ikx\cdot\xi}+u_0^+(x;\xi,k),\\
&(\p_r-ik)u_0^+=o(r^{-(n-1)/2}), \quad\textrm{as} \quad r=|x|\to\infty. 
\end{aligned}
\end{equation}
As it is shown in Corollary \ref{cor_scatt_8_3} below,  under the assumptions above, the problem \eqref{eq_int_8_10} has a unique solution $(u^+,u^-)\in H^1_{\textrm{loc}}(\R^n\setminus\overline{D})\times H^1(D)$. 

It is known that the scattered wave $u_0^+$ has the following asymptotic behavior, 
\[
u_0^+(x;\xi,k)=a(\theta,\xi,k)\frac{e^{ik|x|}}{|x|^{(n-1)/2}}+\mathcal{O}\bigg(\frac{1}{|x|^{(n+1)/2}}\bigg),\quad \theta=\frac{x}{|x|},\quad \textrm{as}\quad |x|\to \infty,
\]
 see \cite{Col_Kress_book, Odell_2006}.  The function $a(\theta,\xi,k):=a(A^+,q^+,A^-,q^-,a,b,c,D; \theta,\xi,k)$ is called the scattering amplitude. 

The second inverse problem studied in this paper is as follows.  Assume that we are given the scattering amplitude $a(\theta,\xi,k)$ for all $\theta,\xi\in \mathbb{S}^{n-1}$ and for some fixed $k>0$, as well as the magnetic and electric potentials $A^+$ and $q^+$. The problem is whether this information determines the obstacle $D$, the transmission coefficients $a$, $b$, and $c$ on $\p D$, as well as the magnetic and electric potentials $A^-$, $q^-$ in $D$.
In this direction we have the following result, which is a generalization of  \cite[Theorem 1.2]{Isakov_2008} and \cite{Valdivia_2004}. 

\begin{thm} 
\label{thm_main_2}

Let $D_1,D_2\subset\R^n$, $n\ge 3$, be bounded open sets with Lipschitz boundaries such that $\R^n\setminus \overline{D_j}$ is connected, $j=1,2$.  Let 
$A^+\in W^{1,\infty}(\R^n, \R^n)$, $q^+\in L^\infty(\R^n,\R)$,  $A_j^-\in W^{1,\infty}(D_j,\R^n)$, $q_j^-\in L^\infty(D_j,\R)$.  Assume that $A^+$ and $q^+$ are compactly supported. Let $a_j, b_j\in C^{1,1}(\overline{D_j},\R)$, and $c_j\in C(\overline{D_j},\R)$, $j=1,2$.  Assume that $a_j,b_j>0$ on $\overline{D_j}$, $a_jb_j=1$ on $\overline{D_j}$, and 
\[
a_j(x)\ne 1 \quad \textrm{for all } x\in \p D_j,\quad j=1,2.
\]
 If 
\[
a(A^+,q^+,A^-_1,q^-_1,a_1,b_1,c_1,D_1; \theta,\xi,k)=a(A^+,q^+,A^-_2,q^-_2,a_2,b_2,c_2,D_2; \theta,\xi,k),
\]
 for all $\theta,\xi\in \mathbb{S}^{n-1}$ and for some fixed $k>0$, then 
\[
D_1=D_2=:D, 
\]
and 
\[
 a_1= a_2,\quad b_1= b_2,\quad c_1=c_2, \quad \textrm{on}\quad \p D.
\] 
Furthermore, if $\p D$ is of class $C^{1,1}$, there is  a function $\psi\in C^{1,1}(\overline{D},\R)$,  $\psi|_{\p D}=0$, such that 
\[
A_2^- =A_1^-+\nabla\psi,\quad\textrm{in}\quad D,\quad q_1^-=q_2^-,\quad\textrm{in}\quad D.
\]
\end{thm}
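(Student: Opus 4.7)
The plan is to reduce Theorem \ref{thm_main_2} to Theorem \ref{thm_main_sa} by localizing the scattering problem to a sufficiently large ball. Choose $R>0$ such that $B:=B(0,R)$ contains $\supp A^+\cup \supp q^+\cup \overline{D_1}\cup\overline{D_2}$. Since $(\mathcal{L}_{A^\pm,q^\pm}-k^2)=\mathcal{L}_{A^\pm,q^\pm-k^2}$ and $q^\pm-k^2$ is still real-valued, the restriction of any scattering solution $(u^+,u^-)$ to $B$ satisfies a self-adjoint transmission problem of the form \eqref{eq_main}, \eqref{eq_trans_cond} on $B$, with exterior potentials $A^+$, $q^+-k^2$ and interior potentials $A_j^-$, $q_j^--k^2$. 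All the hypotheses of \thmref{thm_main_sa} (reality, $a_jb_j=1$, $a_j\neq 1$ on $\p D_j$) carry over unchanged. If we can prove that
\[
\mathcal{C}_{\p B}(A^+,q^+-k^2,A_1^-,q_1^--k^2,a_1,b_1,c_1;D_1)=\mathcal{C}_{\p B}(A^+,q^+-k^2,A_2^-,q_2^--k^2,a_2,b_2,c_2;D_2),
\]
then \thmref{thm_main_sa} applied with $\Omega=B$ and $\gamma=\p B$ yields exactly the desired conclusions.

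Next I would transfer the scattering information to boundary information on $\p B$. For each $\xi\in\mathbb{S}^{n-1}$ let $u_j^+(\cdot;\xi,k)=e^{ikx\cdot\xi}+u_{0,j}^+$ be the scattering solution for problem $j=1,2$. Outside $B$ one has $A^+=0$, $q^+=0$, so $u_{0,j}^+$ solves the free Helmholtz equation with Sommerfeld radiation condition, and by hypothesis they share the same far-field pattern. Rellich's lemma (see \cite{Col_Kress_book}) together with unique continuation then forces $u_{0,1}^+\equiv u_{0,2}^+$ in $\R^n\setminus\overline{B}$, hence $u_1^+=u_2^+$ and $\p_\nu u_1^+=\p_\nu u_2^+$ on $\p B$. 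Since $A^+$ vanishes near $\p B$, this gives equality of the full Cauchy pairs $(u_j^+|_{\p B},(\p_\nu+iA^+\cdot\nu)u_j^+|_{\p B})$ for every $\xi\in\mathbb{S}^{n-1}$.

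The delicate step is to upgrade this identity from the plane-wave family to all Cauchy data, i.e.\ to obtain the full set equality above. By taking $L^2(\mathbb{S}^{n-1})$-superpositions, the equality extends to all incident Herglotz wave functions $v_g(x)=\int_{\mathbb{S}^{n-1}}e^{ikx\cdot\xi}g(\xi)\,dS(\xi)$. After possibly shrinking $R$ so that $k^2$ is not a Dirichlet eigenvalue of $-\Delta$ on $B$ (the scattering problem remains uniquely solvable by Corollary \ref{cor_scatt_8_3}, and the perturbation techniques for the direct problem developed in Section \ref{sec_domain_pertur} allow this adjustment), Herglotz wave functions are dense in the space of free Helmholtz solutions on $B$; combined with the Runge-type approximation for the transmission problem established in Subsection \ref{sec_runge}, the totality $\{u_{\xi}^+|_B:\xi\in\mathbb{S}^{n-1}\}$ is dense in the space of solutions of the interior transmission problem for $\mathcal{L}_{A^+,q^+-k^2}/\mathcal{L}_{A_j^-,q_j^--k^2}$ on $B$. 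Closedness of the Cauchy data set (which in turn relies on the Fredholm solvability of Proposition \ref{prop_2_4} applied to the perturbed operator) then gives the full set equality. The main obstacle is precisely this density/Runge step, since one must match the plane-wave subfamily to the full Cauchy data set of the \emph{transmission} problem rather than of a single Schr\"odinger operator; this is handled by combining the density of Herglotz waves with the transmission-problem Runge approximation and the domain-perturbation result of Section \ref{sec_domain_pertur}, ensuring unique solvability at the chosen $R$. Once full Cauchy data equality on $\p B$ is in hand, \thmref{thm_main_sa} delivers $D_1=D_2$, $a_1=a_2$, $b_1=b_2$, $c_1=c_2$ on $\p D$, and, under the $C^{1,1}$ hypothesis, a gauge function $\psi$ with $\psi|_{\p D}=0$ such that $A_2^-=A_1^-+\nabla\psi$ and $q_1^-=q_2^-$ in $D$.
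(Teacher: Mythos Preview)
Your overall strategy is exactly the paper's: reduce Theorem~\ref{thm_main_2} to Theorem~\ref{thm_main_sa} by showing equality of the full Cauchy data sets on the boundary of a suitably chosen ball $B$, using Rellich's theorem to match the two scattering solutions outside $B$ and then a density argument to pass from the plane-wave family to all transmission solutions on $B$. The selection of $B$ is done in the paper via Corollary~\ref{prop_domain_pert_2}, which simultaneously arranges unique solvability of both transmission problems on $B$ and that $k^2$ is not a Dirichlet eigenvalue of $-\Delta$ on $B$; note that the domain-perturbation argument of Section~\ref{sec_domain_pertur} \emph{enlarges} rather than shrinks the domain.

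The gap is in your justification of the density step. You invoke the Runge result of Subsection~\ref{sec_runge} (Lemma~\ref{lem_density}), but that lemma only asserts that $W(\Omega)$ is dense in $W(V)$ for \emph{bounded} domains $V\subset\subset\Omega$; it says nothing about approximating transmission solutions on $B$ by \emph{scattering} solutions. Combining it with density of Herglotz waves among free Helmholtz solutions does not close the gap either, since the scattered part $u_0^+$ depends on the obstacle and is not a free Helmholtz solution inside $B$, so density of incident waves does not translate directly into density of the full scattering solutions in $W(B)$. What is actually needed is a separate statement---that $W_{\textrm{sc}}$ is dense in $W(B)$ in $H^1(B\setminus\overline{D})$---and the paper proves this as Lemma~\ref{lem_inv_sc_density} via a Hahn--Banach argument specific to the scattering setting: one solves the \emph{adjoint scattering} transmission problem \eqref{eq_inv_sc_2} with source $\overline{f^+}$, uses the Sommerfeld radiation condition and the far-field asymptotics \eqref{eq_inv_sc_7_1} to kill the outgoing contributions on large spheres, and then invokes density of $\{e^{ikx\cdot\xi}|_{\p B}:\xi\in\mathbb{S}^{n-1}\}$ in $H^{1/2}(\p B)$ (valid precisely because $k^2$ is not a Dirichlet eigenvalue) to force the adjoint solution $v^+$ to vanish outside $B$. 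This argument genuinely uses the scattering structure and the adjoint problem on $\R^n\setminus\overline{D}$; it is not a corollary of Lemma~\ref{lem_density}. Once Lemma~\ref{lem_inv_sc_density} is in hand, your remaining steps (closedness via unique solvability, then application of Theorem~\ref{thm_main_sa}) are correct.
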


The study of inverse obstacle scattering at a fixed frequency has a long tradition, starting with the uniqueness proof for the Dirichlet boundary conditions, going back to Schiffer \cite{Lax_Phillips_book}, see also \cite{Kirsch_Kress_1993}.  Another important technique for  the identification of obstacles, applicable to the Neumann and transmission problems, is the method of singular solutions, developed in  \cite{Isakov_1988, Isakov_1990}.  Among further important contributions to the circle of questions around inverse transmission obstacle scattering, we should mention \cite{Kirsch_Kress_1993, Kirsch_Paivar_1998, Odell_2008, Valdivia_2004}. See also the review paper  \cite{Isakov_2009} and the references given there.

The plan of the paper is as follows. After preliminaries in Section \ref{sec_preliminaries}, the solvability of the transmission problem \eqref{eq_main}, \eqref{eq_trans_cond}, and \eqref{eq_int_bounda_d} in the general non-self-adjoint case 
is discussed in Section \ref{subsec_direct} by means of a variational approach, convenient here due to the low regularity of the boundary of the obstacle.
In section \ref{sec_domain_pertur} we show that in the self-adjoint case,  the unique solvability of the transmission problem can be achieved by a small perturbation of the boundary of the domain.   
Section \ref{sec_inv_direct_1} is devoted to the solution of the inverse transmission  problem on a bounded domain and  to the proof of Theorem \ref{thm_main_sa}.  
In the approach to the reconstruction of the obstacle and of the transmission coefficients, following \cite{Isakov_2008}, we use the method of singular solutions for the transmission problem. The task of the reconstruction of the obstacle and of the transmission coefficients occupies Subsections \ref{sec_fund_sol} -- \ref{sec_transmiss_coef}.  

Once the obstacle and the transmission coefficients have been recovered, the determination of the magnetic and electric potentials inside the obstacle becomes possible. This is the subject of Subsection \ref{sec_rec_potentials}.
Proceeding in the spirit of inverse boundary value problems for the magnetic Schr\"odinger operator, in order to exploit a fundamental integral identity, valid inside the obstacle, it becomes essential to determine the values of the tangential component of the magnetic potential  on the boundary of the obstacle. To this end, in Section \ref{sec_boundary_rec}, we adapt the method of \cite{Brown_Salo_2006} of the boundary reconstruction of the tangential component of the magnetic potential to our situation, by combining it with an idea of \cite{SaloTzou_2009}. With the tangential components of the magnetic potential determined, the exploitation of the integral identity becomes possible, and using the machinery developed in \cite{DKSU_2007, Knu_Salo_2007, NakSunUlm_1995,  Salo_2004, Sun_1993} for inverse boundary value problems for the magnetic  Schr\"odinger operator,  we are able to determine the magnetic and electric potentials inside the obstacle, up to a natural gauge transformation.  

Section \ref{sec_scat_prob} is concerned with the scattering transmission problem. First, in Subsection \ref{subsec_direct_scat}, using the Lax-Phillips method, we investigate the existence of solutions to the scattering transmission problem in the non-self-adjoint case. This discussion generalizes \cite{Valdivia_2004}, where the case without magnetic potentials is treated. 
In Subsection \ref{subsec_inv_scat}, the inverse scattering transmission problem is studied, and following the arguments of  \cite{Isakov_2008}, we show that Theorem \ref{thm_main_2} is implied by Theorem \ref{thm_main_sa}. 

In Appendix \ref{ap_UC}, we present a unique continuation result for elliptic second order operators from Lipschitz boundaries, which is used several times in the main text. Although this result is essentially well-known, since it plays an important role in the paper,  we give it here for the convenience of the reader. Appendix \ref{appendix_fundamental_sol} contains some estimates for fundamental solutions of the magnetic Schr\"odinger operator, which are 
crucial when estimating singular solutions of the transmission problems. Finally, in Appendix \ref{appendix_integrals} we provide a brief discussion of asymptotic bounds on some volume and surface integrals, required in the reconstruction of the obstacle and of the transmission coefficients in the main text.

\section{Preliminaries} 

\label{sec_preliminaries}

\subsection{Sobolev spaces and traces}

\label{sub_sec_sobol}
Let $\Omega\subset \R^n$, $n\ge 3$,  be a bounded open set with Lipschitz boundary.  Let $a\in L^\infty(\Omega,\R)$, $a\ge a_0>0$ a.e. in $\Omega$,  and consider the space
\[
H_{a}(\Omega)=\{u\in H^1(\Omega):\div(a\nabla u)\in L^2(\Omega)\},
\]
which is a Hilbert space, equipped with the norm
\[
\|u\|_{H_a(\Omega)}^2=\|u\|_{H^1(\Omega)}^2+\|\div(a\nabla u)\|_{L^2(\Omega)}^2.
\] 
The map $u\mapsto (a\p_\nu u)|_{\p \Omega}$ is continuous on $H_a(\Omega)$ with values in $H^{-1/2}(\p \Omega)$, see \cite[Theorem 1.21]{Choulli_book}. We have for $u\in H_a(\Omega)$, 
\begin{equation}
\label{eq_trace_cont}
\|a\p_\nu u\|_{H^{-1/2}(\p \Omega)}\le C\|u\|_{H_a(\Omega)}, \quad C>0.
\end{equation}

In this paper, we shall work with a subspace of $H^1(\Omega)$, which contains $H_a(\Omega)$, for which the trace of the normal derivative of $u$ is still well-defined. To introduce this space, we shall need to consider the $L^2$--dual of $H^1(\Omega)$, given by
\[
\tilde H^{-1}(\Omega)=\{f\in H^{-1}(\R^n):\supp(f)\subset\overline{\Omega}\},
\] 
and $\|\cdot\|_{\tilde H^{-1}(\Omega)}=\|\cdot\|_{H^{-1}(\R^n)}$, 
see \cite{McLean_book}. Here we use the natural inner product on $L^2(\Omega)$, 
\[
(u,v)_{L^2(\Omega)}=\int_{\Omega} u\overline{v}dx,\quad u,v\in L^2(\Omega).
\]

\begin{rem}
Notice that in general, 
\[
\{f\in H^{-1}(\R^n):\supp(f)\subset \p \Omega\}\ne\emptyset.
\]
Thus, 
the restriction $f|_{\Omega}\in \mathcal{D}'(\Omega)$ does not  determine $f\in \tilde H^{-1}(\Omega)$ uniquely, and therefore, $\tilde H^{-1}(\Omega)$ cannot be imbedded into the space $\mathcal{D}'(\Omega)$.   
\end{rem}

The following result will allow us to define the trace of the normal derivatives of functions from a suitable subspace of $H^1(\Omega)$.   
\begin{prop}\cite[Lemma 4.3]{McLean_book} Let $\Omega\subset \R^n$ be a bounded open set with Lipschitz boundary. 
Let $u\in H^1(\Omega)$ and $f\in \tilde H^{-1}(\Omega)$ satisfy 
\[
-\div(a\nabla u)=f\quad \text{in} \quad \Omega,
\]
where $a\in L^{\infty}(\Omega,\R)$, $a\ge a_0>0$ a.e. in $\Omega$. Then there exists $g\in H^{-1/2}(\p \Omega)$ such that 
\begin{equation}
\label{eq_trace_lem}
\int_{\Omega}a\nabla u\cdot \nabla \overline{v} dx
=(f,v)_{\tilde H^{-1}(\Omega),H^1(\Omega)}+(g,v|_{\p \Omega})_{H^{-1/2}(\p \Omega),H^{1/2}(\p \Omega)}, 
\end{equation}
for any $v\in H^1(\Omega)$. Furthermore, $g$ is uniquely determined by $u$ and $f$, and we have
\[
\|g\|_{H^{-1/2}(\p \Omega)}\le C(\|u\|_{H^1(\Omega)}+\|f\|_{\tilde H^{-1}(\Omega)}). 
\]
\end{prop}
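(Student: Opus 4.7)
My plan is to define $g$ by duality via a Green-type identity and then verify well-definedness, continuity, and uniqueness.

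\textbf{Setting up the pairing.} First I would make sense of the pairing $(f,v)_{\tilde H^{-1}(\Omega),H^1(\Omega)}$ for $f\in\tilde H^{-1}(\Omega)$ and $v\in H^1(\Omega)$. Pick any continuous extension operator $E\colon H^1(\Omega)\to H^1(\R^n)$ and set $\langle f,v\rangle:=\langle f,Ev\rangle_{H^{-1}(\R^n),H^1(\R^n)}$. To see this is independent of $E$, let $w=E_1v-E_2v\in H^1(\R^n)$ with $w|_\Omega=0$. Since $\Omega$ is Lipschitz, so is $\R^n\setminus\overline\Omega$, and the trace of $w$ on $\partial\Omega$ vanishes, so $w|_{\R^n\setminus\overline\Omega}\in H^1_0(\R^n\setminus\overline\Omega)$. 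Hence $w$ is the $H^1(\R^n)$-limit of functions in $C^\infty_c(\R^n\setminus\overline\Omega)$, and these pair to $0$ against $f$ because $\supp f\subset\overline\Omega$. Thus $\langle f,w\rangle=0$.

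\textbf{Definition of $g$.} Given $\phi\in H^{1/2}(\partial\Omega)$, choose a lift $v\in H^1(\Omega)$ with $v|_{\partial\Omega}=\phi$ and $\|v\|_{H^1(\Omega)}\le C\|\phi\|_{H^{1/2}(\partial\Omega)}$, and declare
\[
\langle g,\phi\rangle_{H^{-1/2}(\partial\Omega),H^{1/2}(\partial\Omega)}:=\int_\Omega a\,\nabla u\cdot\overline{\nabla v}\,dx-(f,v)_{\tilde H^{-1}(\Omega),H^1(\Omega)}.
\]
To see this does not depend on the choice of lift, suppose $v_1,v_2$ are two such lifts, so that $w=v_1-v_2\in H^1_0(\Omega)$. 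The distributional equation $-\div(a\nabla u)=f$ in $\Omega$ holds against $C^\infty_c(\Omega)$, which is dense in $H^1_0(\Omega)$; for $w\in H^1_0(\Omega)$ the extension by zero $\tilde w\in H^1(\R^n)$ satisfies $\langle f,w\rangle=\langle f,\tilde w\rangle$, and both sides of the identity
\[
\int_\Omega a\,\nabla u\cdot\overline{\nabla w}\,dx=(f,w)_{\tilde H^{-1}(\Omega),H^1(\Omega)}
\]
are continuous in $w\in H^1_0(\Omega)$, so the identity extends by density. This gives independence of the lift, so \eqref{eq_trace_lem} holds by construction.

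\textbf{Continuity and uniqueness.} The estimate
\[
|\langle g,\phi\rangle|\le\|a\|_{L^\infty}\|u\|_{H^1(\Omega)}\|v\|_{H^1(\Omega)}+\|f\|_{\tilde H^{-1}(\Omega)}\|v\|_{H^1(\Omega)}\le C(\|u\|_{H^1(\Omega)}+\|f\|_{\tilde H^{-1}(\Omega)})\|\phi\|_{H^{1/2}(\partial\Omega)}
\]
shows $g\in H^{-1/2}(\partial\Omega)$ with the asserted bound. Uniqueness follows because any two candidates $g_1,g_2$ satisfy $\langle g_1-g_2,v|_{\partial\Omega}\rangle=0$ for every $v\in H^1(\Omega)$, and the trace map $H^1(\Omega)\to H^{1/2}(\partial\Omega)$ is surjective.

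\textbf{Main obstacle.} The only subtle point is Step 1: showing the pairing is well-defined on all of $H^1(\Omega)$ rather than on $H^1_0(\Omega)$. This uses the fact that $f$, although a priori only supported in $\overline\Omega$ (and hence possibly carrying mass on $\partial\Omega$), annihilates every $w\in H^1(\R^n)$ with $w|_\Omega=0$, which in turn relies on the Lipschitz regularity of $\partial\Omega$ so that the exterior domain admits the needed density of compactly supported smooth functions in its $H^1_0$-space.
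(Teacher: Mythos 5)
Your proof is correct and is essentially the standard argument behind the cited result (\cite[Lemma 4.3]{McLean_book}), which the paper quotes without proof: define the conormal derivative $g$ by the first Green identity on a lift of the boundary datum, check independence of the lift using the weak equation on $H^1_0(\Omega)$, and read off continuity and uniqueness from the trace theorem. The one genuinely delicate point — that the pairing $(f,v)_{\tilde H^{-1}(\Omega),H^1(\Omega)}$ is well defined for $v\in H^1(\Omega)$ even though $f$ may charge $\p\Omega$ — is exactly the one you isolate and handle correctly via the density of $C_c^\infty(\R^n\setminus\overline{\Omega})$ in the exterior $H^1_0$ space.
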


In what follows we shall write $g=(a\p_{\nu} u)|_{\p \Omega}$, when the element $f\in \tilde H^{-1}(\Omega)$ is given.  In particular, when $\div(a\nabla u)\in L^2(\Omega)$, we shall always make the natural choice 
\[
f= (-\div(a\nabla u))\chi_{\Omega}\in L^2(\R^n), 
\]
which allows us to recover the standard definition of the trace $(a\p_\nu u)|_{\p \Omega}$  of a function $u\in H_a(\Omega)$.  Here $\chi_\Omega$ is the characteristic function of $\Omega$.

In the sequel, we shall also need the following result. 
\begin{prop}\cite[Theorem 3.20]{McLean_book}
\label{prop_sob_mult}
Let $\Omega$ be an open subset  of $\R^n$. Let $v\in C_0^r(\R^n)$ for some integer $r\ge 1$, and let $s\in \R$, $|s|\le r$. If $u\in H^s(\Omega)$, then $vu\in H^s(\Omega)$ and 
\[
\|vu\|_{H^s(\Omega)}\le C_r\|v\|_{W^{r,\infty}(\R^n)}\|u\|_{H^s(\Omega)}. 
\]
The same conclusion holds with $H^s(\Omega)$ replaced by $\tilde H^s(\Omega)$.
\end{prop}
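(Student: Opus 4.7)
The plan is to establish the multiplier estimate first on all of $\R^n$, where it is a classical fact, and then transfer it to $\Omega$ and to $\tilde H^s(\Omega)$ by routine extension/restriction and support arguments. On $\R^n$ I would split into three cases according to whether $s$ is a non-negative integer, a negative integer, or a general real with $|s|\le r$.

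For $s=k\in\{0,1,\dots,r\}$ I work with the standard equivalent norm $\|u\|_{H^k(\R^n)}^2=\sum_{|\alpha|\le k}\|\p^\alpha u\|_{L^2}^2$ and apply the Leibniz formula, $\p^\alpha(vu)=\sum_{\beta\le\alpha}\binom{\alpha}{\beta}(\p^\beta v)(\p^{\alpha-\beta}u)$. Since $|\beta|\le|\alpha|\le r$, each factor $\p^\beta v$ is bounded in $L^\infty$ by $\|v\|_{W^{r,\infty}(\R^n)}$, and summing over $\alpha$ and $\beta$ gives the required bound. For $s=-k$ with $0\le k\le r$, I dualize: for $\varphi\in C_0^\infty(\R^n)$,
\[
( vu,\varphi) =( u,\overline v\,\varphi),
\]
and the non-negative integer case applied to $\overline v$ yields $\|\overline v\,\varphi\|_{H^k(\R^n)}\le C_r\|v\|_{W^{r,\infty}(\R^n)}\|\varphi\|_{H^k(\R^n)}$. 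Taking the supremum over $\varphi$ with $\|\varphi\|_{H^k}\le 1$ delivers the $H^{-k}(\R^n)$ estimate.

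For the remaining real values $|s|\le r$, I interpolate. The multiplication operator $M_v\colon u\mapsto vu$ is now known to be bounded on $H^k(\R^n)$ for every integer $k$ in $[-r,r]$ with operator norm at most $C_r\|v\|_{W^{r,\infty}(\R^n)}$. Complex interpolation, together with the identification $[H^{k_0}(\R^n),H^{k_1}(\R^n)]_\theta=H^{(1-\theta)k_0+\theta k_1}(\R^n)$, transfers the same bound to every intermediate fractional order, covering the whole range $|s|\le r$.

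Finally, to pass to $\Omega$ in the first statement, given $u\in H^s(\Omega)$ I pick an extension $U\in H^s(\R^n)$ with $\|U\|_{H^s(\R^n)}\le 2\|u\|_{H^s(\Omega)}$, apply the $\R^n$ estimate to $vU$, and restrict back; for the $\tilde H^s(\Omega)$ version, the inclusion $\supp(vu)\subset\supp(u)\subset\overline\Omega$ ensures $vu\in\tilde H^s(\Omega)$ and the $\R^n$ bound applies directly since the $\tilde H^s$ norm is by definition the ambient $H^s(\R^n)$ norm. The step I expect to require the most care is the interpolation for non-integer $s$: one must be precise about the functional-analytic definition of $H^s(\R^n)$ used (e.g.\ via the Bessel potential or via completion of $C_0^\infty$ in the dual norm for negative exponents) and verify that the operator norm of $M_v$ is genuinely uniform in the interpolation parameter, so that the constant $C_r$ does not blow up.
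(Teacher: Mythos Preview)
Your argument is correct and follows the standard route (Leibniz for non-negative integer orders, duality for negative integers, complex interpolation for fractional $s$, then extension/restriction for $\Omega$ and the support observation for $\tilde H^s(\Omega)$). However, the paper does not give its own proof of this statement: it is quoted verbatim as \cite[Theorem~3.20]{McLean_book} and used as a black box. So there is nothing in the paper to compare your proposal against; your sketch is essentially the proof one finds in McLean's book.
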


\subsection{Magnetic Schr\"odinger operators}
Let $\Omega\subset \R^n$ be a bounded open set  with Lipschitz boundary, and let $A\in W^{1,\infty}(\Omega,\C^n)$ and $q\in L^\infty(\Omega,\C)$.   
Let $f\in \tilde H^{-1}(\Omega)$ and let $u\in H^1(\Omega)$  satisfy the magnetic Schr\"odinger equation, 
\begin{equation}
\label{eq_def_green_1}
\mathcal{L}_{A,q}u=f\quad \text{on} \quad \Omega.
\end{equation}
Then we have the following first Green formula,
\begin{equation}
\label{eq_first_green}
\begin{aligned}
(f,v)_{\tilde H^{-1}(\Omega),H^1(\Omega)}& +((\p_\nu+iA\cdot\nu)u, v)_{H^{-1/2}(\p \Omega),H^{1/2}(\p \Omega)}=\int_{\Omega}\nabla u\cdot \nabla \overline{v}dx\\
&+i\int_{\Omega}A\cdot (u \nabla \overline{v}-  \overline{v}\nabla u)dx +\int_{\Omega}(A^2+q)u\overline{v}dx,
\end{aligned}
\end{equation}
for any $v\in H^1(\Omega)$, see  \eqref{eq_trace_lem}.  If $f_*\in \tilde H^{-1}(\Omega)$ and $v\in H^1(\Omega)$ satisfies  
\begin{equation}
\label{eq_def_green_2}
\mathcal{L}_{\overline A,\overline q}v=f_*\quad \text{on} \quad \Omega,
\end{equation}
then we also have the following first Green formula,
\begin{align*}
(f_*,u)_{\tilde H^{-1}(\Omega),H^1(\Omega)}&+((\p_\nu+i\overline{A}\cdot\nu)v, u)_{H^{-1/2}(\p \Omega),H^{1/2}(\p \Omega)}=\int_{\Omega}\nabla v\cdot \nabla \overline{u}dx\\
&+i\int_{\Omega}\overline{A}\cdot ( v \nabla \overline{u}- \overline{u} \nabla v)dx 
+\int_{\Omega}(\overline{A}^2+\overline{q})v\overline{u}dx,
\end{align*}
for any $u\in H^1(\Omega)$.  Hence, 
\begin{equation}
\label{eq_first_green_adjoint}
\begin{aligned}
&(f,v)_{\tilde H^{-1}(\Omega),H^1(\Omega)} +((\p_\nu+iA\cdot\nu)u, v)_{H^{-1/2}(\p \Omega),H^{1/2}(\p \Omega)}\\
&=
(u,f_*)_{H^1(\Omega), \tilde H^{-1}(\Omega)}+(u, (\p_\nu+i\overline{A}\cdot\nu)v)_{H^{1/2}(\p \Omega), H^{-1/2}(\p \Omega)},
\end{aligned}
\end{equation}
for any $u,v\in H^1(\Omega)$, which satisfy the equations \eqref{eq_def_green_1} and \eqref{eq_def_green_2}, respectively.  In particular, for any $u,v\in H^1(\Omega)$ such that $\Delta u,\Delta v\in L^2(\Omega)$, we have the second Green  formula,
\begin{equation}
\label{eq_second_green}
\begin{aligned}
(\mathcal{L}_{A,q}u,v)_{L^2(\Omega)}&+ ((\p_\nu+iA\cdot\nu)u, v)_{H^{-1/2}(\p \Omega),H^{1/2}(\p \Omega)}\\
&=(u,\mathcal{L}_{\overline A,\overline q}v)_{L^2(\Omega)}+ (u,(\p_\nu+i\overline A\cdot\nu)v)_{H^{1/2}(\p \Omega),H^{-1/2}(\p \Omega)}.
\end{aligned}
\end{equation}
 
In what follows we shall also work with the operator of the  form $
b\mathcal{L}_{A,q} a^{-1}$,
where $a,b\in C^{1,1}(\overline{\Omega},\R)$,  and $a, b>0$  on $\overline{\Omega}$. 
Let $f\in \tilde H^{-1}(\Omega)$ and let $w\in H^1(\Omega)$ satisfy, 
\[
b\mathcal{L}_{A,q} (a^{-1}w)=f\quad \text{on} \quad \Omega.
\]
Writing
\begin{align*}
b\mathcal{L}_{A,q} (a^{-1}w)=&-\div(ba^{-1}\nabla w)+ (a^{-1}\nabla b-ba^{-1}iA)\cdot\nabla w\\
&-\div( (b\nabla a^{-1} +a^{-1}biA)w)\\
&+(\nabla a^{-1}\cdot\nabla b+iA\cdot (a^{-1}\nabla b-b\nabla a^{-1})+ ba^{-1}(A^2+q))w,
\end{align*}
and using \eqref{eq_trace_lem}, we get the following first Green formula, when $v\in H^1(\Omega)$, 
\begin{equation}
\label{eq_first_green_mod}
\begin{aligned}
(f,v)_{\tilde H^{-1}(\Omega),H^1(\Omega)}&=\int_{\Omega} ba^{-1}\nabla w\cdot\nabla\overline{v}dx+
\int_{\Omega} (a^{-1}\nabla b-ba^{-1}iA )\cdot (\nabla w)\overline{v}dx\\
&+\int_{\Omega} (b\nabla a^{-1} +a^{-1}biA)w\cdot\nabla \overline{v}dx\\
&+\int_{\Omega} (\nabla a^{-1}\cdot\nabla b+iA\cdot (a^{-1}\nabla b-b\nabla a^{-1})+ ba^{-1}(A^2+q))w\overline{v}dx\\
&-(b(\p_\nu+iA\cdot\nu)(a^{-1}w),v)_{H^{-1/2}(\p \Omega),H^{1/2}(\p \Omega)}.
\end{aligned}
\end{equation}

For any $w,v\in H^1(\Omega)$ such that $\Delta w,\Delta v\in L^2(\Omega)$, we have the second Green  formula,
\begin{equation}
\label{eq_second_green_mod}
\begin{aligned}
(b\mathcal{L}_{A,q}(a^{-1}w),v)_{L^2(\Omega)}&+(b(\p_\nu+iA\cdot\nu)(a^{-1}w),v)_{H^{-1/2}(\p \Omega),H^{1/2}(\p \Omega)}\\
=(w,a^{-1}\mathcal{L}_{\overline{A},\overline{q}}(bv))_{L^2(\Omega)}&+
(w,a^{-1}(\p_\nu +i\overline{A}\cdot \nu)(bv))_{H^{1/2}(\p \Omega),H^{-1/2}(\p \Omega)},
\end{aligned}
\end{equation}
which is a consequence of \eqref{eq_second_green}.

Finally, notice that  $A\in W^{1,\infty}(\Omega)$ can be extended to the whole of $\R^n$ so that the extension,  which we denote by the same letter, satisfies  $A\in W^{1,\infty}(\R^n)$.  For the existence of such an extension in the case of Lipschitz domain $\Omega$, we refer to  \cite[Theorem 5, p. 181]{Stein_book}.  Also we have  $A\in C^{0,1}(\overline{\Omega})$.

\section{Direct transmission problem}

\label{subsec_direct}

Let $\Omega\subset\R^n$, $n\ge 3$, be a bounded open set  with Lipschitz boundary, and let $D\subset\subset\Omega$ be a bounded open subset with Lipschitz boundary.  We set as before
\[
D^-=D,\quad\textrm{and}\quad  D^+=\Omega\setminus\overline D.
\]
Let $A^\pm\in W^{1,\infty}(D^\pm,\C^n)$, $q^\pm\in L^\infty(D^\pm,\C)$, $a,b\in C^{1,1}(\overline{D},\R)$,  $c\in C(\overline{D},\R)$,  
$f^\pm\in \tilde H^{-1}(D^\pm)$,   $g_0\in H^{1/2}(\p D)$,  $g_1\in H^{-1/2}(\p D)$,  and $g\in H^{1/2}(\p \Omega)$.  
Notice that since the boundary of $D$ is merely Lipschitz, it is convenient to assume here that the transmission coefficients $a$ and $b$  are defined near $\overline{D}$, rather than on the boundary $\p D$. This is due to the fact that the $C^{1,1}$--regularity of $a$ is needed in order to eliminate the jump across the interface $\p D$ in the solution of the transmission problem, while still working with second order differential operators with bounded coefficients. The corresponding regularity of the coefficient $b$ is needed for similar purposes, when considering the adjoint transmission problem.

Assume furthermore that $a,b>0$ on $\overline{D}$.  For $(u^+,u^-)\in H^1(D^+)\times H^1(D^-)$, we consider the following inhomogeneous transmission problem,
\begin{equation}
\label{eq_transmission_inh}
\begin{aligned}
&\mathcal{L}_{A^+,q^+}u^+=f^+\quad \textrm{in}\quad D^+,\\
&\mathcal{L}_{A^-,q^-}u^-=f^-\quad \textrm{in}\quad D^-,\\
&u^+=au^-+g_0\quad \textrm{on}\quad \p D,\\
(&\p_\nu + iA^+\cdot\nu) u^+=b(\p_\nu+iA^-\cdot \nu)u^-+cu^-+g_1 \quad \textrm{on}\quad \p D,\\
&u^+=g\quad \textrm{on}\quad \p \Omega,
\end{aligned}
\end{equation}
and the corresponding homogeneous transmission problem,
\begin{equation}
\label{eq_transmission_hom}
\begin{aligned}
&\mathcal{L}_{A^+,q^+}u^+=0\quad \textrm{in}\quad D^+,\\
&\mathcal{L}_{A^-,q^-}u^-=0\quad \textrm{in}\quad D^-,\\
&u^+=au^-\quad \textrm{on}\quad \p D,\\
(&\p_\nu + iA^+\cdot\nu) u^+=b(\p_\nu+iA^-\cdot \nu)u^-+cu^-\quad \textrm{on}\quad \p D,\\
&u^+=0\quad \textrm{on}\quad \p \Omega. 
\end{aligned}
\end{equation}
In this paper we shall treat only the Dirichlet boundary conditions on the boundary of $\Omega$, and to this end we introduce the following subspace of $H^1(D^{+})$,
\[
H^1_{d}(D^+)=\{u^+\in H^1(D^+):u^+|_{\p \Omega}=0\}. 
\]

Let us now compute the adjoint transmission problem for the problem \eqref{eq_transmission_hom}. 
 By the second Green formula  \eqref{eq_second_green}, for $(u^+,u^-), (v^+,v^-)\in H^1_d(D^+)\times H^1(D^-)$ such that  $\Delta u^\pm, \Delta v^\pm\in L^2(D^\pm)$,  we get
\begin{equation}
\label{eq_der_adj_1}
\begin{aligned}
(&\mathcal{L}_{A^-,q^-} u^-,v^-)_{L^2(D^- )}+((\p_\nu+iA^-\cdot\nu)u^-,v^-)_{H^{-1/2}(\p D),H^{1/2}(\p D)}\\
&=(u^-,\mathcal{L}_{\overline{A^-},\overline{q^-}}v^-)_{L^2(D^-)} + (u^-,(\p_\nu+i\overline{A^-}\cdot\nu)v^{-})_{H^{1/2}(\p D),H^{-1/2}(\p D)},
\end{aligned}
\end{equation}
and 
\begin{equation}
\label{eq_der_adj_2}
\begin{aligned}
(&\mathcal{L}_{A^+,q^+} u^+,v^+)_{L^2(D^+ )}-((\p_\nu+iA^+\cdot\nu)u^+,v^+)_{H^{-1/2}(\p D),H^{1/2}(\p D)}\\
&=(u^+,\mathcal{L}_{\overline{A^+},\overline{q^+}}v^+)_{L^2(D^+)} - (u^+,(\p_\nu+i\overline{A^+}\cdot\nu)v^{+})_{H^{1/2}(\p D),H^{-1/2}(\p D)}.
\end{aligned}
\end{equation}
Adding \eqref{eq_der_adj_1} and \eqref{eq_der_adj_2}, and using the transmission conditions in \eqref{eq_transmission_hom}, we obtain that
\begin{align*}
(\mathcal{L}_{A^+,q^+}& u^+,v^+)_{L^2(D^+ )}+ (\mathcal{L}_{A^-,q^-} u^-,v^-)_{L^2(D^- )}\\
&=(u^+,\mathcal{L}_{\overline{A^+},\overline{q^+}}v^+)_{L^2(D^+)}
+ (u^-,\mathcal{L}_{\overline{A^-},\overline{q^-}}v^-)_{L^2(D^-)}\\
&+(u^-,-a(\p_\nu+i\overline{A^+}\cdot \nu)v^++(\p_\nu+i\overline{A^-}\cdot \nu)v^-+cv^+)_{H^{1/2}(\p D),H^{-1/2}(\p D)}\\
&+
((\p_\nu+iA^-\cdot \nu)u^-, b v^+-v^-)_{H^{-1/2}(\p D),H^{1/2}(\p D)}.
\end{align*}
Thus, the homogeneous adjoint transmission problem for the problem \eqref{eq_transmission_hom} is given by
\begin{equation}
\label{eq_adjoint_trans_hom}
\begin{aligned}
&\mathcal{L}_{\overline{A^+},\overline{q^+}}v^+=0\quad \textrm{in}\quad D^+,\\
&\mathcal{L}_{\overline{A^-},\overline{q^-}}v^-=0\quad \textrm{in}\quad D^-,\\
& v^+=b^{-1}v^-\quad \textrm{on}\quad \p D,\\
&(\p_\nu+i\overline{A^+}\cdot \nu)v^+=a^{-1}(\p_\nu+i\overline{A^-}\cdot \nu)v^-+ca^{-1}b^{-1} v^-\quad \textrm{on}\quad \p D,\\
&v^+=0\quad \textrm{on}\quad \p \Omega,
\end{aligned}
\end{equation}
and the inhomogeneous adjoint transmission problem is defined by 
\begin{equation}
\label{eq_adjoint_trans_inhom}
\begin{aligned}
&\mathcal{L}_{\overline{A^+},\overline{q^+}}v^+=f^+_*\quad \textrm{in}\quad D^+,\\
&\mathcal{L}_{\overline{A^-},\overline{q^-}}v^-=f^-_*\quad \textrm{in}\quad D^-,\\
& v^+=b^{-1}v^-+g_{0*}\quad \textrm{on}\quad \p D,\\
&(\p_\nu+i\overline{A^+}\cdot \nu)v^+=a^{-1}(\p_\nu+i\overline{A^-}\cdot \nu)v^-+ca^{-1}b^{-1} v^-+g_{1*}\quad \textrm{on}\quad \p D,\\
&v^+=g_{*}\quad \textrm{on}\quad \p \Omega,
\end{aligned}
\end{equation}

In what follows we shall need $w_0\in H^1(D^+)$, which is defined as the unique solution of the following Dirichlet problem,
\begin{equation}
\label{eq_w_0_def}
\begin{aligned}
\Delta w_0&=0\quad \textrm{in}\quad D^+,\\
w_0&=g_0\quad\textrm{on}\quad \p D,\\
w_0&=g\quad\textrm{on}\quad \p \Omega. 
\end{aligned}
\end{equation}
Then 
\begin{equation}
\label{eq_estim_w_0}
\|w_0\|_{H^1(D^+)}\le C(\|g_0\|_{H^{1/2}(\p D)}+\|g\|_{H^{1/2}(\p \Omega)}),\quad C>0.
\end{equation}
Next, $\p_{\nu}w_0|_{\p D}\in H^{-1/2}(\p D)$ is well defined, and \eqref{eq_trace_cont} implies that
\begin{equation}
\label{eq_estim_w_0_2}
\|\p_\nu w_0\|_{H^{-1/2}(\p D)}\le C\|w_0\|_{H^1(D^+)}\le C(\|g_0\|_{H^{1/2}(\p D)}+\|g\|_{H^{1/2}(\p \Omega)}). 
\end{equation}

We have the following result concerning the solvability of the transmission problem \eqref{eq_transmission_inh}. 

\begin{prop} 
\label{prop_2_4} Let $A^\pm\in W^{1,\infty}(D^\pm,\C^n)$, $q^\pm\in L^\infty(D^\pm,\C)$, $a,b\in C^{1,1}(\overline{D},\R)$,  $c\in C(\overline{D},\R)$. 
Assume that $a,b>0$  in $\overline{D}$.  
The transmission problem 
\eqref{eq_transmission_inh} is Fredholm in the sense that there are two mutually exclusive possibilities:

\begin{itemize}

\item[(i)]  The homogeneous transmission problem \eqref{eq_transmission_hom}
has only the trivial solution $(u^+,u^-)=(0,0)$. In this case,  for every $f^\pm\in \tilde H^{-1}(D^\pm)$, $g_0\in H^{1/2}(\p D)$, $g_1\in H^{-1/2}(\p D)$ and $g\in H^{1/2}(\p \Omega)$,  the inhomogeneous transmission problem \eqref{eq_transmission_inh} has a unique solution $(u^+,u^-)\in H^1(D^+)\times H^1(D^-)$, and 
\begin{equation}
\label{eq_energy_estimates_unique}
\begin{aligned}
\|u^+\|_{H^1(D^+)}+\|u^-\|_{H^1(D^-)}\le C(\|f^+\|_{\tilde H^{-1}(D^+)}+\|f^-\|_{\tilde H^{-1}(D^-)}\\+\|g_0\|_{H^{1/2}(\p D)}
+ \|g_1\|_{H^{-1/2}(\p D)}+\|g\|_{H^{1/2}(\p \Omega)}).
\end{aligned}
\end{equation}
Furthermore, for  each $f^\pm_*\in \tilde H^{-1}(D^\pm)$, $g_{0*}\in H^{1/2}(\p D)$, $g_{1*}\in H^{-1/2}(\p D)$ and $g_{*}\in H^{1/2}(\p \Omega)$,
the inhomogeneous adjoint problem \eqref{eq_adjoint_trans_inhom} has a unique solution $(v^+,v^-)\in H^1(D^+)\times H^1(D^-)$.

\item[(ii)] The homogeneous transmission problem \eqref{eq_transmission_hom}
has exactly $p$ linearly independent solutions for some $p\ge 1$. In this case, the homogeneous adjoint transmission problem 
\eqref{eq_adjoint_trans_hom} has exactly $p$ linearly independent solutions $(v_j^+,v_j^-)\in H^1_d(D^+)\times H^1(D^-)$, $1\le j\le p$, and the inhomogeneous transmission problem \eqref{eq_transmission_inh} is solvable if and only if 
\begin{align*}
&(f^+,v_j^+)_{\tilde H^{-1}( D^+),H^1( D^+)}+ (f^-,v_j^-)_{\tilde H^{-1}(D^-),H^1(D^-)}\\
&+(2iA^+\cdot\nabla w_0 +(i(\nabla \cdot A^+)- (A^+)^2-q^+)w_0, v_j^+)_{\tilde H^{-1}( D^+),H^1( D^+)}\\
&-(g_1-\p_\nu w_0 - iA^+\cdot\nu g_0, v_j^+)_{H^{-1/2}(\p D),H^{1/2}(\p D)}=0,\quad 1\le j\le p. 
\end{align*}
Here $w_0\in H^1(D^+)$ is  the  solution of the Dirichlet problem \eqref{eq_w_0_def}.
Furthermore, a solution $(u^+,u^-)\in H^1(D^+)\times H^1(D^-)$ of \eqref{eq_transmission_inh} satisfies the following a priori estimate,
\begin{equation}
\label{eq_energy_estimates}
\begin{aligned}
\|u^+\|_{H^1(D^+)}+\|u^-\|_{H^1(D^-)}\le C(\|f^+\|_{\tilde H^{-1}(D^+)}+\|f^-\|_{\tilde H^{-1}(D^-)}
+\|g_0\|_{H^{1/2}(\p D)}\\
+ \|g_1\|_{H^{-1/2}(\p D)}+\|g\|_{H^{1/2}(\p \Omega)}+\|u^+\|_{L^2(D^+)}+\|u^-\|_{L^2(D^-)}).
\end{aligned}
\end{equation}

\end{itemize}

\end{prop}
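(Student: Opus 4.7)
The idea is to recast the transmission problem \eqref{eq_transmission_inh} as a single variational equation on $\Omega$, and then apply the standard Fredholm alternative for coercive-plus-compact sesquilinear forms.

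First I would reduce to $g = 0$ and $g_0 = 0$ by setting $\tilde u^+ = u^+ - w_0$, where $w_0$ is the harmonic lift from \eqref{eq_w_0_def}. Since $\Delta w_0 = 0$, this shifts the data to
\begin{equation*}
\tilde f^+ = f^+ + 2iA^+ \cdot \nabla w_0 + \bigl(i(\nabla\cdot A^+) - (A^+)^2 - q^+\bigr) w_0, \qquad \tilde g_1 = g_1 - \p_\nu w_0 - iA^+ \cdot\nu\, g_0,
\end{equation*}
which are precisely the combinations appearing in the orthogonality condition of (ii). Next, using $a > 0$ on $\overline D$, I would substitute $w = a u^-$ in $D$; the first transmission condition then reads $\tilde u^+ = w$ on $\p D$, so the two pieces glue into a single function $U \in H_d^1(\Omega) := \{U \in H^1(\Omega) : U|_{\p \Omega} = 0\}$. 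The equation in $D$ becomes $b\,\mathcal{L}_{A^-, q^-}(a^{-1} U) = b f^-$, which is exactly the operator for which the modified Green formula \eqref{eq_first_green_mod} is tailored.

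Testing the $D^+$-equation against $V \in H^1_d(\Omega)$ via \eqref{eq_first_green} and the $D^-$-equation against the same $V$ via \eqref{eq_first_green_mod}, adding, and using the second transmission condition (the factor $b$ built into \eqref{eq_first_green_mod} cancels the matching normal-derivative contribution coming from $D^+$, leaving only the $cu^- + g_1$ term), I obtain an identity
\begin{equation*}
B(U, V) = (\tilde f^+, V)_{D^+} + (bf^-, V)_{D^-} - \langle \tilde g_1, V\rangle_{\p D}, \quad V \in H^1_d(\Omega),
\end{equation*}
where $B$ is a bounded sesquilinear form on $H^1_d(\Omega)$ whose principal part is $\int_{D^+}\nabla U \cdot \nabla \bar V + \int_{D^-} ba^{-1}\nabla U \cdot \nabla \bar V$, together with lower-order volume terms involving $A^\pm, q^\pm, \nabla a, \nabla b$, and a trace contribution $\int_{\p D} ca^{-1} U \bar V$. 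Since $a, b > 0$ on $\overline D$ and $H^1_d(\Omega)$ admits the Poincar{\'e} inequality, the principal part is coercive, while the remaining terms are compact on $H^1_d(\Omega)$ thanks to the compact embeddings $H^1(\Omega) \hookrightarrow L^2(\Omega)$ and $H^{1/2}(\p D) \hookrightarrow L^2(\p D)$. A standard G{\aa}rding argument then gives $\Re B(U,U) + \lambda \|U\|^2_{L^2(\Omega)} \ge c_0 \|U\|_{H^1(\Omega)}^2$ for $\lambda$ large, so that the operator $T : H^1_d(\Omega) \to H^1_d(\Omega)^*$ associated with $B$ is Fredholm of index zero.

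A direct computation via the second Green formulas \eqref{eq_second_green} and \eqref{eq_second_green_mod} identifies the adjoint form $B^*$, and under the inverse substitution $V|_{D^+} = v^+$, $V|_{D^-} = b v^-$ its kernel corresponds exactly to the solutions of the homogeneous adjoint transmission problem \eqref{eq_adjoint_trans_hom}; note that the continuity $V \in H^1(\Omega)$ across $\p D$ enforces precisely the adjoint transmission relation $v^+ = b^{-1} v^-$. Feeding this identification into the Fredholm alternative for $T$ yields the dichotomy of the statement; the solvability condition in (ii) is $\ell(V_j) = 0$ for a basis $(V_j)$ of $\ker T^*$, which upon re-expressing $\tilde f^+$ and $\tilde g_1$ in terms of the original data becomes the four-term orthogonality relation quoted. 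The estimate \eqref{eq_energy_estimates_unique} follows from the bounded inverse theorem applied to $T$, and \eqref{eq_energy_estimates} from the standard a priori bound for Fredholm operators together with \eqref{eq_estim_w_0} and \eqref{eq_estim_w_0_2}. The main technical obstacle I anticipate is the bookkeeping forced by the Lipschitz-only regularity of $\p D$: one must use the generalized traces from Subsection \ref{sub_sec_sobol} throughout, and verify that the numerous lower-order terms produced by \eqref{eq_first_green_mod} (involving $\nabla a, \nabla b, A^\pm$, etc.) recombine so that $B^*$ returns precisely the coefficients $b^{-1}, a^{-1}, ca^{-1}b^{-1}$ appearing in \eqref{eq_adjoint_trans_hom}.
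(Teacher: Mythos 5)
Your proposal is correct and follows essentially the same route as the paper: the same reduction via the harmonic lift $w_0$ and the substitution $w^-=au^-$, the same gluing into a single function in $H^1_0(\Omega)$, the same variational form built from the Green identities \eqref{eq_first_green} and \eqref{eq_first_green_mod} with the boundary term $\int_{\p D}ca^{-1}w\overline{v}\,dS$, the same G\aa rding/compactness argument yielding a Fredholm operator of index zero, and the same identification of the adjoint problem through $v^-\mapsto bv^-$. The only point the paper spells out that you leave implicit is the converse direction — that a variational solution actually recovers the transmission conditions (by first testing against $C_0^\infty(D^\pm)$ and then against general $v\in H^1_0(\Omega)$) — but this is routine and your plan clearly accommodates it.
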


The remainder of this section will be devoted to the proof of Proposition \ref{prop_2_4}.  Let us first rewrite the transmission problem \eqref{eq_transmission_inh} by 
making the substitution $u^-=a^{-1}w^-$, in order to eliminate the coefficient $a$ in the first transmission condition. We also multiply the second equation of \eqref{eq_transmission_inh} by $b$, in order to take into account the second transmission condition in \eqref{eq_transmission_inh}, when using the Green formulae. We get
\begin{align*}
&\mathcal{L}_{A^+,q^+}u^+= f^+\quad \textrm{in}\quad D^+,\\
&b\mathcal{L}_{A^-,q^-}(a^{-1}w^-)
= bf^- \quad \textrm{in}\quad D^-,\\
&u^+=w^-+g_0\quad \textrm{on}\quad \p D,\\
&(\p_\nu + iA^+\cdot\nu) u^+=b(\p_\nu+iA^-\cdot \nu)(a^{-1}w^-)+ ca^{-1}w^-+g_1 \quad \textrm{on}\quad \p D,\\
&u^+=g\quad \textrm{on}\quad \p \Omega. 
\end{align*}
Notice that $bf^-$ is well-defined as an element of $\tilde H^{-1}(D^-)$
by Proposition \ref{prop_sob_mult}.

Setting $w^+=u^+- w_0$, where $w_0\in H^1(D^+)$ is  the  solution of the Dirichlet problem \eqref{eq_w_0_def}, we obtain that
\begin{equation}
\label{eq_trans_lem_1}
\begin{aligned}
&\mathcal{L}_{A^+,q^+}w^+=\tilde f^++ f^+\quad \textrm{in}\quad D^+,\\
&b\mathcal{L}_{A^-,q^-}(a^{-1}w^-)
= bf^-  \quad \textrm{in}\quad D^-,\\
&w^+=w^-\quad \textrm{on}\quad \p D,\\
&(\p_\nu + iA^+\cdot\nu) w^+=b(\p_\nu+iA^-\cdot \nu)(a^{-1}w^-) 
+  ca^{-1}w^-+\tilde g_1\quad \textrm{on}\quad \p D,\\
&w^+=0\quad \textrm{on}\quad \p \Omega.
\end{aligned}
\end{equation}
Here
\begin{equation}
\label{eq_f_lem}
\tilde f^+=-\mathcal{L}_{A^+,q^+}w_0=2iA^+\cdot\nabla w_0 +(i(\nabla \cdot A^+)- (A^+)^2-q^+)w_0\in  L^2(D^+),
\end{equation}
\begin{equation}
\label{eq_g_1_lem}
\tilde g_1=g_1-\p_\nu w_0 - iA^+\cdot\nu g_0\in H^{-1/2}(\p D). 
\end{equation}

We shall now discuss the variational formulation of the transmission problem \eqref{eq_trans_lem_1}. To that end, let us introduce the following sesquilinear form, associated with the operator $\mathcal{L}_{A^+,q^+}$  in the transmission problem \eqref{eq_trans_lem_1}, cf. the first Green formula \eqref{eq_first_green},
\begin{equation}
\label{eq_form_mag}
\begin{aligned}
\Phi^+:H^1_d(D^+)&\times H^1_d(D^+)\to \C,\\
\Phi^+(w^+,v^+)=&\int_{D^+}\nabla w^+\cdot \nabla \overline{v^+}dx+i\int_{D^+}A^+\cdot(w^+ \nabla \overline{v^+}- \overline{v^+}\nabla w^+)dx \\
&+\int_{D^+}((A^+)^2+q^+)w^+\overline{v^+}dx.
\end{aligned}
\end{equation}
Associated with 
 the operator  $b\mathcal{L}_{A^-,q^-}a^{-1}$ in the transmission problem \eqref{eq_trans_lem_1}, is  the following  sesquilinear form, cf. the first Green formula \eqref{eq_first_green_mod} for $b\mathcal{L}_{A^-,q^-}a^{-1}$,
\begin{equation}
\label{eq_form_mag_mod}
\begin{aligned}
\Phi^-&:H^1(D^-)\times H^1(D^-)\to \C,\\
\Phi^-&(w^-,v^-)=\int_{D^-} ba^{-1}\nabla w^-\cdot\nabla\overline{v^-}dx+
\int_{D^-} (a^{-1}\nabla b-ba^{-1}iA^-)\cdot (\nabla w^-)\overline{v^-}dx\\
&+\int_{D^-} (b\nabla a^{-1} +a^{-1}biA^-)w^-\cdot\nabla \overline{v^-}dx\\
&+\int_{D^-} (\nabla a^{-1}\cdot\nabla b+iA^-\cdot (a^{-1}\nabla b-b\nabla a^{-1})+ ba^{-1}((A^-)^2+q^-))w^-\overline{v^-}dx.
\end{aligned}
\end{equation}

For a solution $(w^+,w^-)\in H^1_d(D^+)\times H^1(D^-)$ of the transmission problem \eqref{eq_trans_lem_1}, the function
\[
w=\begin{cases} w^+& \textrm{on } D^+,\\
w^- &  \textrm{on } D^-,
\end{cases}
\]
satisfies $w\in H^1_0(\Omega)$, as $w^+=w^-$ on $\p D$. 

Let $v\in H^1_0(\Omega)$, and set 
\[
v^+=v|_{D^+},\quad v^-=v|_{D^-}. 
\]
Then by the first Green formula \eqref{eq_first_green} applied to the first equation in \eqref{eq_trans_lem_1}, we obtain that
\begin{equation}
\label{eq_trans_lem_2}
\begin{aligned}
\Phi^+(w^+,v^+)=&(\tilde f^++f^+,v^+)_{\tilde H^{-1}(D^+),H^1( D^+)}\\
&-((\p_\nu+iA^+\cdot \nu )w^+,v^+)_{H^{-1/2}(\p D),H^{1/2}(\p D)}.
\end{aligned}
\end{equation}
Using  the first Green formula \eqref{eq_first_green_mod} for $b\mathcal{L}_{A^-,q^-}a^{-1}$,  we get
\begin{equation}
\label{eq_trans_lem_3}
\begin{aligned}
\Phi^-(w^-,v^-)
=&(bf^-,v^-)_{\tilde H^{-1}(D^-),H^1(D^-)}\\
&+ (b(\p_\nu+iA^{-}\cdot\nu )(a^{-1}w^-),v^-)_{H^{-1/2}(\p D),H^{1/2}(\p D)}.
\end{aligned}
\end{equation}
Adding \eqref{eq_trans_lem_2}
and \eqref{eq_trans_lem_3}, and using the transmission conditions in \eqref{eq_trans_lem_1}, we have
\begin{equation}
\label{eq_trans_lem_main}
\begin{aligned}
\Phi^+(w^+,v^+)&+ \Phi^-(w^-,v^-)+\int_{\p D} ca^{-1}w\overline{v}dS=
(\tilde f^++f^+,v^+)_{\tilde H^{-1}( D^+),H^1( D^+)}\\
&+ (bf^-,v^-)_{\tilde H^{-1}(D^-),H^1(D^-)}-(\tilde g_1, v)_{H^{-1/2}(\p D),H^{1/2}(\p D)},
\end{aligned}
\end{equation}
for any $v\in H^1_0(\Omega)$. 
The sesquilinear form,
\begin{equation}
\label{eq_form_main_var}
\begin{aligned}
&\Phi:H^1_0(\Omega)\times H^1_0(\Omega)\to \C,\\
&\Phi(w,v)=\Phi^+(w^+,v^+)+ \Phi^-(w^-,v^-)+\int_{\p D} ca^{-1}w\overline{v} dS,
\end{aligned}
\end{equation}
is naturally associated to the transmission problem \eqref{eq_trans_lem_1}.

It follows from \eqref{eq_trans_lem_main} that  a solution  $(w^+,w^-)$ of the transmission problem \eqref{eq_trans_lem_1} satisfies the equation
\begin{equation}
\label{eq_variational_formul}
\begin{aligned}
\Phi(w,v)=&(\tilde f^++f^+,v^+)_{\tilde H^{-1}( D^+),H^1( D^+)}+ (bf^-,v^-)_{\tilde H^{-1}(D^-),H^1(D^-)}\\
&-(g_1-\p_\nu w_0 - iA^+\cdot\nu g_0, v)_{H^{-1/2}(\p D),H^{1/2}(\p D)},
\end{aligned}
\end{equation}  
for any $v\in H^1_0(\Omega)$. Here we have used the definition \eqref{eq_g_1_lem} of $\tilde g_1$. 

Conversely,  if $w\in H^1_0(\Omega)$ satisfies \eqref{eq_variational_formul} for any $v\in H^1_0(\Omega)$, then $(w^+,w^-)$ solves the transmission problem \eqref{eq_trans_lem_1}. Indeed, restricting the attention to $v\in C^\infty_0(D^+)$ and $v\in C^\infty_0(D^-)$ in  \eqref{eq_variational_formul},  we obtain that $w^+$ and $w^-$ satisfy the first and second equations of the transmission problem  \eqref{eq_trans_lem_1}.
Then by the first Green formulae \eqref{eq_first_green} and \eqref{eq_first_green_mod}, we obtain \eqref{eq_trans_lem_2} and \eqref{eq_trans_lem_3}, respectively,  for any $v\in H^1_0(\Omega)$. This, together with \eqref{eq_variational_formul}, implies that the second transmission condition in 
\eqref{eq_trans_lem_1} holds.

Hence, $w\in H^1_0(\Omega)$ satisfies \eqref{eq_variational_formul} for any $v\in H^1_0(\Omega)$, if and only if $u^+=w^++w^0\in H^1(D^+)$ and $u^-=a^{-1}w^-\in H^1(D^-)$ satisfy the transmission problem \eqref{eq_transmission_inh}.

We shall next make the following observation.

\begin{lem}
The sesquilinear form $\Phi:H^1_0(\Omega)\times H^1_0(\Omega)\to \C$ is
bounded, i.e. 
\[
|\Phi(w,v)|\le C\|w\|_{H^1(\Omega)} \|v\|_{H^1(\Omega)},
\]
and coercive on $H^1_0(\Omega)$, i.e. 
\begin{equation}
\label{eq_coercivity_estim}
\Re \Phi(w,w)\ge c\|w\|_{H^1(\Omega)}^2-C\|w\|_{L^2(\Omega)}^2,\quad c>0,
\end{equation}
for all $w\in  H^1_0(\Omega)$.

\end{lem}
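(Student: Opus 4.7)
The plan is standard for Gårding-type estimates for second-order elliptic problems with lower-order terms. For \textbf{boundedness}, I would bound each integrand in \eqref{eq_form_mag}, \eqref{eq_form_mag_mod} and \eqref{eq_form_main_var} by Cauchy--Schwarz, using that $A^\pm \in L^\infty$, $q^\pm\in L^\infty$, and $a^{\pm 1},b^{\pm 1},\nabla a,\nabla b\in L^\infty$ on $\overline{D}$. The volume terms then contribute at most $C\|w\|_{H^1(\Omega)}\|v\|_{H^1(\Omega)}$, and for the boundary term one uses the continuity of the trace $H^1(\Omega)\to H^{1/2}(\p D)\hookrightarrow L^2(\p D)$ together with $ca^{-1}\in L^\infty(\p D)$.

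For \textbf{coercivity}, I would take real parts. The principal parts of $\Phi^+$ and $\Phi^-$ give
\[
\Re\!\int_{D^+}|\nabla w^+|^2 dx + \Re\!\int_{D^-} ba^{-1}|\nabla w^-|^2 dx \;\ge\; c_0 \int_{\Omega}|\nabla w|^2 dx,
\]
where $c_0=\min(1,\min_{\overline{D}}ba^{-1})>0$, since $a,b>0$ on the compact set $\overline{D}$. The ``magnetic'' cross-terms $i\int A\cdot(w\nabla\overline v-\overline v\nabla w)$ are real-valued when $w=v$ (the integrand equals $2A\cdot \Im(w\nabla\overline{w})$ up to sign), and by the elementary inequality $2|xy|\le \eta x^2+\eta^{-1}y^2$ can be absorbed as
\[
\bigl|\Re(\text{cross term})\bigr| \;\le\; \eta\|\nabla w\|_{L^2(\Omega)}^2 + C_\eta \|w\|_{L^2(\Omega)}^2,
\]
using $\|A^\pm\|_{L^\infty}<\infty$. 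The purely lower-order potential contributions (terms containing $(A^\pm)^2+q^\pm$, $\nabla a\cdot\nabla b$, etc.) are all bounded by $C\|w\|_{L^2(\Omega)}^2$.

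For the boundary term $\int_{\p D}ca^{-1}|w|^2 dS$, I would use the standard interpolation/trace inequality on the Lipschitz interface $\p D$: for every $\eta>0$ there exists $C_\eta$ such that
\[
\|w\|_{L^2(\p D)}^2 \;\le\; \eta\|w\|_{H^1(\Omega)}^2 + C_\eta\|w\|_{L^2(\Omega)}^2,
\]
which follows from the continuity of the trace $H^{1/2+\delta}(\Omega)\to L^2(\p D)$ and the interpolation $\|w\|_{H^{1/2+\delta}}\le C\|w\|_{H^1}^{1/2+\delta}\|w\|_{L^2}^{1/2-\delta}$ combined with Young's inequality. Combining all the estimates and choosing $\eta$ sufficiently small so that $c_0-C\eta>0$ produces \eqref{eq_coercivity_estim} with $c=c_0/2$ and some large constant $C$. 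The main (mild) subtlety is in controlling the boundary integral on a merely Lipschitz interface by the interior $H^1$-norm with a small constant; everything else is routine bookkeeping of the $L^\infty$ bounds and applications of Young's inequality.
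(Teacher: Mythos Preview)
Your proposal is correct and follows essentially the same route as the paper: both split $\Re\Phi(w,w)$ into $\Re\Phi^+ + \Re\Phi^- - \int_{\p D}|ca^{-1}||w|^2$, use the positive lower bound $\min(1,\min_{\overline D}ba^{-1})$ on the principal symbol, absorb the first-order cross terms by Young's inequality, and handle the boundary integral via a trace estimate combined with interpolation in the Sobolev scale (the paper uses $H^{3/4}(D^-)\to H^{1/4}(\p D)$, you use $H^{1/2+\delta}$---same mechanism). One minor remark: in this section $A^\pm$ are allowed to be $\C^n$-valued, so the parenthetical that the magnetic cross term is automatically real when $w=v$ need not hold; your subsequent estimate does not use this, however, and goes through unchanged.
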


\begin{proof} As the boundedness of $\Phi$ is clear, only the coercivity needs to be verified. By the definition \eqref{eq_form_main_var} of $\Phi$, we have
\begin{equation}
\label{eq_coercivity_estim_proof_1}
\Re \Phi(w,w)\ge \Re \Phi^+(w^+,w^+)+ \Re \Phi^-(w^-,w^-)-\int_{\p D} |ca^{-1}w\overline{w}| dS.
\end{equation}
It follows from  \eqref{eq_form_mag}  that
\begin{equation}
\label{eq_coercivity_estim_proof_2}
\begin{aligned}
\Re \Phi^+(w^+,w^+)&\ge \|\nabla w^+\|^2_{L^2(D^+)}-\int_{D^+ }|A^+ \cdot( w^+  \nabla \overline{w^+}-   
\overline{w^+}\nabla w^+) |dx\\
&-\int_{D^+}|((A^+)^2+q^+)w^+\overline{w^+}|dx\ge 
\| w^+\|^2_{H^1(D^+)}-\| w^+\|^2_{L^2(D^+)}\\
&-C\|w^+\|_{L^2(D^+)}\|\nabla w^+\|_{L^2(D^+)}-C\|w^+\|_{L^2(D^+)}^2\\
&\ge \| w^+\|^2_{H^1(D^+)}-C\frac{1}{\varepsilon}\|w^+\|_{L^2(D^+)}^2-C\varepsilon \| w^+\|^2_{H^1(D^+)}\\
& \ge\frac{1}{2} \| w^+\|^2_{H^1(D^+)}- C\|w^+\|_{L^2(D^+)}^2,
\end{aligned}
\end{equation}
provided that $\varepsilon >0$ is small enough.

Using  \eqref{eq_form_mag_mod} together with the fact that $ba^{-1}>0$ on $\overline{D}$, we get 
\begin{equation}
\label{eq_coercivity_estim_proof_3}
\begin{aligned}
&\Re \Phi^-(w^-,w^-)\ge  \int_{D^-} ba^{-1}|\nabla w^-|^2dx-
\int_{D^-} |(a^{-1}\nabla b-ba^{-1}iA^-)\cdot (\nabla w^-)\overline{w^-}|dx\\
&-\int_{D^-} |(b\nabla a^{-1} +a^{-1}biA^-)w^-\cdot\nabla \overline{w^-}|dx\\
&-\int_{D^-}| (\nabla a^{-1}\cdot\nabla b+iA^-\cdot (a^{-1}\nabla b-b\nabla a^{-1})+ ba^{-1}((A^-)^2+q^-))||w^-|^2dx\\
&\ge \frac{1}{C}(\| w^-\|^2_{H^1(D^-)}-\| w^-\|^2_{L^2(D^-)})-C\|w^-\|_{L^2(D^-)}\|\nabla w^-\|_{L^2(D^-)}-C\|w^-\|_{L^2(D^-)}^2\\
&\ge \frac{1}{C}\| w^-\|^2_{H^1(D^-)} -C\frac{1}{\varepsilon}\|w^-\|_{L^2(D^-)}^2-C\varepsilon \| w^-\|^2_{H^1(D^-)}\\
&\ge \frac{1}{2C}\| w^-\|^2_{H^1(D^-)}-C\|w^-\|_{L^2(D^-)}^2,
\end{aligned}
\end{equation}
provided that $\varepsilon >0$ is small enough. 

In order to estimate the last term in the right hand side of \eqref{eq_coercivity_estim_proof_1}, we notice that 
by interpolation estimates for Sobolev spaces, for any $\varepsilon>0$, there is $C(\varepsilon)>0$ such that
\[
\|w^-\|_{H^{3/4}(D^-)}\le \varepsilon \|w^-\|_{H^1(D^-)}+ C(\varepsilon)\|w^-\|_{L^2(D^-)},
\]
see \cite[Theorem 7.22]{Grubbbook2009}.  This fact together with the trace theorem implies that 
\begin{equation}
\label{eq_coercivity_estim_proof_4}
\begin{aligned}
\int_{\p D} |ca^{-1}w\overline{w}| dS  &\le C\|w^-\|_{L^2(\p D)}^2\le C\|w^-\|^2_{H^{1/4}(\p D)}\le C\|w^-\|_{H^{3/4}(D^-)}^2\\
&\le C(\varepsilon \|w^-\|_{H^1(D^-)}^2+ C(\varepsilon)\|w^-\|_{L^2(D^-)}^2). 
\end{aligned}
\end{equation}
Combining  \eqref{eq_coercivity_estim_proof_2} -- \eqref{eq_coercivity_estim_proof_4} and choosing $\varepsilon >0$ small enough, we obtain \eqref{eq_coercivity_estim}. 
This completes the proof of the lemma. 
\end{proof}

Recall next that the $L^2$ dual space of $H^1_0(\Omega)$ is $H^{-1}(\Omega)$, and let $\mathcal{A}:H^1_0(\Omega)\to H^{-1}(\Omega)$ be the bounded linear operator determined by the form $\Phi$, given by \eqref{eq_form_main_var}, 
\begin{equation}
\label{eq_op_A}
(\mathcal{A}w,v)_{H^{-1}(\Omega),H^1_0(\Omega)}=\Phi(w,v),
\end{equation}
for $w,v\in H^{1}_0(\Omega)$.  Since the inclusion $H^1_0(\Omega)\hookrightarrow L^2(\Omega)$ is compact, and the form $\Phi$ is coercive on $H^1_0(\Omega)$,  the operator $\mathcal{A}$ is Fredholm of index zero, see \cite[Theorem 2.34]{McLean_book}.

Let us now show that $w\in H^1_0(\Omega)$ solves the homogeneous equation $\mathcal{A}w=0$  if and only if $u^+=w^+\in H^1(D^+)$ and $u^-=a^{-1}w^-\in H^1(D^-)$ solve the homogeneous transmission problem \eqref{eq_transmission_hom}. Indeed, let $w\in H^1_0(\Omega)$ be a solution to $\mathcal{A}w=0$. Then
\begin{equation}
\label{eq_hom_form_1}
(\mathcal{A}w,v)_{H^{-1}(\Omega),H^1_0(\Omega)}=\Phi(w,v)=0,
\end{equation}
for any $v\in H^1_0(\Omega)$. Choosing  $v\in C^\infty_0(D^+)$ and $v\in C^\infty_0(D^-)$ be arbitrary functions, \eqref{eq_hom_form_1} implies that $w^+$ and $w^-$ satisfy  
\begin{align*}
&\mathcal{L}_{A^+,q^+}w^+=0\quad \textrm{in}\quad D^+,\\
&b\mathcal{L}_{A^-,q^-}(a^{-1}w^-)
= 0  \quad \textrm{in}\quad D^-. 
\end{align*}
Taking $v\in H^1_0(\Omega)$ and using the first Green formulae \eqref{eq_first_green} and \eqref{eq_first_green_mod} together with the definition \eqref{eq_form_main_var} of the form $\Phi$, we get 
\begin{align*}
0&=\Phi(w,v)=-((\p_\nu+iA^+\cdot \nu )w^+,v)_{H^{-1/2}(\p D),H^{1/2}(\p D)}\\
&+ (b(\p_\nu+iA^{-}\cdot\nu )(a^{-1}w^-),v)_{H^{-1/2}(\p D),H^{1/2}(\p D)}+\int_{\p D} ca^{-1}w^-\overline{v} dS,
\end{align*}
for any $v\in H^1_0(\Omega)$. Thus,
\[
(\p_\nu+iA^+\cdot \nu )w^+=b(\p_\nu+iA^{-}\cdot\nu )(a^{-1}w^-)+  ca^{-1}w^-,\quad\textrm{on}\quad \p D,
\]
and therefore, $u^+=w^+$ and $u^-=a^{-1}w^-$ solve the homogeneous transmission problem \eqref{eq_transmission_hom}. The opposite direction is obvious.

We shall next study the inhomogeneous case. When doing so, following \eqref{eq_variational_formul}, let us introduce the linear functional $F:H^1_0(\Omega)\to \C$ by
\begin{equation}
\label{eq_def__fun_F}
\begin{aligned}
(F,v)=&(\tilde f^++f^+,v^+)_{\tilde H^{-1}( D^+),H^1( D^+)}+ (bf^-,v^-)_{\tilde H^{-1}(D^-),H^1(D^-)}\\
&-(g_1-\p_\nu w_0 - iA^+\cdot\nu g_0, v)_{H^{-1/2}(\p D),H^{1/2}(\p D)},
\end{aligned}
\end{equation}
for any $v\in H^1_0(\Omega)$.  

\begin{lem}
\label{lem_estim_F}
The  functional $F:H^1_0(\Omega)\to \C$ is bounded and 
\begin{align*}
\|F\|_{H^{-1}(\Omega)}\le C&(\|f^+\|_{\tilde H^{-1}( D^+)}+\|f^-\|_{\tilde H^{-1}(D^-)}+  \|g_1\|_{H^{-1/2}(\p D)}\\
&+ \|g_0\|_{H^{1/2}(\p D)}+\|g\|_{H^{1/2}(\p \Omega)}). 
\end{align*}
\end{lem}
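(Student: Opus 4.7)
The plan is to bound $|(F,v)|$ by the claimed data norms times $\|v\|_{H^1_0(\Omega)}$, estimating each of the three contributions to \eqref{eq_def__fun_F} separately and invoking the bounds \eqref{eq_estim_w_0}--\eqref{eq_estim_w_0_2} for the auxiliary Dirichlet extension $w_0$. For $v\in H^1_0(\Omega)$, writing $v^\pm=v|_{D^\pm}$, one has $\|v^\pm\|_{H^1(D^\pm)}\le \|v\|_{H^1_0(\Omega)}$, and the Lipschitz trace theorem gives $\|v|_{\p D}\|_{H^{1/2}(\p D)}\le C\|v\|_{H^1_0(\Omega)}$.

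For the first volume pairing, $|(f^+,v^+)_{\tilde H^{-1}(D^+),H^1(D^+)}|\le \|f^+\|_{\tilde H^{-1}(D^+)}\|v\|_{H^1_0(\Omega)}$ by duality. For the $\tilde f^+$ piece, the formula \eqref{eq_f_lem} together with $A^+\in W^{1,\infty}(D^+)$ and $q^+\in L^\infty(D^+)$ gives $\|\tilde f^+\|_{L^2(D^+)}\le C\|w_0\|_{H^1(D^+)}$, which by \eqref{eq_estim_w_0} is bounded by $C(\|g_0\|_{H^{1/2}(\p D)}+\|g\|_{H^{1/2}(\p\Omega)})$; since $L^2(D^+)$ embeds continuously into $\tilde H^{-1}(D^+)$ via zero-extension, this controls the $\tilde f^+$ contribution. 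For the second volume pairing, Proposition \ref{prop_sob_mult} applied with $r=1$ and $s=-1$, together with an extension of $b$ to a $W^{1,\infty}(\R^n)$ function, yields $\|bf^-\|_{\tilde H^{-1}(D^-)}\le C\|b\|_{W^{1,\infty}}\|f^-\|_{\tilde H^{-1}(D^-)}$.

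The boundary pairing splits into three pieces. The term involving $g_1$ is bounded directly by $\|g_1\|_{H^{-1/2}(\p D)}\|v|_{\p D}\|_{H^{1/2}(\p D)}$. The term with $\p_\nu w_0$ is controlled via \eqref{eq_estim_w_0_2}. The only delicate piece is $iA^+\cdot\nu\, g_0$: one cannot apply Proposition \ref{prop_sob_mult} directly since $A^+\cdot\nu\in L^\infty(\p D)$ does not in general act as a multiplier on $H^{-1/2}(\p D)$. Instead, the plan is to rewrite the pairing as an honest surface integral and use the Sobolev embedding $H^{1/2}(\p D)\hookrightarrow L^2(\p D)$ on the compact Lipschitz manifold $\p D$, giving
\[
\Big|\int_{\p D}A^+\cdot\nu\,g_0\,\overline{v}\,dS\Big|\le \|A^+\|_{L^\infty(\p D)}\|g_0\|_{L^2(\p D)}\|v\|_{L^2(\p D)}\le C\|g_0\|_{H^{1/2}(\p D)}\|v\|_{H^1_0(\Omega)}.
\]
Adding the three volume and three boundary estimates and taking the supremum over $v\in H^1_0(\Omega)$ with $\|v\|_{H^1_0(\Omega)}\le 1$ yields the announced bound on $\|F\|_{H^{-1}(\Omega)}$. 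The main (mild) obstacle is precisely the $A^+\cdot\nu\, g_0$ step, where one must bypass the lack of an $H^{-1/2}$-multiplier estimate by passing through the $L^2(\p D)$ pairing.
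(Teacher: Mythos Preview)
Your proof is correct and follows essentially the same approach as the paper: both bound the three contributions to $(F,v)$ separately, invoking Proposition~\ref{prop_sob_mult} for the $bf^-$ term, the $L^2$ bound on $\tilde f^+$ via \eqref{eq_estim_w_0}, and handling the $iA^+\cdot\nu\,g_0$ piece by passing through $L^2(\p D)$ rather than $H^{-1/2}(\p D)$. Your explicit identification of this last step as the only delicate point matches the paper's treatment in \eqref{eq_F_4}.
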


\begin{proof}
We have
\begin{equation}
\label{eq_F_1}
|(f^+,v^+)_{\tilde H^{-1}( D^+),H^1( D^+)}|\le C\|f^+\|_{\tilde H^{-1}( D^+)}\|v\|_{H^1(\Omega)},
\end{equation}
and  by Proposition \ref{prop_sob_mult}, we get
\begin{equation}
\label{eq_F_2}
|(bf^-,v^-)_{\tilde H^{-1}(D^-),H^1(D^-)}|\le C\|f^-\|_{\tilde H^{-1}(D^-)}\|v\|_{H^1(\Omega)}. 
\end{equation}
Using \eqref{eq_f_lem}   and  \eqref{eq_estim_w_0}, we obtain that
\begin{equation}
\label{eq_F_3}
\begin{aligned}
|(\tilde f^+,&v^+)_{L^2(D^+),L^2( D^+)}|\le\int_{D^+} |(2iA^+\cdot\nabla w_0)\overline{v^+}|dx\\
&+\int_{ D^+}
|(i(\nabla \cdot A^+)- (A^+)^2-q^+)w_0\overline{v^+}|dx\le C\|w_0\|_{H^1( D^+)}\|v\|_{L^2(\Omega)}\\
&\le 
C(\|g_0\|_{H^{1/2}(\p D)}+\|g\|_{H^{1/2}(\p \Omega)}) \|v\|_{H^1(\Omega)}. 
\end{aligned}
\end{equation}
With the help of \eqref{eq_estim_w_0_2} and the trace theorem, we have  
\begin{equation}
\label{eq_F_4}
\begin{aligned}
& |(g_1-\p_\nu w_0 - iA^+\cdot\nu g_0, v)_{H^{-1/2}(\p D),H^{1/2}(\p D)}|\\
&\le C(\|g_1\|_{H^{-1/2}(\p D)}+ \|\p_{\nu}w_0\|_{H^{-1/2}(\p D)}+\|iA^+\cdot\nu g_0 \|_{L^2(\p D)})\|v\|_{H^{1/2}(\p D)}\\
&\le C(\|g_1\|_{H^{-1/2}(\p D)}+ \|g_0\|_{H^{1/2}(\p D)}+\|g\|_{H^{1/2}(\p \Omega)})\|v\|_{H^1(\Omega)}.
\end{aligned}
\end{equation}
Combining \eqref{eq_F_1} -- \eqref{eq_F_4}, we obtain that
\begin{align*}
|(F,v)|\le C&(\|f^+\|_{\tilde H^{-1}( D^+)}+\|f^-\|_{\tilde H^{-1}(D^-)}+  \|g_1\|_{H^{-1/2}(\p D)}+ \|g_0\|_{H^{1/2}(\p D)}\\
&+\|g\|_{H^{1/2}(\p \Omega)})\|v\|_{H^1(\Omega)},
\end{align*}
which completes the proof. 
\end{proof}

Given $F$ in \eqref{eq_def__fun_F}, the function $w\in H^1_0(\Omega)$ satisfies $\mathcal{A}w=F$ in $\Omega$ precisely when 
$u^+=w^++w_0$ and $u^-=a^{-1}w^-$ satisfy the transmission problem \eqref{eq_transmission_inh}.

We shall next introduce the adjoint of $\mathcal{A}$ and relate the corresponding operator equation to the adjoint transmission problem.  
Let $\mathcal{A}^*:H^1_0(\Omega)\to H^{-1}(\Omega)$ be the adjoint to $\mathcal{A}$, i.e.
\[
(\mathcal{A}^*v,w)_{H^{-1}(\Omega),H^1_0(\Omega)}=(v,\mathcal{A}w)_{H^{1}_0(\Omega),H^{-1}(\Omega)}=\overline{(\mathcal{A}w,v)}_{H^{-1}(\Omega),H^1_0(\Omega)}=\overline{\Phi(w,v)},
\]
for any $w,v\in H^1_0(\Omega)$. 

Consider now the  inhomogeneous adjoint transmission problem \eqref{eq_adjoint_trans_inhom}. Similarly to what was done above we make the substitutions $y^-=b^{-1}v^-$ and $y^+=v^+-w_{0*}$, where 
$w_{0*}\in H^1(D^+)$ is the unique solution of the following Dirichlet problem,
\begin{align*}
\Delta w_{0*}&=0\quad \textrm{in}\quad D^+,\\
w_{0*}&=g_{0*}\quad\textrm{on}\quad \p D,\\
w_{0*}&=g_*\quad\textrm{on}\quad \p \Omega. 
\end{align*}
We get
\begin{equation}
\label{eq_inhom_ad}
\begin{aligned}
&\mathcal{L}_{\overline{A^+},\overline{q^+}}y^+=\tilde f^+_*+ f^+_*\quad \textrm{in}\quad D^+,\\
&a^{-1}\mathcal{L}_{\overline{A^-},\overline{q^-}}(by^-)
= a^{-1}f^-_*  \quad \textrm{in}\quad D^-,\\
&y^+=y^-\quad \textrm{on}\quad \p D,\\
&(\p_\nu + i\overline{A^+}\cdot\nu) y^+=a^{-1}(\p_\nu+i\overline{A^-}\cdot \nu)(by^-) 
+  ca^{-1}y^-+\tilde g_{1*}\quad \textrm{on}\quad \p D,\\
&y^+=0\quad \textrm{on}\quad \p \Omega,
\end{aligned}
\end{equation}
where
\begin{align*}
\tilde f^+_*&=-\mathcal{L}_{\overline{A^+},\overline{q^+}}w_{0*}\in  L^2(D^+),\\
\tilde g_{1*}&=g_{1*}-\p_\nu w_{0*} - i\overline{A^+}\cdot\nu g_{0*}\in H^{-1/2}(\p D). 
\end{align*}
A straightforward computation shows that the sesquilinear form, naturally  associated to the transmission problem \eqref{eq_inhom_ad}, is given by
\[
(y,z)\mapsto \overline{\Phi(z,y)},
\]
where $\Phi$ is defined by \eqref{eq_form_main_var}.

Applying the Fredholm alternative to the operator $\mathcal{A}$, we obtain the two mutually exclusive possibilities in the statement of Proposition \ref{prop_2_4}.
The estimate \eqref{eq_energy_estimates_unique} follows from the fact that in case (i), the operator $\mathcal{A}$ is invertible and the unique solution $w$ to $\mathcal{A}w=F$ satisfies 
\[
\|w\|_{H^1(\Omega)}\le C\|F\|_{H^{-1}(\Omega)}.
\]
It suffices then to apply Lemma \ref{lem_estim_F}. 

It only remains to prove the a priori estimate \eqref{eq_energy_estimates}. To this end, we observe that a solution $w$ of the equation $\mathcal{A}w=F$ satisfies
\[
\Phi(w,w)=(F,w)_{H^{-1}(\Omega),H^1_0(\Omega)},
\]
and therefore,
\[
|\Phi(w,w)|\le C\|F\|_{H^{-1}(\Omega)}\|w\|_{H^1(\Omega)}\le C\bigg(\frac{1}{\varepsilon}\|F\|^2_{H^{-1}(\Omega)}+\varepsilon\|w\|_{H^1(\Omega)}^2\bigg).
\]
Taking $\varepsilon>0$ to be small enough and using the coercivity estimate \eqref{eq_coercivity_estim}, we infer that 
\begin{equation}
\label{eq_energy_estimates_prep}
\|w\|_{H^1(\Omega)}\le C(\|F\|_{H^{-1}(\Omega)}+\|w\|_{L^2(\Omega)}). 
\end{equation}
Substituting $w^+=u^+-w_0$ and $w^-=au^-$ into \eqref{eq_energy_estimates_prep}, and using the fact that  
\[
\frac{1}{C}\|u^-\|_{H^1(D^-)} \le \|au^-\|_{H^1(D^-)}\le C\|u^-\|_{H^1(D^-)}, 
\]
together with Lemma \ref{lem_estim_F}, 
we obtain  the estimate \eqref{eq_energy_estimates}. This completes the proof of Proposition \ref{prop_2_4}.

\begin{rem}  
The transmission problem \eqref{eq_transmission_hom} is self-adjoint, i.e. the adjoint transmission problem \eqref{eq_adjoint_trans_hom} coincides with the given one, if $A^\pm$ and $q^\pm$ are real-valued and $ab=1$ in $\p D$.  As $\p D$ is merely Lipschitz, we shall require that the equality $ab=1$ holds in the entire region $\overline{D}$.  In this case one can easily check that $\Phi(w,v)=\overline{\Phi(v,w)}$ for $w,v\in H^1_0(\Omega)$ and therefore, the operator $\mathcal{A}$, given by \eqref{eq_op_A},  is self-adjoint.  
\end{rem}

\section{Unique solvability of the transmission problem by domain perturbation in the self-adjoint case}

\label{sec_domain_pertur}

Let $\Omega\subset\R^n$, $n\ge 3$,  be a bounded open set in $\R^n$ with Lipschitz boundary, and let $D\subset\subset\Omega$ be a bounded open set with Lipschitz boundary such that $\Omega\setminus\overline{D}$ is connected.  Let $A^+\in W^{1,\infty}(\R^n\setminus\overline{D},\R^n)$, $q^+\in L^{\infty}(\R^n\setminus\overline{D},\R)$,  $A^-\in W^{1,\infty}(D,\R^n)$, $q^-\in L^\infty(D,\R)$, $a,b\in C^{1,1}(\overline{D},\R)$, and  $c\in C(\overline{D},\R)$. 
Assume that $a,b>0$  in $\overline{D}$.  Then  the Fredholm alternative holds for the transmission problem  \eqref{eq_transmission_inh}.  In what follows we shall assume that $ab=1$ in $\overline{D}$.

The purpose of this section is to study the question whether the unique solvability 
of the problem  \eqref{eq_transmission_inh} can be achieved by a small perturbation of the boundary of $\Omega$.  Using the mini-max principle, here we shall answer this question affirmatively. 

The unique solvability of the transmission problem  \eqref{eq_transmission_inh} in $\Omega$ is equivalent to the fact that the transmission problem  
\begin{equation}
\label{eq_trans_eig_1}
\begin{aligned}
&\mathcal{L}_{A^+,q^+}w^+=0\quad \textrm{in}\quad  \Omega\setminus\overline{D},\\
&b\mathcal{L}_{A^-,q^-}(a^{-1}w^-)
= 0  \quad \textrm{in}\quad D,\\
&w^+=w^-\quad \textrm{on}\quad \p D,\\
&(\p_\nu + iA^+\cdot\nu) w^+=b(\p_\nu+iA^-\cdot \nu)(a^{-1}w^-) 
+  ca^{-1}w^-\quad \textrm{on}\quad \p D,\\
&w^+=0\quad \textrm{on}\quad \p  \Omega,
\end{aligned}
\end{equation}
only has the trivial solution.  As in the proof of Proposition \ref{prop_2_4}, the sesquilinear form $\Phi_\Omega:=\Phi$, given by \eqref{eq_form_main_var},  is naturally associated to the transmission problem \eqref{eq_trans_eig_1}. 
One can easily check that $\Phi_{\Omega}(w,v)=\overline{\Phi_{\Omega}(v,w)}$ for $w,v\in H^1_0(\Omega)$. In what follows, it will be convenient to work in the $L^2(\Omega)$--framework. It follows from \eqref{eq_coercivity_estim}
that  $\Phi_{\Omega}$, viewed as an unbounded form on $L^2(\Omega)$, is densely defined and closed on $H^1_0(\Omega)$.  Moreover, \eqref{eq_coercivity_estim} implies also that $\Phi_{\Omega}$ is 
semibounded from below, i.e. 
\[
\Phi_{\Omega}(w,w)\ge -C\|w\|_{L^2( \Omega)}, 
\]
for all $w\in H^1_0( \Omega)$.  Then there exists a unique self-adjoint operator $Q_{\Omega}:L^2( \Omega)\to L^2( \Omega)$ with domain $\mathcal{D}(Q_{\Omega})=\{w\in H^1_0( \Omega):\mathcal{A}_{\Omega}w\in L^2( \Omega)\}$. Here the bounded operator $\mathcal{A}_{\Omega}:H^1_0( \Omega)\to H^{-1}(\Omega)$ is defined by \eqref{eq_op_A}, and we have $(Q_{\Omega}w,v)_{L^2( \Omega)}=\Phi_{\Omega}(w,v)$ for any $w,v\in \mathcal{D}(Q)$, see 
\cite[Theorem VIII.15]{Reed_Simon_book_1}.  Moreover, $Q_{\Omega}\ge -C$ and $Q_\Omega$ has a discrete spectrum, consisting of real eigenvalues of finite multiplicity, accumulating at $+\infty$, 
 \begin{equation}
\label{eq_sets_4}
\lambda_1( \Omega)\le \lambda_2( \Omega)\le \cdots\le \lambda_n( \Omega)\to +\infty.
\end{equation}
The transmission problem \eqref{eq_trans_eig_1} in $\Omega$ only has the trivial solution precisely when zero is not an eigenvalue of the operator $Q_{\Omega}$.  Assuming that zero is an eigenvalue of $Q_{\Omega}$, the 
question that we are interested in is therefore, whether we can perturb the boundary of $\Omega$, to remove zero from the spectrum.

Let us first recall the following well-known consequence of the mini-max principle, applied to the eigenvalues of the semibounded self-adjoint operator $Q_{\Omega}$, see \cite[Theorem 4.5]{Leis_book_1986}.   

\begin{prop}
\label{prop_mon_eigen}
Let $\tilde \Omega\subset\R^n$ be  a bounded open set with Lipschitz boundary such that  $\Omega\subset \tilde \Omega$. Then $\lambda_n( \Omega)\ge \lambda_n( \tilde\Omega)$ for all $n=1,2,\dots$. 
\end{prop}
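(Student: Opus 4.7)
The plan is to apply the min--max principle to the semibounded self-adjoint operator $Q_\Omega$, whose associated closed quadratic form is $\Phi_\Omega$ with form domain $H^1_0(\Omega)$, and to use the fact that zero-extension of functions in $H^1_0(\Omega)$ produces functions in $H^1_0(\tilde\Omega)$ with identical $L^2$ norm and identical form value.

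First I would recall the min--max characterization of the eigenvalues listed in \eqref{eq_sets_4}, namely
\begin{equation*}
\lambda_n(\Omega) \;=\; \min_{\substack{V\subset H^1_0(\Omega)\\ \dim V=n}}\ \max_{\substack{w\in V\\ w\ne 0}}\ \frac{\Phi_\Omega(w,w)}{\|w\|_{L^2(\Omega)}^2},
\end{equation*}
and the analogous formula for $\lambda_n(\tilde\Omega)$ with $\Phi_{\tilde\Omega}$ and $H^1_0(\tilde\Omega)$; these follow from the same framework that yields the eigenvalue list \eqref{eq_sets_4} via \cite[Theorem 4.5]{Leis_book_1986}. Since $D\subset\subset\Omega\subset\tilde\Omega$ and $A^+,q^+$ are defined on $\R^n\setminus\overline{D}$, the form $\Phi_{\tilde\Omega}$ is well-defined with the same structure as $\Phi_\Omega$. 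I would then introduce the extension-by-zero operator $E\colon H^1_0(\Omega)\to H^1_0(\tilde\Omega)$ sending $w$ to the function equal to $w$ on $\Omega$ and to $0$ on $\tilde\Omega\setminus\Omega$; this is a well-defined linear injection because $w\in H^1_0(\Omega)$ has zero trace on $\p\Omega$.

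Next I would observe that, because $Ew$ vanishes on $\tilde\Omega\setminus\overline\Omega$, the integrand of $\Phi_{\tilde\Omega}^+$ (see \eqref{eq_form_mag}) contributes only on $\Omega\setminus\overline D$, while the terms coming from the interior region $D$ and the interface $\p D\subset\Omega$ (see \eqref{eq_form_mag_mod} and \eqref{eq_form_main_var}) are identical to those appearing in $\Phi_\Omega(w,w)$. Consequently
\begin{equation*}
\Phi_{\tilde\Omega}(Ew,Ew)=\Phi_\Omega(w,w), \qquad \|Ew\|_{L^2(\tilde\Omega)}=\|w\|_{L^2(\Omega)}.
\end{equation*}
Hence, for any $n$-dimensional subspace $V\subset H^1_0(\Omega)$, the image $E(V)\subset H^1_0(\tilde\Omega)$ is $n$-dimensional and realises exactly the same Rayleigh quotients. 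Taking the infimum in the min--max formula for $\lambda_n(\tilde\Omega)$ over the smaller class of $n$-dimensional subspaces of the form $E(V)$ yields $\lambda_n(\tilde\Omega)\le \lambda_n(\Omega)$, as claimed. No genuine obstacle is anticipated: the whole content of the argument is the compatibility of zero-extension with both the $L^2$ inner product and the sesquilinear form $\Phi$, which is automatic because $\overline{D}$ is compactly contained in $\Omega$, so the interface terms are unaffected by enlarging the outer domain.
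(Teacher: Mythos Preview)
Your proposal is correct and is precisely the standard mini-max argument that the paper invokes: the paper does not write out a proof but simply records the result as a well-known consequence of the mini-max principle, citing \cite[Theorem 4.5]{Leis_book_1986}. Your zero-extension argument, together with the observation that the form $\Phi$ and the $L^2$ norm are unchanged under extension by zero because all the ``interior'' data ($D$, $\p D$, $A^-$, $q^-$, $a$, $b$, $c$) lie compactly inside $\Omega$, is exactly what underlies that citation.
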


We shall need a sharper result, showing the strict monotonicity of the eigenvalues of the operator $Q_{\Omega}$, with respect to the domain. The proof of this result follows closely the proof of the strict monotonicity of the eigenvalues of a self-adjoint second order elliptic operator, due to \cite{Leis_1967}, see also \cite[Theorem 4.7]{Leis_book_1986}.  The key ingredient here is the unique continuation principle.

 \begin{prop}
\label{prop_mon_eigen_stric}

Let $D\subset\subset \Omega\subset \tilde \Omega$,  $\tilde \Omega\setminus\overline{\Omega}\neq \emptyset$, and let  $\tilde \Omega\setminus\overline{D}$ and $\Omega\setminus\overline{D}$ be connected.  Then $\lambda_n( \Omega)> \lambda_n( \tilde\Omega)$ for all $n=1,2,\dots$.
 
 \end{prop}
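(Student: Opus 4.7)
The plan is to argue by contradiction, following the classical Leis argument for a self-adjoint second order elliptic operator. Suppose $\lambda_n(\Omega)=\lambda_n(\tilde\Omega)=:\lambda$ for some $n$. The strategy is to produce a nontrivial $Q_{\tilde\Omega}$--eigenfunction at eigenvalue $\lambda$ that vanishes on the nonempty open set $\tilde\Omega\setminus\overline\Omega$, and then rule this out by two successive applications of unique continuation.

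\textbf{Producing the eigenfunction.} Extension by zero is an isometric embedding $H^1_0(\Omega)\hookrightarrow H^1_0(\tilde\Omega)$, and since the volume integrals defining $\Phi^\pm$ and the boundary integral on $\p D$ all live inside $\overline\Omega$, we have $\Phi_{\tilde\Omega}(\hat u,\hat v)=\Phi_\Omega(u,v)$ for $u,v\in H^1_0(\Omega)$ and their zero extensions $\hat u,\hat v$. Choose an $L^2(\Omega)$--orthonormal family $u_1,\dots,u_n$ of $Q_\Omega$--eigenfunctions for $\lambda_1(\Omega),\dots,\lambda_n(\Omega)$, and an $L^2(\tilde\Omega)$--orthonormal family $\psi_1,\dots,\psi_{n-1}$ of $Q_{\tilde\Omega}$--eigenfunctions for $\lambda_1(\tilde\Omega),\dots,\lambda_{n-1}(\tilde\Omega)$. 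A dimension count yields a unit vector $w$ in $\operatorname{span}\{\hat u_1,\dots,\hat u_n\}$ which is $L^2(\tilde\Omega)$--orthogonal to $\psi_1,\dots,\psi_{n-1}$, and Courant--Fischer gives
\begin{equation*}
\lambda_n(\tilde\Omega)\le\Phi_{\tilde\Omega}(w,w)=\Phi_\Omega(w,w)\le\lambda_n(\Omega)=\lambda_n(\tilde\Omega),
\end{equation*}
so all inequalities are equalities. A standard first-variation argument, combined with the Hermiticity of $\Phi_{\tilde\Omega}$ to remove the orthogonality constraint against the $\psi_j$'s, then gives $\Phi_{\tilde\Omega}(w,\phi)=\lambda(w,\phi)_{L^2(\tilde\Omega)}$ for every $\phi\in H^1_0(\tilde\Omega)$, whence $w\in\mathcal{D}(Q_{\tilde\Omega})$ and $Q_{\tilde\Omega}w=\lambda w$. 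By construction, $w$ vanishes identically on the nonempty open set $\tilde\Omega\setminus\overline\Omega$.

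\textbf{Unique continuation closes the argument.} Unfolding $Q_{\tilde\Omega}w=\lambda w$ as in the proof of Proposition~\ref{prop_2_4}, the part $w^+:=w|_{\tilde\Omega\setminus\overline D}$ solves $\mathcal{L}_{A^+,q^+}w^+=\lambda w^+$ on the connected open set $\tilde\Omega\setminus\overline D$ and vanishes on the nonempty open subset $\tilde\Omega\setminus\overline\Omega$. The unique continuation result of Appendix~\ref{ap_UC} thus forces $w^+\equiv 0$ on all of $\tilde\Omega\setminus\overline D$, so both traces $w^+|_{\p D}$ and $(\p_\nu+iA^+\cdot\nu)w^+|_{\p D}$ vanish. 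The first transmission condition yields $w^-|_{\p D}=0$; combined with this, the second transmission condition reduces to $ba^{-1}\p_\nu w^-=0$ on $\p D$, hence $\p_\nu w^-=0$ there. Therefore $v:=a^{-1}w^-\in H^1(D)$ has vanishing Cauchy data on the Lipschitz boundary $\p D$ and satisfies $\mathcal{L}_{A^-,q^-}v=\lambda a b^{-1}v$ in $D$. A second application of Appendix~\ref{ap_UC} yields $v\equiv 0$, hence $w\equiv 0$, contradicting $\|w\|_{L^2(\tilde\Omega)}=1$.

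\textbf{Main obstacle.} The only delicate step is the equality case in Courant--Fischer: one must upgrade the optimizer $w\in\operatorname{span}\{\hat u_1,\dots,\hat u_n\}$ to a genuine $Q_{\tilde\Omega}$--eigenfunction in the operator domain, since without this the set-theoretic vanishing of $w$ on $\tilde\Omega\setminus\overline\Omega$ cannot be fed into an elliptic PDE unique continuation statement. Once $w$ is identified as a classical eigenfunction, the remaining bookkeeping at the transmission interface and the two appeals to Appendix~\ref{ap_UC} are routine.
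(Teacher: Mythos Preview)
Your argument is correct. Both your proof and the paper's are Leis-type contradiction arguments built on min-max and unique continuation, but the routes diverge after assuming $\lambda_n(\Omega)=\lambda_n(\tilde\Omega)$. The paper picks $m>n$ with $\lambda_m(\tilde\Omega)>\lambda_n(\tilde\Omega)$, threads a chain $\Omega=\Omega_1\subset\cdots\subset\Omega_m=\tilde\Omega$ of intermediate domains, takes a level-$n$ eigenfunction $w_j$ on each $\Omega_j$, uses unique continuation to show the zero-extensions $\tilde w_1,\dots,\tilde w_m$ are linearly independent in $H^1_0(\tilde\Omega)$, and then finds a unit $v\in\operatorname{span}\{\tilde w_j\}$ orthogonal to the first $m-1$ eigenfunctions of $Q_{\tilde\Omega}$, obtaining the contradiction $\lambda_m(\tilde\Omega)\le\Phi_{\tilde\Omega}(v,v)=\lambda_n(\tilde\Omega)$. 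Your approach is more direct: it avoids the intermediate-domain construction entirely by showing that the Courant--Fischer minimizer over $\operatorname{span}\{\hat u_1,\dots,\hat u_n\}\cap\{\psi_1,\dots,\psi_{n-1}\}^\perp$ must itself be a $Q_{\tilde\Omega}$-eigenfunction vanishing on $\tilde\Omega\setminus\overline\Omega$, which unique continuation kills. The price you pay is the equality-case/first-variation step you flag as the ``main obstacle''; the paper sidesteps this by never needing to promote a Rayleigh minimizer to an operator-domain eigenfunction, at the cost of the more elaborate chain setup.

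One small citation slip: in your first unique continuation step you invoke Appendix~\ref{ap_UC}, but Proposition~\ref{prop_UC} there is unique continuation from Cauchy data on a boundary portion. What you actually need for $w^+$ vanishing on the open set $\tilde\Omega\setminus\overline\Omega$ is the classical interior weak unique continuation property (as the paper cites, H\"ormander, Chapter~17); Appendix~\ref{ap_UC} is the right reference only for your second step across $\partial D$.
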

 
 \begin{proof}
 
Assume that $\lambda_n( \Omega)= \lambda_n( \tilde\Omega)$ for some $n$, and choose $m>n$ such that $\lambda_n( \tilde\Omega)<\lambda_m( \tilde\Omega)$.   As $\tilde \Omega\setminus\overline{\Omega}$ is non-empty, 
there are open sets $\Omega_j$, $j=1,\dots, m$,  such that 
\begin{equation}
\label{eq_sets_1}
\Omega=\Omega_1\subset\Omega_2\subset \dots\subset \Omega_m=\tilde \Omega,
\end{equation}
with $\Omega_j\setminus\overline{D}$ connected, for all $j=1,\dots, m$, and $U_j:=\Omega_j\setminus\overline{\Omega_{j-1}}$ non-empty, $j=2,\dots, m$. We also set $U_1:=\Omega_1$. 
 
It follows from \eqref{eq_sets_1} together with Proposition \ref{prop_mon_eigen}, and the fact that $\lambda_n(\Omega)= \lambda_n(\tilde \Omega)$, that $\lambda_n(\Omega_j)= \lambda_n(\tilde \Omega)$, $j=1,\dots, m$.  Let 
$w_j\in \mathcal{D}(Q_{\Omega_j})\subset H^1_0(\Omega_j)$ be a corresponding eigenfunction of the operator $Q_{\Omega_j}$, i.e. 
 \begin{equation}
\label{eq_sets_2}
 Q_{\Omega_j}w_j= \lambda_n(\tilde \Omega) w_j\quad \textrm{in}\quad \Omega_j,\quad j=1,\dots, m.
 \end{equation}
 Setting 
 \[
 \tilde w_j=\begin{cases} w_j& \textrm{in}\quad \Omega_j,\\
 0 &  \textrm{in}\quad \tilde \Omega\setminus\overline{\Omega_j},
 \end{cases}\in H^1_0(\tilde \Omega),\quad j=1,\dots, m-1, \quad \tilde w_m=w_m,
 \]
 we claim that $\tilde w_1,\dots,\tilde w_m$ are linearly independent. Indeed, assuming the contrary, we obtain that for some $j$,  $w_j=0$ in $U_j$. Notice that the case $j=1$ cannot occur. If $j\ge 2$, then it follows from 
 \eqref{eq_sets_2}  that 
 \[
 \lambda_n(\tilde \Omega) (w_j, v)_{L^2(\Omega_j)}=(Q_{\Omega_j}w_j, v)_{L^2(\Omega_j)}=\Phi_{\Omega_j}(w_j,v),
 \]
 for all $v\in H^1_0(\Omega_j)$.  This together with \eqref{eq_form_main_var} implies that $w^+_j=w_j|_{\Omega_j\setminus\overline{D}}$ and $w^-_j=w_j|_D$ satisfy the following transmission problem, 
  \begin{equation}
\label{eq_sets_3}
 \begin{aligned}
&\mathcal{L}_{A^+,q^+}w^+_j=\quad \lambda_n(\tilde \Omega) w^+_j \quad \textrm{in}\quad \Omega_j\setminus\overline{D},\\
&b\mathcal{L}_{A^-,q^-}(a^{-1}w^-_j)
= \lambda_n(\tilde \Omega) w^-_j  \quad \textrm{in}\quad D,\\
&w^+_j=w^-_j\quad \textrm{on}\quad \p D,\\
&(\p_\nu + iA^+\cdot\nu) w^+_j=b(\p_\nu+iA^-\cdot \nu)(a^{-1}w^-_j) 
+  ca^{-1}w^-_j\quad \textrm{on}\quad \p D.
\end{aligned}
\end{equation}
Since $w^+_j$ vanishes on the non-empty open set $U_j\subset \Omega_j\setminus\overline{D}$ and $\Omega_j\setminus\overline{D}$ is connected, by the classical unique continuation principle, applied to the first equation in \eqref{eq_sets_3}, we get that $w^+_j$ vanishes in  $\Omega_j\setminus\overline{D}$, see \cite[Chapter 17]{Horm_book_3}.  The transmission conditions in \eqref{eq_sets_3} yield that $w^-_j|_{\p D}=\p_\nu w^-_j|_{\p D}=0$, and therefore, by unique continuation from Lipschitz boundary, applied to the second equation in \eqref{eq_sets_3}, we conclude that $w^-_j=0$ in $D$, see Proposition \ref{prop_UC} in Appendix \ref{ap_UC}. This contradicts the fact that $w_j\not\equiv 0$ in $\Omega_j$, and hence, $\tilde w_1,\dots,\tilde w_m$ are linearly independent.  

Let $\{u_k\}_{k=1}^\infty$ be an orthonormal basis of eigenfunctions of the operator $Q_{\tilde \Omega}$ in $\tilde \Omega$, corresponding to the eigenvalues in \eqref{eq_sets_4}, i.e. $Q_{\tilde \Omega}u_k=\lambda_k(\tilde\Omega) u_k$ in $\tilde \Omega$. We have, when $v\in H^1_0(\tilde \Omega)$, 
\begin{equation}
\label{eq_sets_3_new}
\Phi_{\tilde \Omega}(v,v)=\sum_{k=1}^\infty \lambda_k(\tilde\Omega) |(v,u_k)_{L^2(\tilde \Omega)}|^2. 
\end{equation}
As $\tilde w_1,\dots,\tilde w_m$ are linearly independent, there is 
\[
v=\sum_{j=1}^m c_j\tilde w_j\in H^1_0(\tilde \Omega)
\]
such that $(v,u_k)_{L^2(\tilde \Omega)}=0$ for all $k=1,\dots, m-1$, and $\|v\|_{L^2(\tilde \Omega)}=1$.  Thus, it follows from \eqref{eq_sets_3_new} that 
\begin{equation}
\label{eq_sets_5}
\Phi_{\tilde \Omega}(v,v)\ge \lambda_m(\tilde \Omega). 
\end{equation}
 On the other hand,  \eqref{eq_sets_2} implies that 
 \[
 \Phi_{\tilde \Omega}(\tilde w_j,\tilde w_k)=\lambda_n(\tilde \Omega)(\tilde w_j,\tilde w_k)_{L^2(\tilde \Omega)}, 
 \]
 and therefore, 
 \begin{equation}
 \label{eq_sets_6}
 \Phi_{\tilde \Omega}(v,v)=\sum_{j,k=1}^m c_j\overline{c_k}\Phi_{\tilde \Omega}(\tilde w_j, \tilde w_k)=\lambda_n(\tilde \Omega)\|v\|_{L^2(\tilde \Omega)}^2=\lambda_n(\tilde \Omega).
 \end{equation}
 It follows from \eqref{eq_sets_5} and \eqref{eq_sets_6} that $\lambda_n(\tilde \Omega)\ge \lambda_m(\tilde \Omega)$, which is a contradiction. The proof is complete. 
 \end{proof}

 \begin{prop}
 \label{prop_zero_sequence_set}
 Let $D\subset\subset \Omega\subset \tilde \Omega$,  $\tilde \Omega\setminus\overline{\Omega}\ne \emptyset$, and let  $\tilde \Omega\setminus\overline{D}$ and $\Omega\setminus\overline{D}$ be connected.
Then for any sequence of open sets $\Omega_j$, $j=1,2,\dots$,  such that 
 \[
 \Omega=\Omega_1\subset\Omega_2\subset \Omega_3\subset\dots\subset \tilde\Omega,
 \]
  $\tilde\Omega\setminus\overline{\Omega_j}\ne \emptyset$,  $\Omega_{j+1}\setminus\overline{\Omega_j}\ne\emptyset$, and $\Omega_j\setminus\overline{D}$ is connected, $j=1,2,\dots$, there exists $j_0\in \N$ so that zero is not in the spectrum of the operator $Q_{\Omega_j}$ for all $j\ge j_0$. 

 \end{prop}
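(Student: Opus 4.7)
The plan is to combine the strict monotonicity of the eigenvalues $\lambda_n(\Omega_j)$ in $j$ supplied by Proposition \ref{prop_mon_eigen_stric} with the fact that the Weyl-type lower bound $\lambda_n(\tilde\Omega)$ tends to $+\infty$. This reduces the question to finitely many strictly decreasing real sequences, each of which can pass through zero at most once.

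First I would use \eqref{eq_sets_4} applied to $\tilde\Omega$ to choose $N\in\N$ such that $\lambda_N(\tilde\Omega)>0$. Since $\Omega_j\subset\tilde\Omega$ with $\Omega_j\setminus\overline{D}$ connected, Proposition \ref{prop_mon_eigen} gives $\lambda_n(\Omega_j)\ge \lambda_n(\tilde\Omega)$ for every $n,j$. Consequently $\lambda_n(\Omega_j)>0$ whenever $n\ge N$, so the only eigenvalues of $Q_{\Omega_j}$ that can possibly equal zero are among $\lambda_1(\Omega_j),\dots,\lambda_{N-1}(\Omega_j)$.

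Next I would apply Proposition \ref{prop_mon_eigen_stric} to each pair $\Omega_j\subset\Omega_{j+1}$; the hypotheses of that proposition are verified by assumption, since $D\subset\subset\Omega_j$, the inclusion is strict with $\Omega_{j+1}\setminus\overline{\Omega_j}\ne\emptyset$, and both $\Omega_j\setminus\overline{D}$ and $\Omega_{j+1}\setminus\overline{D}$ are connected. This yields
\[
\lambda_n(\Omega_1)>\lambda_n(\Omega_2)>\lambda_n(\Omega_3)>\cdots\ge \lambda_n(\tilde\Omega),
\]
so for each fixed $n<N$ the sequence $\{\lambda_n(\Omega_j)\}_{j\ge 1}$ is strictly decreasing and therefore attains the value $0$ for at most one index, which I denote $j_n$ (if it exists).

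Finally, I would set
\[
j_0:=1+\max\bigl\{j_n:1\le n\le N-1,\ j_n\text{ exists}\bigr\},
\]
with the convention $j_0=1$ if the set is empty. Then for every $j\ge j_0$ one has $\lambda_n(\Omega_j)\ne 0$ for all $n\le N-1$ and $\lambda_n(\Omega_j)>0$ for all $n\ge N$; in particular zero does not belong to the spectrum of $Q_{\Omega_j}$. No step in this argument is a genuine obstacle: the only thing worth checking carefully is that the hypotheses of Proposition \ref{prop_mon_eigen_stric} are met at each step of the nested sequence, which is immediate from the assumptions placed on $\{\Omega_j\}$.
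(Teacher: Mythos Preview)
Your argument is correct and uses exactly the same two ingredients as the paper's proof: the strict monotonicity of Proposition~\ref{prop_mon_eigen_stric} and the fact that $\lambda_n(\tilde\Omega)\to+\infty$. The organization, however, is different. The paper argues by contradiction, following \cite{Stefanov_1990}: assuming zero is an eigenvalue of $Q_{\Omega_{l_j}}$ for an increasing sequence $l_j\to\infty$, strict monotonicity forces the index $k_{l_j}$ of the zero eigenvalue to be strictly increasing in $j$, whence $k_{l_j}\to\infty$, and then $0=\lambda_{k_{l_j}}(\Omega_{l_j})>\lambda_{k_{l_j}}(\tilde\Omega)\to+\infty$ gives the contradiction. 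Your direct argument front-loads the use of $\lambda_n(\tilde\Omega)\to+\infty$ to reduce to finitely many eigenvalue indices $n<N$, and then disposes of each by the observation that a strictly decreasing real sequence vanishes at most once. Both routes are short; yours is arguably the cleaner of the two, since it avoids the bookkeeping of the subsequence $\{l_j\}$ and makes the finiteness of the obstruction explicit from the outset.
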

 
 \begin{proof}
 
 In order to prove this result we shall follow the argument of  \cite[Lemma 3.2]{Stefanov_1990}.  Assume the contrary, i.e. for any $j\in\N$, there exists $l_j\in \N$ so that $l_j\ge j$ and zero is an eigenvalue of the operator $Q_{\Omega_{l_j}}$. Let $k_{l_j}\in\N$ be such that $\lambda_{k_{l_j}}(\Omega_{l_j})=0$, $j=1,2,\dots$.  Without loss of generality we assume  that 
 \[
 l_1<l_2<\dots<l_j\to \infty, \quad j\to \infty. 
 \]
 
 By Proposition \ref{prop_mon_eigen_stric}, we get $0=\lambda_{k_{l_j}}(\Omega_{l_j})>\lambda_{k_{l_j}}(\Omega_{l_{j+1}})$. Since 
\[
\lambda_1(\Omega_{l_{j+1}})\le\dots \le   \lambda_{k_{l_j}}(\Omega_{l_{j+1}})\le \lambda_{k_{l_j}+1}(\Omega_{l_{j+1}})\le \dots \le \lambda_n(\Omega_{l_{j+1}})\to +\infty,
\]  
as $n\to +\infty$, we conclude that $k_{l_j}<k_{l_{j+1}}$, and therefore, $k_{l_j}\to +\infty$ as $j\to +\infty$.  Using again Proposition \ref{prop_mon_eigen_stric}, we have $0=\lambda_{k_{l_j}}(\Omega_{l_j})>\lambda_{k_{l_j}}(\tilde\Omega)\to +\infty$ as $j\to +\infty$.  This contradiction  completes the proof. 
  
\end{proof}

When deriving the Runge type approximation result in Subsection \ref{sec_runge}, we shall need the following consequence of Proposition \ref{prop_zero_sequence_set}. Here $B(x,\varepsilon)$ is an open ball of radius $\varepsilon>0$, centered at $x\in \R^n$. 

\begin{cor}
\label{prop_domain_pert}
Let $\Omega\subset\R^n$ be a bounded open set with Lipschitz boundary, and let $D_j\subset\subset\Omega$ be a bounded open set with Lipschitz boundary such that $\Omega\setminus\overline{D_j}$ is connected, $j=1,2$. 
Let $A^+\in W^{1,\infty}(\R^n,\R^n)$, $q^+\in L^{\infty}(\R^n,\R)$,  $A_j^-\in W^{1,\infty}(D_j,\R^n)$, $q^-_j\in L^\infty(D_j,\R)$, $a_j,b_j\in C^{1,1}(\overline{D_j},\R)$,  $c\in C(\overline{D_j},\R)$. 
Assume that $a_j,b_j>0$  in $\overline{D_j}$ and $a_jb_j=1$ in $\overline{D_j}$.  Then for any $x\in \p \Omega$, and any $\varepsilon>0$, there exists a bounded open set $\tilde \Omega\subset\R^n$ with Lipschitz boundary such that $\tilde \Omega\setminus\overline{D_j}$ is connected, $j=1,2$, $\Omega\subset \tilde \Omega\subset \Omega\cup B(x,\varepsilon)$, and the homogeneous transmission problem on $\tilde \Omega$,
\begin{equation}
\label{eq_sets_7}
\begin{aligned}
&\mathcal{L}_{A^+,q^+}(x,D_x)u^+=0\quad \textrm{in}\quad \tilde \Omega\setminus\overline{D_j},\\
&\mathcal{L}_{A^-_j,q^-_j}(x,D_x)u^-=0\quad \textrm{in}\quad D_j,\\
&u^+=a_j u^-\quad \textrm{on}\quad \p D_j,\\
&(\p_\nu+iA^+\cdot \nu)u^+=b_j(\p_\nu+iA_j^-\cdot\nu)u^-+c_ju^-\quad \textrm{on}\quad \p D_j,\\
&u^+=0\quad \textrm{on}\quad \p \tilde \Omega,
\end{aligned}
\end{equation}
 has only the trivial solution, for $j=1,2$.   

\end{cor}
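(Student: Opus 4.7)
The strategy is a direct application of Proposition \ref{prop_zero_sequence_set} twice, once for each obstacle $D_j$. First I would fix a bounded open set $\Omega^{*}$ with Lipschitz boundary satisfying $\Omega \subset \Omega^{*} \subset \Omega \cup B(x,\varepsilon)$, $\Omega^{*} \setminus \overline{\Omega} \neq \emptyset$, and such that $\Omega^{*} \setminus \overline{D_j}$ is connected for $j = 1,2$. Such an $\Omega^{*}$ is obtained by attaching to $\Omega$ a small open Lipschitz bump (for instance, a smoothly truncated ball) contained in $B(x,\varepsilon) \setminus \overline{\Omega}$ and touching $\partial \Omega$ in a neighborhood of $x$; connectedness of $\Omega^{*} \setminus \overline{D_j}$ is preserved because $D_j \subset\subset \Omega$ is bounded away from $\partial\Omega$, so the added points never separate $D_j$ from $\Omega \setminus \overline{D_j}$.

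Next I would construct a strictly increasing sequence of bounded open sets with Lipschitz boundaries,
\[
\Omega = \Omega_1 \subset \Omega_2 \subset \Omega_3 \subset \cdots \subset \Omega^{*},
\]
such that $\Omega^{*} \setminus \overline{\Omega_k} \neq \emptyset$, $\Omega_{k+1} \setminus \overline{\Omega_k} \neq \emptyset$, and $\Omega_k \setminus \overline{D_j}$ is connected for all $k$ and $j = 1,2$. This is achieved by attaching to $\Omega$ a nested family of ever larger Lipschitz bumps inside $\Omega^{*} \setminus \overline{\Omega}$, for example by successively enlarging the bump used to define $\Omega^{*}$.

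Having fixed this data, for each $j = 1,2$ one forms the sesquilinear form $\Phi_{\Omega_k}^{(j)}$ associated, as in \eqref{eq_form_main_var}, to the coefficients $(A^{+}, q^{+}, A_j^{-}, q_j^{-}, a_j, b_j, c_j)$ on $\Omega_k$ with obstacle $D_j$. Since $a_j b_j = 1$ and all coefficients are real, $\Phi_{\Omega_k}^{(j)}$ is Hermitian, densely defined, closed and semibounded on $H^1_0(\Omega_k)$, and hence defines a self-adjoint operator $Q_{\Omega_k}^{(j)}$ on $L^2(\Omega_k)$ with discrete spectrum. Proposition \ref{prop_zero_sequence_set} applied to the sequence $\{\Omega_k\}$ with ambient domain $\Omega^{*}$ then yields integers $k_1, k_2 \in \mathbb{N}$ such that $0 \notin \mathrm{spec}(Q_{\Omega_k}^{(j)})$ for all $k \geq k_j$. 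Setting $\tilde{\Omega} := \Omega_{k_0}$ for any $k_0 \geq \max(k_1, k_2)$ gives a bounded open set with Lipschitz boundary, $\Omega \subset \tilde{\Omega} \subset \Omega \cup B(x,\varepsilon)$, with $\tilde{\Omega} \setminus \overline{D_j}$ connected, and on which the homogeneous transmission problem \eqref{eq_sets_7} admits only the trivial solution for both $j = 1,2$.

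The main obstacle is the geometric construction of the sequence $\{\Omega_k\}$, simultaneously enforcing strict growth, Lipschitz regularity of $\partial \Omega_k$, and connectedness of $\Omega_k \setminus \overline{D_j}$ for both obstacles. Because we only deform $\partial\Omega$ inside the small ball $B(x,\varepsilon)$, and $D_1, D_2$ are compactly contained in $\Omega$ and hence disjoint from this ball, these requirements do not interact, and an elementary construction based on a nested family of smoothly truncated bumps suffices. Once the geometry is in place, the corollary is immediate from two applications of Proposition \ref{prop_zero_sequence_set}.
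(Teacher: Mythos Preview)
Your proposal is correct and follows exactly the approach implicit in the paper: the corollary is stated there as a direct consequence of Proposition~\ref{prop_zero_sequence_set}, and your argument---choosing a nested sequence of Lipschitz enlargements of $\Omega$ inside $\Omega\cup B(x,\varepsilon)$, applying Proposition~\ref{prop_zero_sequence_set} separately for $j=1$ and $j=2$, and taking $\tilde\Omega=\Omega_{k_0}$ with $k_0\ge\max(k_1,k_2)$---is precisely the intended reasoning. The geometric construction poses no difficulty since $D_1,D_2\subset\subset\Omega$ are disjoint from $B(x,\varepsilon)$ for $\varepsilon$ small.
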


Combining Proposition \ref{prop_zero_sequence_set} and \cite[Lemma 3.2]{Stefanov_1990}, we also get the following result, which will be useful in Section \ref{sec_scat_prob} when considering the scattering problem.  

\begin{cor}
\label{prop_domain_pert_2}

Let $k>0$ and let $D_j\subset\R^n$ be a bounded open set with Lipschitz boundary such that $\R^n\setminus\overline{D_j}$ is connected, $j=1,2$. 
Let $A^+\in W^{1,\infty}(\R^n,\R^n)$, $q^+\in L^{\infty}(\R^n,\R)$, be compactly supported,    $A_j^-\in W^{1,\infty}(D_j,\R^n)$, $q^-_j\in L^\infty(D_j,\R)$, $a_j,b_j\in C^{1,1}(\overline{D_j},\R)$,  $c\in C(\overline{D_j},\R)$. 
Assume that $a_j,b_j>0$  in $\overline{D_j}$ and $a_jb_j=1$ in $\overline{D_j}$.  Then there exists an open ball $B$ such $D_1,D_2\subset\subset B$, $\supp(A^+), \supp(q^+)\subset B$,  the homogeneous transmission problem \eqref{eq_sets_7} with $\tilde \Omega$ replaced by $B$ and $q^+$, $q^-_j$ replaced by $q^+-k^2$, $q^-_j-k^2$, has only the trivial solution,  $j=1,2$, and $k^2$ is not an eigenvalue of the Dirichlet Laplacian on $B$. 

\end{cor}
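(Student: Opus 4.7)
The plan is to choose $B$ as a member of a monotone family of balls and combine Proposition~\ref{prop_zero_sequence_set}, applied to both transmission problems, with \cite[Lemma 3.2]{Stefanov_1990}, applied to the Dirichlet Laplacian, to simultaneously rule out the three undesired spectral coincidences.

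First, I would fix a concentric ball $B_\infty$ large enough so that $\overline{D_1}\cup\overline{D_2}\cup\supp(A^+)\cup\supp(q^+)\subset B_\infty$ with room to spare, and then pick a strictly increasing sequence of open concentric balls
\[
B_1\subset B_2\subset B_3\subset\cdots\subset B_\infty,
\]
each containing $\overline{D_1}\cup\overline{D_2}\cup\supp(A^+)\cup\supp(q^+)$, and satisfying $B_{n+1}\setminus\overline{B_n}\neq\emptyset$ and $B_\infty\setminus\overline{B_n}\neq\emptyset$. Taking the balls sufficiently large (which we may, at the cost of enlarging $B_\infty$) ensures that $B_n\setminus\overline{D_j}$ is connected for both $j=1,2$, since $\R^n\setminus\overline{D_j}$ is connected and $\overline{D_j}$ is compactly contained in $B_n$.

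Next, for each $j\in\{1,2\}$, I would apply Proposition~\ref{prop_zero_sequence_set} to the homogeneous transmission problem on $B_n$ associated with the magnetic Schr\"odinger operators $\mathcal{L}_{A^+,q^+-k^2}$ and $\mathcal{L}_{A_j^-,q_j^--k^2}$, together with transmission coefficients $a_j,b_j,c_j$. Shifting $q^\pm$ by the real constant $-k^2$ preserves the self-adjoint framework of Section~\ref{sec_domain_pertur} verbatim (the assumptions $a_jb_j=1$ on $\overline{D_j}$ and $a_j,b_j>0$ are unchanged, and the potentials remain real-valued). Thus Proposition~\ref{prop_zero_sequence_set} yields an index $n_j$ such that for every $n\ge n_j$ zero is not in the spectrum of the associated self-adjoint operator $Q_{B_n}$, i.e., the shifted homogeneous transmission problem on $B_n$ has only the trivial solution. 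Separately, applying \cite[Lemma 3.2]{Stefanov_1990} to the Dirichlet Laplacian on the family $\{B_n\}$ yields an index $n_0$ so that $k^2$ is not in the Dirichlet spectrum of $-\Delta$ on $B_n$ for $n\ge n_0$. Setting $B:=B_n$ for any $n\ge\max\{n_1,n_2,n_0\}$ gives the required ball.

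Beyond choosing the balls so that $B_n\setminus\overline{D_j}$ remains connected (an easy topological point once $B_n$ is large enough), there is essentially no obstacle: the only substantive input is that Proposition~\ref{prop_zero_sequence_set}, proved under the self-adjointness hypothesis $a_jb_j=1$, still applies after the constant shift $q^\pm\mapsto q^\pm-k^2$, and that the cited Stefanov lemma provides the analogous domain-perturbation statement for the Dirichlet Laplacian. Simultaneously satisfying all three conditions by taking the maximum of the three threshold indices then completes the proof.
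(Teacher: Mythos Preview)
Your proposal is correct and follows exactly the approach the paper indicates: the paper merely states that the corollary follows by ``combining Proposition~\ref{prop_zero_sequence_set} and \cite[Lemma 3.2]{Stefanov_1990}'', and you have supplied precisely those details, applying Proposition~\ref{prop_zero_sequence_set} to each of the two shifted transmission problems and the Stefanov lemma to the Dirichlet Laplacian, then taking the maximum of the three threshold indices.
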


When recovering the obstacle in the proof of Theorem \ref{thm_main_sa},  we shall have to consider the following configuration.  Let $\Omega\subset\R^n$ be a bounded domain with connected Lipschitz boundary and let $D_j\subset\subset\Omega$ be a bounded open set with Lipschitz boundary such that $\Omega\setminus\overline{D_j}$ is connected, $j=1,2$. As the boundary of $D_j$ is Lipschitz, we see that $D_j$ has at most finitely many connected components.

Let $D(e)$ be the connected component of $\Omega\setminus(\overline{D_1}\cup \overline{D_2})$, whose boundary intersects $\p \Omega$.  We define $D(i):=\Omega\setminus\overline{D(e)}$.   The number of the connected components  of $D(i)$ does not exceed the sum of the numbers of the connected components of $D_1$ and $D_2$.  Indeed, this follows from the fact that the boundary of each connected component of $\Omega\setminus(\overline{D_1}\cup \overline{D_2})$, different from $D(e)$, intersects $\p(D_1\cup D_2)$.  Let $C_l$, $l=1,2,\dots, N$, be the connected components of $D(i)$, and let $V_l$ be a small connected neighborhood of $\overline{C_l}$ with $C^\infty$ boundary such that $\overline{V_l}\cap \overline{V_k}=\emptyset$, $k\ne l$, and such that for the union $V:=\cup_{l=1}^N V_l$, we have $\Omega\setminus\overline{V}$ is connected. 

Since $\p \Omega$ is connected, we observe that $\R^n\setminus V$ is connected, and therefore, $\R^n\setminus V_l$ is connected for $l=1,2,\dots, N$.  
It follows that the boundary of $\p V_l$ is connected. 

We denote by  $D_j^{(l)}$ the union of the connected components of $D_j$ belonging to $V_l$, $j=1,2$, $l=1,\dots, N$. Notice that $D_j^{(l)}\subset\subset V$ and $\R^n\setminus D_j^{(l)}$ is connected.  It follows therefore that the set $V_l\setminus\overline{D_j^{(l)}}$ is connected, $j=1,2$.

In Proposition \ref{prop_mon_eigen_stric} we have considered the sets $\Omega$ and $D$ such that $\Omega\setminus\overline{D}$ is connected. 
In the situation at hand, the set $V\setminus\overline{D}$ is no longer connected.  Nevertheless, we have the following generalization of Proposition \ref{prop_mon_eigen_stric}, where we let $j=1,2$  be fixed.
\begin{prop}
Let $\tilde V_l\subset\subset \Omega$, $l=1,\dots, N$, be an open connected subset with $C^\infty$ boundary such that $V_l\subset \tilde V_l$, $\tilde V_l\setminus\overline{V_l}\ne\emptyset$, $\tilde V_l\setminus\overline{D_j^{(l)}}$ is connected and $\overline{\tilde V_l}\cap \overline{\tilde V_k}=\emptyset$, $k\ne l$.  Set $\tilde V:=\cup_{l=1}^N \tilde V_l$. Then for the eigenvalues of the operators $Q_V$ and $Q_{\tilde V}$, we  have $\lambda_n(V)>\lambda_n(\tilde V)$, $n=1,2,\dots$. 
\end{prop}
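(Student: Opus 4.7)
The plan is to reduce this multi-component statement to the single-component strict monotonicity of Proposition~\ref{prop_mon_eigen_stric}. Both $V=\bigsqcup_{l=1}^N V_l$ and $\tilde V=\bigsqcup_{l=1}^N\tilde V_l$ are disjoint unions, in view of the hypotheses $\overline{V_l}\cap\overline{V_k}=\emptyset$ and $\overline{\tilde V_l}\cap\overline{\tilde V_k}=\emptyset$ for $k\ne l$. This gives an orthogonal decomposition $H^1_0(V)=\bigoplus_{l=1}^N H^1_0(V_l)$, and since the obstacle inside $V$ splits as $D_j\cap V=\bigsqcup_l D_j^{(l)}$ with $D_j^{(l)}\subset\subset V_l$, the sesquilinear form $\Phi_V$ from \eqref{eq_form_main_var} splits as $\sum_l \Phi_{V_l}$. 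Consequently the associated self-adjoint operator $Q_V$ decomposes orthogonally as $\bigoplus_{l=1}^N Q_{V_l}$, and the analogous decomposition $Q_{\tilde V}=\bigoplus_{l=1}^N Q_{\tilde V_l}$ holds as well. In particular, the spectrum of $Q_V$, counted with multiplicity, is the sorted merge of the spectra of the $Q_{V_l}$'s, and likewise for $Q_{\tilde V}$.

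I would then apply Proposition~\ref{prop_mon_eigen_stric} componentwise to each pair $(V_l,\tilde V_l)$: its hypotheses are met since $D_j^{(l)}\subset\subset V_l\subset \tilde V_l$, both $V_l\setminus\overline{D_j^{(l)}}$ and $\tilde V_l\setminus\overline{D_j^{(l)}}$ are connected (the former was observed in the construction preceding the proposition, the latter is an explicit hypothesis), and $\tilde V_l\setminus\overline{V_l}\ne\emptyset$. This yields the componentwise strict inequality $\lambda_k(V_l)>\lambda_k(\tilde V_l)$ for every $k\ge 1$ and every $l\in\{1,\dots,N\}$.

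The proof is then completed by the elementary observation that strict inequality is preserved under sorted merging. Pair $\lambda_k(V_l)$ with $\lambda_k(\tilde V_l)$; since every entry of the family $\{\lambda_k(V_l)\}_{k,l}$ is sent to a strictly smaller entry in $\{\lambda_k(\tilde V_l)\}_{k,l}$, for each fixed $n\ge 1$ the $n$ smallest entries of the first family are mapped into $n$ entries of the second family that are all strictly below $\lambda_n(V)$. Hence the $n$-th smallest entry of the second family, which is $\lambda_n(\tilde V)$, lies strictly below $\lambda_n(V)$, as required.

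No serious obstacle is expected once the orthogonal splitting of the operators is in place. The only subtlety worth checking is that the transmission problem truly decouples across the components, which is immediate because each $D_j^{(l)}$ lies inside its own $V_l$ and the transmission conditions in \eqref{eq_trans_eig_1} are imposed only on $\p D_j=\bigsqcup_l \p D_j^{(l)}$, with no interaction between different components.
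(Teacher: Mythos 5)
Your proof is correct and follows the route the paper itself indicates --- its proof consists only of the remark that one applies Proposition~\ref{prop_mon_eigen_stric} to each pair $V_l$, $\tilde V_l$ --- but you supply the two details left implicit: the orthogonal decompositions $Q_V=\bigoplus_{l=1}^N Q_{V_l}$ and $Q_{\tilde V}=\bigoplus_{l=1}^N Q_{\tilde V_l}$, which follow from $\overline{V_l}\cap\overline{V_k}=\emptyset$, $\overline{\tilde V_l}\cap\overline{\tilde V_k}=\emptyset$ and $D_j^{(l)}\subset\subset V_l$, and the elementary fact that a termwise strict inequality between the component spectra survives the sorted merge. Both steps are verified correctly (in particular the merge argument: the $n$ smallest entries of $\{\lambda_k(V_l)\}_{k,l}$ map to $n$ entries of $\{\lambda_k(\tilde V_l)\}_{k,l}$ all strictly below $\lambda_n(V)$, forcing $\lambda_n(\tilde V)<\lambda_n(V)$), so nothing is missing.
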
 
The proof of this result proceeds similarly to the proof of Proposition  \ref{prop_mon_eigen_stric}, applying it to each pair of the sets  $V_l$ and $\tilde V_l$.

Arguing as in the proof of Proposition \ref{prop_zero_sequence_set}, we finally obtain the following result. 
\begin{prop}
\label{prop_domain_pert_V}
Let $x_l\in \p V_l$, $l=1,\dots, N$, and let $\varepsilon>0$ be small. Then there exists a bounded domain $\tilde V_l$ with $C^\infty$ boundary such that $V_l\subset \tilde V_l\subset V_l\cup B(x_l,\varepsilon)$,  $\Omega\setminus \overline{\tilde V}$ is connected, and the transmission problem \eqref{eq_sets_7} with $\tilde \Omega$ replaced by $\tilde V$ has only the trivial solution, for $j=1,2$. Here we set $\tilde V=\cup_{l=1}^N \tilde V_l$. 
\end{prop}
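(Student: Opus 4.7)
The plan is to mimic the proof of Proposition~\ref{prop_zero_sequence_set}, now using the multi-component version of strict eigenvalue monotonicity established in the proposition immediately preceding this one. For each $l = 1,\dots,N$ I would construct a nested sequence $\{\tilde V_l^{(m)}\}_{m\ge 1}$ of bounded domains with $C^\infty$ boundaries satisfying: (i) $V_l \subset \tilde V_l^{(m)} \subsetneq \tilde V_l^{(m+1)} \subset V_l \cup B(x_l,\varepsilon)$ with $\tilde V_l^{(m+1)} \setminus \overline{\tilde V_l^{(m)}} \ne \emptyset$; (ii) $\tilde V_l^{(m)} \setminus \overline{D_j^{(l)}}$ is connected for $j=1,2$; (iii) $\overline{\tilde V_l^{(m)}} \cap \overline{\tilde V_k^{(m)}} = \emptyset$ for $k \ne l$; and (iv) $\Omega \setminus \overline{\tilde V^{(m)}}$ is connected, where $\tilde V^{(m)} := \bigcup_{l=1}^N \tilde V_l^{(m)}$. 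Such a sequence can be produced by first smoothing $\partial V_l$ away from $x_l$ if necessary, and then attaching successively larger smooth ``caps'' near $x_l$ protruding into the connected set $\Omega\setminus \overline V$; (iii) is automatic for $\varepsilon$ small since the $\overline{V_l}$ are disjoint, (ii) persists because $D_j^{(l)} \subset\subset V_l$ lies strictly in the interior, and (iv) persists because a small outward bump at a single boundary point of $V_l$ does not disconnect the (already connected) complement in $\Omega$.

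With the sequence in hand, the generalized strict monotonicity applied to each consecutive pair $\tilde V^{(m)} \subsetneq \tilde V^{(m+1)}$ yields
\[
\lambda_n(\tilde V^{(m)}) > \lambda_n(\tilde V^{(m+1)}), \qquad n, m = 1, 2, \dots.
\]
I would then follow the contradiction argument of Proposition~\ref{prop_zero_sequence_set} verbatim. Suppose no such $\tilde V^{(m)}$ works; then there exist $m_j \to \infty$ and integers $k_{m_j}$ with $\lambda_{k_{m_j}}(\tilde V^{(m_j)}) = 0$. Strict monotonicity forces $k_{m_{j+1}} > k_{m_j}$, hence $k_{m_j} \to +\infty$. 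Fixing one ``envelope'' domain $\tilde V^{\infty}$ with $\tilde V^{(m)} \subset \tilde V^\infty \subset \bigcup_l(V_l \cup B(x_l,\varepsilon))$ for all $m$, a further application of strict monotonicity gives
\[
0 = \lambda_{k_{m_j}}(\tilde V^{(m_j)}) > \lambda_{k_{m_j}}(\tilde V^{\infty}) \longrightarrow +\infty,
\]
a contradiction. Any $\tilde V_l := \tilde V_l^{(m_0)}$ beyond this threshold then satisfies the conclusion.

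The main obstacle is the geometric bookkeeping in constructing the strictly increasing sequence with all four topological conditions holding simultaneously; the eigenvalue step itself is then a direct transcription of the argument in Proposition~\ref{prop_zero_sequence_set}. The key observation making the geometry work is that the required perturbations are purely local around each $x_l$, so the global topological properties of $V$ inside $\Omega$ (connectedness of $\Omega\setminus\overline V$ and of each $V_l\setminus\overline{D_j^{(l)}}$) are preserved under arbitrarily fine successive enlargements.
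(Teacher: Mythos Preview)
Your proposal is correct and follows essentially the same approach as the paper, which simply states that the result is obtained by ``arguing as in the proof of Proposition~\ref{prop_zero_sequence_set}'' using the multi-component strict monotonicity from the preceding proposition. The only minor point to make explicit is that the contradiction argument must be run separately for $j=1$ and $j=2$, and one then takes $m_0$ large enough to work for both; this is the same routine step by which Corollary~\ref{prop_domain_pert} is deduced from Proposition~\ref{prop_zero_sequence_set}.
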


\section{Inverse transmission problems on bounded domains. Proof of Theorem \ref{thm_main_sa}}
\label{sec_inv_direct_1}
\subsection{Singular solutions of the transmission problem}

\label{sec_fund_sol}

The purpose of this subsection is to construct singular solutions to the transmission problem, using fundamental solutions for the magnetic Schr\"odinger  operator with poles outside the domain. 
Using the elliptic estimates \eqref{eq_energy_estimates_unique} for the solutions of the transmission problem, we shall obtain $H^1$-estimates for these singular solutions.   

Let $\Omega\subset\R^n$, $n\ge 3$, be a bounded domain with connected Lipschitz boundary, and let
$D\subset\subset\Omega$ be a bounded open set with Lipschitz boundary such that $\Omega\setminus \overline{D}$ is connected.  Let $A^\pm\in W^{1,\infty}(D^\pm,\R^n)$, $q^\pm\in L^\infty(D^\pm,\R)$, $a,b\in C^{1,1}(\overline{D},\R)$,  $c\in C(\overline{D},\R)$, and $a,b>0$ on $\overline{D}$. 

Let $V\subset \R^n$ be an open subset with Lipschitz boundary such that
\[
D\subset\!\subset V\subset\!\subset\Omega,
\] 
and such that the following homogeneous transmission problem
\begin{equation}
\label{eq_hom_trans_V}
\begin{aligned}
&\mathcal{L}_{A^+,q^+}u^+=0\quad \textrm{in}\quad V\setminus\overline{D},\\
&\mathcal{L}_{A^-,q^-}u^-=0\quad \textrm{in}\quad D,\\
&u^+=au^-\quad \textrm{on}\quad \p D,\\
(&\p_\nu + iA^+\cdot\nu) u^+=b(\p_\nu+iA^-\cdot \nu)u^-+cu^- \quad \textrm{on}\quad \p D,\\
&u^+=0\quad \textrm{on}\quad \p V,
\end{aligned}
\end{equation}
has only the trivial solution.  Denote by 
$G(x,y)$ a fundamental solution of the operator $\mathcal{L}_{A^+,q^+}$, i.e. 
\[
\mathcal{L}_{A^+,q^+}(x,D_x)G(x,y)=\delta(x-y), \quad x,y\in \Omega. 
\]  
We refer to Appendix \ref{appendix_fundamental_sol} for the existence and  the basic properties of $G$.

Let  $y\in \Omega\setminus \overline{V}$ be fixed.   Then   the estimates \eqref{eq_fund_sol} from Appendix \ref{appendix_fundamental_sol} implies that $G(\cdot,y)\in H^1(V)$.  
By elliptic regularity, $G(\cdot,y)\in H^2_{\textrm{loc}}(V)$. The functions 
\begin{equation}
\label{eq_sing_sol_new}
E^+(\cdot,y)=G(\cdot,y)+E_0^+(\cdot,y),\quad E^-(\cdot,y)=G(\cdot,y)+E_0^-(\cdot,y),
\end{equation}
which will be referred to as singular solutions, solve the transmission problem
\begin{align*}
&\mathcal{L}_{A^+,q^+}(x,D_x)E^+(x,y)=0\quad \textrm{in}\quad V\setminus\overline{D},\\
&\mathcal{L}_{A^-,q^-}(x,D_x)E^-(x,y)=0\quad \textrm{in}\quad D,\\
&E^+(\cdot,y)=aE^-(\cdot,y)\quad \textrm{on}\quad \p D,\\
(&\p_\nu + iA^+\cdot\nu) E^+(\cdot,y)=b(\p_\nu+iA^-\cdot \nu)E^-(\cdot,y)+cE^-(\cdot,y) \quad \textrm{on}\quad \p D,\\
&E^+(\cdot,y)=G(\cdot,y)\quad \textrm{on}\quad \p V,
\end{align*}
when $(E_0^+(\cdot,y), E_0^-(\cdot,y))\in H^1(V\setminus\overline{D})\times H^1(D)$ solve  the following transmission problem,
\begin{equation}
\label{eq_transmission_E_0}
\begin{aligned}
&\mathcal{L}_{A^+,q^+}(x,D_x)E_0^+(x,y)=0\quad \textrm{in}\quad V\setminus\overline{D},\\
&\mathcal{L}_{A^-,q^-}(x,D_x)E_0^-(x,y)=f^-\quad \textrm{in}\quad D,\\
&E_0^+(\cdot,y)=aE_0^-(\cdot,y)+(a-1)G(\cdot,y)\quad \textrm{on}\quad \p D,\\
(&\p_\nu + iA^+\cdot\nu) E_0^+(\cdot,y)=b(\p_\nu+iA^-\cdot \nu)E_0^-(\cdot,y)+cE_0^-(\cdot,y)+g_1 \quad \textrm{on}\quad \p D,\\
&E^+_0(\cdot,y)=0\quad \textrm{on}\quad \p V,
\end{aligned}
\end{equation}
where 
\begin{align*}
f^-= & 2i(A^--A^+)\cdot\nabla G(\cdot,y)\\
&+(i\nabla\cdot (A^--A^+)+(A^+)^2-(A^-)^2+q^+-q^-)G(\cdot,y)\in L^2(D),\\
g_1=&(b-1)\p_\nu G(\cdot,y)+(i\nu\cdot (bA^--A^+)+c)G(\cdot,y)\in H^{-1/2}(\p D).
\end{align*}
Since by the choice of $V$ the transmission problem \eqref{eq_transmission_E_0} is uniquely solvable, the estimates  \eqref{eq_energy_estimates_unique} implies that 
\begin{equation}
\label{eq_energy_estimates_unique_E_0}
\begin{aligned}
\|E_0^+(\cdot,y)\|_{H^1(V\setminus\overline{D})}+\|E_0^-(\cdot,y)\|_{H^1(D)}\le C(\|f^-\|_{\tilde H^{-1}(D)}\\+\|(a-1)G(\cdot,y)\|_{H^{1/2}(\p D)}
+ \|g_1\|_{H^{-1/2}(\p D)}).
\end{aligned}
\end{equation}
Let us estimate all the terms in the right hand side of \eqref{eq_energy_estimates_unique_E_0}. 
First we have
\begin{equation}
\label{eq_energy_estimates_unique_E_0_1}
\|f^-\|_{\tilde H^{-1}(D)}\le C \|f^-\|_{L^2(D)}\le C\| G(\cdot,y)\|_{H^1(D)}.
\end{equation}
By Proposition \ref{prop_sob_mult} and the trace theorem, we get
\begin{equation}
\label{eq_energy_estimates_unique_E_0_2}
\|(a-1)G(\cdot,y)\|_{H^{1/2}(\p D)}\le C\|G(\cdot,y)\|_{H^{1/2}(\p D)} \le C\|G(\cdot,y)\|_{H^{1}(D)}. 
\end{equation}
Since 
\[
\Delta_x G(x,y)=-2iA^+\cdot\nabla_x  G(x,y)+(-i(\nabla\cdot A^+)+(A^+)^2+q^+)G(x,y)\in L^2(D),
\]
it follows from \eqref{eq_trace_cont} that
\begin{equation}
\label{eq_3_2}
\begin{aligned}
\|\p_\nu G(\cdot,y)\|_{H^{-1/2}(\p D)}\le C(\|\Delta_x G(x,y)\|_{L^2(D)}+ \|G(\cdot,y)\|_{H^1(D)})\le C \|G(\cdot,y)\|_{H^1(D)}.
\end{aligned}
\end{equation}
This together with the fact that 
$b\in C^{1,1}(\overline{D})$ implies that    
\begin{equation}
\label{eq_energy_estimates_unique_E_0_3}
\begin{aligned}
\|g_1\|_{ H^{-1/2}(\p D)}&\le C(\|\p_\nu G(\cdot,y)\|_{ H^{-1/2}(\p D)}+\|(i\nu\cdot (bA^--A^+)+c)G(\cdot,y)\|_{L^2(\p D)})\\
&\le C(\|\p_\nu G(\cdot,y)\|_{ H^{-1/2}(\p D)}+ \|G(\cdot,y)\|_{H^{1/2}(\p D)})\le C\|G(\cdot,y)\|_{H^{1}(D)}.
\end{aligned}
\end{equation}
Hence, it follows from \eqref{eq_energy_estimates_unique_E_0} with the help of the estimates \eqref{eq_energy_estimates_unique_E_0_1} -- \eqref{eq_energy_estimates_unique_E_0_3} that 
\begin{equation}
\label{eq_3_1}
\|E_0^+(\cdot,y)\|_{H^1(V\setminus\overline{D})}+\|E_0^-(\cdot,y)\|_{H^1(D)}\le C\|G(\cdot,y)\|_{H^{1}(D)}. 
\end{equation}
We conclude that the behavior of the singular solutions $E^+(\cdot,y)$ and $E^-(\cdot,y)$, introduced in \eqref{eq_sing_sol_new},  is essentially controlled by the behavior of the fundamental solution $G(\cdot,y)$ of the magnetic 
Schr\"odinger operator, as the pole $y$ is close to the obstacle.

\subsection{Runge type approximation result}

\label{sec_runge}

Let $\Omega\subset\R^n$, $n\ge 3$, be a bounded domain with connected Lipschitz boundary, and 
$D\subset\subset\Omega$ be a bounded open set with Lipschitz boundary such that $\Omega\setminus \overline{D}$ is connected. Let $\gamma\subset \p \Omega$ be an open nonempty subset of the boundary of $\Omega$. 
For $(u^+,u^-)\in H^1(\Omega\setminus\overline D)\times H^1(D)$, consider the following transmission problem,
\begin{equation}
\label{eq_trans_Omega}
\begin{aligned}
&\mathcal{L}_{A^+,q^+}u^+=0\quad \textrm{in}\quad \Omega\setminus\overline{D},\\
&\mathcal{L}_{A^-,q^-}u^-=0\quad \textrm{in}\quad D,\\
&u^+=au^-\quad \textrm{on}\quad \p D,\\
(&\p_\nu + iA^+\cdot\nu) u^+=b(\p_\nu+iA^-\cdot \nu)u^-+cu^- \quad \textrm{on}\quad \p D,\\
\end{aligned}
\end{equation}
and set
\begin{align*}
W(\Omega)=\{(u^+,u^-)\in H^1(\Omega\setminus\overline D)\times H^1(D) :  \ & (u^+,u^-)\textrm{ satisfies \eqref{eq_trans_Omega}},\\
&\supp(u^+|_{\p \Omega})\subset \gamma\}.
\end{align*}

Let $V\subset \R^n$ be an open set with Lipschitz boundary such that
\[
D\subset\!\subset V\subset\!\subset\Omega,
\] 
and $\Omega\setminus\overline{V}$ is connected. 
 For $(w^+,w^-)\in H^1(V\setminus\overline D)\times H^1(D)$, consider the following transmission problem,
\begin{equation}
\label{eq_trans_V}
\begin{aligned}
&\mathcal{L}_{A^+,q^+}w^+=0\quad \textrm{in}\quad V\setminus\overline{D},\\
&\mathcal{L}_{A^-,q^-}w^-=0\quad \textrm{in}\quad D,\\
&w^+=aw^-\quad \textrm{on}\quad \p D,\\
(&\p_\nu + iA^+\cdot\nu) w^+=b(\p_\nu+iA^-\cdot \nu)w^-+cw^- \quad \textrm{on}\quad \p D,\\
\end{aligned}
\end{equation}
and set
\begin{align*}
W(V)=\{(w^+,w^-)\in H^1(V\setminus\overline D)\times H^1(D) :  \ & (w^+,w^-)\textrm{ satisfies \eqref{eq_trans_V}}\}.
\end{align*}

Let 
\[
W(D)=\{w^-\in H^1(D): \mathcal{L}_{A^-,q^-}w^-=0\quad \textrm{on}\quad D\}. 
\]

\begin{lem}
\label{lem_density}
Assume that $ab=1$ on $\overline{D}$. 
\begin{itemize}
\item[(i)]
The set $W(\Omega)$ is dense in the set $W(V)$ in the $H^1(V\setminus\overline{D})\times H^1(D)$--topology. 

\item[(ii)] The set $W(\Omega)$ is dense in the set $W(D)$ in the $H^1(D)$--topology. 

\end{itemize}
\end{lem}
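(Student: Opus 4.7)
The plan is to prove both parts by Hahn--Banach duality: it suffices to show that any continuous antilinear functional annihilating the indicated restriction of $W(\Omega)$ also annihilates $W(V)$, respectively $W(D)$.

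For part (i), let $(F^+,F^-)\in\tilde H^{-1}(V\setminus\overline{D})\times\tilde H^{-1}(D)$ annihilate $\{(u^+|_{V\setminus\overline{D}},u^-):(u^+,u^-)\in W(\Omega)\}$. Using Corollary \ref{prop_domain_pert}, fix a small enlargement $\tilde\Omega=\Omega\cup B(x_0,\varepsilon)$ with $x_0\in\p\Omega\setminus\overline{\gamma}$ so that $\tilde\Omega\setminus\overline{D}$ is connected, $\gamma\subset\p\tilde\Omega\cap\p\Omega$, and the self-adjoint homogeneous transmission problem on $\tilde\Omega$ has only the trivial solution. Viewing $F^+$ as supported in $\overline{V\setminus\overline{D}}\subset\overline{\tilde\Omega\setminus\overline{D}}$, Proposition \ref{prop_2_4}(i) yields a unique $(v^+,v^-)\in H^1(\tilde\Omega\setminus\overline{D})\times H^1(D)$ solving the inhomogeneous adjoint transmission problem \eqref{eq_adjoint_trans_inhom} on $\tilde\Omega$ with source $(F^+,F^-)$ and $v^+|_{\p\tilde\Omega}=0$. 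In particular $v^+|_\gamma=0$, and since $F^+$ vanishes off $\overline{V\setminus\overline{D}}$, $v^+$ satisfies the homogeneous adjoint equation on the connected annulus $\Omega\setminus\overline{V}$.

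Applying the second Green formulae \eqref{eq_second_green} and \eqref{eq_second_green_mod} on $V\setminus\overline{D}$ and $D$ to $(u^+,u^-)\in W(\Omega)$ paired with $(v^+,v^-)$, the transmission conditions for both the direct and adjoint problems together with $ab=1$ force the $\p D$ boundary terms to cancel, yielding
\[
\langle F^+,u^+\rangle+\langle F^-,u^-\rangle=\mathrm{BT}_{\p V},
\]
a boundary pairing on $\p V$ involving the Dirichlet and conormal traces of $u^+$ and $v^+$. A further use of \eqref{eq_second_green} on the annulus $\Omega\setminus\overline{V}$, where $u^+$ and $v^+$ both solve homogeneous equations, transfers the pairing to the outer boundary, giving $\mathrm{BT}_{\p V}=\mathrm{BT}_{\p\Omega}$; under the annihilation hypothesis we obtain $\mathrm{BT}_{\p\Omega}=0$ for every $(u^+,u^-)\in W(\Omega)$.

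The main obstacle is to pass from $\mathrm{BT}_{\p\Omega}=0$ to the conclusion that $v^+\equiv 0$ on $\Omega\setminus\overline{V}$. Since $v^+$ vanishes on $\p\Omega\setminus\gamma^*$ with $\gamma^*=B(x_0,\varepsilon)\cap\p\Omega$ disjoint from $\gamma$, and since $u^+|_{\p\Omega}$ ranges over the whole subspace of $H^{1/2}(\p\Omega)$ supported in $\gamma$, a splitting of $\mathrm{BT}_{\p\Omega}$ into a term over $\gamma^*$ and a term over $\gamma$, combined with shrinking $\varepsilon$ along a sequence of admissible enlargements furnished by Proposition \ref{prop_zero_sequence_set}, isolates the $\gamma$-term and yields $(\p_\nu+i\overline{A^+}\cdot\nu)v^+|_\gamma=0$. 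Together with $v^+|_\gamma=0$, this provides zero Cauchy data for $v^+$ on the open Lipschitz subset $\gamma$ of $\p(\Omega\setminus\overline{V})$, and the unique continuation principle of Proposition \ref{prop_UC} forces $v^+\equiv 0$ on $\Omega\setminus\overline{V}$. By $H^1$-continuity across $\p V$, the traces $v^+|_{\p V}$ and $(\p_\nu+i\overline{A^+}\cdot\nu)v^+|_{\p V}$ vanish; performing the same Green formula computation with $(w^+,w^-)\in W(V)$ in place of the restricted $(u^+,u^-)$ then gives $\langle F^+,w^+\rangle+\langle F^-,w^-\rangle=\mathrm{BT}_{\p V}=0$, proving (i). Part (ii) follows by running the same argument with adjoint source $(0,F)$: now $F^+=0$, so $v^+$ solves the homogeneous adjoint equation throughout $\tilde\Omega\setminus\overline{D}$, and the same unique continuation argument applied on the connected set $\Omega\setminus\overline{D}$ yields $v^+\equiv 0$ there. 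The adjoint transmission conditions on $\p D$ combined with $ab=1$ then force $v^-|_{\p D}=0$ and $(\p_\nu+i\overline{A^-}\cdot\nu)v^-|_{\p D}=0$, and a final application of \eqref{eq_second_green} on $D$ to $w^-\in W(D)$ paired with $v^-$ yields $\langle F,w^-\rangle=0$.
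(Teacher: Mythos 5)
Your overall strategy (Hahn--Banach duality, solving the adjoint transmission problem on an enlarged domain supplied by Corollary \ref{prop_domain_pert}, unique continuation, then Green identities against $W(V)$ resp.\ $W(D)$) is the same as the paper's, but the geometric placement of the enlargement is different, and that difference creates a genuine gap. In the paper the bump is attached at a point of $\gamma$, so that $\p\Omega\setminus\gamma\subset\p\tilde\Omega$ and the new boundary piece $\tilde\gamma=\p\tilde\Omega\setminus\overline\Omega$ sits over $\gamma$. The test functions are solutions of the transmission problem on $\tilde\Omega$ with Dirichlet data supported in $\tilde\gamma$ (such solutions exist for \emph{arbitrary} data because the transmission problem on $\tilde\Omega$ is uniquely solvable by construction), and their restrictions to $\Omega$ lie in $W(\Omega)$, so the annihilation hypothesis applies to them. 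Since $v^+$ vanishes on all of $\p\tilde\Omega$, the Green identity on $\tilde\Omega$ leaves the single boundary term $(u^+,\p_\nu v^+)_{(H^{1/2},H^{-1/2})(\p\tilde\Omega)}$, and the arbitrariness of $u^+|_{\tilde\gamma}$ forces $\p_\nu v^+=0$ on $\tilde\gamma$ at once.

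You instead attach the bump at $x_0\in\p\Omega\setminus\overline{\gamma}$, so $v^+|_\gamma=0$, and you pair genuine elements of $W(\Omega)$ with $v^+$ on $\Omega$. The identity you obtain is then not a single term: it reads $(u^+,(\p_\nu+iA^+\cdot\nu)v^+)_{\gamma}=((\p_\nu+iA^+\cdot\nu)u^+,v^+)_{\gamma^*}$, where the right-hand side involves the conormal trace of $u^+$ on $\gamma^*=B(x_0,\varepsilon)\cap\p\Omega$; this trace is nonzero in general and is \emph{not} a free parameter, being determined by the Dirichlet data of $u^+$ on $\gamma$ through the solution operator, so varying $u^+$ does not decouple the two terms. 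This is exactly your ``main obstacle'', and the proposed fix---shrinking $\varepsilon$ along enlargements from Proposition \ref{prop_zero_sequence_set} to ``isolate the $\gamma$-term''---is not a proof: $v^+=v^+_\varepsilon$ changes with the domain, no estimate is given showing the $\gamma^*_\varepsilon$-term tends to zero, and the limiting problem on $\Omega$ itself may fail to be uniquely solvable, which is the very reason the domain perturbation is introduced. A related unjustified claim is that $u^+|_{\p\Omega}$ ranges over the whole subspace of $H^{1/2}(\p\Omega)$ supported in $\gamma$: zero may be an eigenvalue of the transmission problem on $\Omega$, so not every such Dirichlet datum is attained by an element of $W(\Omega)$. (Also, $ab=1$ enters through Corollary \ref{prop_domain_pert}, i.e.\ the solvability-by-perturbation step, rather than through the cancellation of the $\p D$ terms, which only uses the direct/adjoint transmission conditions.) With the paper's placement of the bump across $\gamma$ and the test solutions prescribed on $\tilde\gamma$, the remainder of your argument---cancellation on $\p D$, unique continuation on the connected set $\tilde\Omega\setminus\overline{V}$, zero Cauchy data on $\p V$, and the $f^+=0$ variant for part (ii)---does go through as you describe.
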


\begin{proof}

(i) First notice that the dual space of 
$
H^1(V\setminus\overline{D})\times H^1(D)$ �is the space $\tilde H^{-1}(V\setminus\overline{D})\times \tilde H^{-1}(D)$.
Then by the Hahn--Banach theorem, we need to show that for any $(f^+,f^-)\in \tilde H^{-1}(V\setminus\overline{D})\times \tilde H^{-1}(D)$ such that 
\begin{equation}
\label{eq_Hahn-Banach_1}
(f^+,u^+)_{\tilde H^{-1}(V\setminus\overline D),H^1(V\setminus\overline D)}+(f^-,u^-)_{\tilde H^{-1}(D),H^1(D)}=0,
\end{equation}
for any $(u^+,u^-)\in W(\Omega)$, we have
\begin{equation}
\label{eq_Hahn-Banach_2}
(f^+,w^+)_{\tilde H^{-1}(V\setminus\overline D),H^1(V\setminus\overline D)}+(f^-,w^-)_{\tilde H^{-1}(D),H^1(D)}=0,
\end{equation}
for any $(w^+,w^-)\in W(V)$.  

Let us extend $A^+\in W^{1,\infty}(\Omega\setminus \overline{D},\R^n)$ and $q^+\in L^\infty(\Omega\setminus \overline{D},\R)$ to the whole of $\R^n$ so that the extensions, which we denote by the same letters, satisfy  $A^+\in W^{1,\infty}(\R^n,\R^n)$ and $q^+\in L^\infty(\R^n,\R)$. In view of Corollary \ref{prop_domain_pert},  there is a bounded domain $\tilde \Omega\supset\Omega$ with connected Lipschitz boundary such that the sets $\tilde \Omega\setminus\overline{D}$, $\tilde \Omega\setminus\overline{V}$ are connected, $\p \Omega\setminus \gamma\subset \p \tilde \Omega$, and the homogeneous transmission problem \eqref{eq_transmission_hom} in $\tilde \Omega$ instead of $\Omega$ has only the trivial solution. 
Since $f^+\in \tilde H^{-1}(V\setminus\overline{D})$, we conclude that $f^+\in \tilde H^{-1}(\tilde \Omega\setminus \overline D)$, and $\supp (f^+)\subset \overline{V}\setminus{D}$.  Then by the choice of $\tilde \Omega$,  the following adjoint problem 
\begin{equation}
\label{eq_adjoint_density}
\begin{aligned}
&\mathcal{L}_{A^+,q^+}v^+=f^+\quad \textrm{in}\quad \tilde \Omega\setminus \overline D,\\
&\mathcal{L}_{A^-,q^-}v^-=f^-\quad \textrm{in}\quad D,\\
& v^+=b^{-1}v^-\quad \textrm{on}\quad \p D,\\
&(\p_\nu+i A^+ \cdot \nu)v^+=a^{-1}(\p_\nu+i A^-\cdot \nu)v^-+c a^{-1}b^{-1}v^-\quad \textrm{on}\quad \p D,\\
&v^+=0\quad \textrm{on}\quad \p \tilde\Omega,
\end{aligned}
\end{equation}
has a unique solution $(v^+,v^-)\in H^1(\tilde \Omega\setminus \overline D)\times H^1(D)$.  

Let $\tilde \gamma=\p \tilde \Omega\setminus\overline{\Omega}$ and let $(u^+,u^-)\in H^1(\tilde \Omega\setminus \overline D)\times H^1(D)$ satisfy the transmission problem \eqref{eq_trans_Omega} in $\tilde \Omega$ instead of $\Omega$ and $\supp(u^+|_{\p \tilde \Omega})\subset\tilde \gamma$. Thus, $(u^+|_{\Omega\setminus\overline D},u^-)\in W(\Omega)$.

By the second Green formula \eqref{eq_first_green_adjoint} for $u^+$ and $v^+$, we get 
\begin{align*}
&(\mathcal{L}_{A^+,q^+}u^+,v^+)_{L^2(\tilde \Omega\setminus\overline D)} -((\p_\nu+iA^+\cdot\nu)u^+, v^+)_{H^{-1/2}(\p D),H^{1/2}(\p D)}\\
&+ 
((\p_\nu+iA^+\cdot\nu)u^+, v^+)_{H^{-1/2}(\p \tilde \Omega),H^{1/2}(\p \tilde \Omega)}\\
&=
(u^+,f^+)_{H^1(\tilde \Omega\setminus \overline D), \tilde H^{-1}(\tilde \Omega\setminus \overline D)}-(u^+, (\p_\nu+i A^+\cdot\nu)v^+)_{H^{1/2}(\p D), H^{-1/2}(\p D)}\\
&+ (u^+, (\p_\nu+i A^+\cdot\nu)v^+)_{H^{1/2}(\p \tilde \Omega), H^{-1/2}(\p \tilde \Omega)}.
\end{align*}
Thus, using the fact that $\mathcal{L}_{A^+,q^+}u^+=0$ in $\tilde \Omega\setminus\overline{D}$ and  $v^+=0$ on $\p \tilde \Omega$, we get 
\begin{equation}
\label{eq_lem_density_1}
\begin{aligned}
(u^+, \p_\nu v^+)_{H^{1/2}(\p \tilde \Omega), H^{-1/2}(\p \tilde \Omega)}=-((\p_\nu+iA^+\cdot\nu)u^+, v^+)_{H^{-1/2}(\p D),H^{1/2}(\p D)}\\
+ (u^+, (\p_\nu+i A^+\cdot\nu)v^+)_{H^{1/2}(\p D), H^{-1/2}(\p D)}-(u^+,f^+)_{H^1(V\setminus \overline D), \tilde H^{-1}(V\setminus \overline D)}.
\end{aligned}
\end{equation}
By the second Green formula \eqref{eq_first_green_adjoint} for $u^-$ and $v^-$, we obtain that 
\begin{align*}
&(\mathcal{L}_{A^-,q^-}u^-,v^-)_{L^2(D)} +((\p_\nu+iA^-\cdot\nu)u^-, v^-)_{H^{-1/2}(\p D),H^{1/2}(\p D)}\\
&=
(u^-,f^-)_{H^1(D), \tilde H^{-1}(D)}+(u^-, (\p_\nu+i A^-\cdot\nu)v^-)_{H^{1/2}(\p D), H^{-1/2}(\p D)}.
\end{align*}
Since $\mathcal{L}_{A^-,q^-}u^-=0$ on $D$, we have
\begin{equation}
\label{eq_lem_density_2}
\begin{aligned}
0=-(u^-,f^-)_{H^1(D), \tilde H^{-1}(D)}+((\p_\nu+iA^-\cdot\nu)u^-, v^-)_{H^{-1/2}(\p D),H^{1/2}(\p D)}\\
-(u^-, (\p_\nu+i A^-\cdot\nu)v^-)_{H^{1/2}(\p D), H^{-1/2}(\p D)}.
\end{aligned}
\end{equation}
Adding \eqref{eq_lem_density_1} and \eqref{eq_lem_density_2}, and using 
\eqref{eq_Hahn-Banach_1} together with the transmission conditions in \eqref{eq_trans_Omega} and \eqref{eq_adjoint_density}, we get
\begin{align*}
(u^+, \p_\nu v^+&)_{H^{1/2}(\p \tilde \Omega), H^{-1/2}(\p \tilde \Omega)}=-((\p_\nu+iA^+\cdot\nu)u^+, v^+)_{H^{-1/2}(\p D),H^{1/2}(\p D)}\\
&+((\p_\nu+iA^-\cdot\nu)u^-, bv^+)_{H^{-1/2}(\p D),H^{1/2}(\p D)}\\ 
&+(au^-, (\p_\nu+i A^+\cdot\nu)v^+)_{H^{1/2}(\p D), H^{-1/2}(\p D)}\\
&-(u^-, (\p_\nu+i A^-\cdot\nu)v^-)_{H^{1/2}(\p D), H^{-1/2}(\p D)}\\
&=-(cu^-,v^+)_{L^2(\p D)}+(u^-,cv^+)_{L^2(\p D)}=0.
\end{align*}
Since $u^+|_{\p\tilde \Omega}$ can be an arbitrary smooth function with $\supp(u^+|_{\p\tilde \Omega})\subset \tilde \gamma$, we conclude that
$\p_\nu v^+=0$ on $\tilde \gamma$.  Thus, $v^+$ satisfies  $\mathcal{L}_{A^+, q^+}v^+=0$ on $\tilde \Omega\setminus\overline{V}$, and 
 $v^+=0$,  $\p_\nu v^+=0$ on $\tilde \gamma$.  As $A^+\in W^{1,\infty}(\tilde \Omega)$ and $q^+\in L^\infty(\tilde \Omega)$, and $\tilde \Omega\setminus\overline{V}$ is connected, by unique continuation from Lipschitz part of the boundary  we get $v^+=0$ on $\tilde \Omega\setminus\overline{V}$, see Proposition \ref{prop_UC} in Appendix \ref{ap_UC}. 
Since $v^+\in H^1(\tilde \Omega\setminus\overline{D})$ and $\Delta v^+\in L^2(\tilde \Omega\setminus \overline{V})$, we have 
\begin{equation}
\label{eq_v-normal}
v^+=0, \quad\textrm{and}\quad \p_{\nu} v^+=0\quad \textrm{on}\quad \p V. 
\end{equation}

Let $(w^+,w^-)\in W(V)$. Then by the second Green formula \eqref{eq_first_green_adjoint} on  $V\setminus\overline D$ for $w^+$ and $v^+$, we get 
\begin{align*}
&(\mathcal{L}_{A^+,q^+}w^+,v^+)_{L^2(V\setminus\overline D)} -((\p_\nu+iA^+\cdot\nu)w^+, v^+)_{H^{-1/2}(\p D),H^{1/2}(\p D)}\\
&+ 
((\p_\nu+iA^+\cdot\nu)w^+, v^+)_{H^{-1/2}(\p V),H^{1/2}(\p V)}\\
&=
(w^+,f^+)_{H^1(V\setminus \overline D), \tilde H^{-1}(V\setminus \overline D)}-(w^+, (\p_\nu+i A^+\cdot\nu)v^+)_{H^{1/2}(\p D), H^{-1/2}(\p D)}\\
&+ (w^+, (\p_\nu+i A^+ \cdot\nu)v^+)_{H^{1/2}(\p V), H^{-1/2}(\p  V)}.
\end{align*}
Since $\mathcal{L}_{A^+,q^+}w^+=0$ on $V\setminus\overline D$ and \eqref{eq_v-normal}, we get
\begin{equation}
\label{eq_w_f_1}
\begin{aligned}
(w^+,f^+)_{H^1(V\setminus \overline D), \tilde H^{-1}(V\setminus \overline D)}=&-((\p_\nu+iA^+\cdot\nu)w^+, v^+)_{H^{-1/2}(\p D),H^{1/2}(\p D)}\\
&+(w^+, (\p_\nu+i A^+ \cdot\nu)v^+)_{H^{1/2}(\p D), H^{-1/2}(\p D)}.
\end{aligned}
\end{equation}

By the second Green formula \eqref{eq_first_green_adjoint} for $w^-$ and $v^-$, we obtain that 
\begin{equation}
\label{eq_w_f_3}
\begin{aligned}
&(\mathcal{L}_{A^-,q^-}w^-,v^-)_{L^2(D)} +((\p_\nu+iA^-\cdot\nu)w^-, v^-)_{H^{-1/2}(\p D),H^{1/2}(\p D)}\\
&=
(w^-,f^-)_{H^1(D), \tilde H^{-1}(D)}+(w^-, (\p_\nu+i A^- \cdot\nu)v^-)_{H^{1/2}(\p D), H^{-1/2}(\p D)}.
\end{aligned}
\end{equation}
As $\mathcal{L}_{A^-,q^-}w^-=0$ on $D$, we have
\begin{equation}
\label{eq_w_f_2}
\begin{aligned}
(w^-,f^-)_{H^1(D), \tilde H^{-1}(D)}=&((\p_\nu+iA^-\cdot\nu)w^-, v^-)_{H^{-1/2}(\p D),H^{1/2}(\p D)}\\
&-(w^-, (\p_\nu+i A^- \cdot\nu)v^-)_{H^{1/2}(\p D), H^{-1/2}(\p D)}.
\end{aligned}
\end{equation}

Adding \eqref{eq_w_f_1} and \eqref{eq_w_f_2} and using the transmission conditions in \eqref{eq_trans_V}
and \eqref{eq_adjoint_density}, we get \eqref{eq_Hahn-Banach_2}.  This proves (i).

(ii). By the Hahn--Banach theorem, we need to show that for any $f^-\in \tilde H^{-1}(D)$ such that 
\[
(f^-,u^-)_{\tilde H^{-1}(D),H^1(D)}=0,
\]
for any $u^-\in W(\Omega)|_{D}$, we have
\begin{equation}
\label{eq_w_f_5}
(f^-,w^-)_{\tilde H^{-1}(D),H^1(D)}=0,
\end{equation}
for any $w^-\in W(D)$.  Let $(v^+,v^-)\in H^1(\tilde \Omega\setminus\overline D)\times H^1(D)$ be a unique solution to the adjoint transmission problem \eqref{eq_adjoint_density} on $\tilde \Omega$ with $f^+=0$. 
 Let $(u^+,u^-)\in H^1(\tilde \Omega\setminus \overline D)\times H^1(D)$ satisfy the transmission problem \eqref{eq_trans_Omega} in $\tilde \Omega$ instead of $\Omega$ and $\supp(u^+|_{\p \tilde \Omega})\subset\tilde \gamma$. Thus, $u^-\in W(\Omega)|_{D}$. In the same way as when deriving \eqref{eq_v-normal}, we get
 \[
v^+=0, \quad\textrm{and}\quad \p_{\nu} v^+=0\quad \textrm{on}\quad \p D. 
\]
This together with the transmission conditions in \eqref{eq_adjoint_density} implies that 
 \begin{equation}
\label{eq_w_f_4}
v^-=0, \quad\textrm{and}\quad \p_{\nu} v^-=0\quad \textrm{on}\quad \p D. 
\end{equation}
By the second Green formula \eqref{eq_first_green_adjoint} for $w^-$ and $v^-$, we obtain \eqref{eq_w_f_3}. It follows from \eqref{eq_w_f_3} with help of \eqref{eq_w_f_4} and the fact that $\mathcal{L}_{A^-,q^-}w^-=0$ in $D$ that 
\eqref{eq_w_f_5} is valid. The proof is complete.  
\end{proof}

\subsection{Determination of the obstacle} 

\label{sec_obstacle}

Assume that $D_1\ne D_2$. 
Let $(u_1^+,u_1^-)\in H^1(\Omega\setminus\overline{D_1})\times H^1(D_1)$ satisfy the following transmission problem with $j=1$,
\begin{equation}
\label{eq_5_1}
\begin{aligned}
&\mathcal{L}_{A^+,q^+}u_j^+=0\quad \textrm{in}\quad \Omega\setminus\overline{D_j},\\
&\mathcal{L}_{A_j^-,q_j^-}u_j^-=0\quad \textrm{in}\quad D_j,\\
&u_j^+=a_ju_j^-\quad \textrm{on}\quad \p D_j,\\
(&\p_\nu + iA^+\cdot\nu) u_j^+=b_j(\p_\nu+iA_j^-\cdot \nu)u_j^-+c_ju_j^- \quad \textrm{on}\quad \p D_j,
\end{aligned}
\end{equation}
and be such that $\supp(u_1^+|_{\p \Omega})\subset \gamma$.
Since  
\begin{equation}
\label{eq_5_0}
\mathcal{C}_\gamma(A^+,q^+, A_1^-,q_1^-,a_1,b_1,c_1; D_1)=\mathcal{C}_\gamma(A^+,q^+,A_2^-,q_2^-,a_2,b_2,c_2; D_2),
\end{equation}
there is $(u_2^+,u_2^-)\in H^1(\Omega\setminus\overline{D_2})\times H^1(D_2)$, which satisfies the transmission problem \eqref{eq_5_1} with $j=2$, and such that 
\begin{equation}
\label{eq_5_2}
\begin{aligned}
&u_1^+=u_2^+\quad \textrm{on}\quad \p \Omega,\quad \supp(u_2^+|_{\p \Omega})\subset \gamma,\\
&(\p_\nu+iA^+\cdot\nu) u_1^+=(\p_\nu+iA^+\cdot\nu) u_2^+\quad \textrm{on}\quad \gamma.
\end{aligned}
\end{equation}

Let $D(e)$ be the connected component of $\Omega\setminus(\overline{D_1}\cup\overline{D_2})$, whose boundary intersects $\p \Omega$. Then
\eqref{eq_5_1} implies that
\[
\mathcal{L}_{A^+,q^+}(u_1^+-u_2^+)=0\quad \textrm{on}\quad D(e).
\]
It follows from \eqref{eq_5_2} that 
\[
u_1^+=u_2^+, \quad \p_\nu u_1^+=\p_\nu u_2^+\quad \textrm{on}\quad \gamma.
\]
As $D(e)$ is connected, by unique continuation from a part of Lipschitz boundary, we get 
\begin{equation}
\label{eq_5_3}
u_1^+=u_2^+\quad\textrm{on}\quad  D(e).
\end{equation}

Now due to the connectedness of $\Omega\setminus \overline{D_1}$ and  $\Omega\setminus \overline{D_2}$, there is a point $x_0\in \p D_2$ such that $x_0\notin \overline{D_1}$ and $x_0\in \overline{D(e)}$. As $\p D_2$ is Lipschitz, we can assume that there is the unit outer normal $\nu(x_0)$ at the point $x_0$ to $\p D_2$.  
By the hypothesis of Theorem \ref{thm_main_sa}
$
a_2(x_0)\ne b_2(x_0)$,
and therefore, without loss of generality  we may assume that $b_2(x_0)-a_2(x_0)>0$. Thus, 
there exists an open ball $B$, centered at $x_0$, such that  
\[
b_2- a_2>0\quad \textrm{on}\quad \overline{B\cap D_2},
\]
and 
$\overline{B}\subset \Omega\setminus\overline{D_1}$. 
Define
\[
x_\delta=x_0+\delta\nu(x_0),
\]
for $\delta>0$ small so that $x_\delta\in B$.

Let $D(i)=\Omega\setminus\overline{D(e)}$. Then $D(i)$ has a finite number of connected components, denoted by $C_l$, $l=1,\dots,N$.  Let $V_l(\delta)$ be a small connected  neighborhood of $\overline{C_l}$ with $C^\infty$ boundary such that $\overline{V_l(\delta)}\cap \overline{V_k(\delta)}=\emptyset$, $l\ne k$, and such that for the union $V(\delta):=\cup_{l=1}^N V_l(\delta)$, we have 
$x_\delta\notin \overline{V(\delta)}$, $\Omega\setminus\overline{V(\delta)}$ is connected, and the homogeneous transmission problem,
\begin{align*}
&\mathcal{L}_{A^+,q^+}w_j^+=0\quad \textrm{in}\quad V(\delta)\setminus\overline{D_j},\\
&\mathcal{L}_{A_j^-,q_j^-}w_j^-=0\quad \textrm{in}\quad D_j,\\
&w_j^+=a_jw_j^-\quad \textrm{on}\quad \p D_j,\\
(&\p_\nu + iA_j^+\cdot\nu) w_j^+=b_j(\p_\nu+iA_j^-\cdot \nu)w_j^-+cw_j^- \quad \textrm{on}\quad \p D_j,\\
&w_j^+|_{\p V(\delta)}=0,
\end{align*}
 has only the trivial solution, $j=1,2$. The existence of the set $V(\delta)$ follows from Proposition \ref{prop_domain_pert_V}.  See also Figure 1 for the illustration of the configuration described above.

 \begin{figure}[htbp]
\begin{center}
\includegraphics[scale=0.8, viewport=80 300 628 600]{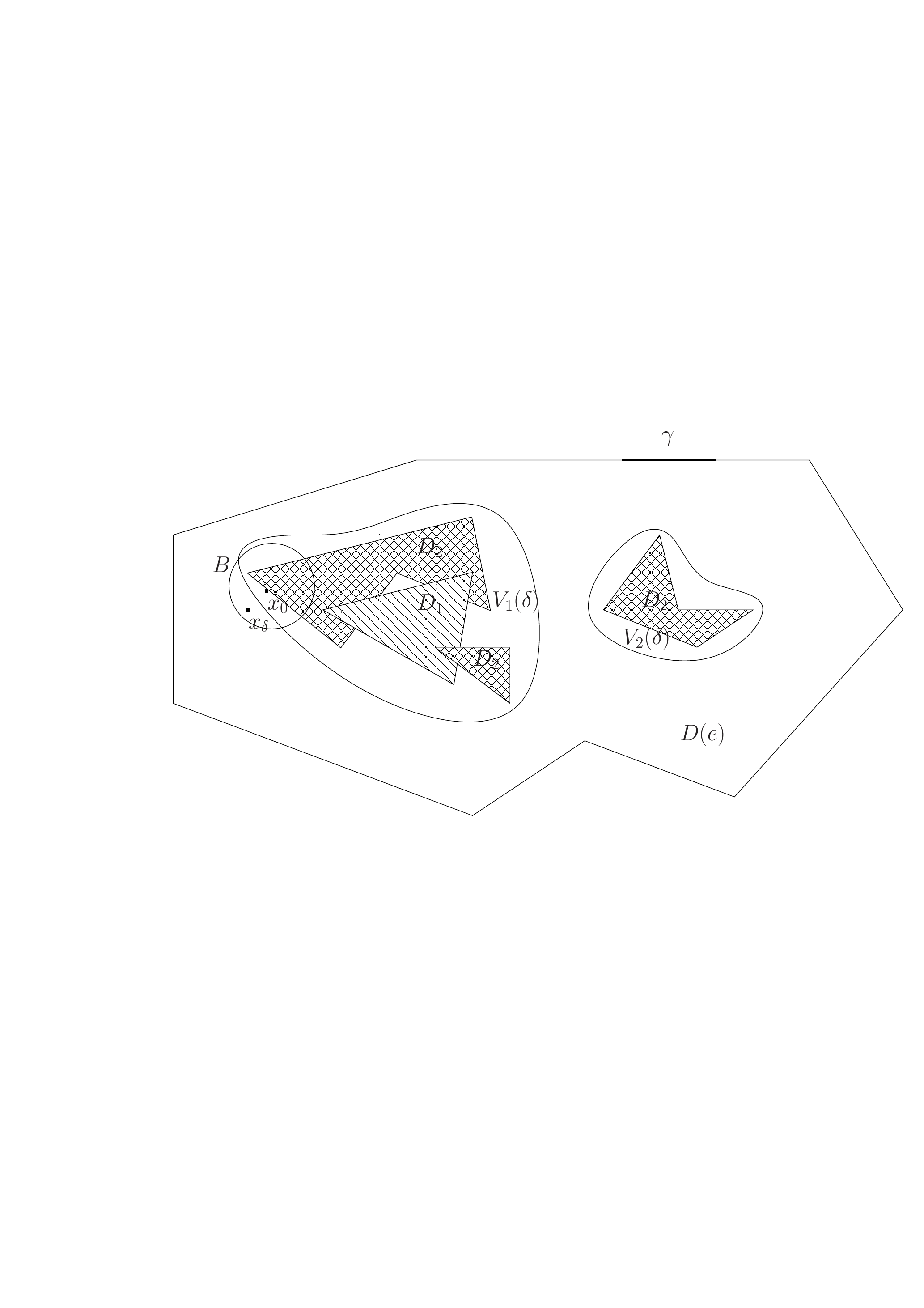}
\caption{A schematic illustration of the reconstruction of the obstacle in the proof of Theorem \ref{thm_main_sa}. Here $V(\delta)=V_1(\delta)\cup V_2(\delta)$.}
\end{center}
\end{figure}

As $u_1^+,u_2^+\in H^1(D(e))$ and $\Delta u_j^+\in L^2(D(e))$, it follows from \eqref{eq_5_3} that
 \begin{equation}
 \label{eq_5_4}
 u_1^+=u_2^+, \quad \p_\nu u_1^+=\p_\nu u_2^+\quad \textrm{on}\quad \p V(\delta).
 \end{equation}

Let $v\in H^1(V(\delta))$ be a solution to the equation
\begin{equation}
\label{eq_5_4_1}
\mathcal{L}_{A^+, q^+}v=0\quad \textrm{in}\quad V(\delta). 
\end{equation}
Since $u_1^+\in H^1(\Omega\setminus\overline{D_1})$ satisfies the equation 
\[
\mathcal{L}_{A^+,q^+}u_1^+=0\quad \textrm{in}\quad V(\delta)\setminus\overline{D_1},
\]
by the second Green formular \eqref{eq_second_green}, we get
\begin{equation}
 \label{eq_5_5}
\begin{aligned}
&0=(\mathcal{L}_{A^+,q^+}u_1^+, v)_{L^2(V(\delta)\setminus\overline{D_1})}-(u_1^+,\mathcal{L}_{A^+,q^+}v)_{L^2(V(\delta)\setminus\overline{D_1})}\\
&=(u_1^+,(\p_\nu+i A^+\cdot\nu)v)_{(H^{1/2},H^{-1/2})(\p V(\delta))}-((\p_\nu+iA^+\cdot\nu)u_1^+,v)_{(H^{-1/2}, H^{1/2})(\p V(\delta))}\\
&-(u_1^+,(\p_\nu+i A^+\cdot\nu)v)_{(H^{1/2}, H^{-1/2})(\p D_1)}+((\p_\nu+iA^+\cdot\nu)u_1^+,v)_{(H^{-1/2}, H^{1/2})(\p D_1)}.
\end{aligned}
\end{equation}
Here $\p_\nu$ is the exterior  normal derivative to $D_1$ and $V(\delta)$. 
As $u_2^+\in H^1(\Omega\setminus\overline{D_2})$ satisfies the equation 
\[
\mathcal{L}_{A^+,q^+}u_2^+=0\quad \textrm{in}\quad V(\delta)\setminus\overline{D_2},
\]
again by the second Green formular \eqref{eq_second_green}, we obtain that
\begin{equation}
 \label{eq_5_6}
\begin{aligned}
&0=(\mathcal{L}_{A^+,q^+}u_2^+, v)_{L^2(V(\delta)\setminus\overline{D_2})}-(u_2^+,\mathcal{L}_{A^+,q^+}v)_{L^2(V(\delta)\setminus\overline{D_2})}\\
&=(u_2^+,(\p_\nu+i A^+\cdot\nu)v)_{(H^{1/2}, H^{-1/2})(\p V(\delta))}-((\p_\nu+iA^+\cdot\nu)u_2^+,v)_{(H^{-1/2}, H^{1/2})(\p V(\delta))}\\
&-(u_2^+,(\p_\nu+i A^+\cdot\nu)v)_{(H^{1/2}, H^{-1/2})(\p D_2)}+((\p_\nu+iA^+\cdot\nu)u_2^+,v)_{(H^{-1/2}, H^{1/2})(\p D_2)}.
\end{aligned}
\end{equation}
Combining \eqref{eq_5_5} and \eqref{eq_5_6} with the help of \eqref{eq_5_4}, we have
\begin{equation}
 \label{eq_5_7}
\begin{aligned}
(&u_1^+,(\p_\nu+i A^+\cdot\nu)v)_{(H^{1/2},H^{-1/2})(\p D_1)}-((\p_\nu+iA^+\cdot\nu)u_1^+,v)_{(H^{-1/2},H^{1/2})(\p D_1)}\\
&=(u_1^+,(\p_\nu+i A^+\cdot\nu)v)_{(H^{1/2}, H^{-1/2})(\p V(\delta))}
-((\p_\nu+iA^+\cdot\nu)u_1^+,v)_{(H^{-1/2}, H^{1/2})(\p V(\delta))}\\
&=
(u_2^+,(\p_\nu+i A^+ \cdot\nu)v)_{(H^{1/2}, H^{-1/2})(\p D_2)}
-((\p_\nu+iA^+\cdot\nu)u_2^+,v)_{(H^{-1/2},H^{1/2})(\p D_2)},
\end{aligned}
\end{equation}
where $(u_j^+, u_j^-)\in H^1(\Omega\setminus\overline{D_j})\times H^1(D_j)$, $j=1,2$, satisfy the transmission problems \eqref{eq_5_1} on $\Omega$ and \eqref{eq_5_2}. 

We now claim that \eqref{eq_5_7} can be extended to all $(u_j^+,u_j^-)\in H^1(V(\delta)\setminus\overline D_j)\times H^1(D_j)$, $j=1,2$, which satisfy the transmission problems
\begin{equation}
 \label{eq_5_7_1}
\begin{aligned}
&\mathcal{L}_{A^+,q^+}u^+_j=0\quad \textrm{in}\quad V(\delta)\setminus\overline{D_j},\\
&\mathcal{L}_{A^-_j,q^-_j}u_j^-=0\quad \textrm{in}\quad D_j,\\
&u_j^+=a_ju_j^-\quad \textrm{on}\quad \p D_j,\\
(&\p_\nu + iA^+\cdot\nu) u_j^+=b_j(\p_\nu+iA_j^-\cdot \nu)u_j^-+c_ju_j^- \quad \textrm{on}\quad \p D_j,
\end{aligned}
\end{equation}
and such that 
\begin{equation}
\label{eq_5_7_2}
u_1^+|_{\p V(\delta)}=u_2^+|_{\p V(\delta)}. 
\end{equation}
Indeed, let $(u_1^+,u_1^-)\in H^1(V(\delta)\setminus\overline D_1)\times H^1(D_1)$ satisfy 
\eqref{eq_5_7_1} with $j=1$. 
Then by Lemma \ref{lem_density} there are $(u_{1k}^+,u_{1k}^-)\in H^1(\Omega\setminus\overline{D_1})\times H^1(D_1)$, $k=1,2,\dots$, which satisfy the transmission problem \eqref{eq_5_1} with $j=1$ on $\Omega$,  $\supp(u_{1k}^+|_{\p \Omega})\subset \gamma$, and 
\begin{equation}
 \label{eq_5_8_-1}
(u_{1k}^+,u_{1k}^-)\to (u_1^+,u_1^-)\quad \textrm{in}\quad H^1(V(\delta)\setminus\overline D_1)\times H^1(D_1),\quad k\to\infty. 
\end{equation}
It follows from \eqref{eq_5_0} that there are $(u_{2k}^+,u_{2k}^-)\in H^1(\Omega\setminus\overline{D_2})\times H^1(D_2)$, $k=1,2,\dots$, which satisfy the transmission problem \eqref{eq_5_1} with $j=2$ on $\Omega$  such that 
\begin{align*}
&u_{1k}^+=u_{2k}^+\quad \textrm{on}\quad \p \Omega,\\
&(\p_\nu+iA^+\cdot\nu) u_{1k}^+=(\p_\nu+iA^+\cdot\nu) u_{2k}^+\quad \textrm{on}\quad \gamma.
\end{align*}
In the same way as in the derivation of \eqref{eq_5_3}, we get
\begin{equation}
 \label{eq_5_8_0}
u_{1k}^+=u_{2k}^+\quad\textrm{on}\quad  D(e),\quad k=1,2,\dots.
\end{equation}
Thus,  
\begin{equation}
 \label{eq_5_8}
u_{2k}^+|_{\p V(\delta)}=u_{1k}^+|_{\p V(\delta)}\to u_1^+|_{\p V(\delta)}\quad\textrm{in}\quad H^{1/2}(\p V(\delta)),\quad k\to \infty.
\end{equation}
By the choice of $V(\delta)$,  it follows from \eqref{eq_energy_estimates_unique}  that
\begin{equation}
 \label{eq_5_9}
\|u_{2k}^+\|_{H^1(V(\delta)\setminus\overline{D_2})}+ \|u_{2k}^-\|_{H^1(D_2)}\le C\|u_{2k}^+|_{\p V(\delta)}\|_{H^{1/2}(\p V(\delta))},\quad k=1,2,\dots.
\end{equation}
Using \eqref{eq_5_8} and \eqref{eq_5_9}, we conclude that there is $(u_2^+,u_2^-)\in H^1(V(\delta)\setminus\overline D_2)\times H^1(D_2)$ such that 
\begin{equation}
 \label{eq_5_10}
(u_{2k}^+,u_{2k}^-)\to (u_2^+,u_2^-)\quad \textrm{in}\quad H^1(V(\delta)\setminus\overline D_2)\times H^1(D_2),\quad k\to\infty. 
\end{equation}
Hence, $(u_2^+,u_2^-)$ satisfy the transmission problem \eqref{eq_5_7_1} with $j=2$. It follows from \eqref{eq_5_8_-1} and \eqref{eq_5_10} that \eqref{eq_5_7} is valid for $u_1^+$ and $u_2^+$. 
 Furthermore, \eqref{eq_5_8_0} implies that 
\begin{equation}
\label{eq_trace_v(delta)}
u_1^+=u_2^+\quad \textrm{on}\quad D(e)\cap V(\delta).  
\end{equation}
On the other hand, let $(\tilde u_2^+,\tilde u_2^-)\in H^1(V(\delta)\setminus\overline D_2)\times H^1(D_2)$ satisfy the transmission problem \eqref{eq_5_7_1} with $j=2$ and $\tilde u_2^+|_{\p V(\delta)}=u_1^+|_{\p V(\delta)}$. 
Since by the choice of the set $V(\delta)$ the transmission problem \eqref{eq_5_7_1} with the Dirichlet boundary conditions has a unique solution, we have $(\tilde u_2^+,\tilde u_2^-)=(u_2^+,u_2^-)$. The claim is proved.

The subsequent analysis will take place in the region $B\cap D_2$. 
In what follows let $(u_j^+,u_j^-)\in H^1(V(\delta)\setminus\overline D_j)\times H^1(D_j)$, $j=1,2$, be such that they satisfy the transmission problems
 \eqref{eq_5_7_1} and the condition \eqref{eq_5_7_2}. 
By the first Green formula \eqref{eq_first_green}, we get
\begin{align*}
0=&\int_{B\cap D_2} b_2(\mathcal{L}_{A_2^-,q_2^-} u_2^-)\overline{v}dx=\int_{B\cap D_2}\nabla u_2^-\cdot\nabla (b_2\overline{v})dx\\
&+i\int_{B\cap D_2} (A_2^-u_2^-\cdot\nabla (b_2\overline{v})
-(A_2^-\cdot\nabla u_2^-)b_2\overline{v})dx+
\int_{B\cap D_2}((A_2^-)^2+q_2^-)u_2^-b_2\overline{v}dx\\
&-(b_2(\p_\nu +iA_2^-\cdot\nu)u_2^-,v)_{(H^{-1/2},H^{1/2})(\p (B\cap D_2))}.
\end{align*}
This implies that
\begin{equation}
\label{eq_5_11}
\begin{aligned}
\int_{B\cap D_2}b_2\nabla u_2^-\cdot\nabla \overline{v}dx&=-\int_{B\cap D_2} (\nabla b_2\cdot \nabla u_2^-)\overline{v}dx\\
&-i\int_{B\cap D_2} A_2^-\cdot (u_2^-\nabla \overline{v}
-\overline{v}\nabla u_2^-)b_2dx\\
&-\int_{B\cap D_2}(iA_2^-\cdot\nabla b_2+((A_2^-)^2+q_2^-)b_2)u_2^-\overline{v}dx\\
&+(b_2(\p_\nu +iA_2^-\cdot\nu)u_2^-,v)_{(H^{-1/2},H^{1/2})(\p (B\cap D_2))}.
\end{aligned}
\end{equation}
It follows from \eqref{eq_5_4_1} that
\begin{equation}
\label{eq_5_4_1_conj}
\mathcal{L}_{-A^+, q^+}\overline v=0\quad \textrm{in}\quad V(\delta). 
\end{equation}
Using the first Green formula \eqref{eq_first_green}, we obtain that
\begin{align*}
0&=\int_{B_2\cap D_2} (\mathcal{L}_{-A^+, q^+}\overline v)a_2u_2^-dx=\int_{B\cap D_2}\nabla \overline{v}\cdot\nabla (a_2 u_2^-)dx\\
&+i\int_{B\cap D_2} (-A^+\overline{v}\cdot\nabla (a_2u_2^-)
+(A^+\cdot\nabla \overline{v})a_2u_2^-)dx+
\int_{B\cap D_2}((A^+)^2+q^+)\overline{v}a_2u_2^-dx\\
&-((\p_\nu -iA^+\cdot\nu)\overline{v},a_2\overline{u_2^-})_{(H^{-1/2},H^{1/2})(\p (B\cap D_2))}.
\end{align*}
This yields that 
\begin{equation}
\label{eq_5_12}
\begin{aligned}
\int_{B\cap D_2}a_2\nabla u_2^-\cdot\nabla \overline{v}dx=&-\int_{B\cap D_2} (\nabla a_2\cdot \nabla \overline{v}) u_2^-dx\\
&+i\int_{B\cap D_2} A^+\cdot(\overline{v} \nabla u_2^- 
-u_2^-\nabla \overline{v} )a_2dx\\
&+\int_{B\cap D_2}(iA^+\cdot\nabla a_2-((A^+)^2+q^+)a_2)u_2^-\overline{v}dx\\
&+(a_2 u_2^-,(\p_\nu +i A^+ \cdot\nu)v)_{(H^{1/2},H^{-1/2})(\p (B\cap D_2))}.
\end{aligned}
\end{equation}
Our next step is to subtract \eqref{eq_5_12}  from \eqref{eq_5_11}. To that end consider first the boundary terms in \eqref{eq_5_11} and \eqref{eq_5_12}. 
Setting
\[
\Gamma(B)=\p D_2\cap B,
\]
and using the transmission conditions in \eqref{eq_5_7_1} with $j=2$ and \eqref{eq_5_7}, 
we get 
\begin{equation}
\label{eq_I_1}
\begin{aligned}
&I_1:=(b_2(\p_\nu +iA_2^-\cdot\nu)u_2^-,v)_{(H^{-1/2},H^{1/2})(\Gamma(B))}\\
&- (a_2 u_2^-,(\p_\nu +i A^+\cdot\nu)v)_{(H^{1/2},H^{-1/2})(\Gamma(B))}
=-\int_{\Gamma(B)}c_2u_2^-\overline{v}dS \\
&+ ((\p_\nu +iA^+\cdot\nu)u_2^+,v)_{(H^{-1/2},H^{1/2})(\p D_2)}
- (u_2^+,(\p_\nu +i A^+\cdot\nu)v)_{(H^{1/2},H^{-1/2})(\p D_2)}\\
&- ((\p_\nu +iA^+\cdot\nu)u_2^+,v)_{(H^{-1/2},H^{1/2})(\p D_2\setminus\Gamma(B))}\\
&+ (u_2^+,(\p_\nu +i A^+\cdot\nu)v)_{(H^{1/2},H^{-1/2})(\p D_2\setminus\Gamma(B))}=-\int_{\Gamma(B)}c_2u_2^-\overline{v}dS \\
&+ ((\p_\nu +iA^+\cdot\nu)u_1^+,v)_{(H^{-1/2},H^{1/2})(\p D_1)}
- (u_1^+,(\p_\nu +i A^+\cdot\nu)v)_{(H^{1/2},H^{-1/2})(\p D_1)}\\
&- ((\p_\nu +iA^+\cdot\nu)u_2^+,v)_{(H^{-1/2},H^{1/2})(\p D_2\setminus\Gamma(B))}\\
&+ (u_2^+,(\p_\nu +i A^+ \cdot\nu)v)_{(H^{1/2},H^{-1/2})(\p D_2\setminus\Gamma(B))}.
\end{aligned}
\end{equation}
The idea in the above computation is to move further away from the pole $x_\delta$. Notice that $\Gamma(B)$ is the portion of the boundary of $D_2$ that is closest to the pole $x_\delta$, and we do not want to have traces of the normal derivatives, integrated over $\Gamma(B)$, in the expression for  $I_1$.

Letting
\begin{equation}
\label{eq_I_2}
\begin{aligned}
I_2:=&(b_2(\p_\nu +iA_2^-\cdot\nu)u_2^-,v)_{(H^{-1/2},H^{1/2})(\p B\cap D_2)}\\
&- (a_2 u_2^-,(\p_\nu +i A^+ \cdot\nu)v)_{(H^{1/2},H^{-1/2})(\p B\cap D_2)},
\end{aligned}
\end{equation}
and subtracting \eqref{eq_5_12}  from \eqref{eq_5_11}, we get
\begin{equation}
\label{eq_5_13}
\begin{aligned}
\int_{B\cap D_2}(b_2-a_2)\nabla u_2^-\cdot\nabla \overline{v}dx=\int_{B\cap D_2} ( (\nabla a_2\cdot \nabla \overline{v}) u_2^--(\nabla b_2\cdot \nabla u_2^-)\overline{v})dx\\
+\int_{B\cap D_2}(ia_2A^+-ib_2A_2^-)\cdot(u_2^-\nabla \overline{v}- \overline{v}\nabla u_2^-)dx-\int_{B\cap D_2} ru_2^-\overline{v}dx+I_1+I_2,
\end{aligned} 
\end{equation}
where
\begin{equation}
\label{eq_r_5_13}
r=iA_2^-\cdot\nabla b_2+((A_2^-)^2+q_2^-)b_2+iA^+\cdot\nabla a_2-((A^+)^2+q^+)a_2\in L^\infty(B\cap D_2).
\end{equation}
Notice that \eqref{eq_5_13} is valid for any 
 $(u_j^+,u_j^-)\in H^1(V(\delta)\setminus\overline D_j)\times H^1(D_j)$, $j=1,2$,  such that they satisfy the transmission problems
 \eqref{eq_5_7_1} and the condition \eqref{eq_5_7_2}, and for any $v\in H^1(V(\delta))$ satisfying \eqref{eq_5_4_1}.  

We shall use \eqref{eq_5_13} with the singular solutions, constructed in  Subsection \ref{sec_fund_sol},  with the poles at $y=x_\delta\notin \overline{V(\delta)}$. 
Let us introduce these singular solutions in our context. 
Denote by $G(x,y)$ the fundamental solution of the operator $\mathcal{L}_{A^+,q^+}$, i.e.
\[
\mathcal{L}_{A^+,q^+}(x,D_x)G(x,y)=\delta(x-y), \quad x,y\in \Omega.
\]
We assume, as we may, that this fundamental solution enjoys  the properties \eqref{eq_fund_sol} and \eqref{eq_integral}. For the existence of such fundamental solutions, we refer to  Appendix \ref{appendix_fundamental_sol}. 

Define 
\begin{equation}
\label{eq_5_14}
\begin{aligned}
&v=v(\cdot, x_\delta)=G(\cdot, x_\delta),\\
&u_j^+=u_j^+(\cdot, x_\delta)=G(\cdot,x_\delta)+E^+_{0j}(\cdot,x_\delta),\\
& u_j^-=u_j^-(\cdot, x_\delta)=G(\cdot,x_\delta)+E^-_{0j}(\cdot,x_\delta),\quad j=1,2,
\end{aligned}
\end{equation}
where 
$(E_{0j}^+(\cdot,x_\delta), E_{0j}^-(\cdot,x_\delta))\in H^1(V(\delta)\setminus\overline{D_j})\times H^1(D_j)$ solve  the following transmission problem,
\begin{equation}
\label{eq_5_15}
\begin{aligned}
&\mathcal{L}_{A^+,q^+}(x,D_x)E_{0j}^+(x,x_\delta)=0\quad \textrm{in}\quad V(\delta)\setminus\overline{D_j},\\
&\mathcal{L}_{A_j^-,q_j^-}(x,D_x)E_{0j}^-(x,x_\delta)=f_j^-\quad \textrm{in}\quad D_j,\\
&E_{0j}^+(\cdot,x_\delta)=a_jE_{0j}^-(\cdot,x_\delta)+(a_j-1)G(\cdot,x_\delta)\quad \textrm{on}\quad \p D_j,\\
(&\p_\nu + iA^+\cdot\nu) E_{0j}^+(\cdot,x_\delta)=b_j(\p_\nu+iA_j^-\cdot \nu)E_{0j}^-(\cdot,x_\delta)+c_jE_{0j}^-(\cdot,x_\delta)+g_{1j} \textrm{ on } \p D_j,\\
&E^+_{0j}(\cdot,x_\delta)=0\quad \textrm{on}\quad \p V(\delta),
\end{aligned}
\end{equation}
where 
\begin{align*}
f^-_j= & 2i(A_j^--A^+)\cdot\nabla G(\cdot,x_\delta)\\
&+(i\nabla\cdot (A_j^--A^+)+(A^+)^2-(A_j^-)^2+q^+-q_j^-)G(\cdot,x_\delta)\in L^2(D_j),\\
g_{1j}=&(b_j-1)\p_\nu G(\cdot,x_\delta)+(i\nu\cdot (b_jA_j^--A^+)+c_j)G(\cdot,x_\delta)\in H^{-1/2}(\p D_j).
\end{align*}

As $\overline{B}\subset \Omega\setminus\overline{D_1}$, we have $\dist(x_\delta, \p D_1)\ge 1/C$, and therefore, 
it follows from \eqref{eq_3_1} and \eqref{eq_fund_sol} that 
\begin{equation}
\label{eq_5_16}
\begin{aligned}
\|E_{01}^+(\cdot,x_\delta)\|_{H^1(V(\delta)\setminus\overline{D_1})}&+\|E_{01}^-(\cdot,x_\delta)\|_{H^1(D_1)}\le C\|G(\cdot,x_\delta)\|_{H^1(D_1)}\le C,
\end{aligned}
\end{equation}
and \eqref{eq_B_0_2} implies that 
\begin{equation}
\label{eq_5_16_6}
\begin{aligned}
\|E_{02}^+(\cdot,x_\delta)\|_{H^1(V(\delta)\setminus\overline{D_2})}+\|E_{02}^-(\cdot,x_\delta)\|_{H^1(D_2)}\le C \|G(\cdot,  x_\delta)\|_{H^1(D_2)}
\le C\delta^{1-n/2},
\end{aligned}
\end{equation}
as $\delta\to 0$.
It is important to mention that \eqref{eq_5_16} implies that the singular behavior of $u_1^\pm$ is the same as the behavior of $G(\cdot,x_\delta)$ when $\delta\to 0$. However, it follows from \eqref{eq_5_16_6} that the term  $E_{02}^\pm(\cdot,x_\delta)$ may not be considered as a remainder  when $\delta\to 0$,  in the definition of $u_2^\pm$. 

Thus, in the left hand side of \eqref{eq_5_13} we would like to have $u_1^+$ instead of $u_2^-$.  To that end, we have
\begin{equation}
\label{eq_ad_5_15_1}
\begin{aligned}
&\int_{B\cap D_2}(b_2-a_2)\nabla u_2^-\cdot\nabla \overline{v}dx=-\int_{B\cap D_2}\nabla\cdot ((b_2-a_2) \nabla \overline{v})u_2^-dx\\
&+((b_2-a_2)u_2^-,\p_\nu v)_{(H^{1/2},H^{-1/2})(\Gamma(B))}+((b_2-a_2)u_2^-,\p_\nu v)_{(H^{1/2},H^{-1/2})(\p B\cap D_2)}\\
&=-\int_{B\cap D_2}\nabla\cdot ((b_2-a_2) \nabla \overline{v})u_2^-dx\\
&+((b_2-a_2)a_2^{-1}u_1^+,\p_\nu v)_{(H^{1/2},H^{-1/2})(\Gamma(B))}+((b_2-a_2)u_2^-,\p_\nu v)_{(H^{1/2},H^{-1/2})(\p B\cap D_2)}.
\end{aligned}
\end{equation}
In the last line we have used the fact that 
\[
u_2^-=a_2^{-1}u_2^+=a_2^{-1}u_1^+\quad\textrm{on}\quad \Gamma(B),
\]
which follows from the transmission conditions in \eqref{eq_5_7_1} and the equality $u_1^+=u_2^+$ on $\p D(e)$, see the discussion after \eqref{eq_trace_v(delta)}. 

On the other hand, 
\begin{equation}
\label{eq_ad_5_15_2}
\begin{aligned}
&((b_2-a_2)a_2^{-1}u_1^+,\p_\nu v)_{(H^{1/2},H^{-1/2})(\Gamma(B))}= \int_{B\cap D_2}a_2^{-1}(b_2-a_2)\nabla u_1^+\cdot\nabla \overline{v}dx\\
&+\int_{B\cap D_2}\nabla\cdot (a_2^{-1}(b_2-a_2) \nabla \overline{v})u_1^+dx
-((b_2-a_2)a_2^{-1}u_1^+,\p_\nu v)_{(H^{1/2},H^{-1/2})(\p B\cap D_2)}.
\end{aligned}
\end{equation}
Substituting \eqref{eq_ad_5_15_2} into \eqref{eq_ad_5_15_1}, we get
\begin{equation}
\label{eq_ad_5_15_3}
\begin{aligned}
&\int_{B\cap D_2}(b_2-a_2)\nabla u_2^-\cdot\nabla \overline{v}dx=\int_{B\cap D_2}a_2^{-1}(b_2-a_2)\nabla u_1^+\cdot\nabla \overline{v}dx\\
&+\int_{B\cap D_2}\nabla\cdot (a_2^{-1}(b_2-a_2) \nabla \overline{v})u_1^+dx -\int_{B\cap D_2}\nabla\cdot ((b_2-a_2) \nabla \overline{v})u_2^-dx\\
&-((b_2-a_2)a_2^{-1}u_1^+,\p_\nu v)_{(H^{1/2},H^{-1/2})(\p B\cap D_2)}+ ((b_2-a_2)u_2^-,\p_\nu v)_{(H^{1/2},H^{-1/2})(\p B\cap D_2)}.
\end{aligned}
\end{equation}
It follows from \eqref{eq_5_4_1_conj} that
\begin{equation}
\label{eq_ad_5_15_4}
\Delta \overline{v}=2iA^+\cdot\nabla \overline{v} + (i(\nabla\cdot A^+)+(A^+)^2+q^+)\overline{v} \quad \textrm{on}\quad B\cap D_2. 
\end{equation}

Substituting \eqref{eq_ad_5_15_3} and \eqref{eq_ad_5_15_4} into \eqref{eq_5_13}, we obtain that
\begin{equation}
\label{eq_5_13_new}
\begin{aligned}
&\int_{B\cap D_2}a_2^{-1}(b_2-a_2)\nabla u_1^+\cdot\nabla \overline{v}dx=\int_{B\cap D_2} ( (\nabla a_2\cdot \nabla \overline{v}) u_2^--(\nabla b_2\cdot \nabla u_2^-)\overline{v})dx\\
&+\int_{B\cap D_2}(ia_2A^+-ib_2A_2^-)\cdot(u_2^-\nabla \overline{v}-\nabla u_2^-\overline{v})dx-\int_{B\cap D_2} ru_2^-\overline{v}dx\\
&\int_{B\cap D_2} u_2^-(\nabla(b_2-a_2) +2i (b_2-a_2)A^+)\cdot \nabla \overline{v}dx\\
&-\int_{B\cap D_2} u_1^+(\nabla(a_2^{-1}(b_2-a_2)) + 2ia_2^{-1}(b_2-a_2)A^+)\cdot\nabla \overline{v}dx\\
&+\int_{B_2\cap D_2} (i(\nabla \cdot A^+)+(A^+)^2+q^+)(u_2^-(b_2-a_2)-u_1^+a_2^{-1}(b_2-a_2))\overline{v}dx+I_1+I_2\\
&+((b_2-a_2)a_2^{-1}u_1^+,\p_\nu v)_{(H^{1/2},H^{-1/2})(\p B\cap D_2)}- ((b_2-a_2)u_2^-,\p_\nu v)_{(H^{1/2},H^{-1/2})(\p B\cap D_2)},
\end{aligned}
\end{equation}
where $r$, $I_1$ and $I_2$ are given by  \eqref{eq_r_5_13}, \eqref{eq_I_1} and \eqref{eq_I_2}, respectively. 
Notice that the idea of deriving \eqref{eq_5_13_new} is to collect the most singular term on the left hand side.  The identity \eqref{eq_5_13_new} will now play the main role in the recovery of the obstacle. 

It is important to mention here that the function $a_2^{-1}(b_2-a_2)>0$ on $\overline{B\cap D_2}$, and that both $\nabla u_1^+$ and $\nabla v$ behave as the gradient of the fundamental solution of the Laplacian, as $\delta$ goes to zero. 
More precisely, recall that $u_1^+$ and $v$ are given by \eqref{eq_5_14}. Then by \eqref{eq_prop_2}, we have
\begin{align*}
&\nabla u_1^+(x,x_\delta)=\nabla_x G_0(x,x_\delta)+ \nabla_x E^+_{01}(x,x_\delta)+ R(x,x_\delta),\\
&\nabla_x v(x,x_\delta)=\nabla_x  G(x,x_\delta)=\nabla_x G_0(x,x_\delta)+ R(x,x_\delta), 
\end{align*}
where $G_0$ is the fundamental solution of $-\Delta$, which is given by \eqref{eq_G_0}, and 
\[
R=\mathcal{O}(|x-x_\delta|^{2-n}),\quad \textrm{as}\quad \delta\to 0.
\]

Substituting $v$, $(u_j^+,u_j^-)$, $j=1,2$, given by \eqref{eq_5_14} into \eqref{eq_5_13_new}, we see that  the left hand side of \eqref{eq_5_13_new}  has the form,
\begin{align*}
\int_{B\cap D_2}a_2^{-1}(b_2-a_2)\nabla u_1^+\cdot\nabla \overline{v}dx=\int_{B\cap D_2}a_2^{-1}(b_2-a_2)|\nabla_x G_0(x,x_\delta)|^2 dx+ I_0,
\end{align*}
 where 
 \begin{equation}
 \label{eq_I_0}
 \begin{aligned}
 &I_0:=\int_{B\cap D_2}a_2^{-1}(b_2-a_2)\nabla_x E^+_{01}(x,x_\delta)\cdot\nabla_x \overline{ G(x,x_\delta)}dx\\
 &+\int_{B\cap D_2}a_2^{-1}(b_2-a_2)(\nabla_x G_0\cdot\overline{R}+ R\cdot \nabla_x G_0)dx+ \int_{B\cap D_2}a_2^{-1}(b_2-a_2) |R|^2dx.
 \end{aligned}
 \end{equation}
It follows from \eqref{eq_G_0} that 
\[
\nabla_xG_0(x,x_\delta)=-\frac{1}{\Upsilon_n|x-x_\delta|^{n-1}}\frac{x-x_\delta}{|x-x_\delta|},
\]
and therefore, as $a_2^{-1}(b_2-a_2)>0$ on $\overline{B\cap D_2}$, using \eqref{eq_D_4} we have the following estimate, 
\begin{equation}
\label{eq_5_upper}
\frac{1}{C}\delta^{2-n}\le\int_{B\cap D_2}a_2^{-1}(b_2-a_2)|\nabla G_0(\cdot,x_\delta)|^2 dx, \quad \textrm{as}\quad \delta\to 0. 
\end{equation}

Moving $I_0$ to the right hand side of  \eqref{eq_5_13_new}, we shall show that the absolute value of the right hand side of \eqref{eq_5_13_new} is bounded by $C\delta^{1-n/2}\mu_n(\delta)$, where 
$\mu_n(\delta)$ is defined by 
\begin{equation}
\label{eq_mu_n}
\mu_n(\delta)=\begin{cases}
(\log\frac{1}{\delta})^{1/2}, & n=3,\\
\delta^{(3-n)/2}, & n\ge 4.
\end{cases}
\end{equation}

Notice that here the quantity $\delta^{1-n/2}$ is related to the $H^1$-norm of the fundamental solution of the magnetic Schr\"odinger operator on $D_2$, see \eqref{eq_B_0_2}, while $\mu_n(\delta)$ is related to the $L^2$-norm of the trace of the fundamental solution of the magnetic Schr\"odinger operator on the boundary of $D_2$, see \eqref{eq_B_0_3}. 

In what follows we shall also need $\tau_n(\delta)$, which is given by
\begin{equation}
\label{eq_tau_n}
\tau_n(\delta)=\begin{cases}
1& n=3,\\
(\log\frac{1}{\delta})^{1/2}& n=4,\\
\delta^{2-n/2} & n\ge 5,
\end{cases}
\end{equation}
which is related to the $L^2$ norm of the fundamental solution to the magnetic Schr\"odinger operator on $D_2$, see \eqref{eq_B_0_1}. 

For any $n\ge 3$, we have
\begin{equation}
\label{eq_5_tau_mu_delta}
1\le \tau_n(\delta)<\!< \mu_n(\delta)<\!< \delta^{1-n/2},\quad\textrm{as}\quad \delta\to 0,
\end{equation}
and 
\begin{equation}
\label{eq_5_tau_mu_delta_2}
 \delta^{1-n/2}\mu_n(\delta)=o(\delta^{2-n}),\quad\textrm{as}\quad \delta\to 0. 
\end{equation}

Thus, the idea is to get a contradiction as $\delta\to 0$, which will show that $D_1=D_2$.

Let us first estimate the absolute value of $I_0$, which is given by�\eqref{eq_I_0}.  Indeed, by \eqref{eq_5_16} and \eqref{eq_B_0}, we get 
\begin{equation}
\label{eq_I_0_1}
\begin{aligned}
\bigg|\int_{B\cap D_2}&a_2^{-1}(b_2-a_2)\nabla_x E^+_{01}(\cdot,x_\delta)\cdot\nabla_x \overline{G(x,x_\delta)}dx\bigg|\\
&\le C \|\nabla E^+_{01}(\cdot,x_\delta)\|_{L^2(B\cap D_2)}\|\nabla  G(\cdot ,x_\delta)\|_{L^2(B\cap D_2)}\le C\delta^{1-n/2}.
\end{aligned}
\end{equation}
Using \eqref{eq_D_3},  we obtain that
\begin{equation}
\label{eq_I_0_2}
0\le \int_{B\cap D_2}a_2^{-1}(b_2-a_2) |R|^2 dx \le C\int_{B\cap D_2}\frac{dx}{|x-x_\delta|^{2(n-2)}}\le C (\tau_n(\delta))^2.
\end{equation}

By \eqref{eq_D_3}, we have 
\begin{equation}
\label{eq_I_0_3}
\begin{aligned}
\bigg|\int_{B\cap D_2}a_2^{-1}(b_2-a_2) (\nabla_x G_0\cdot\overline{R}+ R\cdot \nabla_x G_0)dx\bigg|&\le C\int_{B\cap D_2}\frac{dx}{|x-x_\delta|^{2n-3}}\\
&\le C(\mu_n(\delta))^2, \quad \textrm{as}\quad \delta\to 0.
\end{aligned}
\end{equation}
The inequalities  \eqref{eq_I_0_1},  \eqref{eq_I_0_2}, and  \eqref{eq_I_0_3} imply that 
\begin{equation}
\label{eq_term5_0}
|I_0|\le C\delta^{1-n/2}\mu_n(\delta). 
\end{equation}

In order to continue estimating terms in the right hand side of \eqref{eq_5_13_new} it will be convenient to collect some auxiliary estimates. 
Let $\tilde D\subset \!\subset V(\delta)$ be an open subset with Lipschitz boundary. The following estimate is a direct consequence of \eqref{eq_fund_sol} and \eqref{eq_B_0_2}, 
\[
\|v(\cdot,  x_\delta)\|_{H^1(\tilde D)}\le \begin{cases} C,& \dist(x_\delta, \p \tilde D)\ge 1/C_1, \quad C_1>0,\\
C\delta^{1-n/2}, & \textrm{otherwise}, 
\end{cases}
\]
as $\delta\to 0$.
Furthermore, \eqref{eq_3_2} together with the trace theorem implies that
\begin{equation}
\label{eq_5_16_5}
\begin{aligned}
\|\p_\nu v(\cdot,  x_\delta)\|_{H^{-1/2}(\p \tilde D)}+ \| v(\cdot,  x_\delta)\|_{H^{1/2}(\p \tilde D)}\le C \|v(\cdot,  x_\delta)\|_{H^1(\tilde D)}.
\end{aligned}
\end{equation}
 
As $\overline{B}\subset \Omega\setminus\overline{D_1}$, we have $\dist(x_\delta, \p D_1)\ge 1/C$ and therefore, \eqref{eq_5_16_5}
implies that 
\begin{equation}
\label{eq_5_16_3}
\|\p_\nu v(\cdot,  x_\delta)\|_{H^{-1/2}(\p D_1)}+ \| v(\cdot,  x_\delta)\|_{H^{1/2}(\p D_1)}\le C \|v(\cdot,  x_\delta)\|_{H^1(D_1)}\le C.
\end{equation}
It also follows from \eqref{eq_5_16_5} that 
\begin{equation}
\label{eq_5_16_3_1}
\|\p_\nu v(\cdot,  x_\delta)\|_{H^{-1/2}(\p D_2)}+ \| v(\cdot,  x_\delta)\|_{H^{1/2}(\p D_2)}\le C \|v(\cdot,  x_\delta)\|_{H^1(D_2)}\le C\delta^{1-n/2}.
\end{equation}

Using the fact that 
\[
\mathcal{L}_{A^+,q^+}(x,D_x)E^+_{0j}(x,x_\delta)=0\quad\textrm{in}\quad V(\delta)\setminus\overline{D_j},\quad j=1,2, 
\]
see the first equation of \eqref{eq_5_15},  and using \eqref{eq_trace_cont}, and \eqref{eq_5_16}, we get
\begin{equation}
\label{eq_5_16_2}
\begin{aligned}
\|\p_\nu E^+_{01}(\cdot,  x_\delta)\|_{H^{-1/2}(\p D_1)}+ \| E^+_{01}(\cdot,  x_\delta)\|_{H^{1/2}(\p D_1)}\le C \| E^+_{01}(\cdot,  x_\delta)\|_{H^1(V(\delta)\setminus\overline{D_1})}
\le C.
\end{aligned}
\end{equation}
Similarly,  using \eqref{eq_5_16_6}, we obtain that 
\begin{equation}
\label{eq_5_16_8}
\begin{aligned}
\|\p_\nu E^+_{02}(\cdot,  x_\delta)\|_{H^{-1/2}(\p D_2)}+ \| E^+_{02}(\cdot,  x_\delta)\|_{H^{1/2}(\p D_2)}&\le C \| E^+_{02}(\cdot,  x_\delta)\|_{H^1(V(\delta)\setminus\overline{D_2})}\\
&\le C\delta^{1-n/2}.
\end{aligned}
\end{equation}

It follows from  \eqref{eq_5_14}, \eqref{eq_5_16_3} and \eqref{eq_5_16_2}  that 
\begin{equation}
\label{eq_5_16_4}
\begin{aligned}
\|\p_\nu u_1^+\|_{H^{-1/2}(\p D_1)}&+\|u_1^+\|_{H^{1/2}(\p D_1)}
\le \|\p_\nu G(\cdot, x_\delta)\|_{H^{-1/2}(\p D_1)}
+ \|G(\cdot,x_\delta)\|_{H^{1/2}(\p D_1)}\\
& +\|\p_\nu E^+_{01}(\cdot, x_\delta)\|_{H^{-1/2}(\p D_1)}
+ \|E^+_{01}(\cdot,x_\delta)\|_{H^{1/2}(\p D_1)}\le C.
\end{aligned}
\end{equation}
Similarly, \eqref{eq_5_14}, \eqref{eq_5_16_3_1} and  \eqref{eq_5_16_8} yield that 
\begin{equation}
\label{eq_5_16_9}
\begin{aligned}
\|\p_\nu u_2^+\|_{H^{-1/2}(\p D_2)}&+\|u_2^+\|_{H^{1/2}(\p D_2)}\le \|\p_\nu G(\cdot, x_\delta)\|_{H^{-1/2}(\p D_2)}
+ \|G(\cdot,x_\delta)\|_{H^{1/2}(\p D_2)}\\
& +\|\p_\nu E^+_{02}(\cdot, x_\delta)\|_{H^{-1/2}(\p D_2)}
+ \|E^+_{02}(\cdot,x_\delta)\|_{H^{1/2}(\p D_2)}\le C\delta^{1-n/2}.
\end{aligned}
\end{equation}

Let us start estimating the boundary terms on the right hand side of \eqref{eq_5_13_new}. First for the second and third terms in the last expression for  $I_1$, given by \eqref{eq_I_1}, 
using  \eqref{eq_5_16_3} and \eqref{eq_5_16_4}, we have 
\begin{equation}
\label{eq_term5_1}
\begin{aligned}
|((\p_\nu +i&A^+\cdot\nu)u_1^+,v)_{(H^{-1/2},H^{1/2})(\p D_1)}
- (u_1^+,(\p_\nu +i A^+\cdot\nu)v)_{(H^{1/2},H^{-1/2})(\p D_1)}|\\
&\le C(\|\p_\nu u_1^+\|_{H^{-1/2}(\p D_1)}\|v\|_{H^{1/2}(\p D_1)} + \|u_1^+\|_{H^{1/2}(\p D_1)}\|\p_\nu v\|_{H^{-1/2}(\p D_1)}\\
&+ \|u_1^+\|_{L^2(\p D_1)}\|v\|_{L^2(\p D_1)})\le C.
\end{aligned}
\end{equation}

Let $\tilde D=D_2\setminus(\overline{B\cap D_2})$. Then as $\dist(x_\delta, \p \tilde D)\ge 1/C$,  \eqref{eq_5_16_5} implies that 
\begin{equation}
\label{eq_5_17}
\begin{aligned}
\|\p_\nu v(\cdot,  x_\delta)\|_{H^{-1/2}(\p D_2\setminus\Gamma(B))}&+ \| v(\cdot,  x_\delta)\|_{H^{1/2}(\p D_2\setminus\Gamma(B))}\le C \|v(\cdot,  x_\delta)\|_{H^1(\tilde D)}\le C.
\end{aligned}
\end{equation}

Let us now estimate the last two terms in $I_1$, defined by \eqref{eq_I_1}. Using \eqref{eq_5_16_9} and \eqref{eq_5_17}, we have
\begin{equation}
\label{eq_term5_2}
\begin{aligned}
|((&\p_\nu +iA^+\cdot\nu)u_2^+,v)_{(H^{-1/2},H^{1/2})(\p D_2\setminus\Gamma(B))}\\
&-
(u_2^+,(\p_\nu +i A^+\cdot\nu)v)_{(H^{1/2},H^{-1/2})(\p D_2\setminus\Gamma(B))}|\\
&\le C(\|\p_\nu u_2^+\|_{H^{-1/2}(\p D_2)}\|v\|_{H^{1/2}(\p D_2\setminus \Gamma(B))}
 + \|u_2^+\|_{H^{1/2}(\p D_2)}\|\p_\nu v\|_{H^{-1/2}(\p D_2\setminus \Gamma(B))}\\
&+ \|u_2^+\|_{H^{1/2}(\p D_2)}\|v\|_{H^{1/2}(\p D_2\setminus \Gamma(B))})\le 
C\delta^{1-n/2}.
\end{aligned}
\end{equation}

Let us estimate the first term in the last expression for $I_1$ in \eqref{eq_I_1}. To that end we shall need the following estimate, which is a consequence of the representation \eqref{eq_5_14} of $u_2^-$ together with \eqref{eq_5_16_6} and \eqref{eq_5_16_3_1}, 
\begin{equation}
\label{eq_5_24}
\|u_2^-\|_{H^1(B\cap D_2)}\le \|G(\cdot,x(\delta))\|_{H^1(D_2)} + \|E_{02}^-(\cdot,x(\delta))\|_{H^1(D_2)}\le C\delta^{1-n/2}.
\end{equation}
Using \eqref{eq_B_0_3} and \eqref{eq_5_24}, we get 
\begin{equation}
\label{eq_term5_3}
\begin{aligned}
\bigg|\int_{\Gamma(B)}c_2u_2^-\overline{v}dS\bigg| &\le C\|u_2^-\|_{L^2(\Gamma(B))}\|v\|_{L^2(\Gamma(B))}\le C\|u_2^-\|_{H^1(B\cap D_2)}\mu_n(\delta)\\
&\le C\delta^{1-n/2}\mu_n(\delta),
\end{aligned}
\end{equation}
where $\mu_n(\delta)$ is defined by \eqref{eq_mu_n}.

Summing up,  \eqref{eq_term5_1}, \eqref{eq_term5_2} and \eqref{eq_term5_3} imply that 
\begin{equation}
\label{eq_term5_I_1}
|I_1|\le C\delta^{1-n/2}\mu_n(\delta), 
\end{equation}
as $\delta\to 0$. 

We shall estimate $I_2$, given by \eqref{eq_I_2}.  We have 
\begin{equation}
\label{eq_5_18_0}
\begin{aligned}
|&I_2|=|(b_2(\p_\nu +iA_2^-\cdot\nu)u_2^-,v)_{(H^{-1/2},H^{1/2})(\p B\cap D_2)}\\
&- (a_2 u_2^-,(\p_\nu +i A^+\cdot\nu)v)_{(H^{1/2},H^{-1/2})(\p B\cap D_2)}|\\
&\le C(\|\p_\nu u_2^-\|_{H^{-1/2}(\p B\cap D_2)}\|v\|_{H^{1/2}(\p B\cap D_2)}
 + \|u_2^-\|_{H^{1/2}(\p B\cap D_2)}\|\p_\nu v\|_{H^{-1/2}(\p B\cap D_2)}\\
&+ \|u_2^-\|_{H^{1/2}(\p B\cap D_2)}\|v\|_{H^{1/2}(\p B\cap D_2)}).
\end{aligned}
\end{equation}
We proceed by estimating all terms in the right hand side of the above inequality. 
To that end 
let $\tilde D=D_2\setminus(\overline{B\cap D_2})$. Then as $\dist(x_\delta, \p \tilde D)\ge 1/C$,  \eqref{eq_5_16_5} implies that 
\begin{equation}
\label{eq_5_18}
\begin{aligned}
\|\p_\nu v(\cdot,  x_\delta)\|_{H^{-1/2}(\p B\cap D_2)}+ \| v(\cdot,  x_\delta)\|_{H^{1/2}(\p B\cap D_2)}\le C \|v(\cdot,  x_\delta)\|_{H^1(\tilde D)}
\le C.
\end{aligned}
\end{equation}
By the definition \eqref{eq_5_14} of $u_2^-$, we get
\begin{equation}
\label{eq_5_19}
\begin{aligned}
\|\p_\nu u_2^-\|_{H^{-1/2}(\p B\cap D_2)}&+\| u_2^-\|_{H^{1/2}(\p B\cap D_2)}\\
&\le \|\p_\nu G(\cdot,x_\delta)\|_{H^{-1/2}(\p B\cap D_2)}
+ \|G(\cdot,x_\delta)\|_{H^{1/2}(\p B\cap D_2)}\\
&+ \|\p_\nu E_{02}^-(\cdot,x_\delta))\|_{H^{-1/2}(\p B\cap D_2)}+ \|E_{02}^-(\cdot,x_\delta)\|_{H^{1/2}(\p B\cap D_2)}.
\end{aligned}
\end{equation}
We shall estimate all terms in \eqref{eq_5_19}. First similarly to \eqref{eq_5_18}, we have
\begin{equation}
\label{eq_5_20}
\begin{aligned}
\|\p_\nu G(\cdot,x_\delta)\|_{H^{-1/2}(\p B\cap D_2)}
&+ \|G(\cdot,x_\delta)\|_{H^{1/2}(\p B\cap D_2)}\le C.
\end{aligned}
\end{equation}
The second equation in \eqref{eq_5_15} implies that
\begin{align*}
\Delta E_{02}^-(\cdot,x_\delta)=&-2iA_2^-\cdot \nabla E_{02}^-(\cdot,x_\delta)+(-i\nabla\cdot A_2^-+(A_2^-)^2+q_2^-)E_{02}^-(\cdot,x_\delta)\\
&+f_2^-\quad \textrm{on}\quad D_2,\quad \quad \|f_2^-\|_{L^2(D_2)}\le C\|G(\cdot,x_\delta)\|_{H^1(D_2)}.
\end{align*}
Thus, we conclude from \eqref{eq_trace_cont} and  \eqref{eq_5_16_6}  that 
\begin{equation}
\label{eq_5_21}
\begin{aligned}
\|\p_\nu E_{02}^-(&\cdot,x_\delta)\|_{H^{-1/2}(\p B\cap D_2)}\le C(\|\Delta E_{02}^-(\cdot,x_\delta)\|_{L^2(B\cap D_2)}
+\|E_{02}^-(\cdot,x_\delta)\|_{H^1(B\cap D_2)})\\
&\le C(\|E_{02}^-(\cdot,x_\delta)\|_{H^1(D_2)}
+ \|G(\cdot,x_\delta)\|_{H^1( D_2)})\le C\delta^{1-n/2}.
\end{aligned}
\end{equation}
It follows from \eqref{eq_5_19} with the help of \eqref{eq_5_20}, \eqref{eq_5_21} and the trace theorem that 
\begin{equation}
\label{eq_5_22}
\|\p_\nu u_2^-\|_{H^{-1/2}(\p B\cap D_2)}+\| u_2^-\|_{H^{1/2}(\p B\cap D_2)}\le C\delta^{1-n/2}. 
\end{equation}
We conclude from \eqref{eq_5_18_0}, \eqref{eq_5_18} and \eqref{eq_5_22} that 
\begin{equation}
\label{eq_term5_4}
|I_2|\le C\delta^{1-n/2}.
\end{equation}
We shall now complete estimating the boundary terms in the right hand side of \eqref{eq_5_13_new}.  First, similarly to the estimate \eqref{eq_term5_4} for $I_2$, we get
\begin{equation}
\label{eq_term5_5}
|((b_2-a_2)u_2^-,\p_\nu v)_{(H^{1/2},H^{-1/2})(\p B\cap D_2)}   |\le C\delta^{1-n/2}.
\end{equation}

Using \eqref{eq_5_18} and \eqref{eq_5_16}, we obtain that
\begin{equation}
\label{eq_term5_6}
\begin{aligned}
|((b_2-a_2)a_2^{-1}u_1^+,&\p_\nu v)_{(H^{1/2},H^{-1/2})(\p B\cap D_2)}|\le C\|u_1^+\|_{H^{1/2}(\p B\cap D_2)}\|\p_\nu v\|_{H^{-1/2}(\p B\cap D_2)}\\
&\le C(\|G(\cdot, x_\delta)\|_{H^{1/2}(\p B\cap D_2)}+ \|E^+_{01}(\cdot,x_\delta)\|_{H^{1/2}(\p B\cap D_2)} )\\
&\le C(\|G(\cdot, x_\delta)\|_{H^{1}(\tilde D)}+ \|E^+_{01}(\cdot,x_\delta)\|_{H^{1}(V(\delta)\setminus\overline{D_1})})\le C.  
\end{aligned}
\end{equation}

Let us next estimate the following terms in the right hand side of \eqref{eq_5_13_new}, which contain only $\overline{v}$, $u_2^-$ and $\nabla u_2^-$,
\begin{align*}
I_3:= &-\int_{B\cap D_2} ru_2^-\overline{v}dx -\int_{B\cap D_2} (\nabla b_2\cdot \nabla u_2^-)\overline{v}dx\\
&-\int_{B\cap D_2}(ia_2A^+-ib_2A_2^-)\cdot\nabla u_2^-\overline{v}dx\\
&+\int_{B_2\cap D_2} (i(\nabla \cdot A^+)+(A^+)^2+q^+)(b_2-a_2)u_2^-\overline{v}dx.
\end{align*}
Then using \eqref{eq_5_24} and \eqref{eq_B_0_1}, we have
\begin{equation}
\label{eq_term5_7}
|I_3|\le C\|u_2^-\|_{H^1(B\cap D_2)}\|v\|_{L^2(B\cap D_2)}\le C\delta^{1-n/2}\tau_n(\delta),
\end{equation}
where $\tau_n(\delta)$ is given by \eqref{eq_tau_n}.

Similarly, we also have
\begin{equation}
\label{eq_term5_8}
\begin{aligned}
\bigg|\int_{B_2\cap D_2} (i(\nabla \cdot A^+)+(A^+)^2+q^+)a_2^{-1}(b_2-a_2)u_1^+\overline{v}dx\bigg|\\
\le C\|u_1^+\|_{L^2 (B\cap D_2)}\|v\|_{L^2(B\cap D_2)}\le C(\tau_n(\delta))^2. 
\end{aligned}
\end{equation}

We shall  estimate the following terms in the left hand side of \eqref{eq_5_13_new}, which contain $\nabla \overline{v}$ and $u_2^-$, 
\begin{align*}
I_4:=\int_{B\cap D_2} (\nabla a_2\cdot \nabla \overline{v}) u_2^-dx +\int_{B\cap D_2}(ia_2A^+-ib_2A_2^-)\cdot u_2^-\nabla \overline{v}dx\\
+\int_{B\cap D_2} u_2^-(\nabla(b_2-a_2) +2i (b_2-a_2)A^+)\cdot \nabla \overline{v}dx.
\end{align*}
We shall only estimate the first term in $I_4$, the estimates for the other two terms  being similar.  
Since we do not have an $L^2$-estimate for $u_2^-$ which is better than the $H^1$-estimate, we shall first integrate by parts. We have
\begin{align*}
\int_{B\cap D_2} (\nabla a_2\cdot \nabla \overline{v}) u_2^-dx=&-\int_{B\cap D_2} ( \overline{v} \nabla a_2\cdot \nabla u_2^- + \overline{v} u_2^-\Delta a_2) dx\\
&+\int_{\p (B\cap D_2)} (\p_\nu a_2) \overline{v} u_2^- dS.
\end{align*}
Proceeding similarly as for $I_3$, we get
\[
\bigg| \int_{B\cap D_2} ( \overline{v} \nabla a_2\cdot \nabla u_2^- + \overline{v} u_2^-\Delta a_2) dx \bigg|\le C\delta^{1-n/2}\tau_n(\delta). 
\]
Using the trace theorem, \eqref{eq_5_24} and \eqref{eq_B_0_3},   we obtain that 
\begin{align*}
\bigg|  \int_{\p (B\cap D_2)} (\p_\nu a_2) \overline{v} u_2^- dS  \bigg|\le C\|u_2^-\|_{L^2(\p (B\cap D_2))}\|v\|_{L^2(\p (B\cap D_2))}\\
\le C\|u_2^-\|_{H^1(B\cap D_2)}\|v\|_{L^2(\p (B\cap D_2))}\le C\delta^{1-n/2}\mu_n(\delta),
\end{align*}
where $\mu_n(\delta)$ is defined by \eqref{eq_mu_n}.  
Hence,
\[
\bigg| \int_{B\cap D_2} (\nabla a_2\cdot \nabla \overline{v}) u_2^-dx \bigg|\le C\delta^{1-n/2}\mu_n(\delta),
\]
and similarly,
\begin{equation}
\label{eq_term5_9}
|I_4|\le C\delta^{1-n/2}\mu_n(\delta). 
\end{equation}

In order to finally estimate the following term in the right hand side of  \eqref{eq_5_13_new},
\[
I_5:=\int_{B\cap D_2} B\cdot u_1^+\nabla \overline{v}dx, \quad  B:=\nabla(a_2^{-1}(b_2-a_2)) + 2ia_2^{-1}(b_2-a_2)A^+,
\]
we integrate by parts. We have
\[
I_5=-\int_{B\cap D_2} ((B\cdot \nabla u_1^+)\overline{v}+(\nabla\cdot B) u_1^+ \overline{v})dx+\int_{\p (B\cap D_2)} (\nu\cdot B) u_1^+\overline{v}dS,
\]
and therefore, using \eqref{eq_B_0_1} and  \eqref{eq_B_0_3}, we get 
\begin{equation}
\label{eq_term5_10}
\begin{aligned}
|I_5|\le C(\|u_1^+\|_{H^1(B\cap D_2)}\|v\|_{L^2(B\cap D_2)}+\|u_1^+\|_{L^2(\p (B\cap D_2))}\|v\|_{L^2(\p (B\cap D_2))})\\
\le C\|u_1^+\|_{H^1(B\cap D_2)}(\tau_n(\delta)+\mu_n(\delta))\le C\delta^{1-n/2}\mu_n(\delta).
\end{aligned}
\end{equation}
In the last inequality we have used the fact that 
\[
\|u_1^+\|_{H^1(B\cap D_2)}\le \|G(\cdot,x_\delta)\|_{H^1(B\cap D_2)}+\|E^+_{01}(\cdot,x_\delta)\|_{H^1(B\cap D_2)}\le C\delta^{1-n/2}. 
\] 

Summarizing and keeping in mind \eqref{eq_5_tau_mu_delta},  we conclude from \eqref{eq_5_13_new} with the help of all estimates \eqref{eq_term5_0}, \eqref{eq_term5_I_1},  \eqref{eq_term5_4}, \eqref{eq_term5_5}, \eqref{eq_term5_6}, \eqref{eq_term5_7}, \eqref{eq_term5_8}, \eqref{eq_term5_9} and \eqref{eq_term5_10} that the absolute value of the right hand side of \eqref{eq_5_13_new} is bounded by $C\delta^{1-n/2}\mu_n(\delta)$.  
This together with \eqref{eq_5_upper} implies that 
\[
\frac{1}{C}\delta^{2-n}\le \int_{B\cap D_2}a_2^{-1}(b_2-a_2)|\nabla G_0(\cdot,x(\delta))|^2 dx \le C\delta^{1-n/2}\mu_n(\delta),\quad \textrm{as}\quad \delta\to 0.
\]
In view of \eqref{eq_5_tau_mu_delta_2} we get a contradiction, which shows that $D_1=D_2=:D$.

\begin{rem}
When showing the unique identifiability of the obstacle, we have only used 
the condition $a_jb_j=1$ in $\overline{D_j}$ in order to achieve the unique solvability of the transmission problems in the regions $V(\delta)$ and $\tilde \Omega$, the latter in the application of Lemma \ref{lem_density}. 
In particular, this condition has not been used in all the estimates of this subsection. 
\end{rem}

\subsection{Recovery of the transmission coefficients on the boundary of the obstacle}

\label{sec_transmiss_coef}
In this subsection, we shall carry out all the computations and estimates not using the condition $a_jb_j=1$ in $\overline{D}$, $j=1,2$, with the only occurrence where this condition is needed being an application of Lemma \ref{lem_density} below.

Let $(u_1^+,u_1^-)\in H^1(\Omega\setminus\overline{D})\times H^1(D)$ satisfy the following transmission problem with $j=1$,
\begin{equation}
\label{eq_6_1}
\begin{aligned}
&\mathcal{L}_{A^+,q^+}u_j^+=0\quad \textrm{in}\quad \Omega\setminus\overline{D},\\
&\mathcal{L}_{A_j^-,q_j^-}u_j^-=0\quad \textrm{in}\quad D,\\
&u_j^+=a_ju_j^-\quad \textrm{on}\quad \p D,\\
(&\p_\nu + iA^+\cdot\nu) u_j^+=b_j(\p_\nu+iA_j^-\cdot \nu)u_j^-+c_ju_j^- \quad \textrm{on}\quad \p D,
\end{aligned}
\end{equation}
such that $\supp(u_1^+|_{\p \Omega})\subset \gamma$. 
Then since  
\[
\mathcal{C}_\gamma(A^+,q^+, A_1^-,q_1^-,a_1,b_1,c_1; D)=\mathcal{C}_\gamma(A^+,q^+,A_2^-,q_2^-,a_2,b_2,c_2; D),
\]
there is $(u_2^+,u_2^-)\in H^1(\Omega\setminus\overline{D})\times H^1(D)$, which satisfies the transmission problem \eqref{eq_6_1} with $j=2$, and such  that 
\begin{equation}
\label{eq_6_2}
\begin{aligned}
&u_1^+=u_2^+\quad \textrm{on}\quad \p \Omega,\quad \supp(u_2^+|_{\p \Omega})\subset \gamma,\\
&(\p_\nu+iA^+\cdot\nu) u_1^+=(\p_\nu+iA^+\cdot\nu) u_2^+\quad \textrm{on}\quad \gamma.
\end{aligned}
\end{equation} 
It follows from \eqref{eq_6_1} that
\[
\mathcal{L}_{A^+,q^+}(u_1^+-u_2^+)=0\quad \textrm{on}\quad \Omega\setminus\overline{D},
\]
and \eqref{eq_6_2} that 
\[
u_1^+=u_2^+, \quad \p_\nu u_1^+=\p_\nu u_2^+\quad \textrm{on}\quad \gamma.
\]
As $A^+\in W^{1,\infty}$ and $q^+\in L^\infty$, and 
 $\Omega\setminus\overline{D}$ is connected, by unique continuation we get 
\[
u_1^+=u_2^+\quad\textrm{on}\quad \Omega\setminus\overline{D}.
\]
Since $u_1^+,u_2^+\in H^1(\Omega\setminus\overline{D})$ and $\Delta u_j^+\in L^2(\Omega\setminus\overline{D})$, we have 
\begin{equation}
\label{eq_6_3}
u_1^+=u_2^+,\quad \p_\nu u_1^+=\p_\nu u_2^+\quad \textrm{on}\quad \p D. 
\end{equation}
Then by \eqref{eq_6_3} and the first transmission condition in \eqref{eq_6_1} we get
\begin{equation}
\label{eq_6_4_1}
u_2^-=\frac{a_1}{a_2}u_1^-\quad \textrm{on}\quad \p D. 
\end{equation}
The second transmission condition in \eqref{eq_6_1} together with \eqref{eq_6_3} implies that 
\begin{equation}
\label{eq_6_4_2}
(\p_\nu+iA_2^-\cdot \nu) u_2^-=\frac{b_1}{b_2}(\p_\nu +iA_1^-\cdot \nu) u_1^-+\frac{1}{b_2a_2}(c_1a_2-c_2a_1)u_1^- \quad \textrm{on}\quad \p D. 
\end{equation}
We have
\begin{equation}
\label{eq_6_5}
\begin{aligned}
\mathcal{L}_{A_2^-,q_2^-}(u_2^--u_1^-)=& 2 i(A_2^--A_1^-)\cdot\nabla u_1^-+(i\nabla\cdot(A_2^--A_1^-)\\
&+(A_1^-)^2-(A_2^-)^2+q_1^--q_2^-)u_1^-\quad \textrm{in}\quad D. 
\end{aligned}
\end{equation}

Let $v^-\in H^1(D)$ be a solution to 
\begin{equation}
\label{eq_6_5_1}
\mathcal{L}_{A_2^-, q_2^-}v^-=0\quad \textrm{in}\quad D.
\end{equation}
Then by the second Green formula \eqref{eq_second_green},  we get
\begin{equation}
\label{eq_6_6}
\begin{aligned}
(\mathcal{L}_{A_2^-,q_2^-}(u_2^--u_1^-),v^-)_{L^2(D)}=&-((\p_\nu+iA_2^-\cdot \nu) (u_2^--u_1^-),v^-)_{(H^{-1/2},H^{1/2})(\p D)}\\
&+((u_2^--u_1^-),(\p_\nu +i A_2^- \cdot \nu ) v^-)_{(H^{1/2},H^{-1/2})(\p D)}.
\end{aligned}
\end{equation}
Substituting \eqref{eq_6_4_1}, \eqref{eq_6_4_2} and \eqref{eq_6_5} into \eqref{eq_6_6}, we obtain that
\begin{equation}
\label{eq_6_7}
\begin{aligned}
&\int_{D} (2 i(A_2^--A_1^-)\cdot\nabla u_1^- +i(\nabla \cdot (A_2^--A_1^-))u_1^-)\overline{v^-})dx\\
&+\int_D ((A_1^-)^2-(A_2^-)^2+q_1^--q_2^-)u_1^-\overline{v^-}dx= \bigg(\bigg(\frac{a_1}{a_2}-1\bigg)u_1^-,\p_\nu v^-\bigg)_{(H^{1/2},H^{-1/2})(\p D)}\\
&-\bigg(\bigg(\frac{b_1}{b_2}-1\bigg)\p_\nu u_1^-, v^-\bigg)_{(H^{-1/2},H^{1/2})(\p D)}
+ \bigg( \frac{1}{b_2a_2}(c_2a_1-c_1a_2)u_1^-,v^-\bigg)_{L^2(\p D)}\\
&+(2i(A_2^-\cdot\nu)u_1^-,v^-)_{L^2(\p D)}-\bigg(i\bigg(\frac{b_1}{b_2}(A_1^-\cdot\nu)+ \frac{a_1}{a_2}(A_2^-\cdot\nu)\bigg)u_1^-,v^-\bigg)_{L^2(\p D)}.
\end{aligned}
\end{equation}

In what follows we shall use a linear continuous extension of a Lipschitz function $a\in \textrm{Lip}(\p D)$ to a Lipschitz function $a^*\in \textrm{Lip}(\overline{D})$.
In order to define such an extension, following \cite{Isakov_2008}, we consider a finite open cover $\{U_j\}_{j=1}^N$ of $\p D$ such that $U_j\cap \p D$  is the graph of a Lipschitz function $x_n=\gamma_j(x')$, $x'=(x_1,\dots,x_{n-1})$, after appropriate relabeling of the  coordinates. Let $\chi_j\in C_0^\infty(U_j)$ be such that $0\le \chi_j\le 1$ and $\sum_{j=1}^N\chi_j=1$.  We define $a_j^*(x)=\chi_j(x)a(x',\gamma_j(x'))$ when $x\in U_j$, $j=1,\dots,N$. Then $a^*=\sum_{j=1}^N a_j^*$ is the desired extension of $a$. 

Let $\mu_1,\mu_2\in \textrm{Lip}(\overline{D})$ be two Lipschitz functions. Then we have
\begin{equation}
\label{eq_6_8}
\begin{aligned}
&(\mu_1\p_\nu u_1^-,v^-)_{(H^{-1/2},H^{1/2})(\p D)}=\int_D \mu_1 \nabla u_1^-\cdot \nabla \overline{v^-}dx\\
&+\int_D (\nabla \mu_1\cdot\nabla u_1^-+\mu_1\Delta u_1^-)\overline{v^-}dx
=\int_D \mu_1 \nabla u_1^-\cdot \nabla \overline{v^-}dx + \int_D (\nabla \mu_1\cdot\nabla u_1^-)\overline{v^-}dx\\
 & +\int_D \mu_1(-2iA_1^-\cdot\nabla u_1^--i(\nabla\cdot A_1^-)u_1^-+ ((A_1^-)^2+q_1^-)u_1^-) \overline{v^-}dx.
\end{aligned}
\end{equation}
Using the fact that 
\[
\mathcal{L}_{-A_2^-,q_2^-}\overline{v^-}=0\quad \textrm{in}\quad D,
\]
we also have
\begin{equation}
\label{eq_6_9}
\begin{aligned}
&(\mu_2u_1^-,\p_\nu v^-)_{(H^{1/2},H^{-1/2})(\p D)}=\int_D \mu_2 \nabla u_1^-\cdot \nabla \overline{v^-}dx\\
&+\int_D (\nabla\mu_2\cdot\nabla \overline{v^-}+\mu_2\Delta \overline{v^-})u_1^-dx=\int_D \mu_2 \nabla u_1^-\cdot \nabla \overline{v^-}dx
+ \int_D (\nabla\mu_2\cdot\nabla \overline{v^-})u_1^-dx\\
&+\int_D\mu_2(2iA_2^-\cdot\nabla \overline{v^-}+i(\nabla\cdot A_2^-)\overline{v^-}+ ((A_2^-)^2+q_2^-)\overline{v^-}) u_1^-dx.
\end{aligned}
\end{equation}
Letting 
\[
\mu_1=\bigg(\frac{b_1}{b_2}-1\bigg)^*,\quad \mu_2=\bigg(\frac{a_1}{a_2}-1\bigg)^*,
\]
we conclude from \eqref{eq_6_7}, using \eqref{eq_6_8}, \eqref{eq_6_9}, as well as an integration by parts, 
\begin{equation}
\label{eq_6_10}
\begin{aligned}
&\int_D \bigg( \bigg(\frac{b_1}{b_2}\bigg)^*- \bigg(\frac{a_1}{a_2}\bigg)^*\bigg)\nabla u_1^-\cdot \nabla\overline{v^-}dx= 
\int_D \bigg(\nabla\bigg(\frac{a_1}{a_2}\bigg)^* \cdot \nabla \overline{v^-}\bigg) u_1^-dx\\
&-\int_D \bigg(\nabla\bigg(\frac{b_1}{b_2}\bigg)^* \cdot \nabla u_1^-\bigg) \overline{v^-}dx+ \bigg( \frac{1}{b_2a_2}(c_2a_1-c_1a_2)u_1^-,v^-\bigg)_{L^2(\p D)}\\
&+\int_D \bigg(\frac{a_1}{a_2}\bigg)^*(2iA_2^-\cdot\nabla \overline{v^-}+i(\nabla\cdot A_2^-)\overline{v^-}+ ((A_2^-)^2+q_2^-)\overline{v^-}) u_1^-dx\\
&-\int_D   \bigg(\frac{b_1}{b_2}\bigg)^*(-2iA_1^-\cdot\nabla u_1^--i(\nabla\cdot A_1^-)u_1^-+ ((A_1^-)^2+q_1^-)u_1^-) \overline{v^-}dx\\
&-\bigg(i\bigg(\frac{b_1}{b_2}(A_1^-\cdot\nu)+ \frac{a_1}{a_2}(A_2^-\cdot\nu)\bigg)u_1^-,v^-\bigg)_{L^2(\p D)}.
\end{aligned}
\end{equation}
The identity \eqref{eq_6_10} is valid for any $(u_1^+,u_1^-)\in H^1(\Omega\setminus\overline{D})\times H^1(D)$, which satisfies the transmission problem \eqref{eq_6_1} with $j=1$ and such that $\supp(u_1^+|_{\p \Omega})\subset \gamma$, and any $v\in H^1(D)$ satisfies \eqref{eq_6_5_1}. 

By Lemma \ref{lem_density}, \eqref{eq_6_10} can be extended to all $u_1^-\in H^1(D)$ such that 
\begin{equation}
\label{eq_6_11}
\mathcal{L}_{A_1^-,q_1^-}u_1^-=0\quad\textrm{on}\quad D.
\end{equation}
We shall use \eqref{eq_6_10}  for fundamental solutions of the equation \eqref{eq_6_5_1} and \eqref{eq_6_11} with poles outside $\overline{D}$. 

Next, we would like to show that 
\begin{equation}
\label{eq_6_12_0}
\rho:=\frac{b_1}{b_2}-\frac{a_1}{a_2}=0\quad\textrm{on}\quad \p D.
\end{equation}
If there is a point $x_0\in\p D$ such that $\rho(x_0)\ne 0$, then without loss of generality we may assume that $\rho(x_0)>0$. Hence, there is an open ball, centered at $x_0$, such that 
\[
\rho^*>0 \quad\textrm{on}\quad \overline{B\cap D}. 
\]
Define 
\[
x_\delta=x_0+\delta\nu(x_0),
\]
for $\delta>0$ small so that $x_\delta\in B$.

Let us extend $A_j^-$ and $q_j^-$, $j=1,2$, to the whole of $\R^n$ so that the extensions, which we denote by the same letters, satisfy $A_j^-\in W^{1,\infty}(\R^n)$ and $q_j^-\in L^\infty(\R^n)$. Let 
$G(x,y)$ and $\tilde G(x,y)$ be fundamental solutions of the operators 
$\mathcal{L}_{A_1^-,q_1^-}$ and $\mathcal{L}_{A_2^-, q_2^-}$, respectively, which satisfy  \eqref{eq_fund_sol} and \eqref{eq_integral}. Then we set
\begin{equation}
\label{eq_6_12}
u_1^-=G(\cdot,x_\delta),\quad v^-=\tilde G(\cdot,x_\delta).
\end{equation}
By \eqref{eq_prop_2}, we get
\begin{align*}
\nabla u_1^-=\nabla G_0(\cdot,x_\delta)+R_1(\cdot,x_\delta),\quad
\nabla v^-=\nabla G_0(\cdot,x_\delta)+R_2(\cdot,x_\delta),
\end{align*}
where $G_0$ is the fundamental solution of $-\Delta$, given by \eqref{eq_G_0}, and
\[
R_j=\mathcal{O}(|x-x_\delta|^{2-n}), \quad \textrm{as}\quad \delta\to 0,\quad j=1,2.
\]
For $u_1^-$ and $v^-$, given by \eqref{eq_6_12}, we write
\[
\int_D \bigg( \bigg(\frac{b_1}{b_2}\bigg)^*- \bigg(\frac{a_1}{a_2}\bigg)^*\bigg)\nabla u_1^-\cdot \nabla\overline{v^-}dx=\int_{B\cap D}\bigg( \bigg(\frac{b_1}{b_2}\bigg)^*- \bigg(\frac{a_1}{a_2}\bigg)^*\bigg)|\nabla G_0|^2dx+ I_0,
\]
where 
\begin{equation}
\label{eq_6_13}
\begin{aligned}
I_0:=&\int_{D\setminus \overline{B\cap D}} \bigg( \bigg(\frac{b_1}{b_2}\bigg)^*- \bigg(\frac{a_1}{a_2}\bigg)^*\bigg)\nabla u_1^-\cdot \nabla\overline{v^-}dx\\
&+
\int_{B\cap D}\bigg( \bigg(\frac{b_1}{b_2}\bigg)^*- \bigg(\frac{a_1}{a_2}\bigg)^*\bigg)(\nabla G_0\cdot \overline{R_2}+ R_1\cdot \nabla G_0 + R_1\cdot \overline{R_2})dx. 
\end{aligned}
\end{equation}
By \eqref{eq_D_4}, we have
\begin{equation}
\label{eq_6_14}
\frac{1}{C}\delta^{2-n}\le \int_{B\cap D}\bigg( \bigg(\frac{b_1}{b_2}\bigg)^*- \bigg(\frac{a_1}{a_2}\bigg)^*\bigg)|\nabla G_0|^2dx,\quad \textrm{as}\quad \delta\to 0. 
\end{equation}

Substituting $u_1^-$ and $v^-$, given by \eqref{eq_6_12}, into \eqref{eq_6_10} and moving $I_0$ to the right hand side, we shall show that the absolute value of the right hand side of \eqref{eq_6_10} is bounded by $C\delta^{1-n/2}\mu_n(\delta)$, where $\mu_n(\delta)$ is defined by \eqref{eq_mu_n}.  The first integral in $I_0$, which is defined by \eqref{eq_6_13} is bounded because $\dist(x_\delta,\p (D\setminus \overline{B\cap D}))\ge 1/C$ for some $C>0$.  
Similarly to \eqref{eq_I_0_2} and \eqref{eq_I_0_3}, for the second integral in $I_0$, and hence, for $I_0$, we have
\begin{equation}
\label{eq_6_15}
|I_0|\le C\delta^{1-n/2}\mu_n(\delta),\quad \textrm{as}\quad \delta\to 0. 
\end{equation}
Furthermore, it follows from \eqref{eq_B_0}, \eqref{eq_B_0_1} and \eqref{eq_B_0_3} that  the absolute value of right hand side of \eqref{eq_6_10} is bounded by
\begin{equation}
\label{eq_6_16}
\begin{aligned}
C(\|\nabla v^-\|_{L^2(D)}\|u_1^-\|_{L^2(D)}+\|\nabla u_1^-\|_{L^2(D)}\|v^-\|_{L^2(D)}+ \|u_1^-\|_{L^2(D)}\|v^-\|_{L^2(D)}\\
+\|u_1^-\|_{L^2(\p D)}\|v^-\|_{L^2(\p D)})\le C(\delta^{1-n/2}\tau_n(\delta)+(\tau_n(\delta))^2+(\mu_n(\delta))^2)\le C\delta^{1-n/2}\mu_n(\delta).
\end{aligned}
\end{equation}
Hence, using \eqref{eq_6_14}, \eqref{eq_6_15} and \eqref{eq_6_16}, we conclude from \eqref{eq_6_10}  that 
\[
\frac{1}{C}\delta^{2-n}\le \int_{B\cap D}\bigg( \bigg(\frac{b_1}{b_2}\bigg)^*- \bigg(\frac{a_1}{a_2}\bigg)^*\bigg)|\nabla G_0|^2dx\le    C\delta^{1-n/2}\mu_n(\delta),\quad \textrm{as}\quad \delta\to 0. 
\]
This contradiction shows \eqref{eq_6_12_0}. 

By the definition of the extension,  \eqref{eq_6_12_0} implies that 
\begin{equation}
\label{eq_6_17}
\bigg(\frac{b_1}{b_2}\bigg)^*- \bigg(\frac{a_1}{a_2}\bigg)^*=0\quad\textrm{on}\quad D.
\end{equation}

Next we shall show that 
\begin{equation}
\label{eq_6_17_0}
\beta:= c_1a_2-c_2a_1=0\quad\textrm{on}\quad \p D.
\end{equation}
As above, if $\beta\ne 0$, there is a point $x_0\in \p D$ such that $\beta(x_0)\ne 0$ and an open ball $B$ in $\R^n$, centered at $x_0$ such that without loss of generality $\beta^*>0$ on $\overline{B\cap D}$.  
Also as before, we define $x_\delta=x_0+\delta \nu (x_0)$ for $\delta >0$ small so that $x_\delta\in B$. 

We set 
\[
\Gamma(B):=\p D\cap B, 
\]
Substituting \eqref{eq_6_17} into \eqref{eq_6_10} and integrating by parts, 
we get 
\begin{equation}
\label{eq_6_18}
\begin{aligned}
&\bigg( \frac{1}{b_2a_2}(c_1a_2-c_2a_1)u_1^-,v^-\bigg)_{L^2(\Gamma(B))}
= \bigg( \frac{1}{b_2a_2}(c_2a_1-c_1a_2)u_1^-,v^-\bigg)_{L^2(\p D\setminus\overline{\Gamma(B)})}\\
&
+\int_D \bigg( \nabla\bigg(\frac{a_1}{a_2}\bigg)^* +i \bigg(\frac{a_1}{a_2}\bigg)^* (A_2^--A_1^-)\bigg)\cdot  ( u_1^-\nabla \overline{v^-} -\overline{v^-}\nabla u_1^-)dx\\
&+\int_D \bigg( - i\nabla \bigg(\frac{a_1}{a_2}\bigg)^*\cdot (A_2^-+A_1^-)+ \bigg(\frac{a_1}{a_2}\bigg)^*((A_2^-)^2+q_2^--(A_1^-)^2-q_1^-) \bigg) u_1^-\overline{v^-}dx.
\end{aligned}
\end{equation}
The idea in the above computation is to make the gradients of $u_1^-$ and $v^-$  occur only in the expression $u_1^-\nabla \overline{v^-} -\overline{v^-}\nabla u_1^-$, but not anywhere else.  
 
For $u_1^-$ and $v^-$, given by \eqref{eq_6_12}, we conclude from \eqref{eq_prop_1} and \eqref{eq_prop_2} that  
\begin{equation}
\label{eq_6_19_0}
u_1^-=G_0(\cdot,x_\delta)+r_1(\cdot,x_\delta),\quad v^-=G_0(\cdot,x_\delta)+r_2(\cdot,x_\delta),
\end{equation}
 where $G_0$ is a fundamental solution of $-\Delta$, given by \eqref{eq_G_0}, and 
\begin{equation}
\label{eq_6_19_1}
 \begin{aligned}
 &|r_j(x,x_\delta)|\le C \begin{cases} \log\frac{1}{|x-x_\delta|}, & n=3,\\
|x-x_\delta|^{3-n}, & n\ge 4,
\end{cases}\\
&|\nabla r_j(x,x_\delta)|\le C|x-x_\delta|^{2-n},\quad n\ge 3,
\quad\textrm{as}\quad \delta\to 0,\quad j=1,2.
 \end{aligned}
 \end{equation}
 For $u_1^-$ and $v^-$, given by \eqref{eq_6_12}, we write
 \[
 \bigg( \frac{1}{b_2a_2}(c_1a_2-c_2a_1)u_1^-,v^-\bigg)_{L^2(\Gamma(B))}=\int_{\Gamma(B)}\frac{1}{b_2a_2}(c_1a_2-c_2a_1)|G_0(x,x_\delta)|^2dS +I_1,
 \]
 where 
 \begin{equation}
\label{eq_6_19}
 I_1:=\int_{\Gamma(B)}\frac{1}{b_2a_2}(c_1a_2-c_2a_1) (G_0(\overline{r_2}+r_1)+r_1\overline{r_2}) dS.
 \end{equation}
 By \eqref{eq_D_7}, we have
 \begin{equation}
\label{eq_6_20}
 \frac{1}{C}\sigma_n(\delta)\le \frac{1}{C}\int_{\Gamma(B)}\frac{1}{|x-x_\delta|^{2(n-2)}}dS \le  \int_{\Gamma(B)}\frac{1}{b_2a_2}(c_1a_2-c_2a_1)|G_0(x,x_\delta)|^2dS,
 \end{equation}
 where 
 \[
 \sigma_n(\delta):=\begin{cases}
 \log\frac{1}{\delta}, & n=3,\\
 \delta^{3-n}, & n\ge 4.
 \end{cases}
 \]
 Notice that for all $n\ge 3$,
 \[
  (\tau_n(\delta))^2=o(\sigma_n(\delta)),\quad \textrm{as}\quad\delta\to 0.
 \]

 Since $\dist(x_\delta, \p D\setminus\overline{\Gamma(B)})\ge 1/C$ for some $C>0$, for $u_1^-$ and $v^-$, given by \eqref{eq_6_12}, we get
\begin{equation}
\label{eq_6_21}
\bigg|\bigg( \frac{1}{b_2a_2}(c_2a_1-c_1a_2)u_1^-,v^-\bigg)_{L^2(\p D\setminus\overline{\Gamma(B)})}\bigg|\le C. 
\end{equation}
  By \eqref{eq_6_19_0}, we have
 \begin{align*}
  u_1^-\nabla \overline{v^-} -\overline{v^-}\nabla u_1^-=G_0\nabla(\overline{r_2}-r_1)+(r_1-\overline{r_2})\nabla G_0+r_1\nabla \overline{r_2}-\overline{r_2}\nabla r_1.
 \end{align*}
  Therefore, using \eqref{eq_6_19_1}, we get
 \begin{align*}
  |u_1^-\nabla \overline{v^-} -\overline{v^-}\nabla u_1^-|\le 
  C\begin{cases} |x-x_\delta|^{-2}\log\frac{1}{|x-x_\delta|},& n=3,\\
  |x-x_\delta|^{2(2-n)}, & n\ge 4. 
  \end{cases}
 \end{align*}
 Hence,  using \eqref{eq_D_3} and \eqref{eq_B_-1_2}, we have
 \begin{equation}
\label{eq_6_22}
  \bigg|\int_D \bigg( \nabla\bigg(\frac{a_1}{a_2}\bigg)^* +i \bigg(\frac{a_1}{a_2}\bigg)^* (A_2^--A_1^-)\bigg)\cdot  ( u_1^-\nabla \overline{v^-} -\overline{v^-}\nabla u_1^-)dx \bigg| \le C
  (\tau_n(\delta))^2.
 \end{equation}

Furthermore, for $u_1^-$ and $v^-$, given by \eqref{eq_6_12}, using \eqref{eq_B_0_1}, we obtain that 
\begin{equation}
\label{eq_6_23}
\begin{aligned}
\bigg|\int_D \bigg( - i\nabla \bigg(\frac{a_1}{a_2}\bigg)^*\cdot (A_2^-+A_1^-)+ \bigg(\frac{a_1}{a_2}\bigg)^*((A_2^-)^2+q_2^--(A_1^-)^2-q_1^-) \bigg) u_1^-\overline{v^-}dx\bigg|&\\
\le C(\tau_n(\delta))^2.&
\end{aligned}
\end{equation}

Using \eqref{eq_6_19_1}, \eqref{eq_D_5} and \eqref{eq_B_12}, for the integral $I_1$, given by \eqref{eq_6_19}, we get  
\begin{equation}
\label{eq_6_24}
 |I_1|\le C\begin{cases} \int_{\Gamma(B)}\frac{1}{|x-x_\delta|}\log\frac{1}{|x-x_\delta|}dS, & n=3,\\
 \int_{\Gamma(B)}\frac{dS}{|x-x_\delta|^{2n-5}}, & n\ge 4. 
 \end{cases}\le C(\tau_n(\delta))^2. 
\end{equation}
We conclude from  \eqref{eq_6_18} with the help of \eqref{eq_6_20}, \eqref{eq_6_21}, \eqref{eq_6_22}, \eqref{eq_6_23}, and \eqref{eq_6_24} that 
\[
 \frac{1}{C}\sigma_n(\delta)\le   \int_{\Gamma(B)}\frac{1}{b_2a_2}(c_1a_2-c_2a_1)|G_0(x,x_\delta)|^2dS\le C(\tau_n(\delta))^2,\quad \textrm{as}\quad \delta\to 0.
\]
This contradiction shows that \eqref{eq_6_17_0}.

Hence, \eqref{eq_6_12_0},  \eqref{eq_6_17_0}, and the fact that $a_jb_j=1$ on $\overline{D}$ imply that 
\begin{equation}
\label{eq_6_25}
a_1=  a_2,\quad b_1= b_2,\quad c_1= c_2, \quad \textrm{on}\quad \p D. 
\end{equation}

\subsection{Recovery of the magnetic  and electric potentials}

\label{sec_rec_potentials}

In this subsection we shall assume that $\p D$ is of class $C^{1,1}$.  Then using \eqref{eq_6_25}, we get
\begin{equation}
\label{eq_7_0_0_1}
\mathcal{C}_\gamma(A^+,q^+,A_2^-,q_2^-,a_2,b_2,c_2; D) = \mathcal{C}_\gamma(A^+,q^+,A_1^-,q_1^-, a_2, b_2,c_2; D). 
\end{equation}
Let $(u_1^+,u_1^-)\in H^1(\Omega\setminus\overline{D})\times H^1(D)$ satisfy the following transmission problem with $j=1$,
\begin{equation}
\label{eq_7_0_0_2}
\begin{aligned}
&\mathcal{L}_{A^+,q^+}u_j^+=0\quad \textrm{in}\quad \Omega\setminus\overline{D},\\
&\mathcal{L}_{A_j^-,q_j^-}u_j^-=0\quad \textrm{in}\quad D,\\
&u_j^+=a_2u_j^-\quad \textrm{on}\quad \p D,\\
(&\p_\nu + iA^+\cdot\nu) u_j^+=b_2(\p_\nu+i A_j^-\cdot \nu)u_j^-+c_2u_j^- \quad \textrm{on}\quad \p D,
\end{aligned}
\end{equation}
such that $\supp(u_1^+|_{\p \Omega})\subset \gamma$. It follows from \eqref{eq_7_0_0_1} that there is  $(u_2^+,u_2^-)\in H^1(\Omega\setminus\overline{D})\times H^1(D)$, which satisfies the transmission problem  \eqref{eq_7_0_0_2}
 with $j=2$, and such that 
 \begin{align*}
 &u_1^+=u_2^+\quad\textrm{on}\quad\p\Omega,\quad\supp(u_2^+|_{\p \Omega})\subset\gamma,\\
&(\p_\nu +iA^+\cdot\nu)u_1^+=(\p_\nu +i A^+\cdot\nu)u_2^+\quad \textrm{on}\quad\gamma. 
 \end{align*}
 By unique continuation, we have
 \[
 u_1^+=u_2^+,\quad \p_\nu u_1^+=\p_\nu u_2^+\quad\textrm{on}\quad \p D, 
 \]
and therefore, using the transmission conditions in \eqref{eq_7_0_0_2}, we obtain that 
 \begin{equation}
\label{eq_7_0_0_3}
 u_1^-=u_2^-,\quad (\p_\nu +i A_1^-\cdot\nu )u_1^-=(\p_\nu +i A_2^-\cdot\nu )u_2^-\quad\textrm{on}\quad \p D.
 \end{equation}
 We have
\begin{equation}
\label{eq_7_0_0_4}
 \begin{aligned}
\mathcal{L}_{A_2^-,q_2^-}(u_2^--u_1^-)=& 2 i(A_2^-- A_1^-)\cdot\nabla u_1^-+(i\nabla\cdot(A_2^-- A_1^-)\\
&+( A_1^-)^2-(A_2^-)^2+q_1^--q_2^-)u_1^-\quad \textrm{in}\quad D. 
\end{aligned}
\end{equation}
By the second Green formula \eqref{eq_second_green}, using \eqref{eq_7_0_0_3}, we get 
\begin{equation}
\label{eq_7_0_0_4_1}
\begin{aligned}
(\mathcal{L}_{A_2^-,q_2^-}(u_2^--u_1^-),v^-)_{L^2(D)}=(i(A_2^-- A_1^-)\cdot\nu u_1^-,v^-)_{L^2(\p D)},
\end{aligned}
\end{equation}
where $v^-\in H^1(D)$ is a solution to 
\begin{equation}
\label{eq_7_0_0_5}
\mathcal{L}_{A_2^-,q_2^-}v^-=0\quad \textrm{in}\quad D.
\end{equation}
Substituting  \eqref{eq_7_0_0_4} into \eqref{eq_7_0_0_4_1}, and integrating by parts, we have
\begin{equation}
\label{eq_7_0_new}
\begin{aligned}
&\int_Di  (A_2^--A_1^-)\cdot  ( \overline{v^-}\nabla u_1^--u_1^-\nabla \overline{v^-} )dx\\
&+\int_D ((A_1^-)^2 -(A_2^-)^2 + q_1^- - q_2^-) u_1^-\overline{v^-}dx=0,
\end{aligned}
\end{equation}
which is valid for any $u_1^-\in H^1(D)$ and $v^-\in H^1(D)$, satisfying $\mathcal{L}_{A_1^-,q_1^-}u_1=0$ in $D$ and \eqref{eq_7_0_0_5}, respectively. Here we have used Lemma \ref{lem_density}.

By  \cite[Theorem 5.8]{Salo_2004}, there is $\psi_j\in C^{1,1}(\overline{D},\R)$ which satisfies $\psi_j=0$ on $\p D$ and $\p_\nu  \psi_j=- A_j^-\cdot\nu$ on $\p D$, $j=1,2$.  It follows from \eqref{eq_int_1} that $ A_j^-$ can be replaced by $A_j^-+\nabla \psi_j$, $j=1,2$.  Hence, we may and shall assume that the normal components of $A_j^-$ satisfy $A_j^-\cdot\nu=0$ on $\p D$, $j=1,2$.  Then by Proposition \ref{prop_boundary_rec} below applied to each connected component of $D$, we conclude form \eqref{eq_7_0_new}
that $A_1^-=A_2^-$ on $\p D$.

Let $B\subset \R^n$ be an open ball such that $D\subset\subset \Omega\subset\subset B$. As the boundary of $\Omega$ is connected,  we get that $B\setminus\overline{D}$ is connected. Since $A_1^-=A_2^-$ on $\p D$, we can extend $ A_1^-$ and $A_2^-$ to $B$ so that the extensions, which we shall denote by the same letters, agree on $B\setminus \overline{D}$, have compact support, and satisfy $ A_1^-, A_2^-\in W^{1,\infty}(B)$. We also extend $q_j^-$ to $B$ so that $q_j^-\in L^\infty(B)$ and $q_j^-=0$ on $B\setminus\overline{ D}$, $j=1,2$.  

Hence, \eqref{eq_7_0_new} yields that 
\begin{equation}
\label{eq_7_0_new_1}
\begin{aligned}
&\int_Bi  (A_2^-- A_1^-)\cdot  ( \overline{v^-}\nabla u_1^--u_1^-\nabla \overline{v^-} )dx\\
&+\int_B (( A_1^-)^2 -(A_2^-)^2 + q_1^- - q_2^-) u_1^-\overline{v^-}dx=0,
\end{aligned}
\end{equation}
for any $u_1^-\in H^1(B)$ and $v^-\in H^1(B)$ , which solve 
\[
\mathcal{L}_{ A_1^-,q_1^-}u_1^-=0\quad\textrm{in}\quad B, \quad \mathcal{L}_{A_2^-,q_2^-}v^-=0\quad\textrm{in}\quad B. 
\]
Notice that  identity \eqref{eq_7_0_new_1}  is exactly what one encounters when solving the inverse boundary value problem for the magnetic Schr\"odinger operator. 
The next step is therefore to construct complex geometric optics solutions for the magnetic Schr\"odinger operator on $B$. Using the method of Carleman estimates, complex geometric optics solutions were constructed in 
\cite{DKSU_2007} for $C^2$--magnetic potentials, and in \cite{Knu_Salo_2007}, the construction was generalized to less regular magnetic potentials, including the Lipschitz continuous case. We refer also to 
\cite{KrupLassasUhlmann_slab}, where this construction was reviewed in the latter case. Substituting these complex geometric optics solutions into \eqref{eq_7_0_new_1} and arguing as in \cite{DKSU_2007, NakSunUlm_1995,  Salo_2004, Sun_1993}, see also \cite[Theorem 1.1]{KrupLassasUhlmann_slab}, we conclude that there is a function $\psi\in C^{1,1}(\overline{B},\R)$ such that  $\psi|_{\p B}=0$ and  
\begin{equation}
\label{eq_7_3_5sec}
A_2^-- A_1^-=\nabla \psi\quad \textrm{in}\quad B.
\end{equation}
Since the potentials $A_2^-$ and  $ A_1^-$ agree on the connected set $B\setminus\overline{D}$, it follows that $\psi=0$ on $\p D$, and therefore, using \eqref{eq_int_1}, we can assume in what follows that $A_2^-= A_1^-$ in $D$. Going back to \eqref{eq_7_0_new} and arguing as in  \cite{DKSU_2007, NakSunUlm_1995,  Salo_2004, Sun_1993}, see also \cite[Theorem 1.1]{KrupLassasUhlmann_slab},  we conclude that 
\begin{equation}
\label{eq_7_4_5sec}
q_1^-=q_2^-\quad \textrm{in}\quad D.
\end{equation}
The proof of Theorem \ref{thm_main_sa} is complete. 

\begin{rem}
Assuming that zero is not a Dirichlet eigenvalue of the operator $\mathcal{L}_{A_j^-,q_j^-}$ in $D$, $j=1,2$, using Lemma \ref{lem_density} together with a priori estimates for the Dirichlet problem for $\mathcal{L}_{A_j^-,q_j^-}$ in $D$, we get  $\mathcal{C}(A_1^-,q_1^-)=\mathcal{C}(A_2^-,q_1^-)$. Here we write for $j=1,2$,
\[
\mathcal{C}(A_j^-,q_j^-)=\{(w|_{\p D}, (\p_\nu + i A_j^-\cdot\nu)w|_{\p D}):w\in H^1(D), \mathcal{L}_{A_j^-,q_j^-}w=0\textrm{ in }D\}.
\]
 In this case we can conclude that \eqref{eq_7_3_5sec} and  \eqref{eq_7_4_5sec} hold by appealing directly to the results of \cite{Salo_2004}.
\end{rem}

\section{Boundary reconstruction of the magnetic potential} 

\label{sec_boundary_rec}

When recovering the magnetic potential in Theorem \ref{thm_main_sa}, an important step consists in determining the boundary values of the tangential component of the magnetic potential. The purpose of this section is to carry out this step by adapting the method of  \cite{Brown_Salo_2006}.  Compared with the latter work, here we do not assume that the Dirichlet problem for the magnetic Schr\"odinger operator is well-posed. 

Let $\Omega\subset\R^n$, $n\ge 3$, be a bounded domain  in $\R^n$ with $C^1$ boundary.  Notice that such a regularity of the boundary is important in the method of \cite{Brown_Salo_2006}.

To circumvent the difficulty related to the fact that zero may be a Dirichlet eigenvalue  we shall require a solvability result for the magnetic Schr\"odinger operator, which is based on a Carleman estimate with a gain of two derivatives, obtained in  \cite{SaloTzou_2009}.  We have learned of the idea of using a Carleman estimate to handle the case when zero is a Dirichlet eigenvalue from 
the work \cite{SaloTzou_2009} on the Dirac operator. 

\begin{prop}  \cite{SaloTzou_2009}. Let $\varphi(x)=\alpha\cdot x$, $\alpha\in \R^n$,  $|\alpha|=1$,  and let  $\varphi_{\varepsilon}=\varphi+\frac{h}{\varepsilon}\frac{\varphi^2}{2}$
be a convexification of $\varphi$. Then for $0<h\ll \varepsilon\ll 1$ and $s\in \R$,
\begin{equation}
\label{eq_7_5}
\frac{h}{\sqrt{\varepsilon}}\|u\|_{H^{s+2}_{\emph{\textrm{scl}}}}\le C\|e^{\varphi_{\varepsilon}/h}(-h^2\Delta)e^{-\varphi_{\varepsilon}/h}u\|_{H^s_{\emph{\textrm{scl}}}},
\end{equation}
for all $u\in C^\infty_0(\Omega)$. 
\end{prop}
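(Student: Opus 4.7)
The plan is to follow the standard template for Carleman estimates with a convexified weight, carefully keeping track of how the convexity parameter $\varepsilon$ produces the gain in the lower bound. First, I would compute the conjugated operator explicitly. Writing $\psi=\varphi_\varepsilon$, a direct computation gives
\[
P_\varphi := e^{\psi/h}(-h^2\Delta)e^{-\psi/h} = -h^2\Delta - |\nabla\psi|^2 + 2h\nabla\psi\cdot\nabla + h\Delta\psi,
\]
and I decompose $P_\varphi = A + \tilde B$ into its self-adjoint and skew-adjoint parts,
\[
A = -h^2\Delta - |\nabla\psi|^2, \qquad \tilde B = 2h\nabla\psi\cdot\nabla + h\Delta\psi.
\]
Then the identity $\|P_\varphi u\|^2 = \|Au\|^2 + \|\tilde B u\|^2 + \langle [A,\tilde B]u,u\rangle$ reduces the matter to computing $[A,\tilde B]$ and showing it is positive.

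Second, I would exploit the explicit form of the weight. Since $\varphi(x)=\alpha\cdot x$ is linear, the Hessian of $\psi$ equals $(h/\varepsilon)\,\alpha\otimes\alpha$, the function $\Delta\psi=h/\varepsilon$ is a constant, and $|\nabla\psi|^2=(1+h\varphi/\varepsilon)^2$. A straightforward calculation then gives
\[
[A,\tilde B] = -\tfrac{4h^4}{\varepsilon}(\alpha\cdot\nabla)^2 + \tfrac{4h^2}{\varepsilon}(1+h\varphi/\varepsilon)^2,
\]
so after an integration by parts on the first term,
\[
\langle [A,\tilde B]u,u\rangle = \tfrac{4h^4}{\varepsilon}\|\alpha\cdot\nabla u\|^2 + \tfrac{4h^2}{\varepsilon}\int(1+h\varphi/\varepsilon)^2|u|^2\,dx.
\]
For $h/\varepsilon$ small enough the factor $(1+h\varphi/\varepsilon)^2$ is bounded below by $1/2$ on $\Omega$, hence
\[
\|P_\varphi u\|^2 \ge \tfrac{2h^2}{\varepsilon}\|u\|_{L^2}^2 + \|Au\|^2 + \|\tilde B u\|^2,
\]
which already yields the $L^2$ bound $\frac{h}{\sqrt\varepsilon}\|u\|_{L^2}\le C\|P_\varphi u\|_{L^2}$.

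Third, I would upgrade to $H^{s+2}_{\mathrm{scl}}$ on the left. For $s=0$, I use the equation $-h^2\Delta u = P_\varphi u + |\nabla\psi|^2 u - \tilde B u$ together with the semiclassical elliptic bound $\|u\|_{H^2_{\mathrm{scl}}}\le C(\|h^2\Delta u\|_{L^2}+\|u\|_{L^2})$. Combined with the interpolation inequality $\|h\nabla u\|_{L^2}^2 \le \|h^2\Delta u\|_{L^2}\|u\|_{L^2}$ and the $L^2$ bound just obtained, this gives
\[
\|u\|_{H^2_{\mathrm{scl}}} \le C\|P_\varphi u\|_{L^2} + C\|u\|_{L^2} \le C\tfrac{\sqrt\varepsilon}{h}\|P_\varphi u\|_{L^2},
\]
which is \eqref{eq_7_5} for $s=0$. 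For general $s\in\R$, I conjugate by the semiclassical Bessel operator $\langle hD\rangle^s$, which commutes exactly with $-h^2\Delta$ and produces only lower order commutators with the $h$-dependent coefficients $|\nabla\psi|^2$, $\nabla\psi$, and $\Delta\psi$; the resulting error terms are of size $O(h/\varepsilon)$ and are absorbed into the leading term of the $s=0$ estimate.

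The main obstacle is the bookkeeping in the last step: because the weight $\psi=\varphi_\varepsilon$ and its derivatives depend on both $h$ and $\varepsilon$, the commutators with $\langle hD\rangle^s$ are semiclassical pseudodifferential operators whose symbol seminorms must be controlled uniformly in the two small parameters, and one has to verify that the commutator errors remain strictly smaller than the $\frac{h}{\sqrt\varepsilon}$ gain coming from the convexity. The principal positivity computation, by contrast, is algebraically simple precisely because $\varphi$ is linear, and this is what makes the Hessian term $(h/\varepsilon)\alpha\otimes\alpha$ the sole source of the gain.
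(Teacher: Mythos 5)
Your conjugation, the splitting into the self-adjoint part $A=-h^2\Delta-|\nabla\psi|^2$ and the skew-adjoint part $\tilde B=2h\nabla\psi\cdot\nabla+h\Delta\psi$, the commutator computation $[A,\tilde B]=-\frac{4h^4}{\varepsilon}(\alpha\cdot\nabla)^2+\frac{4h^2}{\varepsilon}(1+\frac{h}{\varepsilon}\varphi)^2$, and the resulting $L^2$ and $H^2_{\mathrm{scl}}$ bounds for $u\in C_0^\infty(\Omega)$ are all correct; for $s=0$ this is a sound integration-by-parts version of the estimate. Note that the paper gives no proof of this proposition but cites \cite{SaloTzou_2009}, where the estimate is obtained on the symbol level with the semiclassical calculus, which produces all $s\in\R$ at once rather than by shifting from $s=0$.

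The gap is exactly the step you compress into one sentence, the passage to general $s$, which is the actual point of the statement since the paper applies it with $s=-1$. Two things go wrong as written. First, $v=\langle hD\rangle^s u$ is no longer compactly supported (only Schwartz), whereas your $s=0$ estimate was proved for $u\in C_0^\infty(\Omega)$ and used the bounds $(1+\frac{h}{\varepsilon}\varphi)^2\ge\frac12$ and $|\nabla\psi|^2\le C$, which hold on the bounded set $\Omega$ but fail on $\R^n$: the coefficients of the conjugated operator grow at infinity and the zeroth-order commutator term vanishes on the hyperplane $\varphi=-\varepsilon/h$. So you cannot simply apply the $s=0$ estimate to $v$; one has to cut off, writing $v=\chi v+(1-\chi)v$ with $\chi\in C_0^\infty$ equal to $1$ near $\overline{\Omega}$, use that $(1-\chi)\langle hD\rangle^s u$ is $O(h^\infty)$ in every norm by disjointness of supports, and apply the $s=0$ estimate on a fixed larger bounded set (or, equivalently, modify the weight and coefficients globally); none of this appears in your outline. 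Second, the quantitative absorption claim is false as stated: an error of size $O(h/\varepsilon)$ cannot be absorbed into a gain of $h/\sqrt{\varepsilon}$, since $h/\varepsilon=(h/\sqrt{\varepsilon})\cdot\varepsilon^{-1/2}\gg h/\sqrt{\varepsilon}$. The argument survives only because the commutators are in fact one order better: $[\langle hD\rangle^s,a]$ carries an extra factor $h$ beyond $\sup|\nabla a|=O(h/\varepsilon)$, so the errors are $O(h^2/\varepsilon)=(h/\sqrt{\varepsilon})^2$ and act on weaker norms of $v$, which is what makes them negligible relative to the gain. With these two points supplied your outline can be completed, but as it stands the general-$s$ case is asserted rather than proved, and the absorption step in the form you state it would fail.
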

Here 
\[
\|u\|_{H^s_{\textrm{scl}}}=\|\langle h D\rangle^s u \|_{L^2}, \quad \langle \xi\rangle=(1+|\xi|^2)^{1/2},
\]
is the natural semiclassical norm in the Sobolev space $H^s(\R^n)$.

The following result concerns  a similar Carleman estimate for the magnetic Schr\"odinger operator $\mathcal{L}_{A,q}$ with $s=-1$.  
\begin{prop}
Let $A\in W^{1,\infty}(\Omega,\C^n)$,  $q\in L^\infty(\Omega,\C)$, and  let $\varphi(x)=\alpha\cdot x$, $\alpha\in \R^n$,  $|\alpha|=1$. Then for $h>0$ small enough,
\begin{equation}
\label{eq_7_5_mag}
h\|u\|_{H^{1}_{\textrm{scl}}}\le C\|e^{\varphi/h} (h^2\mathcal{L}_{A,q})e^{-\varphi/h} u\|_{H^{-1}_{\textrm{scl}}},
\end{equation}
for all $u\in C^\infty_0(\Omega)$. 

\end{prop}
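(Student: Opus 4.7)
The strategy is to treat $h^{2}\mathcal L_{A,q}$ as an $h$-small first-order perturbation of $-h^{2}\Delta$, applying the preceding estimate \eqref{eq_7_5} with $s=-1$ (which gains two derivatives and carries the small factor $h/\sqrt{\varepsilon}$) and then removing the convexification at the very end.

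First I would decompose
\[
h^{2}\mathcal L_{A,q}=-h^{2}\Delta+R,\qquad R:=-2ih^{2}A\cdot\nabla-ih^{2}(\nabla\cdot A)+h^{2}(A^{2}+q),
\]
and conjugate by $e^{\varphi_{\varepsilon}/h}$. Since $\varphi_{\varepsilon}$ is real, the conjugation produces
\[
e^{\varphi_{\varepsilon}/h}Re^{-\varphi_{\varepsilon}/h}=-2ih^{2}A\cdot\nabla+2ihA\cdot(\nabla\varphi_{\varepsilon})-ih^{2}(\nabla\cdot A)+h^{2}(A^{2}+q),
\]
which is a zeroth/first-order semiclassical operator with coefficients of size $O(h)$ in $L^{\infty}$ (using $A\in W^{1,\infty}$, $q\in L^{\infty}$, and the fact that $\nabla\varphi_{\varepsilon}$ is bounded uniformly in $\varepsilon\in(0,1]$ on $\Omega$). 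Using that multiplication by $L^{\infty}$ functions is bounded on $L^{2}$ and that $hD_{x_{j}}:L^{2}\to H^{-1}_{\textrm{scl}}$ is a contraction, one gets the key perturbation bound
\[
\bigl\|e^{\varphi_{\varepsilon}/h}Re^{-\varphi_{\varepsilon}/h}u\bigr\|_{H^{-1}_{\textrm{scl}}}\le Ch\,\|u\|_{H^{1}_{\textrm{scl}}},\qquad u\in C_{0}^{\infty}(\Omega),
\]
with $C$ independent of $h$ and $\varepsilon$.

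Next I apply \eqref{eq_7_5} with $s=-1$ to obtain
\[
\frac{h}{\sqrt{\varepsilon}}\|u\|_{H^{1}_{\textrm{scl}}}\le C\bigl\|e^{\varphi_{\varepsilon}/h}(h^{2}\mathcal L_{A,q})e^{-\varphi_{\varepsilon}/h}u\bigr\|_{H^{-1}_{\textrm{scl}}}+C\bigl\|e^{\varphi_{\varepsilon}/h}Re^{-\varphi_{\varepsilon}/h}u\bigr\|_{H^{-1}_{\textrm{scl}}},
\]
and insert the perturbation bound. Multiplying through by $\sqrt{\varepsilon}$, the remainder becomes $C\sqrt{\varepsilon}\,h\|u\|_{H^{1}_{\textrm{scl}}}$ and can be absorbed into the left-hand side for any fixed $\varepsilon$ small enough (independent of $h$). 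This yields
\[
h\|u\|_{H^{1}_{\textrm{scl}}}\le C_{\varepsilon}\bigl\|e^{\varphi_{\varepsilon}/h}(h^{2}\mathcal L_{A,q})e^{-\varphi_{\varepsilon}/h}u\bigr\|_{H^{-1}_{\textrm{scl}}},\qquad u\in C_{0}^{\infty}(\Omega).
\]

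Finally, to pass from $\varphi_{\varepsilon}$ to the linear weight $\varphi$, I use the identity
\[
e^{\varphi_{\varepsilon}/h}=e^{\varphi/h}\cdot e^{\varphi^{2}/(2\varepsilon)},
\]
where, for $\varepsilon>0$ now fixed, the factor $m:=e^{\varphi^{2}/(2\varepsilon)}\in C^{\infty}(\overline{\Omega})$ is bounded above and below, together with its derivatives. Multiplication by $m$ and $m^{-1}$ is therefore bounded on $H^{\pm 1}_{\textrm{scl}}(\R^{n})$ uniformly in $h$ (direct verification for $s=1$ using the product rule $\|h\partial_{j}(m\psi)\|_{L^{2}}\le\|\partial_{j}m\|_{L^{\infty}}\|h\psi\|_{L^{2}}+\|m\|_{L^{\infty}}\|h\partial_{j}\psi\|_{L^{2}}$, and by duality for $s=-1$). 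Given $v\in C_{0}^{\infty}(\Omega)$, I apply the previous estimate to $u:=mv\in C_{0}^{\infty}(\Omega)$; on the one hand $\|v\|_{H^{1}_{\textrm{scl}}}\le C\|u\|_{H^{1}_{\textrm{scl}}}$, on the other
\[
e^{\varphi_{\varepsilon}/h}(h^{2}\mathcal L_{A,q})e^{-\varphi_{\varepsilon}/h}u=m\bigl[e^{\varphi/h}(h^{2}\mathcal L_{A,q})e^{-\varphi/h}v\bigr],
\]
whose $H^{-1}_{\textrm{scl}}$ norm is bounded by $C\|e^{\varphi/h}(h^{2}\mathcal L_{A,q})e^{-\varphi/h}v\|_{H^{-1}_{\textrm{scl}}}$, yielding \eqref{eq_7_5_mag}.

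The main technical point is the $H^{\pm 1}_{\textrm{scl}}$ continuity of multiplication by smooth bounded functions (needed both for the conjugation and for removing the convexification); everything else is a routine absorption argument. The fact that $A$ is only Lipschitz is unproblematic because $\nabla\cdot A\in L^{\infty}$ is all that the perturbation bound requires.
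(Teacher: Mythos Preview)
Your proposal is correct and follows essentially the same route as the paper: perturb the Carleman estimate \eqref{eq_7_5} for $-h^{2}\Delta$ (with $s=-1$) by the first-order part of $h^{2}\mathcal L_{A,q}$, absorb the $O(h)$ remainder by fixing $\varepsilon$ small, and then remove the convexification via the factorization $e^{\varphi_{\varepsilon}/h}=e^{\varphi/h}e^{\varphi^{2}/(2\varepsilon)}$. The paper carries out the perturbation bound term by term (in particular writing $h^{2}\tilde A\cdot\nabla u=h(h\nabla)(\tilde Au)-h^{2}(\nabla\cdot\tilde A)u$ to land in $H^{-1}_{\textrm{scl}}$) and dispatches the last step with ``The claim follows''; you state the perturbation bound more synthetically and spell out the multiplier argument for $m=e^{\varphi^{2}/(2\varepsilon)}$ more explicitly, but the substance is the same.
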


\begin{proof}
Let us write
\[
\mathcal{L}_{A,q}=-\Delta +\tilde A\cdot \nabla + \tilde q,
\]
where
\[
\tilde A:=-2i A\in W^{1,\infty}(\Omega,\C^n),\quad \tilde q=-i(\nabla\cdot A)+A^2+q\in L^\infty(\Omega,\C). 
\]
We have
\begin{equation}
\label{eq_7_6}
\|h^2\tilde qu\|_{H^{-1}_{\textrm{scl}}}\le h^2\|\tilde q\|_{L^\infty}\|u\|_{H^1_{\textrm{scl}}},
\end{equation}
and
\[
e^{\varphi_{\varepsilon}/h}(h^2\tilde A\cdot\nabla)e^{-\varphi_{\varepsilon}/h}=h^2 \tilde A\cdot \nabla -h\tilde A\cdot\nabla\varphi_{\varepsilon}.
\]
Let $0<\varepsilon\ll 1$ be independent of $h$. Then for $h$ small enough, 
\begin{equation}
\label{eq_7_7}
\|h(\tilde A\cdot\nabla\varphi_{\varepsilon}) u\|_{H^{-1}_{\textrm{scl}}}\le h\bigg\|\bigg(\tilde A\cdot \bigg(1+\frac{h}{\varepsilon}\varphi\bigg)\nabla\varphi \bigg)u\bigg\|_{L^2}\le Ch\|u\|_{H^1_{\textrm{scl}}}.
\end{equation}
We have
\[
h^2\tilde A\cdot \nabla u=h(h\nabla)(\tilde A u)-h^2(\nabla \cdot\tilde A)u.
\]
Since the operator $h\nabla$ maps $L^2(\Omega)\to H^{-1}_{\textrm{scl}}(\Omega)$, we get
\begin{equation}
\label{eq_7_8}
\|h(h\nabla)(\tilde A u)\|_{H^{-1}_{\textrm{scl}}}\le hC\|\tilde Au\|_{L^2}\le Ch\|u\|_{H^1_{\textrm{scl}}},
\end{equation}
and 
\begin{equation}
\label{eq_7_9}
\|h^2(\nabla \cdot\tilde A)u\|_{H^{-1}_{\textrm{scl}}}\le Ch^2\|u\|_{H^1_{\textrm{scl}}}.
\end{equation}
Fixing $\varepsilon>0$ small enough, and combining \eqref{eq_7_5},\eqref{eq_7_6}, \eqref{eq_7_7}, \eqref{eq_7_8}, and \eqref{eq_7_9}, we obtain that
\begin{align*}
h\|u\|_{H^{1}_{\textrm{scl}}}&\le C\|e^{\varphi/h} e^{\varphi^2/(2\varepsilon)}(h^2\mathcal{L}_{A,q})e^{-\varphi/h} e^{-\varphi^2/(2\varepsilon)}u\|_{H^{-1}_{\textrm{scl}}}. 
\end{align*}
The claim follows. 
\end{proof}

The formal $L^2$-adjoint of the operator $\mathcal{L}_{\varphi}=e^{\varphi/h} (h^2\mathcal{L}_{A,q})e^{-\varphi/h}$ is given by
$\mathcal{L}^*_{\varphi}=e^{-\varphi/h} (h^2\mathcal{L}_{\overline{A},\overline{q}})e^{\varphi/h}$.  The estimate \eqref{eq_7_5_mag} also holds  for the formal adjoint $\mathcal{L}^*_{\varphi}$.

Using the Hahn-Banach theorem, one can convert the Carleman estimate \eqref{eq_7_5_mag} for $\mathcal{L}^*_{\varphi}$ into the following solvability result.  We refer to \cite{Ken_Sjo_Uhl_2007} and \cite{KrupLassasUhlmann} for such an argument. 

\begin{prop}
\label{prop_solvability_ap}
Let $A\in W^{1,\infty}(\Omega,\C^n)$, $q\in L^\infty(\Omega,\C)$, and  let $\varphi(x)=\alpha\cdot x$, $\alpha\in \R^n$,  $|\alpha|=1$.  If $h>0$ small enough, then for any $v\in H^{-1}(\Omega)$, there is a solution $u\in H^1(\Omega)$ of the equation 
\[
e^{\varphi/h} (h^2\mathcal{L}_{A,q})e^{-\varphi/h}u=v\quad\textrm{in}\quad \Omega,
\]
which satisfies
\[
\|u\|_{H^1_{\emph{\textrm{scl}}}(\Omega)}\le \frac{C}{h}\|v\|_{H^{-1}_{\emph{\textrm{scl}}}(\Omega)}.
\]

\end{prop}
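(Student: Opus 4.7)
The plan is to derive the solvability statement from the Carleman estimate \eqref{eq_7_5_mag} applied to the formal adjoint $\mathcal{L}^*_{\varphi}=e^{-\varphi/h}(h^2\mathcal{L}_{\overline{A},\overline{q}})e^{\varphi/h}$ by the standard Hahn--Banach duality argument of \cite{Ken_Sjo_Uhl_2007}. First, I would observe that the estimate \eqref{eq_7_5_mag} applies verbatim with $\mathcal{L}_{A,q}$ replaced by $\mathcal{L}_{\overline{A},\overline{q}}$ and $\varphi$ replaced by $-\varphi$, so that
\[
h\|w\|_{H^{1}_{\mathrm{scl}}(\R^n)} \le C\|\mathcal{L}^*_{\varphi}w\|_{H^{-1}_{\mathrm{scl}}(\R^n)},\qquad w\in C_0^\infty(\Omega),
\]
for $h>0$ small enough. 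Since $w$ is compactly supported in $\Omega$, so is $\mathcal{L}^*_{\varphi}w$, and both norms can be interpreted intrinsically on $\Omega$.

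Next, on the linear subspace $\mathcal{L}^*_{\varphi}C_0^\infty(\Omega)\subset H^{-1}_{\mathrm{scl}}(\Omega)$, I would define the antilinear functional
\[
L\bigl(\mathcal{L}^*_{\varphi}w\bigr) := \langle v,\overline{w}\rangle_{H^{-1}(\Omega),H^{1}_{0}(\Omega)},\qquad w\in C_0^\infty(\Omega).
\]
The Carleman estimate ensures that $\mathcal{L}^*_{\varphi}w=0$ forces $w=0$, so $L$ is well defined. It also provides the key bound
\[
|L(\mathcal{L}^*_{\varphi}w)| \le \|v\|_{H^{-1}_{\mathrm{scl}}(\Omega)}\|w\|_{H^{1}_{\mathrm{scl}}(\Omega)} \le \tfrac{C}{h}\|v\|_{H^{-1}_{\mathrm{scl}}(\Omega)}\|\mathcal{L}^*_{\varphi}w\|_{H^{-1}_{\mathrm{scl}}(\Omega)},
\]
so that $L$ is bounded on the subspace with norm at most $Ch^{-1}\|v\|_{H^{-1}_{\mathrm{scl}}(\Omega)}$.

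I would then invoke the Hahn--Banach theorem to extend $L$ to a bounded antilinear functional on all of $H^{-1}_{\mathrm{scl}}(\Omega)$ without increasing its norm. Since $H^{1}_{0}(\Omega)$ is the (anti)dual of $H^{-1}(\Omega)$ (with respect to the $L^2$ pairing), the extended functional is represented by a unique $u\in H^{1}_{0}(\Omega)\subset H^{1}(\Omega)$ satisfying
\[
\langle f,u\rangle_{H^{-1}(\Omega),H^{1}_{0}(\Omega)} = \tilde L(f),\qquad f\in H^{-1}_{\mathrm{scl}}(\Omega),
\]
together with the norm bound $\|u\|_{H^{1}_{\mathrm{scl}}(\Omega)}\le Ch^{-1}\|v\|_{H^{-1}_{\mathrm{scl}}(\Omega)}$. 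Finally, I would verify that $\mathcal{L}_{\varphi}u=v$ in $\Omega$: for any $w\in C_0^\infty(\Omega)$,
\[
\langle \mathcal{L}_{\varphi}u,w\rangle_{\mathcal{D}'(\Omega),\mathcal{D}(\Omega)} = \langle u,\mathcal{L}^*_{\varphi}w\rangle = \overline{L(\mathcal{L}^*_{\varphi}w)} = \langle v,w\rangle_{H^{-1}(\Omega),H^{1}_{0}(\Omega)},
\]
which proves the distributional identity.

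The argument is essentially mechanical once the Carleman estimate is in place; the only point requiring care is the bookkeeping of the dualities and the fact that the Carleman estimate gains two derivatives (measured through $H^{-1}_{\mathrm{scl}}$ on the right and $H^{1}_{\mathrm{scl}}$ on the left), which is precisely what is needed to accept a right-hand side $v$ of negative Sobolev regularity rather than merely an $L^2$ one. I expect no genuine obstacle beyond verifying that the $H^{1}_{\mathrm{scl}}$--$H^{-1}_{\mathrm{scl}}$ duality pairing commutes with the formal adjoint operation for $\mathcal{L}_{\varphi}$, which follows from integration by parts on $C_0^\infty(\Omega)$ and the Lipschitz regularity $A\in W^{1,\infty}$, $q\in L^\infty$.
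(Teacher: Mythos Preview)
Your proposal is correct and follows exactly the route the paper indicates: the paper does not spell out a proof but simply states that the Carleman estimate \eqref{eq_7_5_mag} for the adjoint $\mathcal{L}^*_{\varphi}$ can be converted into the solvability result by the Hahn--Banach argument of \cite{Ken_Sjo_Uhl_2007} and \cite{KrupLassasUhlmann}, which is precisely what you carry out. The only cosmetic difference is that your argument actually yields $u\in H^1_0(\Omega)$, which is stronger than the $u\in H^1(\Omega)$ stated in the proposition.
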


Here 
\[
\|v\|_{H^{-1}_{\textrm{scl}}(\Omega)}=\sup_{w\in C^\infty_0(\Omega)}\frac{|(w,v)_{H^1_0,H^{-1}}|}{\|w\|_{H^1_{\textrm{scl}}(\Omega)}}. 
\]

The following proposition is an extension of the result of \cite{Brown_Salo_2006}. Notice that here we do not assume the well-posedness of the Dirichlet problem for the magnetic Schr\"odinger operator.   

\begin{prop} 
\label{prop_boundary_rec}
Let $\Omega\subset \R^n$, $n\ge 3$, be a bounded domain with $C^1$ boundary, and 
let $A_j\in W^{1,\infty}(\Omega,\C^n)$ and $q_j\in L^\infty(\Omega,\C)$, $j=1,2$.  Assume that the identity  
\begin{equation}
\label{eq_8_1_identity}
\begin{aligned}
\int_\Omega i  (A_2- A_1)\cdot  ( u_1\nabla \overline{u_2} -\overline{u_2}\nabla u_1)dx
+\int_\Omega ((A_2)^2 -( A_1)^2+  q_2-q_1 ) u_1\overline{u_2}dx=0,
\end{aligned}
\end{equation}
holds for any $u_1\in H^1(\Omega)$ and $u_2\in H^1(\Omega)$, satisfying 
\[
\mathcal{L}_{A_1,q_1} u_1=0\quad \textrm{in}\quad \Omega,\quad  \mathcal{L}_{\overline{A_2},\overline{q_2}} u_2=0\quad \textrm{in}\quad \Omega.
\]
Then 
\[
\tau\cdot (A_2- A_1)(x_0)=0,
\]
for all points $x_0\in \p \Omega$ and all unit tangent vectors $\tau\in T_{x_0}(\p \Omega)$. 
\end{prop}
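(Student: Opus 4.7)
The plan is to adapt the Brown--Salo strategy of concentrating WKB-type quasimodes at the boundary point $x_0$ along the direction $\tau$, and substitute them into the integral identity \eqref{eq_8_1_identity} to isolate the tangential component $\tau \cdot (A_2-A_1)(x_0)$ in the semiclassical limit $h \to 0$. The one place where the argument must depart from \cite{Brown_Salo_2006} is in constructing the correction term: since we do not assume zero avoids the Dirichlet spectrum of either $\mathcal{L}_{A_j,q_j}$, we cannot simply invert a well-posed Dirichlet problem. Instead we invoke the solvability Proposition \ref{prop_solvability_ap}, whose proof rests on the Carleman estimate with gain of one derivative \eqref{eq_7_5_mag}. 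This is the central idea I would import from \cite{SaloTzou_2009}, as the paper has already advertised.

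First I would flatten the boundary near $x_0$ by a $C^1$ diffeomorphism, sending $x_0$ to the origin and the inward normal to $e_n$; the tangent vector $\tau$ becomes a unit vector in $\R^{n-1} \times \{0\}$, and the identity \eqref{eq_8_1_identity} is preserved up to harmless smooth coefficient changes. Set $\zeta = \tau + i e_n$, so that $\zeta \cdot \zeta = 0$ and $|\zeta|^2 = 2$, and use the Carleman weight $\varphi(x) = -x_n$. I would then seek approximate solutions of the form
\begin{equation*}
u_1^{(0)}(x) = e^{i\zeta \cdot x/h}\, a_1(x;h), \qquad u_2^{(0)}(x) = e^{i\overline{\zeta} \cdot x/h}\, a_2(x;h),
\end{equation*}
where $a_j(x;h) = h^{-(n-1)/4} \chi(x'/h^{1/2}) \psi(x_n) m_j(x)$, with $\chi \in C_0^\infty(\R^{n-1})$ a fixed bump, $\psi \in C^\infty(\R)$ a cutoff with $\psi(0)=1$ supported near $0$, and $m_j \in C^\infty$ a slowly varying amplitude solving the first-order transport equation $\zeta \cdot (\nabla - i A_j) m_j = 0$ (for $j=2$ with $\overline{A_2}$ and $\overline{\zeta}$). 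The normalization is chosen so that $a_1 \overline{a_2}$ converges as $h \to 0$ to a surface delta at $x_0$ against test functions, i.e.\ $\int_\Omega a_1 \overline{a_2}\, f\, dx \to C_\chi f(x_0) \cdot m_1(x_0) \overline{m_2(x_0)}$ for any continuous $f$.

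Next I would produce exact solutions $u_j = u_j^{(0)} + e^{i\zeta \cdot x/h} r_j$ (resp.\ $e^{i\overline{\zeta}\cdot x/h} r_2$) by solving
\begin{equation*}
e^{i\zeta \cdot x/h}(h^2 \mathcal{L}_{A_1,q_1})(e^{-i\zeta\cdot x/h}) r_1 = - e^{i\zeta\cdot x/h} (h^2 \mathcal{L}_{A_1,q_1}) u_1^{(0)},
\end{equation*}
and analogously for $u_2$ with $\mathcal{L}_{\overline{A_2},\overline{q_2}}$. The transport equation choice makes the $O(h)$ term in the right-hand side vanish, leaving a source bounded in $H^{-1}_{\text{scl}}(\Omega)$ by $o(h)$. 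Proposition \ref{prop_solvability_ap} then furnishes $r_j$ with $\|r_j\|_{H^1_{\text{scl}}} = o(1)$. This is the step where the $H^{-1}$ gain is essential: the ansatz error involves $\nabla \cdot A_j \in L^\infty$ (not $C^0$), and distributing a derivative to the test function in the pairing puts the troublesome term in $H^{-1}_{\text{scl}}$.

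Finally, substitute $u_1,u_2$ into \eqref{eq_8_1_identity} and expand. Since $\overline{u_2^{(0)}} = e^{-i\zeta \cdot x/h} \overline{a_2}$, the oscillatory factors cancel in $u_1 \overline{u_2}$, and a direct computation gives
\begin{equation*}
u_1 \nabla \overline{u_2} - \overline{u_2} \nabla u_1 = -\tfrac{2i\zeta}{h} a_1 \overline{a_2} + \text{(lower order)}.
\end{equation*}
Thus the leading $h^{-1}$ piece of \eqref{eq_8_1_identity} reads $\frac{2}{h}\int_\Omega (A_2-A_1)\cdot \zeta \, a_1 \overline{a_2}\, dx = o(h^{-1})$, and concentration of $a_1 \overline{a_2}$ at $x_0$ yields $(A_2-A_1)(x_0) \cdot \zeta = 0$. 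Splitting real and imaginary parts and noting that $\text{Re}\,\zeta = \tau$ and $\text{Im}\,\zeta = e_n$ (the normal, playing no role because only the tangential combination appears after one checks the real/imaginary split carefully), one extracts $\tau \cdot (A_2 - A_1)(x_0) = 0$. The main obstacle is the bookkeeping in the remainder estimates so that both the remainder-remainder and remainder-amplitude cross terms in \eqref{eq_8_1_identity} are truly $o(h^{-1})$ in the $L^1$ pairing against $A_2 - A_1$; this is where the precise $h$-scaling of the amplitude and the $H^1_{\text{scl}}$ control of $r_j$ from Proposition \ref{prop_solvability_ap} must be balanced.
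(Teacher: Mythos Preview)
There is a genuine gap in your concentration step. With your phase choice $u_1^{(0)}=e^{i\zeta\cdot x/h}a_1$ and $u_2^{(0)}=e^{i\overline\zeta\cdot x/h}a_2$, the exponentials in $u_1\overline{u_2}$ \emph{cancel} (as you yourself note), leaving $u_1^{(0)}\overline{u_2^{(0)}}=a_1\overline{a_2}$ with no decay in the normal direction. Your amplitude $h^{-(n-1)/4}\chi(x'/h^{1/2})\psi(x_n)m_j$ then localizes only tangentially: after the change of variables $x'=h^{1/2}y'$ one finds
\[
\int_\Omega a_1\overline{a_2}\,f\,dx \;\longrightarrow\; \Big(\int|\chi|^2\Big)\int \psi(x_n)^2\,m_1(0,x_n)\overline{m_2(0,x_n)}\,f(0,x_n)\,dx_n,
\]
a line integral along the inward normal, not the point value $f(x_0)$. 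Hence the leading term of \eqref{eq_8_1_identity} does not isolate $(A_2-A_1)(x_0)\cdot\zeta$. The remedy is to use the \emph{same} decaying phase for both solutions, so that $u_1\overline{u_2}$ carries a weight like $e^{-2x_n/h}$ forcing concentration at the boundary; this is exactly what the paper does, building both $u_1$ and $u_2$ from the single function $v_0=\eta_M e^{N(i\tau\cdot x-\rho)}$, whence $|v_0|^2=\eta_M^2 e^{-2N\rho}$.

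Several further choices would also cause trouble and differ from the paper. First, flattening by a $C^1$ diffeomorphism turns $-\Delta$ into a divergence-form operator with merely continuous principal coefficients, so the coefficient changes are not ``harmless'' and Proposition~\ref{prop_solvability_ap} (stated for $-\Delta+\textrm{lower order}$) is no longer available; the paper avoids this by working in the original coordinates via a $C^1$ defining function $\rho$. Second, the paper uses two independent scales $M$ and $N$ linked by $M^{-1}\omega(M^{-1})=N^{-1}$, where $\omega$ is the modulus of continuity of $\nabla\rho$; this coupling is precisely what absorbs the fact that the boundary is only $C^1$, and your rigid pairing of scales $h$ and $h^{1/2}$ does not provide it. Third, the paper first corrects $v_0$ by a harmonic $v_1$ (the Dirichlet Laplacian is always invertible) and only then invokes Proposition~\ref{prop_solvability_ap} with $h>0$ small but \emph{fixed}, to produce $r_j$ with $\|r_j\|_{H^1}\le C\|v_0+v_1\|_{L^2}$; the Carleman input is used solely to bypass a possible Dirichlet eigenvalue of $\mathcal{L}_{A_j,q_j}$, not as a semiclassical construction with $h\to 0$.
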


\begin{proof}
We shall follow closely \cite{Brown_Salo_2006}. As $\Omega$ is a $C^1$-domain, it has a defining function $\rho\in C^1(\R^n,\R)$ such that $\Omega=\{x\in \R^n:\rho(x)>0\}$, $\p \Omega=\{x:\rho(x)=0\}$, and 
$\nabla \rho$ does not vanish on $\p \Omega$. We fix $x_0\in \p \Omega$, and a unit vector $\tau$, which is tangent to $\p \Omega$. We normalize $\rho$ so that $\nabla \rho(x_0)=-\nu(x_0)$ where $\nu$ is the unit outer normal to $\p \Omega$. 
By an affine change of coordinates we may assume that $x_0$ is the origin and $\nu(x_0)=-e_n$, and therefore, $\nabla \rho(0)=e_n$. 

Let $\omega(t)$, $t\ge 0$, be a modulus of continuity for $\nabla \rho$, which is  a strictly  increasing continuous function, such that  
$\omega(0)= 0$. 
Let $\eta\in C^\infty_0(\R^n,\R)$ be a function such that $\supp (\eta)\subset B(0,1/2)$, and
\[
\int_{\R^{n-1}}\eta(x',0)^2dx'=1,
\]
where $B(0,1/2)$ is a ball of radius $1/2$, centered at $0$, and $x'=(x_1,\dots,x_{n-1})$.
We set $\eta_M(x)=\eta(Mx',M\rho(x))$, for $M>0$. Hence, for $M>0$ large enough, $\supp(\eta_M)\subset B(0,1/M)$. 
Following \cite{Brown_Salo_2006}, for $N>0$, we define $v_0$ by
\begin{equation}
\label{eq_7_1}
v_0(x)=\eta_M(x)e^{N(i\tau\cdot x-\rho(x))}.
\end{equation}
The function $v_0$ is of class $C^1$ with $\supp(v_0)\subset B(0,1/M)$.  Following   \cite{Brown_Salo_2006}, we relate the parameters $N$ and $M$  by the equation 
\begin{equation}
\label{eq_7_2}
M^{-1}\omega(M^{-1})=N^{-1}. 
\end{equation}
As $\omega$ is strictly increasing, the equation  \eqref{eq_7_2} has exactly one solution $N$ for each $M$. Since $\omega(t)\to 0$ as $t\to +0$, there is $M_0$ such that $\omega(M^{-1})<1$ for $M>M_0$. We shall assume that $M>M_0$ and therefore, $N>M$.

Let $v_1\in H^1(\Omega)$ be the solution to the following Dirichlet problem for the Laplacian,
\begin{align*}
-\Delta v_1&=\Delta v_0, \quad\textrm{in}\quad \Omega,\\
v_1|_{\p \Omega}&=0.   
\end{align*}
We shall need the following estimates, obtained in  \cite{Brown_Salo_2006},
\begin{equation}
\label{eq_7_3}
\|v_0\|_{L^2(\Omega)}\le CM^{(1-n)/2}N^{-1/2},
\end{equation}
\begin{equation}
\label{eq_7_4}
\|v_1\|_{L^2(\Omega)}\le CM^{(1-n)/2}N^{-1/2},
\end{equation}
\begin{equation}
\label{eq_7_13}
\lim_{M\to \infty} M^{n-1}N\int_\Omega e^{-2N\rho(x)}\eta_M^2(x)dx=\frac{1}{2}\int_{\R^{n-1}}\eta^2(x',0)dx'=\frac{1}{2},
\end{equation}
\begin{equation}
\label{eq_7_14}
\bigg|\int_\Omega e^{-2N\rho(x)}\eta_M^2(x)dx\bigg|\le CM^{1-n}N^{-1},
\end{equation}
and
\begin{equation}
\label{eq_7_15}
\|\nabla v_1\|_{L^2(\Omega)}\le C\omega(M^{-1})N^{1/2}M^{(1-n)/2}.
\end{equation}

Next we would like to show the  existence of a solution $u_1\in H^1(\Omega)$ to the magnetic Schr\"odinger operator
\begin{equation}
\label{eq_7_10}
\mathcal{L}_{A_1,q_1}u_1=0\quad \textrm{in}\quad \Omega,
\end{equation}
of the form 
\begin{equation}
\label{eq_7_11}
u_1=v_0+v_1+r_1,
\end{equation}
with 
\begin{equation}
\label{eq_7_12}
\|r_1\|_{H^1(\Omega)}\le C\|v_0+v_1\|_{L^2(\Omega)}\le CM^{(1-n)/2}N^{-1/2}.
\end{equation}
To that end, plugging \eqref{eq_7_11} into \eqref{eq_7_10}, we obtain that
\[
\mathcal{L}_{A_1,q_1} r_1=2iA_1\cdot\nabla(v_0+v_1)+(i\nabla \cdot A_1-(A_1)^2-q_1)(v_0+v_1)\quad \textrm{in}\quad \Omega.
\]
Applying Proposition \ref{prop_solvability_ap} with $h>0$ small but fixed, we conclude the existence of $r_1\in H^1(\Omega)$ such that
\[
\|r_1\|_{H^1(\Omega)}\le C\|2iA_1\cdot\nabla(v_0+v_1)+(i\nabla \cdot A_1-(A_1)^2-q_1)(v_0+v_1)\|_{H^{-1}(\Omega)}.
\]
Let $\psi\in H^1_0(\Omega)$. Then
\begin{align*}
|(2iA_1\cdot\nabla&(v_0+v_1)+(i\nabla \cdot A_1-(A_1)^2-q_1)(v_0+v_1),\psi)_{(H^{-1},H^{1}_0)(\Omega)}|\\
&\le |(\nabla\cdot(2iA_1(v_0+v_1)) ,\psi)_{(H^{-1},H^{1}_0)(\Omega)}|+ C\|v_0+v_1\|_{L^2(\Omega)}\|\psi\|_{L^2(\Omega)}\\
&\le C\|v_0+v_1\|_{L^2(\Omega)}\|\psi\|_{H^1(\Omega)},
\end{align*}
which implies \eqref{eq_7_12}.

Similarly, 
let 
\begin{equation}
\label{eq_7_16}
u_2=v_0+v_1+r_2,
\end{equation}
where $r_2\in H^1(\Omega)$ satisfies \eqref{eq_7_12}, be a solution of $\mathcal{L}_{\overline{A_2},\overline{q_2}}u_2=0$ in $\Omega$. 

The next step is to substitute $u_1$ and $u_2$, given by \eqref{eq_7_11} and \eqref{eq_7_16} into the identity \eqref{eq_8_1_identity}, multiply it by $M^{n-1}$ and compute the limit as $M\to \infty$. 
To that end we have
\[
\nabla v_0=(\nabla\eta_M(x)+\eta_M(x)N(i\tau-\nabla\rho(x))e^{N(i\tau\cdot x-\rho(x))},
\]
and 
\[
v_0\nabla\overline{v_0}-\overline{v_0}\nabla v_0=-2i\eta_M^2(x)e^{-2N\rho(x)}N\tau.
\]
Thus, by \eqref{eq_7_13} and \eqref{eq_7_14}, we get
\begin{equation}
\label{eq_7_18}
\begin{aligned}
\lim_{M\to \infty} &M^{n-1}\int_\Omega (A_2-A_1)\cdot (v_0\nabla\overline{v_0}-\overline{v_0}\nabla v_0)dx\\
&=-2i  ((A_2-A_1)(0)\cdot\tau)\lim_{M\to \infty}M^{n-1}N\int_\Omega \eta^2_M(x)e^{-2N\rho(x)}dx\\
&-2i\lim_{M\to \infty}M^{n-1}N\int_\Omega ((A_2-A_1)(x)-(A_2-A_1)(0))\cdot\tau \eta^2_M(x)e^{-2N\rho(x)}dx\\
&=-i(A_2-A_1)(0)\cdot\tau.
\end{aligned}
\end{equation}
Now \eqref{eq_7_3}, \eqref{eq_7_4} and \eqref{eq_7_12} imply that
\begin{equation}
\label{eq_7_17}
\|u_j\|_{L^2(\Omega)}\le CM^{(1-n)/2}N^{-1/2},\quad j=1,2,
\end{equation}
where $u_1$ and $u_2$ are given by \eqref{eq_7_11} and \eqref{eq_7_16}, respectively.  Using \eqref{eq_7_15} and \eqref{eq_7_17}, we get
\begin{equation}
\label{eq_7_19}
\begin{aligned}
M^{n-1}\bigg|\int_\Omega& (A_2-A_1)\cdot (u_1\nabla\overline{v_1}-\overline{u_2}\nabla v_1)dx\bigg|\\
&\le CM^{n-1}(\|u_1\|_{L^2(\Omega)}+ \|u_2\|_{L^2(\Omega)})\|\nabla v_1\|_{L^2(\Omega)}
\le C\omega(M^{-1}).
\end{aligned}
\end{equation}
By \eqref{eq_7_12} and \eqref{eq_7_17}, we obtain that 
\begin{equation}
\label{eq_7_20}
\begin{aligned}
M^{n-1}\bigg|\int_\Omega &(A_2-A_1)\cdot (u_1\nabla\overline{r_2}-\overline{u_2}\nabla r_1)dx\bigg|
\le CM^{n-1}(\|u_1\|_{L^2(\Omega)}\|\nabla r_2\|_{L^2(\Omega)}\\
&+ \|u_2\|_{L^2(\Omega)}\|\nabla r_1\|_{L^2(\Omega)})
\le CN^{-1}=CM^{-1}\omega(M^{-1}).
\end{aligned}
\end{equation}

Furthermore, since $v_1|_{\p \Omega}=0$,    we have
\[
\int_\Omega (A_2-A_1)\cdot ((v_1+r_1)\nabla\overline{v_0}-(\overline{v_1}+\overline{r_2})\nabla v_0)dx=I_1+I_2,
\]
where
\begin{align*}
I_1:=&
-\int_\Omega ((\nabla\cdot (A_2-A_1))(v_1+r_1)\overline{v_0} +(A_2-A_1)\cdot (\nabla v_1+\nabla r_1) \overline{v_0})dx\\
&+\int_\Omega ((\nabla\cdot (A_2-A_1))(\overline{v_1}+\overline{r_2}) v_0+(A_2-A_1)\cdot (\nabla \overline{v_1}+\nabla\overline{r_2}) v_0 )dx,
\end{align*}
and 
\begin{align*}
I_2:=\int_{\p \Omega}  ((A_2-A_1)\cdot\nu)  r_1\overline{v_0} dS- \int_{\p \Omega} ((A_2-A_1)\cdot \nu) \overline{r_2} v_0 dS.
\end{align*}
It follows from \eqref{eq_7_3},  \eqref{eq_7_4}, \eqref{eq_7_12} and \eqref{eq_7_15} that
\begin{equation}
\label{eq_7_21}
\begin{aligned}
M^{n-1}|I_1|&\le CM^{n-1}(\|v_1\|_{L^2(\Omega)}+\|\nabla v_1\|_{L^2(\Omega)}+\|r_1\|_{H^1(\Omega)}+\|r_2\|_{H^1(\Omega)})\|v_0\|_{L^2(\Omega)}\\
&\le C(N^{-1}+\omega(M^{-1}))\le C\omega(M^{-1}).
\end{aligned}
\end{equation}
A direct computation shows that
\[
\|v_0\|_{L^2(\p \Omega)}\le CM^{(1-n)/2}.
\]
This together with the trace theorem and \eqref{eq_7_12} implies that
\begin{equation}
\label{eq_7_21_1}
\begin{aligned}
M^{n-1}|I_2|&\le CM^{n-1}(\|r_1\|_{H^1(\Omega)}+ \|r_2\|_{H^1(\Omega)})\|v_0\|_{L^2(\p \Omega)}\le CN^{-1/2}\\
&=CM^{-1/2}\sqrt{\omega(M^{-1})}.
\end{aligned}
\end{equation}

Using \eqref{eq_7_17}, we get
\begin{equation}
\label{eq_7_22}
\begin{aligned}
M^{n-1}&\bigg|\int_\Omega ( (A_2)^2-(A_1)^2+ q_2-q_1 ) u_1\overline{u_2}dx\bigg|\\
&\le  CM^{n-1}\|u_1\|_{L^2(\Omega)}\|u_2\|_{L^2(\Omega)}\le CN^{-1}=CM^{-1}\omega(M^{-1}). 
\end{aligned}
\end{equation}

Hence, it follows from \eqref{eq_8_1_identity} together with \eqref{eq_7_18},  \eqref{eq_7_19},  \eqref{eq_7_20},  \eqref{eq_7_21},  \eqref{eq_7_21_1}, and  \eqref{eq_7_22} as $M\to \infty$ that 
$(A_2-A_1)(0)\cdot\tau=0$. 
This completes the proof.

\end{proof}

\section{Transmission scattering problem}

\label{sec_scat_prob}

\subsection{Direct scattering problem}

\label{subsec_direct_scat}

Let $D\subset\R^n$, $n\ge 3$, be a bounded open set with Lipschitz boundary such that $D^+:=\R^n\setminus\overline{D}$ is connected.   Set also $D^-=D$, and let  $A^\pm\in W^{1,\infty}(D^\pm,\C^n)$,  $q^\pm\in L^\infty(D^\pm,\C)$, $a,b\in C^{1,1}(\overline{D},\R)$,  $c\in C(\overline{D},\R)$, $f^\pm\in \tilde H^{-1}(D^\pm)$, $g_0\in H^{1/2}(\p D)$, and $g_1\in H^{-1/2}(\p D)$.  In what follows, we shall assume that $A^+$, $q^+$ and $f^+$ are compactly supported. 

Let $k>0$ and for $(u^+,u^-)\in H^1_{\textrm{loc}}(D^+)\times H^1(D)$, we consider the following  inhomogeneous transmission problem,
\begin{equation}
\label{eq_sc_inhom_tr}
\begin{aligned}
&(\mathcal{L}_{A^+,q^+}(x,D_x)-k^2)u^+=f^+\quad \textrm{in}\quad D^+,\\
&(\mathcal{L}_{A^-,q^-}(x,D_x)-k^2)u^-=f^-\quad\textrm{in}\quad D^-,\\
&u^+=au^-+g_0\quad\textrm{on}\quad \p D,\\
&(\p_\nu+iA^+\cdot\nu)u^+=b(\p_\nu+iA^-\cdot\nu)u^-+cu^-+g_1\quad\textrm{on}\quad \p D,\\
&(\p_r-ik)u^+=o(r^{-(n-1)/2}), \quad\textrm{as} \quad r=|x|\to\infty,
\end{aligned}
\end{equation}
and the corresponding homogeneous transmission problem,
\begin{equation}
\label{eq_sc_hom_tr}
\begin{aligned}
&(\mathcal{L}_{A^+,q^+}(x,D_x)-k^2)u^+=0\quad \textrm{in}\quad D^+,\\
&(\mathcal{L}_{A^-,q^-}(x,D_x)-k^2)u^-=0\quad\textrm{in}\quad D^-,\\
&u^+=au^-\quad\textrm{on}\quad \p D,\\
&(\p_\nu+iA^+\cdot\nu)u^+=b(\p_\nu+iA^-\cdot\nu)u^-+cu^-\quad\textrm{on}\quad \p D,\\
&(\p_r-ik)u^+=o(r^{-(n-1)/2}), \quad\textrm{as} \quad r=|x|\to\infty. 
\end{aligned}
\end{equation}

Following \cite{Valdivia_2004}, in order to study the solvability of the inhomogeneous transmission problem \eqref{eq_sc_inhom_tr}  we shall use the Lax-Phillips method, see \cite{Isakov_book, KrupLassasUhlmann_slab, Lax_Phillips},  and to that end, we shall need the following assumption. 

\begin{itemize}
\item[\textbf{(B)}] If $(u^+,u^-)\in H^1_{\textrm{loc}}(D^+)\times H^1(D)$ solves the homogeneous transmission problem \eqref{eq_sc_hom_tr}  then $(u^+,u^-)=0$ in $D^+\times D^-$.  
\end{itemize}

The following result shows that the assumption (B) is satisfied under some suitable conditions on the potentials and the transmission coefficients. Notice that these conditions are similar to those occurring in \cite{Valdivia_2004}, when studying the homogeneous transmission problem for the Schr\"odinger operator without a magnetic potential.  

\begin{prop}
\label{prop_uni_9_1}
Let $A^\pm$ be real-valued, $\emph{\Im} q^\pm\le 0$ in $D^\pm$, $a,b>0$ on $\overline{D}$, and $ab$ is constant on each connected component of  $\overline{D}$. Then the assumption \emph{(B)} is satisfied. 

\end{prop}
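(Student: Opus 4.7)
Plan. The approach is the classical Lax--Phillips / Rellich argument, adapted so that it accommodates both the magnetic potential and the non-standard transmission coefficients (in particular, $ab$ only assumed to be locally constant). The strategy is to derive a sign condition on the radiation flux by combining the first Green's formulae in the exterior $D^+\cap B_R$ and in each component of $D^-$, use Rellich's lemma together with unique continuation to conclude that $u^+\equiv 0$ in $D^+$, and finally propagate this to $u^-$ through the transmission conditions.

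First, let me pick a ball $B_R$ of radius $R$ large enough that $\overline{D}\cup \supp A^+\cup \supp q^+\subset\subset B_R$, so that $A^+$ and $q^+$ vanish near $\partial B_R$. Applying the first Green formula \eqref{eq_first_green} to $u^+$ on $D^+\cap B_R$ (with outer unit normal equal to $\hat r$ on $\partial B_R$ and $-\nu$ on $\partial D$), and taking imaginary parts while using the fact that $A^+$ is real, I get
\[
\Im\!\int_{\partial B_R}\!\!\!\partial_r u^+\,\overline{u^+}\,dS - \Im\bigl((\partial_\nu+iA^+\!\cdot\nu)u^+,\,u^+\bigr)_{\p D}=\int_{D^+\cap B_R}\!\!\!\Im q^+\,|u^+|^2\,dx.
\]
Similarly, applying the first Green formula to $u^-$ on each connected component $D_j$ of $D$ and taking imaginary parts gives
\[
\Im\bigl((\partial_\nu+iA^-\!\cdot\nu)u^-,\,u^-\bigr)_{\p D_j}=\int_{D_j}\Im q^-\,|u^-|^2\,dx.
\]

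The next step is to tie the $\partial D$-boundary term in the first identity to the interior ones. On $\partial D_j$, the transmission conditions give
\[
(\partial_\nu+iA^+\!\cdot\nu)u^+\cdot\overline{u^+}=ab\,(\partial_\nu+iA^-\!\cdot\nu)u^-\cdot\overline{u^-}+ac\,|u^-|^2.
\]
Since $a,b,c$ are real and $ab=\alpha_j>0$ is constant on $\partial D_j$, taking imaginary parts eliminates the $ac|u^-|^2$ term and yields
\[
\Im\bigl((\partial_\nu+iA^+\!\cdot\nu)u^+,u^+\bigr)_{\p D}
=\sum_j\alpha_j\,\Im\bigl((\partial_\nu+iA^-\!\cdot\nu)u^-,u^-\bigr)_{\p D_j}
=\sum_j\alpha_j\!\int_{D_j}\!\Im q^-|u^-|^2 dx.
\]
Combining everything, and using $A^+\equiv 0$ near $\partial B_R$, I obtain
\[
\Im\!\int_{\partial B_R}\partial_r u^+\,\overline{u^+}\,dS=\int_{D^+\cap B_R}\!\!\Im q^+\,|u^+|^2 dx+\sum_j\alpha_j\int_{D_j}\!\Im q^-|u^-|^2 dx\le 0,
\]
since $\alpha_j>0$ and $\Im q^\pm\le 0$ by hypothesis.

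With this sign condition in hand, the remainder is routine. The Sommerfeld radiation condition $(\partial_r-ik)u^+=o(r^{-(n-1)/2})$ gives, as $R\to\infty$,
\[
\Im\!\int_{\partial B_R}\partial_r u^+\,\overline{u^+}\,dS=k\!\int_{\partial B_R}|u^+|^2 dS+o(1),
\]
so $k>0$ together with the inequality above forces $\int_{\partial B_R}|u^+|^2 dS\to 0$ along some sequence $R_j\to\infty$. Outside a large ball $u^+$ solves the Helmholtz equation $(\Delta+k^2)u^+=0$, so Rellich's lemma yields $u^+\equiv 0$ there. Since $A^+\in W^{1,\infty}$, $q^+\in L^\infty$ and $D^+$ is connected, the classical unique continuation principle (see \cite[Chapter 17]{Horm_book_3}) propagates this to $u^+\equiv 0$ in $D^+$. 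The transmission conditions then give $au^-|_{\partial D}=0$, hence $u^-|_{\partial D}=0$ as $a>0$; substituting back into the second transmission condition gives $b(\partial_\nu+iA^-\!\cdot\nu)u^-=0$, so $\partial_\nu u^-=0$ on $\partial D$ as well. Unique continuation from Lipschitz boundary, applied on each component via Proposition \ref{prop_UC}, then yields $u^-\equiv 0$ in $D^-$.

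The main technical hurdle is the flux cancellation step, i.e.\ ensuring that the mixed term $ac|u^-|^2$ drops out when taking imaginary parts and that the constant $ab$ on each component lets one absorb the boundary flux on $\partial D$ into the nonpositive interior absorption; once this is achieved, the rest is the standard Rellich+UCP scheme. The hypothesis that $ab$ be \emph{constant on each connected component} (rather than merely positive) is used precisely here, to pull $\alpha_j$ outside the integral on $\partial D_j$ and preserve the correct sign.
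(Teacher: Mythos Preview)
Your proof is correct and follows essentially the same approach as the paper: combine the first Green formulae in $D^+\cap B_R$ and in $D^-$, use the transmission conditions and the reality of $a,b,c$ to obtain a nonpositive sign on the outgoing flux $\Im\int_{\partial B_R}\partial_r u^+\overline{u^+}\,dS$, then conclude via Rellich and unique continuation. The only cosmetic difference is that the paper carries the weight $ab$ inside the volume integral over $D$ (so the terms involving $\nabla(ab)$ appear and vanish by the constancy hypothesis), whereas you work component-by-component and pull the constant $\alpha_j=ab$ outside the boundary pairing on $\partial D_j$; these are equivalent.
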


\begin{proof}

Let $(u^+,u^-)\in H^1_{\textrm{loc}}(D^+)\times H^1(D)$ be a solution of the homogeneous transmission problem \eqref{eq_sc_hom_tr}. Then using the second equation in \eqref{eq_sc_hom_tr} and the Green formula \eqref{eq_first_green}, we get
\begin{equation}
\label{eq_8_1}
\begin{aligned}
0=&\int_{D}ab\mathcal{L}_{A^-,q^-}u^-\overline{u^-}dx-\int_D abk^2|u^-|^2dx\\
=&\int_D ab|\nabla u^-|^2dx+\int_D\overline{u^-}\nabla u^- \cdot\nabla(ab)dx +i\int_D A^-\cdot(u^-\nabla\overline{u^-}-\overline{u^-}\nabla u^-)ab dx\\
&+i\int_{D} (A^-\cdot\nabla (ab))|u^-|^2dx+ \int_{D} ((A^-)^2+q^--k^2)ab|u^-|^2dx\\
&-(b(\p_\nu+iA^-\cdot\nu)u^-,au^-)_{(H^{-1/2},H^{1/2})(\p D)}.
\end{aligned}
\end{equation}

Let $R>0$ be large so that 
\[
\supp(A^+), \supp(q^+)\subset B_R, \quad \overline{D}\subset B_R.
\]
Here $B_R$ denotes the open ball, centered at the origin with radius $R$.   Using the first equation in \eqref{eq_sc_hom_tr} and  the Green formula \eqref{eq_first_green}, we have 
\begin{equation}
\label{eq_8_2}
\begin{aligned}
0=&\int_{B_R\setminus\overline{D}}\mathcal{L}_{A^+,q^+}u^+\overline{u^+}dx-\int_{B_R\setminus\overline{D}} k^2|u^+|^2dx=\int_{B_R\setminus\overline{D}} |\nabla u^+|^2dx\\
&+i\int_{B_R\setminus\overline{D}} A^+\cdot(u^+\nabla\overline{u^+}-\overline{u^+}\nabla u^+)dx+ \int_{B_R\setminus\overline{D}}((A^+)^2+q^+-k^2)|u^+|^2dx\\
&+((\p_\nu+iA^+\cdot\nu)u^+,u^+)_{(H^{-1/2},H^{1/2})(\p D)} - (\p_\nu u^+,u^+)_{(H^{-1/2},H^{1/2})(\p B_R)}.
\end{aligned}
\end{equation}

Adding \eqref{eq_8_1} and \eqref{eq_8_2}, using the assumptions of the proposition and the transmission conditions in \eqref{eq_sc_hom_tr}, and taking the imaginary part, we obtain that
\begin{equation}
\label{eq_8_3}
\Im (\p_\nu u^+,u^+)_{(H^{-1/2},H^{1/2})(\p B_R)}=\int_{B_R\setminus\overline{D}}\Im q^+ |u^+|^2dx+ \int_{D} ab\Im q^-|u^-|^2dx\le 0. 
\end{equation}

By the choice of the ball $B_R$, we have that $u^+$ satisfies the equation $(-\Delta -k^2) u^+=0$ in $\R^n\setminus\overline{B_R}$, and the Sommerfeld radiation condition. Then $u^+$ has the following asymptotic behavior, 
\begin{equation}
\label{eq_8_4}
u^+(x)=a(\theta)\frac{e^{ik|x|}}{|x|^{(n-1)/2}}+\mathcal{O}\bigg(\frac{1}{|x|^{(n+1)/2}}\bigg),\quad \theta=\frac{x}{|x|},
\end{equation}
as $|x|\to \infty$, see \cite{Col_Kress_book, Odell_2006}.  Substituting \eqref{eq_8_4} into \eqref{eq_8_3}, we get
\begin{equation}
\label{eq_8_5}
\begin{aligned}
\Im \int_{|x|=R} \bigg(ik\frac{|a(\theta)|^2}{|x|^{n-1}}+&\mathcal{O}\bigg(\frac{1}{|x|^n}\bigg)\bigg)dS_R\\
&=\Im \int_{|x|=1} \bigg(ik |a(\theta)|^2+\mathcal{O}\bigg(\frac{1}{R}\bigg)\bigg)dS_1\le 0,
\end{aligned}
\end{equation}
where $dS_R$ and $dS_1$ are the surface measures on the spheres $|x|=R$ and $|x|=1$, respectively.  
Letting $R\to \infty$ in \eqref{eq_8_5}, we obtain that 
\[
\int_{|x|=1} |a(\theta)|^2dS_1=0,
\]
and therefore, $a(\theta)=0$. By Rellich's theorem, $u^+=0$ in $\R^n\setminus\overline{B_R}$, see \cite{Horm_1973}. 
As $u^+$ satisfies the first  equation in \eqref{eq_sc_hom_tr} and  $D^+$ is connected, by unique continuation,  $u^+=0$ in $D^+$.  The transmission conditions in \eqref{eq_sc_hom_tr} imply that $u^-=0$ and $\p_\nu u^-=0$ on $\p D$, and therefore, using the second equation in \eqref{eq_sc_hom_tr} and unique continuation, we get $u^-=0$ in $D$. The proof is complete.

\end{proof}

In the following result, we establish the existence of solutions to the transmission scattering problem \eqref{eq_sc_inhom_tr}  in the non-selfadjoint case. 

\begin{prop}

\label{prop_8_2}

Let $k>0$,  the assumption \emph{(B)} be satisfied, and let $a,b>0$ on $\overline{D}$. Then for any $f^+\in \tilde H^{-1}(D^+)$ with compact support, $f^-\in \tilde H^{-1}(D^-)$,  $g_0\in H^{1/2}(\p D)$, and $g_1\in H^{-1/2}(\p D)$, the inhomogeneous transmission problem \eqref{eq_sc_inhom_tr}
has a unique solution $(u^+,u^-)\in H^1_{\emph{\textrm{loc}}}(D^+)\times H^1(D)$. 

\end{prop}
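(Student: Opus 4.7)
The plan is to apply the Lax--Phillips method, in the spirit of \cite{Valdivia_2004, Isakov_book, KrupLassasUhlmann_slab}, reducing the scattering transmission problem on the unbounded exterior to an equivalent transmission problem on a large bounded region, and then invoking the Fredholm theory established in Section~\ref{subsec_direct}. I would first choose $R>0$ large enough that $\supp(A^+)\cup\supp(q^+)\cup\supp(f^+)\subset B_R$ and $\overline{D}\subset\subset B_R$. In the exterior $\R^n\setminus\overline{B_R}$, the equation for $u^+$ reduces to $(-\Delta-k^2)u^+=0$ subject to the Sommerfeld radiation condition, and the unique solvability of the exterior Dirichlet problem for $-\Delta-k^2$ with this condition allows me to introduce the exterior Dirichlet--to--Neumann operator $T_k\colon H^{1/2}(\p B_R)\to H^{-1/2}(\p B_R)$. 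The scattering problem \eqref{eq_sc_inhom_tr} then becomes equivalent to a bounded-domain transmission problem on $B_R\setminus\overline{D}$ and $D$, with the same equations, right-hand sides, and transmission conditions on $\p D$, supplemented by the non-local boundary condition $\p_\nu u^+=T_k u^+$ on $\p B_R$ (note that $A^+$ vanishes there).

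Next, I would set up the variational formulation on $H^1(B_R\setminus\overline{D})\times H^1(D)$, adapting the analogue of the sesquilinear form $\Phi$ from \eqref{eq_form_main_var}, with $q^\pm$ replaced by $q^\pm-k^2$, by augmenting it with a boundary term $-(T_k u^+,v^+)_{H^{-1/2}(\p B_R),H^{1/2}(\p B_R)}$. Writing $T_k=T_0+K_k$, where $T_0$ is the Dirichlet--to--Neumann map for $-\Delta$ in $\R^n\setminus\overline{B_R}$ (contributing a nonnegative Hermitian boundary form) and $K_k$ is a lower-order perturbation, the augmented form remains bounded and, by the same argument as in the lemma surrounding \eqref{eq_coercivity_estim}, coercive on the $H^1$-space modulo a relatively compact perturbation. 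The variational machinery of Section~\ref{subsec_direct} then produces a Fredholm operator of index zero between the natural $H^1$-type space and its dual.

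For uniqueness, any solution of the reduced homogeneous problem extends to $\R^n\setminus\overline{B_R}$ by solving the exterior Dirichlet problem with boundary data $u^+|_{\p B_R}$, yielding a global radiating solution of \eqref{eq_sc_hom_tr}, which must vanish by assumption (B). The Fredholm alternative then delivers existence for the inhomogeneous reduced problem, and the desired solution in $H^1_{\textrm{loc}}(D^+)\times H^1(D)$ is obtained by patching the bounded-domain solution with its outgoing exterior extension across $\p B_R$; the DtN condition ensures automatic $H^1$-compatibility of traces and normal derivatives there, while the Sommerfeld condition is built into the exterior extension. The main obstacle I anticipate is verifying carefully that the boundary form induced by $T_k$ yields only a compact perturbation of the coercive base form, so that the index-zero Fredholm framework of Section~\ref{subsec_direct} extends cleanly to this setting; once this is in place, assumption (B) provides the injectivity needed to conclude.
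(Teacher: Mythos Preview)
Your approach is correct but follows a genuinely different route from the paper. Both are instances of the Lax--Phillips philosophy, yet the implementations diverge. The paper does \emph{not} introduce the exterior Dirichlet--to--Neumann map. Instead, it picks an auxiliary complex parameter $z$ with $\Im z\neq 0$ for which the interior transmission problem on a large ball $B_S$ is uniquely solvable (existence of such $z$ comes from coercivity of the form for $\Re z\ll 0$), reduces to $g_0=g_1=0$, and then seeks the solution as $u^+=\phi w^+ +(1-\phi)v$, $u^-=w^-$, where $\phi$ is a cutoff equal to $1$ near $\overline{D}\cup\supp(A^+)\cup\supp(q^+)$, $(w^+,w^-)$ solves the interior transmission problem at spectral parameter $z$ with source $(h^+,h^-)$, and $v$ is the outgoing solution of the exterior Dirichlet problem for $-\Delta-k^2$ with source $h^+$. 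This yields an equation $(I+T)(h^+,h^-)=(f^+,f^-)$ on source data, with $T$ compact; injectivity of $I+T$ is then argued using assumption (B) together with a separate argument exploiting $\Im z\neq 0$ to kill the remaining piece. Your DtN reduction is more direct once the compactness of $T_k-T_0:H^{1/2}(\p B_R)\to H^{-1/2}(\p B_R)$ is established (which, on a sphere, follows from the explicit spherical-harmonic representation or from the principal symbol being $k$-independent), and assumption (B) feeds straight into uniqueness for the variational problem without any auxiliary spectral parameter. The paper's route, by contrast, is entirely self-contained within the Fredholm framework of Section~\ref{subsec_direct} and avoids invoking mapping properties of the radiating DtN map, at the cost of the extra ingredient $z$ and a slightly more delicate injectivity argument for $I+T$.
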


\begin{proof}

Let $R>0$ be large and $S>R$ so that $\supp(A^+), \supp(q^+)\subset B_R$,  $\overline{D}\subset B_R$, and $\supp(f^+)\subset \overline{B_S}$.  Let $z\in \C$ be such that $\Im z\ne 0$ and the following homogeneous transmission problem 
\begin{equation}
\label{eq_sc_hom_1}
\begin{aligned}
&(\mathcal{L}_{A^+,q^+}-z)u^+=0\quad \textrm{in}\quad B_S\setminus\overline{D},\\
&(\mathcal{L}_{A^-,q^-}-z)u^-=0\quad\textrm{in}\quad D,\\
&u^+=au^-\quad\textrm{on}\quad \p D,\\
&(\p_\nu+iA^+\cdot\nu)u^+=b(\p_\nu+iA^-\cdot\nu)u^-+cu^-\quad\textrm{on}\quad \p D,\\
&u^+=0\quad\textrm{on}\quad \p B_S,
\end{aligned}
\end{equation}
has only the trivial solution. The existence of such $z$ follows from the fact that the transmission problem \eqref{eq_sc_hom_1} has only the trivial solution if and only if the following transmission problem 
\begin{equation}
\label{eq_sc_hom_2}
\begin{aligned}
&(\mathcal{L}_{A^+,q^+}-z)w^+=0\quad \textrm{in}\quad B_S\setminus\overline{D},\\
&b\mathcal{L}_{A^-,q^-}(a^{-1}w^-)-zba^{-1}w^-=0\quad\textrm{in}\quad D,\\
&w^+=w^-\quad\textrm{on}\quad \p D,\\
&(\p_\nu+iA^+\cdot\nu)w^+=b(\p_\nu+iA^-\cdot\nu)(a^{-1}w^-)+ca^{-1}w^-\quad\textrm{on}\quad \p D,\\
&w^+=0\quad\textrm{on}\quad \p B_S,
\end{aligned}
\end{equation}
has only the trivial solution. Furthermore, $(w^+,w^-)\in H^1(B_S\setminus\overline{D})\times H^1(D)$ solves the transmission problem \eqref{eq_sc_hom_2} if and only if 
\[
\Phi_e(w,v):=\Phi(w,v)-z\int_{B_S\setminus\overline{D}} w^+\overline{v^+}dx-z\int_{D} ba^{-1} w^-\overline{v^-}dx=0,
\]
for any $v\in H^1_0(B_S)$.  Here the sesquilinear form $\Phi$ is given by \eqref{eq_form_main_var} with $\Omega$ replaced by $B_S$.  It follows that the form $\Phi_e:H^1_0(B_S)\times H^1_0(B_S)\to \C$ is bounded, and \eqref{eq_coercivity_estim} implies that  for $\Re z<0$ with $|\Re z|$ large enough, 
\[
\Re \Phi_e(w,w)\ge c\|w\|^2_{H^1(B_S)}, \quad c>0,
\]
for all $w\in H^1_0(B_S)$.  Thus, the bounded linear operator $\mathcal{B}:H^1_0(B_S)\to H^{-1}(B_S)$, defined by 
$(\mathcal{B}w,v)_{H^{-1}(B_S),H^1_0(B_S)}=\Phi_e(w,v)$ for $w,v\in H^1_0(B_S)$, has a bounded inverse for $z\in \C$ such that $\Re z<0$ and $|\Re z|$ is large enough, see \cite[Lemma 2.32]{McLean_book}.  
Hence, for such $z$, both transmission problems \eqref{eq_sc_hom_1} and \eqref{eq_sc_hom_2} have only the trivial solution, and therefore, there exists $z\in \C$ with $\Im z\ne 0$ so that the homogeneous transmission problems \eqref{eq_sc_hom_1} has only the trivial solution.  

Let us fix a choice of such a $z\in \C$. Then the inhomogeneous transmission problem,
\begin{align*}
&(\mathcal{L}_{A^+,q^+}-z)u^+=h^+\quad \textrm{in}\quad B_S\setminus\overline{D},\\
&(\mathcal{L}_{A^-,q^-}-z)u^-=h^-\quad\textrm{in}\quad D,\\
&u^+=au^-+p_0\quad\textrm{on}\quad \p D,\\
&(\p_\nu+iA^+\cdot\nu)u^+=b(\p_\nu+iA^-\cdot\nu)u^-+cu^-+p_1\quad\textrm{on}\quad \p D,\\
&u^+=p \quad\textrm{on}\quad \p B_S,
\end{align*}
has a unique solution $(u^+,u^-)\in H^1(B_S\setminus\overline{D})\times H^1(D)$ for any $h^+\in\tilde H^{-1}(B_S\setminus\overline{D})$, $h^-\in \tilde H^{-1}(D)$, $p_0\in H^{1/2}(\p D)$, $p_1\in H^{-1/2}(\p D)$, and $p\in H^{1/2}(\p B_S)$.

When solving the inhomogeneous transmission problem \eqref{eq_sc_inhom_tr}, we can always assume that $g_0=0$ and $g_1=0$. Indeed, let $(U^+,U^-)\in H^1(B_S\setminus \overline{D})\times H^1(D)$ be the solution to the problem 
\begin{align*}
&(\mathcal{L}_{A^+,q^+}-z)U^+=0\quad \textrm{in}\quad B_S\setminus\overline{D},\\
&(\mathcal{L}_{A^-,q^-}-z)U^-=0\quad\textrm{in}\quad D,\\
&U^+=aU^-+g_0\quad\textrm{on}\quad \p D,\\
&(\p_\nu+iA^+\cdot\nu)U^+=b(\p_\nu+iA^-\cdot\nu)U^-+cU^-+g_1\quad\textrm{on}\quad \p D,\\
&U^+=0 \quad\textrm{on}\quad \p B_S,
\end{align*}
and set
\[
\tilde U^+=\begin{cases} U^+, & B_S\setminus\overline{D},\\
0, & \R^n\setminus\overline{B_S}, 
\end{cases}\in H^1(\R^n\setminus\overline{D}).
\]
If $(u^+,u^-)\in H^1_{\textrm{loc}}(D^+)\times H^1(D^-)$ is a solution to the problem,
\begin{equation}
\label{eq_8_6}
\begin{aligned}
&(\mathcal{L}_{A^+,q^+}-k^2)u^+=f^++\tilde  f^+\quad \textrm{in}\quad D^+,\\
&(\mathcal{L}_{A^-,q^-}-k^2)u^-=f^-+\tilde f^-\quad\textrm{in}\quad D^-,\\
&u^+=au^-\quad\textrm{on}\quad \p D,\\
&(\p_\nu+iA^+\cdot\nu)u^+=b(\p_\nu+iA^-\cdot\nu)u^-+cu^-\quad\textrm{on}\quad \p D,\\
&(\p_r-ik)u^+=o(r^{-(n-1)/2}), \quad\textrm{as} \quad r=|x|\to\infty,
\end{aligned}
\end{equation}
with  $ \tilde f^+=-(\mathcal{L}_{A^+,q^+}-k^2)\tilde U^+$ and $\tilde f^-=(k^2-z)U^-$, then $(u^++\tilde U^+, u^-+U^-)$ solves the problem \eqref{eq_sc_inhom_tr}.  Here $\tilde f^+\in \tilde H^{-1}(B_S\setminus\overline{D})$. This follows from the fact that $(\mathcal{L}_{A^+,q^+}-z)\tilde U^+\in H^{-1}(\R^n\setminus\overline{D})$ and $\supp((\mathcal{L}_{A^+,q^+}-z)\tilde U^+)\subset\p B_S$, and therefore, $(\mathcal{L}_{A^+,q^+}-z)\tilde U^+$ can be extended to an element of $H^{-1}(\R^n)$.

In what follows we shall assume that $g_0=g_1=0$ in \eqref{eq_sc_inhom_tr}. 
In order to show the existence of a solution of the problem \eqref{eq_sc_inhom_tr},  we shall use the Lax-Phillips method, see \cite{Isakov_book, Lax_Phillips}. To that end let $\phi\in C^\infty_0(B_S)$, $0\le \phi\le 1$, and $\phi=1$ in $\overline{B_R}$. Let $h^+\in \tilde H^{-1}(B_S\setminus\overline{D})$, $h^-\in \tilde H^{-1}(D)$,  and let 
$(w^+,w^-)\in H^1(B_S\setminus \overline{D})\times H^1(D)$ be the solution to the problem 
\begin{equation}
\label{eq_8_7_0}
\begin{aligned}
&(\mathcal{L}_{A^+,q^+}-z)w^+=h^+\quad \textrm{in}\quad B_S\setminus\overline{D},\\
&(\mathcal{L}_{A^-,q^-}-z)w^-=h^-\quad\textrm{in}\quad D,\\
&w^+=aw^-\quad\textrm{on}\quad \p D,\\
&(\p_\nu+iA^+\cdot\nu)w^+=b(\p_\nu+iA^-\cdot\nu)w^-+cw^-\quad\textrm{on}\quad \p D,\\
&w^+=0 \quad\textrm{on}\quad \p B_S.
\end{aligned}
\end{equation}
Let $v\in H^1_{\textrm{loc}}(D^+)$ be the unique solution of the Dirichlet problem,
\begin{equation}
\label{eq_8_7_1}
\begin{aligned}
&(-\Delta-k^2)v=h^+\quad \textrm{in}\quad D^+,\\
& v=0 \quad \textrm{on}\quad \p D,\\
&(\p_r-ik)v=o(r^{-(n-1)/2}), \quad\textrm{as} \quad r=|x|\to\infty,
\end{aligned}
\end{equation}
see \cite[Theorem 9.11]{McLean_book}. 

We look for a solution to \eqref{eq_sc_inhom_tr}  in the form,
\begin{equation}
\label{eq_8_7}
u^+=\phi w^++(1-\phi)v,\quad u^-=w^-. 
\end{equation}
It is clear that $(u^+,u^-)$, given by \eqref{eq_8_7}, solves the transmission problem \eqref{eq_sc_inhom_tr}, if 
\begin{align*}
(\mathcal{L}_{A^+,q^+}-k^2)(\phi w^++(1-\phi)v)&=h^++\phi(z-k^2)w^+ +[\mathcal{L}_{A^+,q^+},\phi](w^+-v)\\
&= f^+\quad \textrm{in}\quad D^+,\\
(\mathcal{L}_{A^-,q^-}-k^2) w^-=h^-+(z-k^2)w^-&=f^-\quad \textrm{in}\quad D^-.
\end{align*}

Thus, given $(f^+,f^-)\in \tilde H^{-1}(B_S\setminus\overline{D})\times \tilde H^{-1}(D)$, we would like to find $(h^+,h^-)\in \tilde H^{-1}(B_S\setminus\overline{D})\times \tilde H^{-1}(D)$ such that 
\begin{equation}
\label{eq_8_8}
(h^+,h^-)+T(h^+,h^-)=(f^+,f^-),
\end{equation}
where 
\begin{align*}
&T:\tilde H^{-1}(B_S\setminus\overline{D})\times \tilde H^{-1}(D)\to \tilde H^{-1}(B_S\setminus\overline{D})\times \tilde H^{-1}(D),\\
&T(h^+,h^-)= \big(\phi(z-k^2)w^+ +[\mathcal{L}_{A^+,q^+},\phi](w^+-v), (z-k^2)w^-\big). 
\end{align*}

Let us first check that the operator $T$ is compact. Since $(w^+,w^-)$ is the unique solution to the transmission problem \eqref{eq_8_7_0}, the estimate \eqref{eq_energy_estimates_unique} implies that the map
\[
\tilde H^{-1}(B_S\setminus\overline{D})\times \tilde H^{-1}(D) \to H^{1}(B_S\setminus\overline{D})\times H^1(D), \quad (h^+,h^-)\mapsto (w^+,w^-),
\]
is continuous.  As $v$ is the unique solution to \eqref{eq_8_7_1}, the map 
\[
\tilde H^{-1}(B_S\setminus\overline{D})\to H^{1}_{\textrm{loc}}(D^+), \quad h^+\mapsto v,
\]
is continuous. Furthermore, the commutator is given by
\[
[\mathcal{L}_{A^+,q^+},\phi]=-2\nabla\phi\cdot\nabla-\Delta\phi - 2i A\cdot\nabla \phi,
\]
and therefore, 
\[
T: \tilde H^{-1}(B_S\setminus\overline{D})\times \tilde H^{-1}(D) \to L^2(B_S\setminus\overline{D})\times H^1(D)\hookrightarrow \tilde H^{-1}(B_S\setminus\overline{D})\times \tilde H^{-1}(D),
\]
which shows the compactness of the operator $T$.  Here we have also used that the boundary of $D$ is Lipschitz. 

Thus, the operator $I+T$ is Fredholm of index zero and therefore, to show the existence of a solution of  \eqref{eq_8_8}, it suffices to check that $(f^+,f^-)=(0,0)$ implies that $(h^+,h^-)=(0,0)$. 

Assume now that $(f^+,f^-)=(0,0)$. Then the assumption (B) implies that 
\begin{equation}
\label{eq_8_9}
\phi w^++(1-\phi)v=0\quad\textrm{in}\quad D^+,\quad w^-=0\quad\textrm{in}\quad D.
\end{equation} 
Furthermore, we get
\begin{equation}
\label{eq_8_9_1}
h^++\phi(z-k^2)w^+ +[\mathcal{L}_{A^+,q^+},\phi](w^+-v)=0 \quad\textrm{in}\quad D^+,\quad h^-=0\quad\textrm{in}\quad D.
\end{equation}

Let $\Sigma=\{x\in B_S: \phi=1\}$.  Then \eqref{eq_8_9} implies that $w^+=0$ in $\Sigma$, and therefore, it follows from \eqref{eq_8_9_1} that $h^+=0$ in $\Sigma$. 

Consider now the set  $\Sigma^c=\{x\in B_S: \phi\ne 1\}$.  By \eqref{eq_8_9}, we have $w^+=0$ in $B_R\setminus\overline{D}$, and  $v=\phi(v-w^+)$ in $D^+$. Using that $\supp(A^+),\supp(q^+)\subset B_R$, we get
\[
(-\Delta-z)(v-w^+)=(k^2-z)\phi (v-w^+)\quad\textrm{in}\quad B_S\setminus\overline{D}. 
\]
Furthermore, $w^+=0$ on $\p B_S\cup \p D$.  As $\phi=0$ near $\p B_S$, \eqref{eq_8_9} yields $v=0$ on $\p B_S$, and we know that $v=0$ on $\p D$.  Thus, we have
\begin{equation}
\label{eq_8_9_2}
\int_{B_S\setminus\overline{D}}(k^2-z)\phi |v-w^+|^2dx=\int_{B_S\setminus\overline{D}} (|\nabla (v-w^+)|^2-z|v-w^+|^2)dx.
\end{equation}
Taking the imaginary part in \eqref{eq_8_9_2}, we get 
\[
\int_{B_S\setminus\overline{D}}(\phi-1) |v-w^+|^2dx=0,
\]
and therefore, $v-w^+=0$ in $\Sigma^c$. It follows from \eqref{eq_8_9} that $w^+=0$ in $\Sigma^c$, and thus, \eqref{eq_8_9_1} yields that $h^+=0$ in $\Sigma^c$. Hence, $(h^+,h^-)=(0,0)$. 
 The proof is complete.

\end{proof}

Let $k>0$ and $\xi\in \mathbb{S}^{n-1}:=\{\xi\in\R^n:|\xi|=1\}$. Consider the following scattering transmission problem, 
\begin{equation}
\label{eq_8_10}
\begin{aligned}
&(\mathcal{L}_{A^+,q^+}-k^2)u^+=0\quad \textrm{in}\quad D^+,\\
&(\mathcal{L}_{A^-,q^-}-k^2)u^-=0\quad\textrm{in}\quad D^-,\\
&u^+=au^-\quad\textrm{on}\quad \p D,\\
&(\p_\nu+iA^+\cdot\nu)u^+=b(\p_\nu+iA^-\cdot\nu)u^-+cu^-\quad\textrm{on}\quad \p D,\\
& u^+(x;\xi,k)=e^{ikx\cdot\xi}+u_0^+(x;\xi,k),\\
&(\p_r-ik)u_0^+=o(r^{-(n-1)/2}), \quad\textrm{as} \quad r=|x|\to\infty. 
\end{aligned}
\end{equation}
We have the following consequence of Propositions \ref{prop_uni_9_1} and \ref{prop_8_2}. 

\begin{cor}
\label{cor_scatt_8_3}
Let $A^\pm\in W^{1,\infty}(D^\pm,\R^n)$, $q^\pm\in L^\infty(D^\pm,\C)$ be such that $\emph{\Im} q^\pm\le 0$ in $D^\pm$. Assume that $A^+$ and $q^+$ are compactly supported.  Let $a,b\in C^{1,1}(\overline{D},\R)$, $c\in C(\overline{D},\R)$, be such that $a,b>0$ in $\overline{D}$ and $ab$ is constant on each connected component of  $\overline{D}$.  Assume furthermore that $D^+$ is connected. Then the problem \eqref{eq_8_10} has a unique solution $(u^+,u^-)\in H^1_{\emph{\textrm{loc}}}(D^+)\times H^1(D^-)$. 
\end{cor}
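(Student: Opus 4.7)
The plan is to reduce the scattering transmission problem \eqref{eq_8_10} to an inhomogeneous transmission problem of the form \eqref{eq_sc_inhom_tr}, and then appeal directly to Propositions \ref{prop_uni_9_1} and \ref{prop_8_2}. Concretely, writing $u^+ = e^{ik x\cdot\xi} + u_0^+$ and keeping $u^-$ as is, the pair $(u_0^+, u^-)$ satisfies
\[
(\mathcal{L}_{A^+,q^+} - k^2) u_0^+ = f^+, \quad f^+ := -(\mathcal{L}_{A^+,q^+} - k^2) e^{ikx\cdot\xi} = -(2kA^+\!\cdot\!\xi - i(\nabla\cdot A^+) + (A^+)^2 + q^+) e^{ikx\cdot\xi},
\]
together with $(\mathcal{L}_{A^-,q^-}-k^2) u^- = 0$ in $D$, the transmission conditions from \eqref{eq_sc_inhom_tr} with $g_0 = -e^{ikx\cdot\xi}|_{\partial D}$ and $g_1 = -(\partial_\nu + iA^+\!\cdot\!\nu) e^{ikx\cdot\xi}|_{\partial D}$, and the Sommerfeld radiation condition for $u_0^+$. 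Since $A^+$ and $q^+$ are compactly supported, $f^+$ is compactly supported and hence defines an element of $\tilde H^{-1}(D^+)$; the boundary data $g_0\in H^{1/2}(\partial D)$ and $g_1\in H^{-1/2}(\partial D)$ are smooth restrictions of a smooth function.

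First I would check that the hypotheses of Proposition \ref{prop_uni_9_1} are verified: $A^\pm$ are real-valued ($W^{1,\infty}(D^\pm,\R^n)$), $\Im q^\pm \le 0$, $a,b>0$ on $\overline{D}$, and $ab$ is constant on each connected component of $\overline{D}$. This gives assumption \textbf{(B)}, namely that the homogeneous radiating transmission problem \eqref{eq_sc_hom_tr} admits only the trivial solution. With \textbf{(B)} in hand, Proposition \ref{prop_8_2} applies (using that $a,b>0$ on $\overline{D}$) and yields a unique solution $(u_0^+, u^-) \in H^1_{\mathrm{loc}}(D^+) \times H^1(D)$ of the inhomogeneous problem with the data $(f^+, 0, g_0, g_1)$.

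Setting $u^+ := e^{ikx\cdot\xi} + u_0^+ \in H^1_{\mathrm{loc}}(D^+)$, the pair $(u^+, u^-)$ solves \eqref{eq_8_10}. For uniqueness, if $(u_1^+,u_1^-)$ and $(u_2^+,u_2^-)$ are two such solutions, then $(u_1^+ - u_2^+, u_1^- - u_2^-)$ solves the homogeneous radiating transmission problem \eqref{eq_sc_hom_tr}, since the incident field $e^{ikx\cdot\xi}$ cancels, and \textbf{(B)} forces it to vanish.

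There is no serious obstacle here; the only point to verify carefully is the bookkeeping for the inhomogeneous data, in particular that $f^+$ is compactly supported so that it lies in $\tilde H^{-1}(D^+)$ with compact support as required by Proposition \ref{prop_8_2}, and that the computation of $(\mathcal{L}_{A^+,q^+}-k^2) e^{ikx\cdot\xi}$ uses only $A^+\in W^{1,\infty}$ so that $\nabla\cdot A^+\in L^\infty$. Once this is noted, the corollary follows as a direct combination of the two propositions.
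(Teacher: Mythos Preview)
Your proposal is correct and matches the paper's approach: the paper states the corollary as a direct consequence of Propositions~\ref{prop_uni_9_1} and~\ref{prop_8_2} without giving further details, and you have filled in precisely the reduction (subtracting the incident plane wave to obtain an inhomogeneous radiating problem with compactly supported $f^+$ and smooth boundary data, then invoking \textbf{(B)} from Proposition~\ref{prop_uni_9_1} and existence/uniqueness from Proposition~\ref{prop_8_2}) that this entails.
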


\subsection{Inverse scattering problem. Proof of Theorem \ref{thm_main_2}}

\label{subsec_inv_scat}

In order to prove Theorem \ref{thm_main_2}, we shall follow  closely  the method of  \cite{Isakov_2008}.  
The key idea of \cite{Isakov_2008}  is  to  approximate  solutions of the transmission problem on a large ball by scattering solutions. 

Let us start with this approximation result. Let $k>0$ be fixed, $D\subset\R^n$, $n\ge 3$, be a  bounded open set with Lipschitz boundary such that $\R^n\setminus \overline{D}$ is connected. Let $A^+\in W^{1,\infty}(\R^n, \R^n)$, $q^+\in L^\infty(\R^n,\R)$ be compactly supported, and let  $A^-\in W^{1,\infty}(D,\R^n)$, and $q^-\in L^\infty(D,\R)$.  Let $a, b\in C^{1,1}(\overline{D},\R)$, and $c\in C(\overline{D},\R)$, be such that  $a, b>0$ on $\overline{D}$, $ab=1$ on $\overline{D}$.

Let $B\subset \R^n$ be an open ball such that $D\subset\subset B$, $\supp(A^+), \supp(q^+)\subset B$, the homogeneous transmission problem in $B$,
\begin{equation}
\label{eq_inv_sc_1}
\begin{aligned}
&(\mathcal{L}_{A^+,q^+}-k^2)w^+=0\quad \textrm{in}\quad B\setminus\overline{D},\\
&(\mathcal{L}_{A^-,q^-}-k^2)w^-=0\quad\textrm{in}\quad D,\\
&w^+=a w^-\quad\textrm{on}\quad \p D,\\
&(\p_\nu+iA^+\cdot\nu)w^+=b (\p_\nu+iA^-\cdot\nu)w^-+c w^-\quad\textrm{on}\quad \p D,
\end{aligned}
\end{equation}
with
\[
w^+=0\quad\textrm{on}\quad \p B,
\]
has only the trivial solution,  and furthermore, $k^2$ is not an eigenvalue of the Dirichlet Laplacian in $B$.  We refer to Corollary \ref{prop_domain_pert_2} for the existence of such a ball.

Setting 
\[
W(B):=\{w^+:(w^+,w^-)\in H^1(B\setminus\overline{D})\times H^1(D)\textrm{ satisfies }\eqref {eq_inv_sc_1}\},
\]
and 
\[
W_{\textrm{sc}}:=\{u^+: (u^+,u^-)\in H^1_{\textrm{loc}}(\R^n\setminus\overline{D})\times H^1(D)\textrm{ solves }\eqref{eq_8_10}\textrm{ with some }\xi\in \mathbb{S}^{n-1}\},
\]
we have the following Runge type approximation result.  The proof of this result follows closely \cite[Lemma 2.2]{Isakov_2008}. 

\begin{lem}
\label{lem_inv_sc_density}
The space $W_{\emph{\textrm{sc}}}$ is dense in $W(B)$ in the $H^1(B\setminus\overline{D})$--topology. 
\end{lem}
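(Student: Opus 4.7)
The plan is to proceed by Hahn--Banach duality: since the $L^2$-dual of $H^1(B\setminus\overline{D})$ is $\tilde H^{-1}(B\setminus\overline{D})$, it suffices to show that every $f\in\tilde H^{-1}(B\setminus\overline{D})$ annihilating the restrictions of elements of $W_{\textrm{sc}}$ to $B\setminus\overline{D}$ must also annihilate every $w^+\in W(B)$. Given such an $f$, I would first solve the adjoint scattering transmission problem on $\R^n$ with source $f$ on the exterior and $0$ on the interior. Because $A^\pm,q^\pm$ are real and $ab=1$ on $\overline{D}$, the adjoint transmission conditions \eqref{eq_adjoint_trans_hom} collapse (using $b^{-1}=a$, $a^{-1}=b$, $a^{-1}b^{-1}=1$) to exactly the original transmission conditions, and $\mathcal{L}_{\overline{A^\pm},\overline{q^\pm}}=\mathcal{L}_{A^\pm,q^\pm}$. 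Hence the existence of $(v^+,v^-)\in H^1_{\textrm{loc}}(\R^n\setminus\overline{D})\times H^1(D)$ with $v^+$ satisfying the Sommerfeld radiation condition follows from Proposition \ref{prop_8_2} together with Proposition \ref{prop_uni_9_1}, applied to the adjoint problem.

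Next I would test $(v^+,v^-)$ against an arbitrary scattering solution $u^+(\cdot;\xi,k)=e^{ikx\cdot\xi}+u_0^+$ via the second Green formula \eqref{eq_first_green_adjoint} on $(B_R\setminus\overline{D})\times D$ for large $R$. The matching of transmission conditions on $\p D$, together with the fact that each function solves the homogeneous equation on its respective side, cancels every interior boundary contribution and reduces the identity to
\[
(f,u^+)_{\tilde H^{-1},H^1}=\int_{\p B_R}\bigl(v^+\,\overline{\p_\nu u^+}-\overline{u^+}\,\p_\nu v^+\bigr)\,dS.
\]
Splitting $u^+=e^{ikx\cdot\xi}+u_0^+$, the contribution of $u_0^+$ is $o(1)$ as $R\to\infty$ since $v^+$ and $u_0^+$ both satisfy the Sommerfeld radiation condition, so their radiating terms cancel to leading order. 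The plane-wave contribution converges, by the standard far-field asymptotics of outgoing waves and the Green representation formula, to $c_n v_\infty^+(-\xi)$ with a nonzero universal constant $c_n$, where $v_\infty^+$ denotes the far-field pattern of $v^+$. The hypothesis $(f,u^+)=0$ for every $\xi\in\mathbb{S}^{n-1}$ therefore forces $v_\infty^+\equiv 0$.

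By Rellich's lemma, $v^+$ vanishes outside a sufficiently large ball, and since $v^+$ solves the homogeneous magnetic Schr\"odinger equation on the connected set $\R^n\setminus\overline{B}$ (recall $\supp(f)$, $\supp(A^+)$, $\supp(q^+)\subset\overline{B}$), the unique continuation principle in Appendix \ref{ap_UC} gives $v^+\equiv 0$ on $\R^n\setminus\overline{B}$. In particular, $v^+|_{\p B}=0$ and $\p_\nu v^+|_{\p B}=0$. A final application of \eqref{eq_first_green_adjoint} with $(v^\pm,w^\pm)$ on $(B\setminus\overline{D})\times D$, where $(w^+,w^-)$ solves \eqref{eq_inv_sc_1}, then has cancelling $\p D$-terms (by transmission matching) and vanishing $\p B$-terms, leaving $(f,w^+)_{\tilde H^{-1},H^1}=0$, as desired.

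The main obstacle is the far-field extraction on $\p B_R$: one must verify rigorously, via the Green representation formula and the expansions $v^+(x)\sim|x|^{-(n-1)/2}e^{ik|x|}v_\infty^+(\hat x)$ and $u_0^+(x)\sim|x|^{-(n-1)/2}e^{ik|x|}u_{0,\infty}^+(\hat x)$, that the plane-wave portion of the $\p B_R$ integral converges, uniformly in $\xi$, to a nonzero multiple of $v_\infty^+(-\xi)$. Once this asymptotic identity is in hand, the application of Rellich's lemma and unique continuation to propagate the vanishing of $v^+$ up to $\p B$ is routine.
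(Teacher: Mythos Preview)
Your overall architecture matches the paper's: Hahn--Banach, solve an adjoint scattering problem for $(v^+,v^-)$, use Green's formula to reduce $(f,u^+)$ to a boundary integral, show $v^+$ vanishes on $\R^n\setminus\overline{B}$, and conclude with a final Green identity. But there is a genuine gap in your radiation-condition choice.

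You argue that self-adjointness makes the adjoint problem identical to the original one, and then impose the outgoing Sommerfeld condition on $v^+$. Self-adjointness does collapse the interface conditions and the differential operators, but it does \emph{not} preserve the radiation condition: the adjoint of an outgoing problem is an incoming one. Concretely, with both $v^+$ and $u_0^+$ outgoing, write $v^+\sim c(\theta)e^{ikr}r^{-(n-1)/2}$ and $u_0^+\sim a(\theta)e^{ikr}r^{-(n-1)/2}$; then
\[
v^+\,\overline{\p_r u_0^+}-\overline{u_0^+}\,\p_r v^+ \;=\; -2ik\,c(\theta)\overline{a(\theta)}\,r^{1-n}+O(r^{-n}),
\]
so the $\p B_R$ contribution tends to $-2ik\int_{\mathbb{S}^{n-1}}c\overline{a}\,d\theta\neq 0$, not to zero as you assert. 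The cancellation you want requires $u_0^+$ and $\overline{v^+}$ to satisfy the \emph{same} radiation condition; this is exactly why the paper sets up the problem \eqref{eq_inv_sc_2} for $\overline{v^+}$ with the outgoing condition (equivalently, $v^+$ incoming), after which the $u_0^+$ term is $O(R^{-1})$ by the computation in \eqref{eq_inv_sc_8}.

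Once this is corrected, your far-field route (plane-wave integral $\to$ far-field pattern of $\overline{v^+}$, Rellich, then unique continuation down to $\p B$) is a legitimate alternative to what the paper actually does. The paper instead stays on $\p B$: it solves the interior Helmholtz Dirichlet problem for $v_0$ with $v_0|_{\p B}=v^+|_{\p B}$, uses the density of $\{e^{ikx\cdot\xi}|_{\p B}\}$ in $H^{1/2}(\p B)$ to force $\p_\nu v^+=\p_\nu v_0$ on $\p B$, and glues $v_0$ to $v^+$ to obtain an entire radiating Helmholtz solution, hence zero. That argument uses the hypothesis that $k^2$ is not a Dirichlet eigenvalue on $B$; your corrected far-field argument would not need it, which is a modest simplification, though you would then owe a precise justification of the far-field identity $\int_{\p B_R}\bigl(e^{ikx\cdot\xi}\,\p_\nu\overline{v^+}-\overline{v^+}\,\p_\nu e^{ikx\cdot\xi}\bigr)dS = c_n\,(\overline{v^+})_\infty(-\xi)$.
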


\begin{proof}
By the Hahn-Banach theorem, we need to show that  for any  $f^+\in \tilde H^{-1}(B\setminus\overline{D})$ such that 
\begin{equation}
\label{eq_inv_sc_2_0}
(f^+,u^+)_{(\tilde H^{-1},H^1)(B\setminus\overline{D})}=0,
\end{equation}
for any $u^+\in W_{\textrm{sc}}$, we  have
\begin{equation}
\label{eq_inv_sc_2_-1}
(f^+,w^+)_{(\tilde H^{-1},H^1)(B\setminus\overline{D})}=0,
\end{equation}
for any $w^+\in W(B)$.  Consider the following transmission problem,
\begin{equation}
\label{eq_inv_sc_2}
\begin{aligned}
&(\mathcal{L}_{-A^+,q^+}-k^2)\overline{v^+}=\overline{f^+}\quad \textrm{in}\quad \R^n\setminus\overline{D},\\
&(\mathcal{L}_{-A^-,q^-}-k^2)\overline{v^-}=0\quad\textrm{in}\quad D,\\
&\overline{v^+}=b^{-1}\overline{v^-}\quad\textrm{on}\quad \p D,\\
&(\p_\nu-iA^+\cdot\nu)\overline{v^+}=a^{-1}(\p_\nu-iA^-\cdot\nu)\overline{v^-}+ca^{-1}b^{-1}\overline{v^-}\quad\textrm{on}\quad \p D,\\
&(\p_r-ik)\overline{v^+}=o(r^{-(n-1)/2}), \quad\textrm{as} \quad r=|x|\to\infty. 
\end{aligned}
\end{equation}
It follows from Proposition \ref{prop_8_2} that  the problem \eqref{eq_inv_sc_2} has a unique solution $(\overline{v^+}, \overline{v^-})\in H^1_{\textrm{loc}}(\R^n\setminus\overline{D})\times H^1(D)$. 

Let $u^+\in W_{\textrm{sc}}$. Then by the second Green formula \eqref{eq_first_green_adjoint}, we get
\begin{equation}
\label{eq_inv_sc_3}
\begin{aligned}
((\mathcal{L}_{A^+,q^+}&-k^2)u^+, v^+)_{L^2(B\setminus\overline{D})}- ((\p_\nu+iA^+\cdot\nu)u^+,v^+)_{(H^{-1/2},H^{1/2})(\p D)}\\
&+ (\p_\nu u^+,v^+)_{(H^{-1/2},H^{1/2})(\p B)}=(u^+,f^+)_{(H^1,\tilde H^1)(B\setminus\overline{D})}\\
&- (u^+,(\p_\nu+iA^+\cdot\nu)v^+)_{(H^{1/2},H^{-1/2})(\p D)}+ (u^+,\p_\nu v^+)_{(H^{1/2},H^{-1/2})(\p B)},
\end{aligned}
\end{equation}
and 
\begin{equation}
\label{eq_inv_sc_4}
\begin{aligned}
((\mathcal{L}_{A^-,q^-}&-k^2)u^-, v^-)_{L^2(D)}+ ((\p_\nu+iA^-\cdot\nu)u^-,v^-)_{(H^{-1/2},H^{1/2})(\p D)}\\
&=(u^-, (\mathcal{L}_{A^-,q^-}-k^2) v^-)_{L^2(D)} +(u^-,(\p_\nu+iA^-\cdot\nu)v^-)_{(H^{1/2},H^{-1/2})(\p D)}.
\end{aligned}
\end{equation}
Adding \eqref{eq_inv_sc_3} and \eqref{eq_inv_sc_4}, and using the transmission conditions in \eqref{eq_8_10},  \eqref{eq_inv_sc_2}, and \eqref{eq_inv_sc_2_0}, we get 
\begin{equation}
\label{eq_inv_sc_5}
(\p_\nu u^+,v^+)_{(H^{-1/2},H^{1/2})(\p B)}- (u^+,\p_\nu v^+)_{(H^{1/2},H^{-1/2})(\p B)}=(u^+,f^+)_{(H^1,\tilde H^1)(B\setminus\overline{D})}=0. 
\end{equation}

We shall next show that $v^+=0$ outside the ball $B$. Indeed, $v^+$ and $u_0^+(x;\xi,k)=u^+(x;\xi,k)-e^{ikx\cdot\xi}$ satisfy the Helmholtz equation outside the ball $B$,
\begin{equation}
\label{eq_inv_sc_6}
(-\Delta-k^2)v^+=0\quad\textrm{in}\quad \R^n\setminus\overline{B},\quad  (-\Delta-k^2)u_0^+=0\quad\textrm{in}\quad \R^n\setminus\overline{B}.
\end{equation}
Let $R>0$ be so large that $B\subset B_R$, where $B_R$ is an open ball of radius $R$, centered at the origin. Multiplying the second equation in \eqref{eq_inv_sc_6}
by $\overline{v^+}$, and integrating over $B_R\setminus\overline{B}$, we obtain that
\begin{equation}
\label{eq_inv_sc_7}
\begin{aligned}
(u_0^+,&\p_\nu v^+)_{(H^{1/2},H^{-1/2})(\p B)}-(\p_\nu u^+_0,v^+)_{(H^{1/2},H^{-1/2})(\p B)}\\
&=(u_0^+,\p_\nu v^+)_{(H^{1/2},H^{-1/2})(\p B_R)}-(\p_\nu u^+_0,v^+)_{(H^{1/2},H^{-1/2})(\p B_R)}.
\end{aligned}
\end{equation}
As $u_0^+$ and $\overline{v^+}$ satisfy the Sommerfeld radiation condition and the Helmholtz equation outside $B$, cf. \eqref{eq_inv_sc_6},  they 
 have the following asymptotic behaviors 
\begin{equation}
\label{eq_inv_sc_7_1}
\begin{aligned}
&u^+_0(x;\xi,k)=a(\theta,\xi,k)\frac{e^{ik|x|}}{|x|^{(n-1)/2}}+\mathcal{O}\bigg(\frac{1}{|x|^{(n+1)/2}}\bigg),\quad \theta=\frac{x}{|x|},\\
&\overline{v^+}(x)=b(\theta,k)\frac{e^{ik|x|}}{|x|^{(n-1)/2}}+\mathcal{O}\bigg(\frac{1}{|x|^{(n+1)/2}}\bigg),
\end{aligned}
\end{equation}
as $|x|\to \infty$, see \cite{Col_Kress_book, Odell_2006}.  Substituting \eqref{eq_inv_sc_7_1} into the right hand side of \eqref{eq_inv_sc_7}, we get
\begin{equation}
\label{eq_inv_sc_8}
\begin{aligned}
(u_0^+,\p_\nu v^+)_{(H^{1/2},H^{-1/2})(\p B)}-(\p_\nu u^+_0,v^+)_{(H^{1/2},H^{-1/2})(\p B)}\\
=\int_{|x|=R} \mathcal{O}(|x|^{-n}) dS_R=\mathcal{O}(R^{-1}) \int_{|x|=1} dS_1,
\end{aligned}
\end{equation}
where $dS_R$ and $dS_1$ are the surface measures on the spheres $|x|=R$ and $|x|=1$, respectively. Letting $R\to \infty$ in \eqref{eq_inv_sc_8}, we conclude that 
\begin{equation}
\label{eq_inv_sc_9}
(u_0^+,\p_\nu v^+)_{(H^{1/2},H^{-1/2})(\p B)}-(\p_\nu u^+_0,v^+)_{(H^{1/2},H^{-1/2})(\p B)}=0.
 \end{equation}
 Substituting $u^+=e^{ikx\cdot\xi}+u^+_0$ into \eqref{eq_inv_sc_5}, and using \eqref{eq_inv_sc_9}, we get
 \begin{equation}
\label{eq_inv_sc_10}
 (e^{ikx\cdot\xi},\p_\nu v^+)_{(H^{1/2},H^{-1/2})(\p B)}-(\p_\nu e^{ikx\cdot\xi},v^+)_{(H^{1/2},H^{-1/2})(\p B)}=0.
 \end{equation}
 Recalling that $k^2$ is not an eigenvalue of the Dirichlet Laplacian in $B$, let $v_0\in H^1(B)$ be the unique solution to the problem,
 \begin{align*}
 (-\Delta-k^2)v_0&=0\quad \textrm{in}\quad B,\\
 v_0&=v^+\quad \textrm{on}\quad\p B. 
 \end{align*} 
 As $e^{ikx\cdot\xi}$ also satisfies the Helmholtz equation, integrating over $B$, we have
  \begin{equation}
\label{eq_inv_sc_11}
(\p_\nu e^{ikx\cdot\xi},v_0)_{(H^{1/2},H^{-1/2})(\p B)}= (e^{ikx\cdot\xi},\p_\nu v_0)_{(H^{1/2},H^{-1/2})(\p B)}.
 \end{equation}
 Combining \eqref{eq_inv_sc_10} and \eqref{eq_inv_sc_11}, we obtain that 
  \begin{equation}
\label{eq_inv_sc_12}
 (e^{ikx\cdot\xi}, \p_\nu v^+-\p_\nu v_0)_{(H^{1/2},H^{-1/2})(\p B)}=0,
 \end{equation}
 for all $\xi\in\mathbb{S}^{n-1}$. 
 
 Since $k^2$ is not a Dirichlet eigenvalue of the Laplacian in $B$, $\textrm{span}\{e^{ikx\cdot\xi}|_{\p B}: \xi\in\mathbb{S}^{n-1} \}$ is dense in $H^{1/2}(\p B)$, see \cite[Lemma 3.2]{Isakov_1990} and \cite[Lemma 3.5.3]{Isakov_book_source}.  
 Hence, it follows from \eqref{eq_inv_sc_12} that 
 \[
  \p_\nu v^+=\p_\nu v_0\quad \textrm{on}\quad \p B. 
 \]
 Let us define
 \[
 \tilde v=\begin{cases} v_0 & \quad \textrm{in}\quad B,\\
 v^+& \quad \textrm{in}\quad \R^n\setminus\overline{B}. 
 \end{cases}
 \]
 Then $\tilde v\in H^1_{\textrm{loc}}(\R^n)$ satisfies the Helmholtz equation,
 \[
 (-\Delta-k^2)\tilde v=0\quad \textrm{in}\quad \R^n,
 \]
  as well as the Sommerfeld radiation condition. Hence, $\tilde v=0$ in $\R^n$, see \cite{Col_Kress_book, Odell_2006}, and therefore $v^+=0$ in $ \R^n\setminus\overline{B}$. 
 
Now let $w^+\in W(B)$. Arguing as in the derivation of \eqref{eq_inv_sc_5} for $(w^+,w^-)$, satisfying \eqref{eq_inv_sc_1},  instead of $(u^+,u^-)$, we get 
\[
(w^+,f^+)_{(H^1,\tilde H^1)(B\setminus\overline{D})}=(\p_\nu w^+,v^+)_{(H^{-1/2},H^{1/2})(\p B)}- (w^+,\p_\nu v^+)_{(H^{1/2},H^{-1/2})(\p B)}=0. 
\]
 which shows \eqref{eq_inv_sc_2_-1}. The proof is complete. 
\end{proof}

Let $k>0$ be fixed and let $B\subset \R^n$ be an open ball such that $D_1,D_2\subset\subset B$, $\supp(A^+), \supp(q^+)\subset B$, both homogeneous transmission problems in $B$,
\begin{equation}
\label{eq_inv_sc_14}
\begin{aligned}
&(\mathcal{L}_{A^+,q^+}-k^2)w^+=0\quad \textrm{in}\quad B\setminus\overline{D_j},\\
&(\mathcal{L}_{A_j^-,q^-_j}-k^2)w^-=0\quad\textrm{in}\quad D_j,\\
&w^+=a_jw^-\quad\textrm{on}\quad \p D_j,\\
&(\p_\nu+iA^+\cdot\nu)w^+=b_j(\p_\nu+iA_j^-\cdot\nu)w^-+c_jw^-\quad\textrm{on}\quad \p D_j,
\end{aligned}
\end{equation}
with
\[
w^+=0\quad\textrm{on}\quad \p B,
\]
have only the trivial solutions, $j=1,2$, and furthermore, $k^2$ is not an eigenvalue of the Dirichlet Laplacian in $B$. 
Using Lemma \ref{lem_inv_sc_density} together with  Rellich's uniqueness theorem, we conclude, similarly to \cite{Isakov_2008}, that the sets of the Cauchy data for the transmission problems \eqref{eq_inv_sc_14} in the ball $B$ agree,
\[
\mathcal{C}_{\p B}(A^+,q^+,A_1^-,q_1^-,a_1,b_1,c_1;D_1)=\mathcal{C}_{\p B}(A^+,q^+,A_2^-,q_2^-,a_2,b_2,c_2;D_2). 
\]
Applying now Theorem \ref{thm_main_sa} to the transmission problems  \eqref{eq_inv_sc_14} in the ball $B$,  we get the claim of Theorem \ref{thm_main_2}. The proof of Theorem \ref{thm_main_2} is complete.

\begin{appendix}

\section{Unique continuation for elliptic second order operators from Lipschitz boundary}

\label{ap_UC}

The discussion in this section is essentially well-known and is presented here for the convenience of the reader, see also \cite{Behrndt_Roh}.

Let $\Omega\subset\R^n$, $n\ge 2$, be a bounded domain with Lipschitz boundary.  Let 
\begin{equation}
\label{eq_UC_2}
P=\sum_{j,k=1}^n \p_{x_j} a_{jk}\p_{x_k} + \sum_{j=1}^n b_j \p_{x_j}+c\quad\textrm{in}\quad \Omega,
\end{equation}
where $(a_{jk})_{1\le j,k\le n}$ is a real symmetric matrix with $a_{jk}$ being  Lipschitz continuous on $\overline{\Omega}$, such that 
\begin{equation}
\label{eq_UC_1}
\sum_{j,k=1}^n a_{jk}(x)\xi_j\xi_k\ge \eta |\xi|^2,\quad \textrm{for all}\quad x\in\overline{\Omega},\quad \xi\in \R^n, \quad \eta>0,
\end{equation}
and 
$b_j, c\in L^\infty(\Omega, \C)$. Let $u\in H^1(\Omega)$ satisfy $Pu=0$ in $\Omega$ and let $\nu$ be the almost everywhere defined outer unit normal to $\p \Omega$. Then the conormal derivative of $u$ is defined by
\[
\mathcal{B}_{\nu}u=\sum_{j,k=1}^n \nu_j (a_{jk}\p_{x_k} u)|_{\p \Omega}\in H^{-1/2}(\p \Omega).
\]

\begin{prop}
\label{prop_UC}
Let $\Omega\subset\R^n$, $n\ge 2$, be a bounded domain with Lipschitz boundary and let $\gamma\subset\p \Omega$ be an open non-empty subset of  the boundary of $\Omega$. 
If $u\in H^1(\Omega)$ satisfies $Pu=0$ in $\Omega$, and is such that  $u|_{\gamma}=\mathcal{B}_\nu u|_{\gamma}=0$, then $u\equiv 0$ in $\Omega$. 
\end{prop}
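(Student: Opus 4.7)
My plan is to reduce the unique continuation from the Lipschitz part of the boundary to the classical interior unique continuation principle of Aronszajn/Hörmander for second order elliptic operators with Lipschitz principal coefficients (which applies because $a_{jk}\in \mathrm{Lip}(\overline\Omega)$ and $b_j,c\in L^\infty$). The standard device is to extend $u$ by zero across $\gamma$ into a slightly larger domain, in such a way that both the $H^1$-regularity and the equation $Pu=0$ are preserved across $\gamma$; the two vanishing conditions $u|_\gamma=0$ and $\mathcal{B}_\nu u|_\gamma=0$ are precisely what make this possible.

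Concretely, I would fix a point $x_0\in\gamma$ and choose a small open ball $B$ centered at $x_0$ such that $\p\Omega\cap B\subset\gamma$ is a Lipschitz graph. Set $\tilde\Omega:=\Omega\cup B$, which is still a bounded domain with Lipschitz boundary, and define $\tilde u\in L^2(\tilde\Omega)$ to be $u$ on $\Omega$ and $0$ on $\tilde\Omega\setminus\overline\Omega$. The condition $u|_{\gamma\cap B}=0$ (in $H^{1/2}$) ensures that the traces on either side of $\gamma\cap B$ agree, so by the standard characterization of $H^1_0$-extensions across a Lipschitz interface one gets $\tilde u\in H^1(\tilde\Omega)$, with $\nabla\tilde u=(\nabla u)\chi_\Omega$.

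Next I would verify that $P\tilde u=0$ in $\tilde\Omega$ in the distributional sense. For any $\varphi\in C_0^\infty(\tilde\Omega)$, the formal adjoint $P^*$ applied as a test and two integrations by parts against $u$ on $\Omega$ (using that $\tilde u$ vanishes on $\tilde\Omega\setminus\overline\Omega$, that $\varphi$ has compact support in $\tilde\Omega$ so its restriction to $\p\Omega$ is supported in $\gamma\cap B$, and that $a_{jk}$ are Lipschitz while $b_j,c\in L^\infty$) produce the bulk term $(Pu,\varphi)_{L^2(\Omega)}=0$ plus boundary contributions on $\gamma\cap B$ of the form $\langle\mathcal{B}_\nu u,\varphi\rangle_{H^{-1/2},H^{1/2}(\gamma\cap B)}-\langle u,\mathcal{B}_\nu^*\varphi+(b\cdot\nu)\varphi\rangle$. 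Both boundary contributions vanish because $u|_\gamma=0$ and $\mathcal{B}_\nu u|_\gamma=0$, so $(\tilde u,P^*\varphi)_{L^2(\tilde\Omega)}=0$, i.e., $P\tilde u=0$ in $\tilde\Omega$.

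Now $\tilde u\in H^1(\tilde\Omega)$ vanishes identically on the non-empty open set $B\setminus\overline\Omega\subset\tilde\Omega$ and satisfies an elliptic equation with Lipschitz principal coefficients and bounded lower order terms. The classical strong unique continuation property (Aronszajn, extended by Hörmander to $\mathrm{Lip}$ principal symbol; see \cite[Chapter 17]{Horm_book_3}) then forces $\tilde u\equiv 0$ on the connected component of $\tilde\Omega$ containing $B\setminus\overline\Omega$, which is all of $\tilde\Omega$ since $\Omega$ and $B$ are connected and overlap. In particular $u\equiv 0$ in $\Omega$. The main technical point, and the only step where some care is really required, is the weak-equation computation in the extension argument: one must handle the $\mathcal{B}_\nu$ trace rigorously via \eqref{eq_trace_lem} (applied to the principal part of $P$), because $u\in H^1(\Omega)$ only a priori, and ensure that the lower order perturbations $b_j\p_{x_j}u+cu$ on $\Omega$ extended by zero genuinely produce no $\p\Omega$-supported distributions in $\tilde\Omega$; this is where the Lipschitz regularity of the interface $\gamma\cap B$ and of the coefficients $a_{jk}$ is used in an essential way.
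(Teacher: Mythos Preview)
Your approach is essentially the same as the paper's: extend $u$ by zero across $\gamma$ into an enlarged domain, check that the extension is in $H^1$ and still satisfies the equation in the weak sense, and invoke the classical interior unique continuation principle from \cite[Chapter 17]{Horm_book_3}. The paper carries this out with a global enlargement $\tilde\Omega\supset\Omega$ such that $\p\Omega\setminus\gamma\subset\p\tilde\Omega$, rather than your local ball $B$ around a single point of $\gamma$, but this is a cosmetic difference.

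Two small remarks. First, you silently write ``$P\tilde u=0$ in $\tilde\Omega$'' without saying what $P$ means on $\tilde\Omega\setminus\overline\Omega$; you should note (as the paper does) that the Lipschitz coefficients $a_{jk}$ extend to Lipschitz functions on $\tilde\Omega$ and $b_j,c$ to $L^\infty$ functions, so that an elliptic extension $\tilde P$ is available. Second, your weak-equation computation is more elaborate than necessary: since $P$ is already in divergence form, a single integration by parts of the principal part suffices and produces only the boundary term $-\langle\mathcal{B}_\nu u,\varphi\rangle_{H^{-1/2},H^{1/2}(\p\Omega)}$, which vanishes because $\varphi|_{\p\Omega}$ is supported in $\gamma$ and $\mathcal{B}_\nu u|_\gamma=0$; there is no need for a second integration by parts or a separate term involving $u|_\gamma$.
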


\begin{proof}
Let $\tilde \Omega\supset \Omega$ be a bounded domain with Lipschitz boundary such that $\p \Omega\setminus \gamma\subset \p \tilde \Omega$ and $\tilde \Omega\setminus\overline{\Omega}$ is non-empty. 
Let us extend $a_{jk}$, $b_j$ and $c$ to the whole of $\tilde \Omega$ so that the extension $(\tilde a_{jk})_{1\le j,k\le n}$ is a real symmetric matrix with $\tilde a_{jk}$ being  Lipschitz continuous on $\tilde \Omega$ and satisfying \eqref{eq_UC_1} on $\tilde \Omega$, $\tilde b_j,\tilde c \in L^\infty(\tilde \Omega, \C)$. We shall denote by $\tilde P$ the corresponding elliptic differential operator on $\tilde \Omega$, defined as in \eqref{eq_UC_2}. 

Let $u\in H^1(\Omega)$ satisfy $Pu=0$ in $\Omega$ with the Cauchy data $u|_{\gamma}=\mathcal{B}_\nu u|_{\gamma}=0$.  Then we extend $u$ by zero on $\tilde \Omega\setminus\Omega$ and denote by $\tilde u$ the extension.  Since $u|_{\gamma}=0$ and $\gamma$ is Lipschitz,  we conclude that  $\tilde u\in H^1(\tilde \Omega)$.  

Let us show that $\tilde P\tilde u=0$ in $\tilde \Omega$ in the sense of distribution theory. Indeed, let $\tilde \varphi\in C^\infty_0(\tilde \Omega)$. Then we have
\begin{align*}
(\tilde P\tilde u) (\tilde \varphi)&=\int_{\tilde \Omega}\bigg(-\sum_{j,k=1}^n \tilde  a_{jk}\p_{x_k}\tilde u \p_{x_j} \tilde  \varphi + \sum_{j=1}^n (\tilde b_j \p_{x_j} \tilde u)\tilde \varphi+  \tilde c \tilde u \tilde \varphi\bigg)dx\\
&=\int_{\Omega}\bigg(-\sum_{j,k=1}^n   a_{jk}\p_{x_k} u \p_{x_j}  \varphi + \sum_{j=1}^n ( b_j \p_{x_j}  u) \varphi+   c u \varphi \bigg)dx,
\end{align*}
where $\varphi=\tilde\varphi|_{\Omega}$.  Since the boundary of $\Omega$ is Lipschitz, we can integrate by parts and get
\[
(\tilde P\tilde u) (\tilde \varphi)=\int_{\Omega}(P  u) \varphi dx- (\mathcal{B}_\nu u, \varphi)_{H^{-1/2}(\p \Omega),H^{1/2}(\p \Omega)}=0.
\]
Here we have used the fact that $\mathcal{B}_\nu u|_{\gamma}=0$ and $\varphi|_{\p\Omega\setminus\gamma}=0$. 

As $\tilde u\in H^1(\tilde \Omega)$ satisfies $\tilde P\tilde u=0$ in $\tilde \Omega$ and  $\tilde u=0$ on $\tilde\Omega\setminus\overline{\Omega}$, by the classical unique continuation result $\tilde u$ vanishes identically on $\tilde \Omega$, see \cite[Chapter 17]{Horm_book_3}.  The proof is complete.

\end{proof}

\section{Fundamental solution for a magnetic Schr\"odinger operator}

\label{appendix_fundamental_sol}

Let $\Omega\subset\R^n$, $n\ge 3$, be a bounded domain.  Consider a magnetic Schr\"odinger operator 
\[
\mathcal{L}_{A,q}(x,D)=\sum_{j=1}^n(D_j+A_j(x))^2+q(x)=-\Delta-2iA(x)\cdot\nabla-i(\nabla\cdot A(x))+A(x)^2+q(x),
\]
where $A\in W^{1,\infty}(\Omega,\R^n)$, $q\in L^\infty(\Omega,\R)$, and $D=-i\nabla$.  Let $G(x,y)$ be a fundamental solution of the operator $\mathcal{L}_{A,q}$, i.e. 
\[
\mathcal{L}_{A,q}(x,D_x)G(x,y)=\delta(x-y), \quad x,y\in \Omega. 
\]  
It is shown in   \cite[Theorem 17.1.1]{Horm_book_3}, \cite[Theorem 19, VIII]{Miranda_book}, \cite{Serov_2009}, that there exists a fundamental solution $G(x,y)$, which satisfies the following estimates,
\begin{equation}
\label{eq_fund_sol}
\begin{aligned}
|G(x,y)|&\le C|x-y|^{2-n},\\
|\nabla_xG(x,y)|&\le C|x-y|^{1-n},\quad C>0,
\end{aligned} 
\end{equation}
for all $x,y\in\Omega$.  Furthermore, $G(x,y)$ satisfies the following integral equation
\begin{equation}
\label{eq_integral}
G(x,y)=G_0(x,y)+\int_{\Omega} G_0(x,z)(2iA(z)\cdot\nabla_z G(z,y)-\tilde q(z)G(z,y))dz.
\end{equation}
Here $\tilde q=A^2-i\nabla\cdot A+ q$ and $G_0(x,y)$ is a fundamental solution of $-\Delta$, which is given by
\begin{equation}
\label{eq_G_0}
G_0(x,y)=\frac{1}{(n-2)\Upsilon_n |x-y|^{n-2}}, \quad \Upsilon_n=2\frac{\pi^{n/2}}{\Gamma(n/2)},
\end{equation}
 see \cite[p. 247]{McLean_book}. 

The following result describes more precisely the behavior of the fundamental solution $G(x,y)$ of the magnetic Schr\"odinger operator as $|x-y|\to 0$.  
 
\begin{prop}
\label{prop_assym}
 Let $A\in W^{1,\infty}(\Omega,\R^n)$ and $q\in L^\infty(\Omega,\R)$.  Then
\begin{equation}
\label{eq_prop_1}
G(x,y)-G_0(x,y)=\begin{cases} \mathcal{O}(\log\frac{1}{|x-y|}), & n=3,\\
\mathcal{O}(|x-y|^{3-n}), & n\ge 4,
\end{cases}
\end{equation} 
and  
\begin{equation}
\label{eq_prop_2}
\nabla_x(G(x,y)-G_0(x,y))=\mathcal{O}(|x-y|^{2-n}),\quad  n\ge 3,
\end{equation}
as $|x-y|\to 0$.
 
\end{prop}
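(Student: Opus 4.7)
The plan is to use the integral equation \eqref{eq_integral} and the a priori bounds \eqref{eq_fund_sol} to extract the refined asymptotics. Writing
\[
G(x,y)-G_0(x,y)=\int_{\Omega} G_0(x,z)\bigl(2iA(z)\cdot\nabla_z G(z,y)-\tilde q(z)G(z,y)\bigr)dz,
\]
and plugging in $|G_0(x,z)|\le C|x-z|^{2-n}$, $|G(z,y)|\le C|z-y|^{2-n}$, $|\nabla_z G(z,y)|\le C|z-y|^{1-n}$, together with the fact that $A\in W^{1,\infty}$ and $q\in L^{\infty}$ force the coefficients to be bounded on $\Omega$, the problem reduces to estimating two Riesz-type convolution integrals over the bounded set $\Omega$:
\[
I_1(x,y)=\int_{\Omega}\frac{dz}{|x-z|^{n-2}|z-y|^{n-1}},\qquad I_2(x,y)=\int_{\Omega}\frac{dz}{|x-z|^{n-2}|z-y|^{n-2}}.
\]

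The key technical input is the classical estimate (for $\Omega$ bounded and $\alpha,\beta>0$)
\[
\int_{\Omega}\frac{dz}{|x-z|^{\alpha}|z-y|^{\beta}}\;\le\; C\begin{cases} |x-y|^{n-\alpha-\beta}, & \alpha+\beta>n,\\[2pt] \log\frac{1}{|x-y|}, & \alpha+\beta=n,\\[2pt] \mathrm{const.}, & \alpha+\beta<n,\end{cases}
\]
which I would apply with $(\alpha,\beta)=(n-2,n-1)$ for $I_1$ and $(n-2,n-2)$ for $I_2$. A quick bookkeeping shows that $I_1$ dominates: for $n=3$ we have $\alpha+\beta=n$ giving a logarithm, while for $n\ge 4$ we get $|x-y|^{3-n}$; the contribution of $I_2$ is of strictly lower order in every dimension ($I_2$ is bounded for $n=3$, logarithmic for $n=4$, and $\mathcal{O}(|x-y|^{4-n})$ for $n\ge 5$). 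Combining these yields \eqref{eq_prop_1}.

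For the gradient estimate \eqref{eq_prop_2}, I would differentiate the integral representation in $x$. Formally
\[
\nabla_x\bigl(G(x,y)-G_0(x,y)\bigr)=\int_{\Omega}\nabla_x G_0(x,z)\bigl(2iA(z)\cdot\nabla_z G(z,y)-\tilde q(z)G(z,y)\bigr)dz,
\]
and since $|\nabla_x G_0(x,z)|\le C|x-z|^{1-n}$ the two resulting integrals have exponents $(\alpha,\beta)=(n-1,n-1)$ and $(n-1,n-2)$. In both cases $\alpha+\beta>n$, and the first (again dominant) is bounded by $C|x-y|^{2-n}$, matching \eqref{eq_prop_2}; the second is of lower order except for a harmless logarithm when $n=3$ which is absorbed into $\mathcal{O}(|x-y|^{2-n})$.

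The main subtlety — and the one step that deserves care rather than a wave of the hand — is justifying differentiation under the integral when $\nabla_x G_0(x,z)$ has a non-integrable singularity at $z=x$ in the sense needed to commute with $\nabla_x$. The standard remedy is to split $\Omega=B(x,\rho)\cup(\Omega\setminus B(x,\rho))$ with a small radius $\rho\sim|x-y|/2$, differentiate freely on the outer region, and on the inner region integrate by parts in $z$ (moving the $x$-derivative onto $z$-derivative of $G_0$ modulo boundary terms on $\partial B(x,\rho)$) or else apply $\nabla_x$ to a mollified kernel and pass to the limit. One then checks that the boundary contribution on $\partial B(x,\rho)$ is of size $\rho\cdot\rho^{1-n}\cdot|x-y|^{1-n}=\mathcal{O}(|x-y|^{2-n})$, which is consistent with the claimed bound. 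Once the differentiation is legitimized, the Riesz-integral bookkeeping above concludes the proof.
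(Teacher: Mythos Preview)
Your proposal is correct and follows essentially the same route as the paper: both use the integral equation \eqref{eq_integral} together with the bounds \eqref{eq_fund_sol} to reduce \eqref{eq_prop_1} to the Riesz-type convolution $\int_\Omega |x-z|^{2-n}|z-y|^{1-n}\,dz$, which the paper estimates by an explicit three-region split ($|z|\le |x-y|/2$, $|x-y|/2\le |z|\le 2|x-y|$, $|z|\ge 2|x-y|$) that is exactly the standard proof of the ``classical estimate'' you invoke. For \eqref{eq_prop_2} the paper simply writes ``can be proved in a similar way,'' so your discussion of differentiating under the integral is in fact more detailed than what the paper provides.
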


In the case when $A\in C^\infty(\overline{\Omega}, \R^n)$ and $q\in C^\infty(\overline{\Omega},\R)$, Proposition \ref{prop_assym}  is well-known, see  \cite[Theorem 6.3]{McLean_book}. 
Since we did not find a reference for this result in the case when $A\in W^{1,\infty}(\Omega,\R^n)$ and $q\in L^\infty(\Omega,\R)$, we shall sketch the proof here. 

\begin{proof}
It follows from \eqref{eq_fund_sol}, \eqref{eq_integral}, and \eqref{eq_G_0} that
\begin{align*}
|G(x,y)-G_0(x,y)|&\le C\int_\Omega |x-z|^{2-n}(2|A(z)||z-y|^{1-n}+|\tilde q(z)||z-y|^{2-n})dz\\
&\le C \int_\Omega |x-z|^{2-n}|z-y|^{1-n}dz,\quad x,y\in \Omega,
\end{align*}
where we use the fact that $|z-y|\le C$ for $z,y\in \Omega$. 
Let $K>0$ be sufficiently large. Then
\[
\int_\Omega |x-z|^{2-n}|z-y|^{1-n}dz\le \int_{|z|\le K} |z|^{2-n}|z+x-y|^{1-n}dz:=I_1+I_2+I_3, 
\]
where
\begin{align*}
I_1:=&  \int_{|z|\le \frac{|x-y|}{2}} |z|^{2-n}|z+x-y|^{1-n}dz,\\
I_2:= &\int_{\frac{|x-y|}{2}\le |z|\le 2|x-y|} |z|^{2-n}|z+x-y|^{1-n}dz,\\
I_3:= & \int_{2|x-y| \le |z|\le K} |z|^{2-n}|z+x-y|^{1-n}dz. 
\end{align*}
We have
\begin{align*}
I_1\le \bigg(\frac{|x-y|}{2}\bigg)^{1-n} \int_{|z|\le \frac{|x-y|}{2}} |z|^{2-n}dz \le \mathcal{O}(|x-y|^{3-n}),
\end{align*}
\begin{align*}
I_2&\le \bigg(\frac{|x-y|}{2}\bigg)^{2-n} \int_{\frac{|x-y|}{2}\le |z|\le 2|x-y|} |z+x-y|^{1-n}dz \\
&\le 
\bigg(\frac{|x-y|}{2}\bigg)^{2-n} \int_{|\omega|\le 3|x-y|}|\omega|^{1-n}d\omega \le \mathcal{O}(|x-y|^{3-n}),
\end{align*}
and
\begin{align*}
I_3&\le 2^{n-1} \int_{2|x-y| \le |z|\le K} |z|^{3-2n}dz\le C\int_{2|x-y| \le r\le K} r^{2-n}dr\\
&= \begin{cases} \mathcal{O}(\log\frac{1}{|x-y|}),& n=3,\\
\mathcal{O}(|x-y|^{3-n}),& n\ge 4.
\end{cases}
\end{align*}
Thus, \eqref{eq_prop_1} follows. The estimate \eqref{eq_prop_2} can be proved in a similar way. The proof is complete.

\end{proof}

\section{Auxiliary estimates for some integrals} 

\label{appendix_integrals}

The purpose of this appendix is to provide a brief discussion of asymptotic bounds of some volume and surface integrals, required in the main text. We shall follow the methods of \cite[pp. 131--133]{Isakov_book} and \cite[Chapter 5]{Salo_2004}, and the following discussion is mainly for the completeness and convenience of the reader.

Let $\Omega\subset\R^n$, $n\ge 3$, be a bounded domain with Lipschitz boundary, and let $x_0\in \p \Omega$. We may assume that there is the unit outer normal $\nu(x_0)$ at the point $x_0$ to $\p \Omega$. 
We may choose coordinates $x=(x',x_n)$ isometric to the standard ones so that $x_0=0$ and for some $r_0>0$, we have 
\begin{align*}
\Omega\cap B(0,r_0)&=\{x\in B(0,r_0):x_n>\varphi(x'), x'\in U\},\\
\p \Omega\cap B(0,r_0)&=\{x\in B(0,r_0):x_n=\varphi(x'), x'\in U\},
\end{align*}
 where $\varphi:\R^n\to \R$ is a Lipschitz function and $U\subset\R^{n-1}$ is an open neighborhood of $0$.  Furthermore, 
\begin{equation}
\label{eq_D_nu}
\nu(0)=\frac{(\nabla_{x'}\varphi(0),-1)}{\sqrt{1+|\nabla_{x'}\varphi(0)|^2}},
\end{equation}
and we assume as we may that $\nu(0)=-e_n=-(0,\dots,0,1)$.

Let $0<\varepsilon<1$ and $y\in \R^n$. Then we define the cones
\begin{align*}
C_\varepsilon(y)&=\bigg\{x\in\R^n: \bigg(\frac{x-y}{|x-y|}\bigg)_n>1-\varepsilon\bigg\},\\
 C_\varepsilon^-(y)&=\bigg\{x\in\R^n: \bigg(\frac{x-y}{|x-y|}\bigg)_n<-(1-\varepsilon)\bigg\}. 
\end{align*}

Since $\p \Omega$ is Lipschitz, there is $\varepsilon_0>0$ small such that 
\[
C_{\varepsilon_0}(0)\cap B(0,r_0)\subset \Omega\textrm{ and } C_{\varepsilon_0}^-(0)\cap B(0,r_0)\subset \R^n\setminus\overline{\Omega}. 
\]

We define 
\[
x_\delta=x_0+\delta\nu(x_0)=-\delta e_n,
\]
for $\delta>0$ small parameter such that there is a constant $c=c(\varepsilon_0)>0$ small fixed so that 
\begin{equation}
\label{eq_D_1}
B(x_\delta, c\delta)\subset \R^n\setminus\overline{\Omega}.
\end{equation}
Furthermore, it is easy to see that there is $\varepsilon_1=\varepsilon_1(\varepsilon_0)>0$ small and $C=C(\varepsilon_0)>0$ large enough fixed so that 
\[
C_{\varepsilon_1}(x_\delta)\cap\{x\in\R^n:|x-x_\delta|>C\delta\}\subset C_{\varepsilon_0}(0). 
\]
We define the set
\begin{equation}
\label{eq_D_2}
E_\delta=C_{\varepsilon_1}(x_\delta)\cap\{x\in\R^n:C\delta<|x-x_\delta|<r_0\}\subset \Omega. 
\end{equation}

\begin{prop} Let $k>0$. Then we have, as $\delta\to 0$,
\begin{equation}
\label{eq_D_3}
\int_\Omega \frac{dx}{|x-x_\delta|^k} \le 
\begin{cases} C, & k<n,\\
C\log\frac{1}{\delta}, & k=n,\\
C\delta^{n-k}, & k>n,
\end{cases}\quad C>0,
\end{equation}
and 
\begin{equation}
\label{eq_D_4}
\int_\Omega \frac{dx}{|x-x_\delta|^k} \ge 
\begin{cases} \frac{1}{C}, & k<n,\\
\frac{1}{C}\log\frac{1}{\delta}, & k=n,\\
\frac{1}{C}\delta^{n-k}, & k>n,
\end{cases}\quad C>0.
\end{equation}

\end{prop}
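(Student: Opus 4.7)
The strategy is to reduce both bounds to elementary one-dimensional integrals by passing to polar coordinates centered at the pole $x_\delta$, exploiting the Lipschitz geometry of $\partial\Omega$ at $x_0$ twice: once to obtain an annulus in $\R^n\setminus\overline\Omega$ surrounding $x_\delta$ (for the upper bound) and once to find a truncated cone inside $\Omega$ with vertex $x_\delta$ (for the lower bound).

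For the upper bound, I would first fix $R>0$ large enough that $\Omega\subset B(x_\delta,R)$ uniformly in $\delta$, which is possible because $\Omega$ is bounded and $x_\delta\to x_0\in\partial\Omega$. By \eqref{eq_D_1}, $B(x_\delta,c\delta)\cap\Omega=\emptyset$, so
\[
\int_\Omega\frac{dx}{|x-x_\delta|^k}\le \int_{B(x_\delta,R)\setminus B(x_\delta,c\delta)}\frac{dx}{|x-x_\delta|^k}= |\mathbb{S}^{n-1}|\int_{c\delta}^R r^{n-1-k}\,dr,
\]
and the three cases $k<n$, $k=n$, $k>n$ for the resulting one-dimensional integral yield directly the asymptotics claimed in \eqref{eq_D_3}.

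For the lower bound I would use the set $E_\delta\subset\Omega$ defined in \eqref{eq_D_2}. In polar coordinates $x=x_\delta+r\omega$, $r>0$, $\omega\in\mathbb{S}^{n-1}$, the set $E_\delta$ is exactly $\{(r,\omega):C\delta<r<r_0,\ \omega_n>1-\varepsilon_1\}$, so that
\[
\int_\Omega\frac{dx}{|x-x_\delta|^k}\ge \int_{E_\delta}\frac{dx}{|x-x_\delta|^k}= |S_{\varepsilon_1}|\int_{C\delta}^{r_0} r^{n-1-k}\,dr,
\]
where $S_{\varepsilon_1}=\{\omega\in\mathbb{S}^{n-1}:\omega_n>1-\varepsilon_1\}$ has strictly positive and $\delta$-independent surface measure. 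Splitting again into $k<n$, $k=n$, $k>n$ gives \eqref{eq_D_4}: in the first case the integral tends to a positive limit as $\delta\to 0$, in the second it behaves like $\log(r_0/(C\delta))\sim\log(1/\delta)$, and in the third the term $(C\delta)^{n-k}$ dominates for $\delta$ small, producing the required $\delta^{n-k}$.

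There is no real obstacle here; the only point requiring care is that the constants $c$, $C$, $\varepsilon_1$, $r_0$ and $R$ entering the construction of $B(x_\delta,c\delta)$ and $E_\delta$ depend only on the Lipschitz character of $\partial\Omega$ near $x_0$ (through the angle $\varepsilon_0$), and not on $\delta$, so that the asymptotic constants in \eqref{eq_D_3} and \eqref{eq_D_4} are genuinely uniform as $\delta\to 0$. Both bounds are then sharp up to constants, matching in each of the three regimes.
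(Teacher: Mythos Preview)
Your proposal is correct and follows essentially the same argument as the paper: both bounds are obtained by passing to polar coordinates centered at $x_\delta$, using the inclusion $\Omega\subset B(x_\delta,R)\setminus B(x_\delta,c\delta)$ for the upper bound and the truncated cone $E_\delta\subset\Omega$ for the lower bound, then evaluating the resulting one-dimensional radial integral in each of the three regimes.
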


\begin{proof}
It follows from \eqref{eq_D_1} that for any $x\in \Omega$, $|x-x_\delta|\ge c\delta$. Let $R>0$ be independent of $\delta$ such that $\overline{\Omega}\subset B(x_\delta, R)$. Then we get
\[
\int_\Omega \frac{dx}{|x-x_\delta|^k} \le \int_{c\delta\le |x-x_\delta|\le R}\frac{dx}{|x-x_\delta|^k}= C\int_{c\delta}^R r^{n-k-1}dr,
\]
which shows \eqref{eq_D_3}.

Using \eqref{eq_D_2} and polar coordinates $x-x_\delta=r\omega$, $r>0$, $\omega\in\mathbb{S}^{n-1}$, $dx=r^{n-1}drd\omega$,  we obtain that 
\[
\int_\Omega \frac{dx}{|x-x_\delta|^k} \ge \int_{E_\delta} \frac{dx}{|x-x_\delta|^k} =\int_{C\delta}^{r_0} r^{n-k-1}dr\int_{\omega_n>1-\varepsilon_1}d\omega=C\int_{C\delta}^{r_0} r^{n-k-1}dr,
\]
which proves \eqref{eq_D_4}. 

\end{proof}

We shall also need the following estimate, when $n=3$, 
 \begin{equation}
\label{eq_B_-1_2}
\int_\Omega \frac{1}{|x-x_\delta|^{2}}\log\frac{1}{|x-x_\delta|}dx \le C,
\end{equation}
as $\delta\to 0$.

As a consequence of the estimates \eqref{eq_fund_sol} and \eqref{eq_D_3}, we obtain the following result. 

\begin{cor} We have, as $\delta\to 0$,
\begin{equation}
\label{eq_B_0}
\|\nabla_xG(x,x_\delta)\|_{L^2(\Omega)}\le C\delta^{1-n/2},
\end{equation}
\begin{equation}
\label{eq_B_0_1}
\|G(\cdot,x_\delta)\|_{L^2(\Omega)}\le 
\begin{cases}
C,& n=3,\\
C\bigg(\log\frac{1}{\delta}\bigg)^{1/2}, & n=4,\\
 C\delta^{2-n/2}, & n\ge 5,
 \end{cases}.
\end{equation}
Thus, 
\begin{equation}
\label{eq_B_0_2}
\|G(\cdot,x_\delta)\|_{H^1(\Omega)}\le C\delta^{1-n/2}.
\end{equation}

\end{cor}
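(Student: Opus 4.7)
The plan is to apply the pointwise bounds \eqref{eq_fund_sol} on $G$ and $\nabla_x G$ directly, squaring them to reduce the $L^2$--norms to integrals of negative powers of $|x-x_\delta|$, and then invoking the asymptotic bound \eqref{eq_D_3} in each of the relevant cases. There is no real obstacle here; the corollary is a bookkeeping consequence of the two preceding lemmas, and the only point that requires any attention is to check that the exponent of $|x-x_\delta|^{-k}$ arising in each integrand lies in the right regime ($k<n$, $k=n$, or $k>n$) to select the correct case of \eqref{eq_D_3}.

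First I would estimate $\|\nabla_x G(\cdot,x_\delta)\|_{L^2(\Omega)}$. From \eqref{eq_fund_sol} we have $|\nabla_x G(x,x_\delta)|^2 \le C|x-x_\delta|^{2(1-n)}=C|x-x_\delta|^{-(2n-2)}$. Since $2n-2>n$ for every $n\ge 3$, the third case of \eqref{eq_D_3} applies with $k=2n-2$, yielding
\[
\int_\Omega |\nabla_x G(x,x_\delta)|^2\,dx \le C\delta^{n-(2n-2)}=C\delta^{2-n},
\]
which gives \eqref{eq_B_0} after taking square roots.

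Next I would estimate $\|G(\cdot,x_\delta)\|_{L^2(\Omega)}$ by the same method. Squaring the first inequality of \eqref{eq_fund_sol} gives $|G(x,x_\delta)|^2\le C|x-x_\delta|^{-(2n-4)}$, and the position of $k=2n-4$ relative to $n$ is exactly what splits into the three cases:
\begin{itemize}
\item $n=3$: $k=2<n$, so \eqref{eq_D_3} yields $\int_\Omega|G|^2\,dx\le C$;
\item $n=4$: $k=4=n$, so \eqref{eq_D_3} yields $\int_\Omega|G|^2\,dx\le C\log(1/\delta)$;
\item $n\ge 5$: $k=2n-4>n$, so \eqref{eq_D_3} yields $\int_\Omega|G|^2\,dx\le C\delta^{n-(2n-4)}=C\delta^{4-n}$.
\end{itemize}
Taking square roots gives \eqref{eq_B_0_1}.

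Finally, \eqref{eq_B_0_2} is obtained by combining \eqref{eq_B_0} and \eqref{eq_B_0_1}, using the fact that in every dimension $n\ge 3$ the gradient bound $\delta^{1-n/2}$ dominates the $L^2$ bound of $G$ itself as $\delta\to 0$ (indeed $\delta^{1-n/2}/\delta^{2-n/2}=\delta^{-1}\to\infty$ for $n\ge 5$, and clearly $\delta^{-1/2}$ and $\delta^{-1}$ dominate $1$ and $(\log(1/\delta))^{1/2}$ respectively for $n=3,4$), in agreement with \eqref{eq_5_tau_mu_delta}. Hence $\|G(\cdot,x_\delta)\|_{H^1(\Omega)}\le C\delta^{1-n/2}$, which completes the proof.
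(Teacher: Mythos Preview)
Your proof is correct and is exactly the argument the paper has in mind: the corollary is stated in the paper simply as ``a consequence of the estimates \eqref{eq_fund_sol} and \eqref{eq_D_3}'', and you have carried out precisely this computation, correctly identifying the exponent $k$ in each case and the dominance of the gradient term for \eqref{eq_B_0_2}.
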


\begin{prop}
Let $k>0$. Then we have, as $\delta\to 0$,
\begin{equation}
\label{eq_D_5}
\int_{\p \Omega} \frac{dS}{|x-x_\delta|^k} \le 
\begin{cases} C, & k<n-1,\\
C\log\frac{1}{\delta}, & k=n-1,\\
C\delta^{n-1-k}, & k>n-1,
\end{cases}\quad C>0, 
\end{equation}
and 
\begin{equation}
\label{eq_D_7}
\int_{\p \Omega} \frac{dS}{|x-x_\delta|^k} \ge 
\begin{cases} \frac{1}{C}, & k<n-1,\\
\frac{1}{C}\log\frac{1}{\delta}, & k=n-1,\\
\frac{1}{C}\delta^{n-1-k}, & k>n-1,
\end{cases}\quad C>0. 
\end{equation}

\end{prop}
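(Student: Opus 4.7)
My plan is to parameterize $\p \Omega$ near $x_0$ by the Lipschitz graph $x_n = \varphi(x')$ and reduce both bounds to a single one-dimensional radial integral whose $\delta$-scaling accounts for all three cases. Split $\p \Omega$ into the near part $\iG_0 := \p \Omega \cap B(0, r_0)$ and the far part $\p \Omega \setminus B(0, r_0)$. On the far part $|x - x_\delta| \ge c_0 > 0$ uniformly for small $\delta$, so its contribution to $\int_{\p \Omega} |x-x_\delta|^{-k}\, dS$ is uniformly bounded. This absorbs into $O(1)$ for the upper bound, and is simply discarded for the lower bound.

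On $\iG_0$ I parameterize by $x = (x', \varphi(x'))$ with $x' \in U'$ for a suitably small neighborhood of the origin, so that $dS = \sqrt{1 + |\nabla \varphi(x')|^2}\, dx'$ is comparable to $dx'$ from above and below by constants depending only on the Lipschitz constant of $\varphi$. The central step is the two-sided comparison
\[
\tfrac{1}{C}(|x'|^2 + \delta^2) \le |x - x_\delta|^2 \le C(|x'|^2 + \delta^2).
\]
The upper bound follows from $|x - x_\delta|^2 = |x'|^2 + (\varphi(x') + \delta)^2$ together with $|\varphi(x')| \le L|x'|$ (using $\varphi(0) = 0$ and the Lipschitz constant $L$). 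The lower bound combines the trivial estimate $|x - x_\delta| \ge |x'|$ with the nontangency estimate $|x - x_\delta| \ge c\delta$ for all $x \in \overline{\Omega}$ coming from \eqref{eq_D_1}, via $\max(a,b)^2 \ge \tfrac{1}{2}(a^2 + b^2)$.

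These reduce the problem to controlling $\int_{|x'| \le r_0'} (|x'|^2 + \delta^2)^{-k/2}\, dx'$ from above and below. Polar coordinates in $\R^{n-1}$ turn this into $c_n \int_0^{r_0'} r^{n-2}(r^2 + \delta^2)^{-k/2}\, dr$, and the substitution $r = \delta s$ gives
\[
c_n\, \delta^{n-1-k} \int_0^{r_0'/\delta} \frac{s^{n-2}}{(s^2 + 1)^{k/2}}\, ds.
\]
For $k < n-1$ the integrand behaves like $s^{n-2-k}$ at infinity, producing an integral of order $(r_0'/\delta)^{n-1-k}$; the $\delta^{n-1-k}$ prefactor cancels and a positive constant results. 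For $k = n-1$ the integrand behaves like $s^{-1}$ at infinity, yielding $\log(r_0'/\delta)$ which is comparable to $\log(1/\delta)$. For $k > n-1$ the integral converges to a finite positive constant as $\delta \to 0$, so the prefactor $\delta^{n-1-k}$ dominates. Reading off the three regimes yields both \eqref{eq_D_5} and \eqref{eq_D_7}.

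The only nontrivial point is the lower bound $|x - x_\delta|^2 \ge \tfrac{1}{C}(|x'|^2 + \delta^2)$ on $\iG_0$: this is precisely where the geometric configuration from Section \ref{appendix_integrals}, namely that $x_\delta$ sits at distance at least $c\delta$ from $\overline{\Omega}$, is essential --- without this the denominator could become too small along the normal direction and the asymptotics in the three regimes would degenerate. Everything else is a direct computation parallel to the proofs of \eqref{eq_D_3} and \eqref{eq_D_4}.
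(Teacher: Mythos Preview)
Your proof is correct and follows essentially the same route as the paper: split off the far part of $\p\Omega$, parameterize the near part as a Lipschitz graph, establish the two-sided comparison $|x-x_\delta|^2 \sim |x'|^2 + \delta^2$, and reduce to a radial integral in $\R^{n-1}$. The only cosmetic differences are that the paper obtains the lower bound $|x-x_\delta|^2 \ge \tfrac{1}{2}(|x'|^2+\delta^2)$ directly from $|\varphi(x')|\le \tfrac{1}{2}|x'|$ (using $\nabla\varphi(0)=0$) rather than from \eqref{eq_D_1}, and evaluates the final radial integral via the substitution $\sigma=r^{n-1}$ instead of your scaling $r=\delta s$; neither change affects the argument.
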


\begin{proof}
First recall that for $x\in\p \Omega\cap B(0,r_0)$, we get $x_n=\varphi(x')$, $x'\in U$, and $\varphi(0)=0$.  As 
$\nu(0)=-e_n$,  it follows from \eqref{eq_D_nu} that $\nabla\varphi(0)=0$, and therefore,
\[
\lim_{x'\to 0}\frac{|\varphi(x')|}{|x'|}=0.
\] 
Thus, without loss of generality, we may assume that $r_0>0$ is so small that 
\begin{equation}
\label{eq_D_6}
|\varphi(x')|\le \frac{1}{2}|x'|,
\end{equation}
 for any $x'\in U$.

We write 
\[
\int_{\p \Omega} \frac{dS}{|x-x_\delta|^k} =\int_{\p \Omega\cap B(0,r_0)} \frac{dS}{|x-x_\delta|^k} + \int_{\p \Omega\setminus\overline{\p \Omega\cap B(0,r_0)}} \frac{dS}{|x-x_\delta|^k}, 
\]
where the second integral is bounded as $\delta\to 0$. For the first integral, we have
\[
\int_{\p \Omega\cap B(0,r_0)} \frac{dS}{|x-x_\delta|^k}=\int_{U} \frac{\sqrt{1+|\nabla \varphi(x')|^2}dx'}{(|x'|^2+(\varphi(x')+\delta)^2)^{k/2}} \le C\int_U \frac{dx'}{(|x'|^2+\delta^2)^{k/2}}.
\]
Here we have used the fact that 
\[
|x'|^2+(\varphi(x')+\delta)^2\ge \frac{1}{2}(|x'|^2+\delta^2),
\]
which follows from \eqref{eq_D_6}.  
Let $B(0,R)\subset\R^{n-1}$ be a ball of radius $R$, centered at $0$, such that $U\subset B(0,R)$. Using the polar coordinates $r=|x'|$, we get
\[
I_1:=\int_U \frac{dx'}{(|x'|^2+\delta^2)^{k/2}}\le  C \int_0^R \frac{r^{n-2}dr}{(r^2+\delta^2)^{k/2}}.
\]
Setting $\sigma=r^{n-1}$ and using the fact that $\sigma^{2/(n-1)}+\delta^2\ge C^{-1}(\sigma+\delta^{n-1})^{2/(n-1)}$, we obtain that
\[
I_1\le C\int_0^{R^{n-1}}\frac{d\sigma}{(\sigma^{2/(n-1)}+\delta^2)^{k/2}}\le C\int_0^{R^{n-1}}\frac{d\sigma}{(\sigma+\delta^{n-1})^{k/(n-1)}}.
\]
Thus, \eqref{eq_D_5} follows.

In order to show \eqref{eq_D_7}, we get
\begin {align*}
\int_{\p \Omega} \frac{dS}{|x-x_\delta|^k} &\ge \int_{\p \Omega\cap B(0,r_0)} \frac{dS}{|x-x_\delta|^k}=  \int_{U} \frac{\sqrt{1+|\nabla \varphi(x')|^2}dx'}{(|x'|^2+(\varphi(x')+\delta)^2)^{k/2}}\\
&\ge \frac{1}{C}\int_U \frac{dx'}{(|x'|^2+\delta^2)^{k/2}}.
\end{align*}
 Here we have used the fact that 
 \[
 |x'|^2+(\varphi(x')+\delta)^2\le 2(|x'|^2+\delta^2),
 \]
which is a consequence of \eqref{eq_D_6}. As above, using 
the polar coordinates $r=|x'|$, we get
\[
I_2:=\int_U \frac{dx'}{(|x'|^2+\delta^2)^{k/2}}\ge C \int_0^{R_1} \frac{r^{n-2}dr}{(r^2+\delta^2)^{k/2}},\quad R_1>0.
\]
Setting $\sigma=r^{n-1}$ and using the fact that $\sigma^{2/(n-1)}+\delta^2\le C(\sigma+\delta^{n-1})^{2/(n-1)}$, we conclude that 
\[
I_2\ge \frac{1}{C}\int_0^{R_1^{n-1}}\frac{d\sigma}{(\sigma^{2/(n-1)}+\delta^2)^{k/2}}\ge \frac{1}{C}\int_0^{R_1^{n-1}}\frac{d\sigma}{(\sigma+\delta^{n-1})^{k/(n-1)}},
\]
which shows \eqref{eq_D_7}. The proof is complete.

\end{proof}

As a consequence of  \eqref{eq_fund_sol} and \eqref{eq_D_5},  we get the following result. 

\begin{cor}
We have
\begin{equation}
\label{eq_B_0_3}
\|G(\cdot,y)\|_{L^2(\p \Omega)}\le 
\begin{cases}
C\bigg(\log\frac{1}{\delta}\bigg)^{1/2} & n=3,\\
 C\delta^{(3-n)/2}, & n\ge 4,
 \end{cases} \quad\textrm{as}\quad \delta\to 0.
\end{equation}
\end{cor}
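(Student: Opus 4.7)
The plan is to argue directly from the pointwise bound on the fundamental solution together with the boundary-integral estimate \eqref{eq_D_5}. First, I would square and write
\[
\|G(\cdot,y)\|_{L^2(\p\Omega)}^{2}=\int_{\p\Omega}|G(x,x_\delta)|^{2}\,dS(x),
\]
and invoke the first estimate in \eqref{eq_fund_sol}, which gives $|G(x,x_\delta)|\le C|x-x_\delta|^{2-n}$ for all $x\in\overline{\Omega}$. Therefore
\[
\|G(\cdot,y)\|_{L^2(\p\Omega)}^{2}\le C\int_{\p\Omega}\frac{dS(x)}{|x-x_\delta|^{2(n-2)}}.
\]

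Next, I would apply \eqref{eq_D_5} with the exponent $k=2(n-2)=2n-4$, noting that $k\ge n-1$ precisely when $n\ge 3$, with equality when $n=3$. In the case $n=3$, we have $k=n-1=2$, so \eqref{eq_D_5} yields the logarithmic bound $C\log\frac{1}{\delta}$; taking square roots gives the factor $(\log\frac{1}{\delta})^{1/2}$. For $n\ge 4$, we have $k=2n-4>n-1$, so \eqref{eq_D_5} yields $C\delta^{n-1-k}=C\delta^{3-n}$; taking square roots yields $C\delta^{(3-n)/2}$.

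There is no real obstacle here: the work is entirely in the two ingredients already established, namely \eqref{eq_fund_sol} and \eqref{eq_D_5}, and the corollary is essentially their combination with $k=2(n-2)$. The only point worth a moment of attention is checking the dichotomy $k=n-1$ versus $k>n-1$ in the right-hand side of \eqref{eq_D_5}, which correctly produces the split between the $n=3$ and $n\ge 4$ cases in the claimed estimate \eqref{eq_B_0_3}.
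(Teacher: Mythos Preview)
Your proposal is correct and follows precisely the approach indicated in the paper, which simply states the corollary as a consequence of \eqref{eq_fund_sol} and \eqref{eq_D_5}. The paper gives no further details, and your argument fills them in exactly as intended: bound $|G|^2$ pointwise by $C|x-x_\delta|^{2(2-n)}$, apply \eqref{eq_D_5} with $k=2n-4$, and split into the cases $k=n-1$ (i.e., $n=3$) and $k>n-1$ (i.e., $n\ge 4$).
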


We shall also need the following estimate,  when $n=3$, 
\begin{equation}
\label{eq_B_12}
\int_{\p \Omega} \frac{1}{|x-x_\delta|}\log \frac{1}{|x-x_\delta|} dS \le C, \quad \textrm{as}\quad \delta\to 0. 
\end{equation}

\end{appendix}

\section*{Acknowledgements}  
I am very grateful to Victor Isakov for the most useful comments and suggestions on this work. I would also like to thank Mikhail Agranovich, Matti Lassas, Mikko Salo, Valeriy Serov, Semen Podkorytov, and Gunther Uhlmann for helpful discussions.  This research is supported by the Academy of Finland (project 125599).

\end{document}